%


\documentclass[aihp,preprint]{imsart}

\RequirePackage[OT1]{fontenc}
\RequirePackage[colorlinks,citecolor=blue,urlcolor=blue]{hyperref}

\usepackage{amsmath,amsfonts,amsbsy,amsgen,amscd,mathrsfs,amssymb,amsthm,mathtools}

\usepackage{accents}

\usepackage{pgfplots}
\usepackage{tikz-cd}
\usetikzlibrary{arrows,automata}


\startlocaldefs

\numberwithin{equation}{section}
\newtheorem{theorem}{Theorem}[section]
\newtheorem{corollary}[theorem]{Corollary}
\newtheorem{conjecture}[theorem]{Conjecture}
\newtheorem{lemma}[theorem]{Lemma}
\newtheorem{proposition}[theorem]{Proposition}
\theoremstyle{definition}
\newtheorem{assumption}[theorem]{Assumption}
\newtheorem{definition}[theorem]{Definition}

\newtheorem{notation}[theorem]{Notation}
\newtheorem{question}[theorem]{Question}
\newtheorem{remark}[theorem]{Remark}

\newcommand\al{\alpha}
\newcommand\be{\beta}
\newcommand\dd{\mathrm d}
\newcommand\De{\Delta}
\newcommand\de{\delta}
\newcommand\deq{\stackrel{\mathrm d}{=}}
\newcommand\eps{\varepsilon}
\newcommand\Ga{\Gamma}
\newcommand\ga{\gamma}
\newcommand\ka{\kappa}
\newcommand\La{\Lambda}
\newcommand\la{\lambda}

\newcommand\Si{\Sigma}
\newcommand\si{\sigma}
\newcommand\ubar[1]{%
\underaccent{\bar}{#1}}
\newcommand\ze{\zeta}
\renewcommand\d{~\mathrm d}
\renewcommand\phi{\varphi}
\renewcommand\rho{\varrho}
\renewcommand\th{\vartheta}

\newcommand\bs{\boldsymbol}
\newcommand\mbb{\mathbb}
\newcommand\mbf{\mathbf}
\newcommand\mc{\mathcal}
\newcommand\mf{\mathfrak}
\newcommand\mr{\mathrm}

\endlocaldefs

\begin{document}

\begin{frontmatter}

\title{On the Convergence of Random Tridiagonal
Matrices to Stochastic Semigroups}
\runtitle{Convergence of Tridiagonal Matrices to Stochastic Semigroups}

\begin{aug}
\author{\fnms{Pierre Yves} \snm{Gaudreau Lamarre}\corref{}\thanksref{a}\ead[label=e1]{plamarre@princeton.edu}}
\address[a]{Princeton University, Princeton, NJ\space\space08544, USA.\\
\printead{e1}}

\runauthor{Pierre Yves Gaudreau Lamarre}

\affiliation{Princeton University}

\end{aug}

\begin{abstract}
We develop an improved version of the stochastic semigroup approach to study the edge
of $\be$-ensembles pioneered by Gorin and Shkolnikov \cite{GorinShkolnikov}, and later extended to rank-one additive
perturbations by the author and Shkolnikov \cite{GaudreauLamarreShkolnikov}.
Our method is applicable to
a significantly more general class of random tridiagonal matrices than that considered in \cite{GaudreauLamarreShkolnikov,GorinShkolnikov},
including some non-symmetric cases that are not covered by the stochastic operator formalism of Bloemendal, Ram\'irez,
Rider, and Vir\'ag \cite{BloemendalVirag,RamirezRiderVirag}.

We present two applications of our main results: Firstly, we prove the convergence of $\be$-Laguerre-type
(i.e., sample covariance) random tridiagonal matrices to the stochastic Airy semigroup and its rank-one
spiked version. Secondly, we prove the convergence of the eigenvalues
of a certain class of non-symmetric random tridiagonal matrices to the spectrum of a
continuum Schr\"odinger operator with Gaussian white noise potential.
\end{abstract}

\begin{abstract}[language=french]
Nous d\'eveloppons une version am\'elior\'ee de l'approche de {\it stochastic semigroup} pour \'etudier
l'extr\'emit\'e des ensembles b\^eta introduite par Gorin et Shkolnikov \cite{GorinShkolnikov},
ensuite \'etendue
aux ensembles b\^eta gaussiens avec perturbation de rang un par
l'auteur et Shkolnikov \cite{GaudreauLamarreShkolnikov}.
Notre m\'ethode est applicable \`a
une classe nettement plus g\'en\'erale de matrices tridiagonales al\'eatoires que celles dans \cite{GaudreauLamarreShkolnikov,GorinShkolnikov},
y compris certains cas non sym\'etriques qui ne sont pas couverts par la m\'ethode de {\it stochastic operators}
introduite par Bloemendal, Ram\'irez,
Rider et Vir\'ag \cite{BloemendalVirag, RamirezRiderVirag}.

Nous pr\'esentons deux applications de nos principaux r\'esultats :
Premi\`erement, nous prouvons la convergence
de matrices tridiagonales al\'eatoires
de type $\be$-Laguerre (c.-\`a-d., matrices de covariances empiriques)
vers le semi-groupe du {\it stochastic Airy operator} et sa
perturbation de rang un.
Deuxi\`emement, nous prouvons la convergence des valeurs propres
d'une certaine classe de matrices tridiagonales al\'eatoires non sym\'etriques vers
le spectre
d'op\'erateurs de Schr\"odinger avec bruit blanc gaussien.
\end{abstract}

\begin{keyword}
\kwd{Random tridiagonal matrices}
\kwd{Feynman-Kac formulas}
\kwd{stochastic Airy operator}
\kwd{stochastic Airy semigroup}
\kwd{random walk occupation measures}
\kwd{Brownian local time}
\kwd{strong invariance principles}
\end{keyword}



\end{frontmatter}

\section{Introduction}

\subsection{Operator Limits of Random Matrices}

This paper, which is a direct sequel of \cite{GaudreauLamarreShkolnikov,GorinShkolnikov}, is concerned with
operator limits of random matrices. The theory of operator limits was initiated in \cite{DumitriuEdelman,
EdelmanSutton,RamirezRiderVirag} and eventually gave rise to a vast literature on the subject. We refer to the
survey article \cite{Virag} for a recent historical account of these early developments.

A fundamental object in this theory is the {\bf stochastic Airy operator}, formally defined as
\[\mr{SAO}_\be f(x):=-f''(x)+xf(x)+W_\be'(x)f(x),\qquad f:\mbb R_+\to\mbb R,\]
where $\be>0$ is fixed parameter,
$W_\be$ is a Brownian motion with variance $4/\be$,
$\mbb R_+:=[0,\infty)$,
and $f$ obeys a Dirichlet or Robin boundary condition at the origin.
We refer to \cite[Section 2.3]{BloemendalVirag}, \cite[Section 2]{Minami}, and \cite[Section 2]{RamirezRiderVirag}
for a rigorous definition.

The interest of studying $\mr{SAO}_\be$ comes from the fact that its eigenvalue
point process captures the asymptotic edge fluctuations of a large class of random matrices
and interacting particle systems. In \cite{BloemendalVirag,RamirezRiderVirag},
this was proved for the $\be$-Hermite ensemble, the $\be$-Laguerre ensemble (for the right edge),
as well as rank-one perturbations of the $\be$-Hermite and $\be$-Laguerre ensembles
(the spiked models). Then, \cite{KrishnapurRiderVirag}
established operator limits as a means of
proving edge universality for general $\be$-ensembles (c.f., \cite{BourgadeErdosYau}).
More generally, \cite{BloemendalVirag,RamirezRiderVirag} proved the eigenvalue and eigenvector convergence of
a wide class of symmetric random tridiagonal matrices to the spectrum of Schr\"odinger operators of the form $-\De+Y'$,
where $Y$ is a random function.

\subsection{Stochastic Semigroups}

More recently,
Gorin and Shkolnikov
introduced in \cite{GorinShkolnikov} a new method of studying edge fluctuations of $\be$-ensembles.
Their main result was that high powers of a generalized version of the $\be$-Hermite ensemble converge to
a random Feynman-Kac-type semigroup that was dubbed the {\bf stochastic Airy semigroup}
(\cite[Theorem 2.1]{GorinShkolnikov}), which we denote
by $\mr{SAS}_\be(t)$ for $\be,t>0$ (see Definition \ref{Definition: Continuum Limit}
and Notation \ref{Notation: SAS}).

Combining their result with the fact that the 
edge-rescaled $\be$-Hermite ensemble converges to $\mr{SAO}_\be$, Gorin and Shkolnikov concluded that
$\mr{SAS}_\be(t)=\mr e^{-t\,\mr{SAO}_\be/2}$ for all $t>0$
(\cite[Corollary 2.2]{GorinShkolnikov}),
thus providing a new tool with which
$\mr{SAO}_\be$'s spectrum can be studied. As a demonstration of this, it was shown in
\cite[Corollary 2.3 and Proposition 2.6]{GorinShkolnikov} that certain statistics
of $\mr{SAS}_\be(t)$ admit an especially simple form when $\be=2$. Among other things, this provided the first
manifestation of the special integrable structure in the $\be$-ensembles when $\be\in\{1,2,4\}$
at the level of the operator limits describing edge fluctuations. These results were extended to
rank-one spiked $\be$-Hermite models in
\cite{GaudreauLamarreShkolnikov}.
Feynman-Kac formulas for general one-dimensional Schr\"odinger operators with
multiplicative Gaussian noise were obtained more recently in \cite{GaudreauLamarre}.

\subsection{Overview of Main Results}

In this paper, we introduce a modification of the formalism
developed in \cite{GaudreauLamarreShkolnikov,GorinShkolnikov}. Our main results
(Theorems \ref{Theorem: Main Dirichlet} and \ref{Theorem: Main Robin}) establish the convergence
of high powers of a large class of random tridiagonal matrices to the semigroups of continuum Schr\"odinger
operators with Gaussian white noise. Our results improve on \cite{GaudreauLamarreShkolnikov,GorinShkolnikov}
 and \cite{BloemendalVirag,RamirezRiderVirag} in two significant ways.

Firstly, a main technical achievement of \cite{GorinShkolnikov} was to show that the
moment method can be used to study edge fluctuations of $\be$-ensembles for $\be\not\in\{1,2,4\}$.
The key to achieving this is to relate the combinatorics of traces of high powers of tridiagonal matrices to strong invariance
principles for random walks and their occupation measures (\cite[Section 3]{GorinShkolnikov} and 
\cite[Section 3.1]{GaudreauLamarreShkolnikov}). A notable feature of the combinatorial
analysis in \cite{GorinShkolnikov} is that it requires the tridiagonal matrices under consideration to have
diagonal entries of smaller order than their super/sub-diagonal entries (see Section \ref{Subsection: Brief Comparison}
for details). In particular, this argument is not directly applicable to the $\be$-Laguerre
ensemble.
In this context, one contribution of this paper is to
develop an improved version of the stochastic semigroup formalism that does not have restrictions on
the relative size of  diagonal/off-diagonal entries.
As a demonstration of this, we prove that our main results apply to
every matrix model considered in \cite{GaudreauLamarreShkolnikov,GorinShkolnikov}, as well as generalized
$\be$-Laguerre ensembles (Section \ref{Section: Classical Beta}).

Secondly, a notable feature of our results is that they appear to be the first to apply to non-symmetric matrices.
As a consequence, we prove new limit laws for the eigenvalues of certain non-symmetric random
tridiagonal matrices (Propositions \ref{Proposition: Convergence of Eigenvalues} and \ref{Proposition: Non-Symmetric Eigenvalues}).
In particular, we identify a new matrix model whose edge fluctuations are in the
Tracy-Widom universality class (Corollary \ref{Corollary: Non Symmetric Example}).
These results complement previous investigations on the spectrum of
non-symmetric random tridiagonal matrices, such as \cite{GoldsheidSodin,GoldsheidKhoruzhenko2,
GoldsheidKhoruzhenko3,GoldsheidKhoruzhenko}.

Several features of the strategy of proof in \cite{GaudreauLamarreShkolnikov,GorinShkolnikov} for analyzing
the combinatorics of large powers of tridiagonal matrices carry over to this paper. For instance, strong invariance principles
for occupation measures of random walks also play a fundamental role in our proofs. That being said, the differences are
significant enough that many nontrivial modifications and new ideas need to be introduced. Most notably, 
several results in the literature concerning strong approximations of Brownian local time that are used without
modification in \cite{GaudreauLamarreShkolnikov,GorinShkolnikov} require significant work to be applicable to our setting
(Sections \ref{Section: Dirichlet Local Time Coupling} and \ref{Section: Spiked Local Time Coupling}).

\subsection{Organization}

In Section \ref{Section: Main Results}, we introduce our random
matrix models, their continuum limits,
and we state our main results.
In Section \ref{Section: Applications}, we discuss applications of our main
results to random matrices.
In Section \ref{Section: Method}, we explain the main idea in our strategy of proof,
and we make a brief comparison with the method of \cite{GaudreauLamarreShkolnikov,GorinShkolnikov}.
In Sections \ref{Section: Dirichlet Local Time Coupling} and \ref{Section: Spiked Local Time Coupling},
we prove two local time strong invariance results that lie at the heart of our proof.
Finally, in Sections \ref{Section: Main Dirichlet 1}, \ref{Section: Main Dirichlet 2},
and \ref{Section: Main Robin}, we complete the proofs of our main results.

\subsection*{Acknowledgments}

The author thanks Mykhaylo Shkolnikov for his continuous guidance and support,
and for multiple discussions regarding this paper. The author thanks
Vadim Gorin, Diane Holcomb and anonymous referees for multiple valuable comments that
helped greatly improve the presentation of the paper.

\section{Setup and Main Results}\label{Section: Main Results}

\subsection{Random Matrix Models}

We begin by introducing our random matrix models.
Let $(m_n)_{n\in\mbb N}$
be a sequence of positive numbers
and $(w_n)_{n\in\mbb N}$ be a sequence of real numbers such that
the following holds:

\begin{assumption}\label{Assumption: mn}
There exists $0<C\leq 1$ and $1/13<\mf d<1/2$ such that
\begin{align}\label{Equation: mn}
Cn^{\mf d}\leq m_n\leq C^{-1}n^{\mf d},\qquad n\in\mbb N.
\end{align}
\end{assumption}

\begin{assumption}\label{Assumption: wn}
There exists some $w\in\mbb R$ such that
\begin{align}\label{Equation: wn}
\lim_{n\to\infty}m_n(1-w_n)=w.
\end{align}
\end{assumption}

For every $n\in\mbb N$, let us define the $(n+1)\times (n+1)$ tridiagonal matrices $\De_n,\De^w_n$, and $Q_n$ as
\[\De_n:=m_n^2\left[\begin{array}{ccccc}
-2&1\\
1&\ddots&\ddots\\
&\ddots&\ddots&1\\
&&1&-2
\end{array}\right],
\qquad
 Q_n:=\left[\begin{array}{ccccc}
D_n(0)&U_n(0)\\
L_n(0)&\ddots&\ddots\\
&\ddots&\ddots&U_n(n-1)\\
&&L_n(n-1)&D_n(n)
\end{array}\right],
\]
\[\De_n^w:=\De_n+\mr{diag}_n(m_n^2w_n,0,\ldots,0)\]
where $D_n(a),U_n(a),L_n(a)$ are real-valued random variables
for every $n\in\mbb N$ and $0\leq a\leq n$ (or $0\leq a\leq n-1$).

\begin{notation}
Throughout, we index the entries of a $(n+1)\times(n+1)$ matrix $ M$ as
$ M(a,b)$ for $0\leq a,b\leq n$. Similarly, $v\in\mbb R^{n+1}$ is indexed as
$v(a)$ for $0\leq a\leq n$. We use $\mr{diag}_n(d_0,\ldots,d_n)$
to denote the $(n+1)\times(n+1)$ diagonal matrix $M$ with entries $M(a,a)=d_a$
for $0\leq a\leq n$.
\end{notation}
\begin{notation}
For simplicity, we often state
properties of $D_n(a),U_n(a),L_n(a)$ for $0\leq a\leq n$, with the understanding
that $a\leq n-1$ for $U_n(a)$ and $L_n(a)$.
\end{notation}

We assume that the entries of $ Q_n$ satisfy the following decomposition:
For $E\in\{D,U,L\}$,
\begin{align}
\label{Equation: Potential+Noise Decomposition}
E_n(a)&=V^E_n(a)+\xi^E_n(a),&0\leq a\leq n,
\end{align}
where the $V^E_n(a)$ are deterministic and the $\xi^E_n(a)$ are random.
We call $V^E_n$ the {\bf potential terms} and $\xi^E_n$ the {\bf noise
terms}. The random matrix models studied in this paper are as follows.

\begin{definition}[Random Matrix Models]\label{Definition: Random Matrix Models}
For every $n\in\mbb N$ and $t>0$, we define
\begin{align}
\label{Equation: Matrix Models}
\hat K_n(t):=\left(I_n-\frac{-\De_n+Q_n}{3m_n^2}\right)^{\lfloor m_n^2(3t/2)\rfloor},\qquad
\hat K^w_n(t):=\left(I_n-\frac{-\De^w_n+Q_n}{3m_n^2}\right)^{\lfloor m_n^2(3t/2)\rfloor}.
\end{align}
\end{definition}

\subsection{Continuum Limit}

We now describe the continuum limits of \eqref{Equation: Matrix Models}.
In order to describe these objects, we need some notations:

\begin{notation}
We use $B$ to denote a standard Brownian motion on $\mbb R$,
and $X$ to denote a standard reflected Brownian motion on $\mbb R_+$.

Let $Z=B$ or $X$. For every $t>0$ and $x,y\geq0$,
we denote
\[Z^x:=\big(Z|Z(0)=x\big)
\qquad\text{and}\qquad
Z^{x,y}_t:=\big(Z|Z(0)=x\text{ and }Z(t) = y\big),\]
and we use $\mbf E^x$ and $\mbf E^{x,y}_t$ to denote the expected value with respect to the law of $Z^x$
and $Z^{x,y}_t$ respectively.

For any $t>0$, we use $x\mapsto L^x_t(Z)$ to denote the continuous version
of the local time process of $Z$ on $[0,t]$, which we characterize by the requirement that
for every measurable function $f$, one has
\begin{align}\label{Equation: Interior Local Time}
\int_0^tf\big(Z(s)\big)\d s=\int_{\mbb R}L^x_t(Z)\, f(x)\d x.
\end{align}
As a matter of convention, in the case where $Z=X$, we distinguish the boundary local time $\mf L^0_t(Z)$ from the above, which we define as
\begin{align}\label{Equation: Boundary Local Time}
\mf L^0_t(Z):=\lim_{\eps\to0}\frac{1}{2\eps}\int_0^t\mbf 1_{\{0\leq Z(s)<\eps\}}\d s.
\end{align}
Finally, we let $\tau_0(B)$ denote the first hitting time of zero by $B$.
\end{notation}

\begin{definition}[Continuum Limits]\label{Definition: Continuum Limit}
Let $Q$ be the diffusion process
\[\dd Q(x)=V(x)\dd x+\dd W(x),\qquad x\geq0,\]
where $V\geq0$ is a deterministic locally integrable function on $\mbb R_+$,
and $W$ is a Brownian motion with variance $\si^2>0$.
For every $t>0$, we let
$\hat K(t)$ and $\hat K^w(t)$ be the integral operators on $L^2(\mbb R_+)$
with random kernels
\begin{align}
\label{Equation: Dirichlet Semigroup}
\hat K(t;x,y)&:=\frac{\mr e^{-(x-y)^2/2t}}{\sqrt{2\pi t}}\mbf E^{x,y}_t\left[\mbf 1_{\{\tau_0(B)>t\}}\mr e^{-\langle L_t(B),Q'\rangle}\right]\\
\label{Equation: Robin Semigroup}
\hat K^w(t;x,y)&:=\left(\frac{\mr e^{-(x-y)^2/2t}}{\sqrt{2\pi t}}+\frac{\mr e^{-(x+y)^2/2t}}{\sqrt{2\pi t}}\right)\mbf E^{x,y}_t\left[\mr e^{-\langle L_t(X),Q'\rangle-w\mf L^0_t(X)}\right]
\end{align}
for $x,y\geq0$, where
\begin{enumerate}
\item we assume that $B$ and $X$ are independent of $W$,
and that $\mbf E^{x,y}_t$ is the conditional expected value of $B^{x,y}_t$ or $X^{x,y}_t$
given $W$; and
\item for any piecewise continuous and compactly supported function $f$,
\[\langle f,Q'\rangle:=\int_\mbb Rf(x)\d Q(x)\]
denotes $\dd Q$ pathwise stochastic integration (see \cite[Remark 2.18]{GaudreauLamarre}).
\end{enumerate}
\end{definition}

\begin{remark}\label{Remark: Strong Continuity}
Consider the operator $\hat H:=-\tfrac12\De+V+W'$ acting on $\mbb R_+$
with Dirichlet boundary condition at zero, and let $\hat H^w$
be the same operator but with Robin boundary condition $f'(0)=wf(0)$.
If the function $V$ satisfies
\begin{align}\label{Equation: Log Potential}
\lim_{x\to\infty}V(x)/\log x=\infty,
\end{align}
then $\hat H$ and $\hat H^w$ can be rigorously defined as self-adjoint
operators with compact resolvent (and thus discrete spectrum)
using quadratic forms (\cite[Proposition 2.9 and Corollary 2.12]{GaudreauLamarre};
see also \cite{BloemendalVirag,Minami,RamirezRiderVirag}).
According to \cite[Theorem 2.23]{GaudreauLamarre}, for every
$t>0$, it holds with probability one that $\hat K(t)$ and $\hat K^w(t)$ are self-adjoint
Hilbert-Schmidt operators on $L^2(\mbb R_+)$,
and $\hat K(t)=\mr e^{-t\hat H}$ and $\hat K^w(t)=\mr e^{-t\hat H^w}$.
We also have the trace formula $\mr{Tr}[\hat K(t)]=\int_0^\infty\hat K(t;x,x)\d x<\infty.$
\end{remark}

\begin{notation}\label{Notation: SAS}
Let $\be>0$. If $V(x)=x/2$ and $\si^2=1/\be$ in Definition \ref{Definition: Continuum Limit},
then we use the notation $\mr{SAS}_\be(t):=\hat K(t)$ and $\mr{SAS}^w_\be(t):=\hat K^w(t)$,
since in this case we recover the stochastic Airy semigroup defined in
\cite{GaudreauLamarreShkolnikov,GorinShkolnikov}, which is the semigroup of
the stochastic Airy operator.
\end{notation}

\subsection{Technical Assumptions}\label{Subsection: Technical Assumptions}

We are now finally in a position to state our main results and the assumptions under which they apply.
We begin with the assumptions on the random entries of $Q_n$
in \eqref{Equation: Potential+Noise Decomposition}; our theorems are stated in Section \ref{Subsection: Theorems}.

\subsubsection{Assumptions on the Potential Terms $V_n^E$}

\begin{assumption}[Potential Convergence]\label{Assumption: Potential Convergence}
There exists nonnegative continuous functions
$V^D,V^U,V^L:\mbb R_+\to\mbb R$ such that
\[\lim_{n\to\infty}V^E_n(\lfloor m_nx\rfloor)=V^E(x),\qquad x\geq0\]
uniformly on compact sets for every $E\in\{D,U,L\}$.
Moreover, the function
\begin{align}\label{Equation: Combination of Potentials}
V:=\tfrac12(V^D+V^U+V^L),\qquad x\geq0
\end{align}
satisfies \eqref{Equation: Log Potential}.
\end{assumption}

\begin{assumption}[Growth Upper Bounds]\label{Assumption: Growth Upper Bounds}
For every $E\in\{D,U,L\}$ we have the following:
For large enough $n$,
\begin{align}\label{Equation: Potential Absolute Bounds}
0\leq V^E_n(a)\leq 2 m_n^2,\qquad 0\leq a\leq n,
\end{align}
and if $C_n=o(n)$ as $n\to\infty$, then
\begin{align}\label{Equation: Potential Growth Upper Bound}
\max_{a\leq C_n}V^E_n(a)=o(m_n^2),\qquad n\to\infty.
\end{align}
\end{assumption}

\begin{assumption}[Growth Lower Bounds]\label{Assumption: Growth Lower Bounds}
 At least one of $E\in\{D,U,L\}$ satisfies the following:
 For every $\theta>0$, there exists $c=c(\theta)>0$ and $N=N(\theta)\in\mbb N$ such that
 for every $n\geq N$,
 \begin{align}\label{Equation: Log Potential Lower Bound}
\theta\log(1+a/m_n)-c\leq V^E_n(a)\leq m_n^2,\qquad 0\leq a\leq n.
\end{align}
 Moreover, at least one of $E\in\{D,U,L\}$ (not necessarily the same as
 \eqref{Equation: Log Potential Lower Bound}) satisfies the following:
With $\mf d$ as in \eqref{Equation: mn},
there exists $\mf d/2(1-\mf d)<\al\leq2\mf d/(1-\mf d)$, $\eps>0$, and positive constants
$\ka$ and $C>0$ such that
\begin{align}\label{Equation: Polynomial Potential Lower Bound}
\ka(a/m_n)^\al\leq V^E_n(a)\leq m_n^2,\qquad Cn^{1-\eps}\leq a\leq n
\end{align}
for $n$ large enough.
\end{assumption}

\subsubsection{Assumptions on the Noise Terms $\xi_n^E$}

\begin{assumption}[Independence]\label{Assumption: Independence}
For every $n\in\mbb N$, the variables $\xi^D_n(0),\ldots,\xi^D_n(n)$
are independent, and likewise for $\xi^U_n(0),\ldots,\xi^U_n(n-1)$
and $\xi^L_n(0),\ldots,\xi^L_n(n-1)$. We emphasize, however, that
the random vectors
$\xi_n^D$, $\xi_n^U$, and $\xi_n^L$ need not be independent of each other
(for instance, if $Q_n$ is symmetric, then $\xi^U_n=\xi^L_n$).
\end{assumption}

\begin{assumption}[Moment Asymptotics]\label{Assumption: Moments}
For every $E\in\{D,U,L\}$, we have:
\begin{align}\label{Equation: Noise Expected Decay}
|\mbf E[\xi_n^E(a)]|=o\big(m_{n-a}^{-1/2}\big)\qquad\text{as }(n-a)\to\infty,
\end{align}
and there exists constants $C>0$ and $0<\ga<2/3$ such that
\begin{align}\label{Equation: Noise Moment Bound}
\mbf E\big[|\xi^E_n(a)|^q\big]\leq m_n^{q/2}C^qq^{\ga q}
\end{align}
for every $0\leq a\leq n$, integer $q\in\mbb N$,
and $n$ large enough.
\end{assumption}

\begin{assumption}[Noise Convergence]\label{Assumption: Noise Convergence}
There exists Brownian motions $W^D$, $W^U$, and $W^L$ such that
\begin{align}\label{Equation: Skorokhod Noise Terms}
\lim_{n\to\infty}\bigg(\frac1{m_n}\sum_{a=0}^{\lfloor m_n x\rfloor}\xi^E_n(a)\bigg)_{E=D,U,L}=
\Big(W^E(x)\Big)_{E=D,U,L},\qquad x\geq0
\end{align}
in joint distribution with respect to the Skorokhod topology. We assume that
\begin{align}\label{Equation: Combination of BM}
W:=\tfrac12\big(W^D+W^U+W^L\big)
\end{align}
is also a Brownian motion with some variance $\si^2>0$.
Furthermore, if $\phi_1,\ldots,\phi_k$ are continuous and compactly supported functions
and $(\phi^{(n)}_1)_{n\in\mbb N},\ldots,(\phi^{(n)}_k)_{n\in\mbb N}$ are such that $\phi^{(n)}_i\to\phi_i$
uniformly for every $1\leq i\leq k$, then
\begin{align}\label{Equation: Noise Stochastic Integral Convergence}
\lim_{n\to\infty}\bigg(\sum_{a\in\mbb N_0}\phi^{(n)}_i(a/m_n)\frac{\xi^E_n(a)}{m_n}\bigg)_{E=D,U,L;~1\leq i\leq k}
=\bigg(\int_{\mbb R_+}\phi_i(a)\d W^E(a)\bigg)_{E=D,U,L;~1\leq i\leq k}
\end{align}
in joint distribution, and also jointly with \eqref{Equation: Skorokhod Noise Terms}.
\end{assumption}

\subsubsection{Assumptions for the Robin Boundary Condition}

The following assumption will only be made when considering $\hat K^w_n(t)$:

\begin{definition}
We say that a sequence $(X_n)_{n\in\mbb N}$ is uniformly sub-Gaussian
if there exists $C,c>0$ independent of $n$ such that
\begin{align}
\label{Equation: Uniform SubGaussian}
\sup_{n\in\mbb N}\mbf E\left[\mr e^{y\,|X_n|}\right]\leq C\mr e^{cy^2},\qquad y\geq0.
\end{align}
\end{definition}

\begin{assumption}\label{Assumption: Robin}
$\big(D_n(0)/m_n^{1/2}\big)_{n\in\mbb N}$ is uniformly sub-Gaussian.
\end{assumption}

\begin{remark}
If $\ga<1/2$ in \eqref{Equation: Noise Moment Bound},
then Assumption \ref{Assumption: Robin} is satisfied.
\end{remark}

\subsection{Main Theorems}\label{Subsection: Theorems}

\begin{notation}
In order to make sense of the claim that $\hat K_n(t)\to\hat K(t)$
and $\hat K_n^w(t)\to\hat K^w(t)$, we need to ensure that the discrete
and continuous objects act on the same space. For this purpose,
we note that the action of the matrices \eqref{Equation: Matrix Models} on $\mbb R^{n+1}$
can naturally be extended to step functions on $\mbb R_+$ of the form
\[\textstyle\sum_{a=0}^nv(a)\,\mbf 1_{[a/m_n,(a+1)/m_n)}\qquad\text{for some $v\in\mbb R^{n+1}$.}\]
This can then be further extended to any locally integrable $f:\mbb R_+\to\mbb R$
via
\begin{align}
\label{Notation: Embedding}
\pi_nf:=m_n^{1/2}\sum_{a=0}^n\int_{a/m_n}^{(a+1)/m_n}f(x)\d x\,\mbf 1_{[a/m_n,(a+1)/m_n)}.
\end{align}
Thus, for any $(n+1)\times(n+1)$ matrix $ M$ and locally integrable functions $f,g$, we define
$Mf$ as the vector/step function $M(\pi_nf)$, and we define
\[\langle f, Mg\rangle:=m_n\sum_{0\leq a,b\leq n}\left(\int_{a/m_n}^{(a+1)/m_n}f(x)\d x\right)M(a,b)\left(\int_{b/m_n}^{(b+1)/m_n}g(x)\d x\right).\]
\end{notation}

Our limit results are as follows.

\begin{theorem}\label{Theorem: Main Dirichlet}
Suppose that Assumptions \ref{Assumption: mn}
and \ref{Assumption: Potential Convergence}--\ref{Assumption: Noise Convergence} hold.
Let $ \hat K(t)$ be defined as in \eqref{Equation: Dirichlet Semigroup},
where $V$ is given by \eqref{Equation: Combination of Potentials} and $W$ is given by \eqref{Equation: Combination of BM}.
Then, $ \hat K_n(t)\to \hat K(t)$ as $n\to\infty$ in the following two senses:
\begin{enumerate}
\item For every $t_1,\ldots,t_k>0$ and $f_1,g_1,\ldots,f_k,g_k:\mbb R_+\to\mbb R$ uniformly continuous and bounded,
\[\lim_{n\to\infty}\big(\langle f_i, \hat K_n(t_i)g_i\rangle\big)_{1\leq i\leq k}=\big(\langle f_i, \hat K(t_i)g_i\rangle\big)_{1\leq i\leq k}\]
in joint distribution and mixed moments.
\item For every $t_1,\ldots,t_k>0$,
\[\lim_{n\to\infty}\big(\mr{Tr}[ \hat K_n(t_i)]\big)_{1\leq i\leq k}=\big(\mr{Tr}[ \hat K(t_i)]\big)_{1\leq i\leq k}\]
in joint distribution and mixed moments.
\end{enumerate}
\end{theorem}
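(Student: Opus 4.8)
The plan is to establish convergence of the mixed moments
$\mbf E\big[\prod_{i=1}^k\langle f_i,\hat K_n(t_i)g_i\rangle\big]$ (and their trace analogues) to the corresponding continuum quantities, and then upgrade this to joint convergence in distribution by showing the limiting moments determine a unique law. The overarching strategy follows the Feynman–Kac moment-method philosophy of \cite{GaudreauLamarreShkolnikov,GorinShkolnikov}, but without the hypothesis that diagonal entries are of smaller order than off-diagonal ones, so the combinatorial bookkeeping has to be reorganized.

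First I would write each matrix power $\hat K_n(t_i)=(I_n-(-\De_n+Q_n)/3m_n^2)^{\lfloor m_n^2(3t_i/2)\rfloor}$ as a sum over nearest-neighbor lattice paths: expanding $\langle f_i,\hat K_n(t_i)g_i\rangle$ produces a sum over paths of length $\ell_n^{(i)}=\lfloor m_n^2(3t_i/2)\rfloor$ on $\{0,\dots,n\}$, with each step contributing a weight built from the entries of $I_n-(-\De_n+Q_n)/3m_n^2$. The key point is that a step of the path carries either an off-diagonal contribution $\tfrac{m_n^2 - E_n(a)}{3m_n^2}$ (with $E\in\{U,L\}$, pushing toward a Gaussian/Brownian scaling of the path after diffusive rescaling $x=a/m_n$, $s=\text{step}/m_n^2$) or a "holding" contribution $1-\tfrac{2m_n^2+D_n(a)}{3m_n^2}=\tfrac{m_n^2-D_n(a)}{3m_n^2}$ at a vertex. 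Separating each entry into its potential part $V_n^E$ and noise part $\xi_n^E$ via \eqref{Equation: Potential+Noise Decomposition}, the path weight factors (to leading order) into (a) a pure-random-walk transition weight, (b) a product $\exp(-\frac1{3m_n^2}\sum V_n^E(\cdot))$ over visited sites/edges which, under Assumption \ref{Assumption: Potential Convergence} and the diffusive rescaling, converges to $\exp(-\langle L_t(B),V\rangle)$ via the occupation-measure strong invariance principles (the part inherited from \cite{GorinShkolnikov}), and (c) a product of noise factors $\prod(1-\xi_n^E(\cdot)/3m_n^2)$. Using Assumption \ref{Assumption: Moments} to control the moment growth of the $\xi_n^E$ — so that $\prod(1-\xi/3m_n^2)\approx\exp(-\frac1{3m_n^2}\sum\xi)$ with negligible error — together with Assumptions \ref{Assumption: Independence} and \ref{Assumption: Noise Convergence}, one identifies the limit of (c) with $\exp(-\langle L_t(B),W'\rangle)$, jointly across the $k$ times and with (b). Multiplying (a), (b), (c) and passing to the limit inside the expectation (via uniform integrability, again from Assumption \ref{Assumption: Moments}) yields exactly the Feynman–Kac kernel \eqref{Equation: Dirichlet Semigroup}; the Dirichlet condition $\tau_0(B)>t$ arises because paths that hit the boundary $0$ feel the large potential at small sites (controlled via \eqref{Equation: Potential Growth Upper Bound}) or, more precisely, because the boundary behavior of the discrete walk matches a killed Brownian motion. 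The trace statement is handled the same way with the endpoints tied together and summed, using the growth lower bounds in Assumption \ref{Assumption: Growth Lower Bounds} and \eqref{Equation: Polynomial Potential Lower Bound}, \eqref{Equation: Log Potential Lower Bound} to ensure the trace sum converges and the tail contribution of large sites $a$ vanishes uniformly in $n$.

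The main obstacle, I expect, is step (b)–(c) at the level of a \emph{strong} coupling: one needs the discrete occupation measures of the rescaled walk to converge to Brownian local time \emph{simultaneously} with the convergence of the noise partial sums to $W$, strongly enough that the exponential functionals converge — and, crucially, near the boundary $0$, where the reflected/killed nature of the walk and the accumulation of local time require a dedicated local-time strong invariance principle (this is flagged in the excerpt as the content of Sections \ref{Section: Dirichlet Local Time Coupling} and \ref{Section: Spiked Local Time Coupling}, which are exactly the results I would invoke here). A secondary difficulty is that, without the off-diagonal-dominance assumption, the "holding times" at vertices are non-negligible and must be incorporated into the invariance principle so that the discrete walk converges to Brownian motion with the correct (diffusive) time change; and the noise terms now appear on the diagonal as well, so the decomposition of the path weight must be done carefully to avoid double-counting and to keep the error terms $o(1)$ after taking $k$-fold products. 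Once the moment convergence is in hand, I would close the argument by noting that the limit random variables $\langle f_i,\hat K(t_i)g_i\rangle$ and $\mr{Tr}[\hat K(t_i)]$ are bounded functionals of a Hilbert–Schmidt operator with subexponential moment tails (Remark \ref{Remark: Strong Continuity} and the $V$-growth condition \eqref{Equation: Log Potential}), hence their joint law is moment-determinate, so convergence of all mixed moments upgrades to joint convergence in distribution.
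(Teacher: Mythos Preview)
Your strategy for the moment convergence is essentially the paper's: expand $\hat K_n(t)$ over lazy-walk paths, separate potential and noise via \eqref{Equation: Potential+Noise Decomposition}, invoke the strong local-time couplings of Section~\ref{Section: Dirichlet Local Time Coupling} to pass to Brownian functionals, and use uniform integrability (from Assumption~\ref{Assumption: Moments}) together with the growth bounds of Assumption~\ref{Assumption: Growth Lower Bounds} and Vitali's theorem to justify the limit under the $\dd x_1\cdots\dd x_k$ integral. One small correction: the Dirichlet condition $\mbf 1_{\{\tau_0(B)>t\}}$ does \emph{not} come from ``large potential at small sites'' (indeed $V^E_n(0)$ may vanish); it comes structurally, from the indicator $\mbf 1_{\{\tau^{(n)}(S)>\th\}}$ that records when the walk stays inside $\{0,\ldots,n\}$, which converges to the hitting-time indicator under the KMT coupling.

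The genuine gap is your final step. You propose to upgrade moment convergence to joint distributional convergence by arguing that the limits $\langle f_i,\hat K(t_i)g_i\rangle$ are moment-determinate. This is not available here: the Feynman--Kac functional $\mr e^{-\langle L_t(B),Q'\rangle}$ is, conditionally on $B$, the exponential of a centered Gaussian with variance $\si^2\|L_t(B)\|_2^2$, so its $q$-th moment behaves like $\mr e^{cq^2}$. After averaging over $B$ and integrating against $f,g$ this lognormal-type growth persists, and Carleman's condition fails---the limiting law is \emph{not} determined by its moments. The paper (Section~\ref{Subsection: Distribution}) circumvents this by a truncation argument: one replaces $F_{n,t}(S)$ by $\ubar R\lor F_{n,t}(S)\land\bar R$, for which the inner products are uniformly bounded and hence moment convergence \emph{does} imply distributional convergence; one then removes the truncation via monotonicity in $\bar R$ and $\ubar R$ and a stochastic-dominance argument (using that $f_i,g_i\geq0$ after splitting into positive and negative parts). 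You need this or an equivalent device; moment-determinacy alone will not close the argument.
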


\begin{theorem}\label{Theorem: Main Robin}
Suppose that Assumptions \ref{Assumption: mn}, \ref{Assumption: wn},
and \ref{Assumption: Potential Convergence}--\ref{Assumption: Robin} hold.
Let $\hat K^w(t)$ be defined as in \eqref{Equation: Robin Semigroup},
where $V$ is given by \eqref{Equation: Combination of Potentials} and $W$ is given by \eqref{Equation: Combination of BM}.
Then, $ \hat K^w_n(t)\to \hat K^w(t)$ as $n\to\infty$ in the following sense:
For every $t_1,\ldots,t_k>0$ and $f_1,g_1,\ldots,f_k,g_k:\mbb R_+\to\mbb R$ uniformly continuous and bounded,
\[\lim_{n\to\infty}\big(\langle f_i, \hat K^w_n(t_i)g_i\rangle\big)_{1\leq i\leq k}=\big(\langle f_i, \hat K^w(t_i)g_i\rangle\big)_{1\leq i\leq k}\]
in joint distribution and mixed moments.
\end{theorem}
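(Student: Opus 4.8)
## Proof Proposal

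The plan is to reduce Theorem~\ref{Theorem: Main Robin} to the Dirichlet case Theorem~\ref{Theorem: Main Dirichlet} as much as possible, treating the only genuinely new ingredients as (i) the boundary local time term $w\,\mf L^0_t(X)$ in the kernel \eqref{Equation: Robin Semigroup}, and (ii) the presence of the extra diagonal perturbation $m_n^2 w_n$ in the corner of $\De^w_n$. The overall moment-method strategy should be exactly the one used for $\hat K_n(t)$: expand $\langle f_i,\hat K^w_n(t_i)g_i\rangle$ and its mixed moments as sums over paths of a random walk on $\{0,1,\ldots,n\}$ generated by the tridiagonal structure of $I_n - (-\De^w_n+Q_n)/3m_n^2$, identify the combinatorial contribution of each path with a discrete Feynman-Kac functional built from occupation measures of the walk, and then pass to the limit using the strong invariance principles (Sections~\ref{Section: Dirichlet Local Time Coupling} and~\ref{Section: Spiked Local Time Coupling}) that couple the rescaled walk, its interior occupation measure, and now also its time spent near the origin, to a reflected Brownian motion $X$, its local time $L_t(X)$, and its boundary local time $\mf L^0_t(X)$.

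The first step is to set up the path expansion and isolate the boundary contribution. Writing $M_n := I_n - (-\De^w_n+Q_n)/3m_n^2$, one has $M_n = M_n^{\mathrm{Dir}} - \tfrac{w_n}{3}\,\mathrm{diag}_n(1,0,\ldots,0)$ where $M_n^{\mathrm{Dir}}$ is the corresponding matrix without the corner term; expanding $(M_n)^{\lfloor m_n^2(3t/2)\rfloor}$ in powers of the off-diagonal moves versus self-loops at site $0$, each walk path of length $\lfloor m_n^2(3t/2)\rfloor$ picks up a factor that is a product over lattice sites of local weights, and the site-$0$ weight now carries the extra $-w_n/3$. Because a self-loop at site $0$ in one unit of discrete time corresponds, under the $m_n^{-2}$ time rescaling and $m_n^{-1}$ space rescaling, to the walk spending an increment of time in the boundary layer $[0,1/m_n)$, the accumulated site-$0$ weight should converge, via Assumption~\ref{Assumption: wn} (which gives $m_n(1-w_n)\to w$, hence $m_n^2 \cdot (-w_n/3) \cdot (\text{time at }0)$ has the right scaling after one tracks the combinatorial prefactor $3$ built into the normalization), to $\exp(-w\,\mf L^0_t(X))$. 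This is the analogue of how the interior potential and noise terms produce $\exp(-\langle L_t(X),Q'\rangle)$, and the reflected-versus-absorbed distinction in \eqref{Equation: Robin Semigroup} (the sum of the two heat kernels) simply reflects that the limiting walk is reflected at $0$ rather than killed.

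The second step is the local time coupling. The interior functional $\langle L_t(X),Q'\rangle$ is handled exactly as in the proof of Theorem~\ref{Theorem: Main Dirichlet} (Sections~\ref{Section: Main Dirichlet 1}--\ref{Section: Main Dirichlet 2}) using Assumptions~\ref{Assumption: Potential Convergence}--\ref{Assumption: Noise Convergence}, so the work is to control the boundary functional jointly with it. Here I would invoke the strong invariance result of Section~\ref{Section: Spiked Local Time Coupling}: on a common probability space, couple the rescaled walk to $X$ so that the rescaled count of visits to site $0$ (the discrete boundary occupation measure) is uniformly close to $\mf L^0_t(X)$ with the required polynomial error rate, jointly with the interior occupation measure being close to $L_t(X)$. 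The uniform sub-Gaussianity Assumption~\ref{Assumption: Robin} on $D_n(0)/m_n^{1/2}$ enters precisely to guarantee that the random part $\xi^D_n(0)$ of the corner entry, which also multiplies the site-$0$ occupation time, does not blow up the Feynman-Kac functional: it gives exponential moment control of $\exp(y |\xi^D_n(0)|/m_n \cdot \#\{\text{visits to }0\})$, which is what is needed to take expectations of exponentials of the discrete functional and to pass the limit inside mixed moments. One then combines: discrete functional $\to$ (interior term) $+$ (boundary term), and exponential-moment bounds give uniform integrability, upgrading convergence in probability of the functionals to convergence of $\mbf E[\exp(-\text{functional})]$ and of mixed moments, which is exactly the assertion of the theorem.

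The main obstacle I expect is the joint strong approximation of the boundary local time together with the interior occupation measure at the required error rate, under only the polynomial control $Cn^{\mf d}\le m_n\le C^{-1}n^{\mf d}$ with $\mf d<1/2$ (Assumption~\ref{Assumption: mn}) rather than the nicer regime of \cite{GorinShkolnikov}. Standard strong-approximation theorems for Brownian local time (e.g.\ those quoted in \cite{GaudreauLamarreShkolnikov,GorinShkolnikov}) are stated for simple random walk and do not immediately accommodate the site-dependent, potentially non-symmetric step distribution coming from $U_n,L_n$ and the varying potential $V^E_n$; as the introduction notes, making these applicable here is exactly what Sections~\ref{Section: Dirichlet Local Time Coupling} and~\ref{Section: Spiked Local Time Coupling} are for, so the proof of Theorem~\ref{Theorem: Main Robin} should be read as an application of those two sections plus a careful but essentially bookkeeping-level tracking of the corner term. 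A secondary technical point is ensuring that the tail of the walk (large excursions away from $0$, where the growth lower bounds in Assumption~\ref{Assumption: Growth Lower Bounds} kick in) is negligible so that the trace-type integrability issues do not arise even though Theorem~\ref{Theorem: Main Robin}, unlike Theorem~\ref{Theorem: Main Dirichlet}, only asserts the bilinear-form convergence and not convergence of traces.
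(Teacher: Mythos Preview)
Your high-level strategy is right --- path expansion, strong coupling with a reflected Brownian motion, and uniform integrability via the sub-Gaussian hypothesis --- and it matches the paper's Section~\ref{Section: Main Robin}. But there is a genuine gap in your first step, the combinatorial setup, and it propagates into the scaling heuristic.

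You write $M_n = M_n^{\mathrm{Dir}} - \tfrac{w_n}{3}\,\mathrm{diag}_n(1,0,\ldots,0)$ and propose to treat the corner term as an additional ``site-$0$ weight'' carried by the lazy walk $S$. First, the sign is wrong (it is $+w_n/3$), and more importantly Assumption~\ref{Assumption: wn} says $w_n\to 1$, not $w_n\to 0$. Hence the $(0,0)$ entry of $M_n$ is $\tfrac13(1+w_n-D_n(0)/m_n^2)\approx \tfrac23$, not $\approx\tfrac13$. If you insist on the lazy-walk interpretation, the site-$0$ self-loop weight is then $\approx 2$, and the product over $\La^{(0,0)}_\th(S)\sim m_n$ such loops blows up rather than converging to $\mr e^{-w\mf L^0_t(X)}$. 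Your scaling sentence ``$m_n^2\cdot(-w_n/3)\cdot(\text{time at }0)$'' does not balance for the same reason: the small parameter is $(1-w_n)\sim w/m_n$, not $w_n$ itself.

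The paper's fix (Section~4.2) is to rewrite the corner entry as $\tfrac23\big(1-\tfrac{1-w_n}{2}-\tfrac{D_n(0)}{2m_n^2}\big)$ and interpret the factor $\tfrac23$ as the transition probability of a \emph{different} Markov chain $T$ (Definition~\ref{Definition: T Markov Process}) that stays at $0$ with probability $2/3$ --- a genuinely reflected lazy walk. The path expansion \eqref{Equation: Spiked Matrix-Form Formula}--\eqref{Equation: Spiked Random Walk Functional 3} is then in terms of $T$, and now the boundary factor is $\big(1-\tfrac{1-w_n}{2}-\cdots\big)^{\La^{(0,0)}_\th(T)}$ with $(1-w_n)\La^{(0,0)}_\th(T)\to 2w\,\mf L^0_t(X)$, which is exactly what you want. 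The coupling in Section~\ref{Section: Spiked Local Time Coupling} that you correctly cite is a coupling of $T$ (not $S$) with $\tilde X$, built via a Skorokhod-map argument that explains where the extra factor of $2$ at the origin goes. Once you replace $S$ by $T$ in your Step~1, the rest of your outline (uniform integrability from Proposition~\ref{Proposition: Tail of Horizontal at Zero} plus Assumption~\ref{Assumption: Robin}, Vitali-type control of the $x$-integral, and the truncation argument for convergence in distribution) lines up with the paper's proof.
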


\begin{remark}
Unlike Theorem \ref{Theorem: Main Dirichlet},
Theorem \ref{Theorem: Main Robin} contains no statement
on the convergence of traces. Similarly to the lack of trace convergence in \cite{GaudreauLamarreShkolnikov},
this is due to the fact that we were unable to construct a strong coupling of a certain Markov chain and
its occupation measures with the reflected Brownian bridge $X^{x,x}_t$
and its local time process. Throughout this paper, we make several remarks and
conjectures concerning this trace convergence, its consequences,
and the related strong invariance result (see Conjectures
\ref{Conjecture: Trace Convergence} and \ref{Conjecture: Trace Convergence 2}, and
Remark \ref{Remark: Trace Convergence 1}).
\end{remark}

\begin{conjecture}
\label{Conjecture: Trace Convergence}
In the setting of Theorem \ref{Theorem: Main Robin}, for every $t_1,\ldots,t_k>0$,
\[\lim_{n\to\infty}\big(\mr{Tr}[ \hat K^w_n(t_i)]\big)_{1\leq i\leq k}=\big(\mr{Tr}[ \hat K^w(t_i)]\big)_{1\leq i\leq k}\]
in joint distribution and mixed moments.
\end{conjecture}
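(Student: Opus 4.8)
The remark following Theorem \ref{Theorem: Main Robin} already isolates the single missing ingredient: a \emph{bridge} analogue of the reflected (``spiked'') local time strong invariance principle of Section \ref{Section: Spiked Local Time Coupling}. The plan is to establish such a coupling and then rerun, essentially verbatim, the argument that proves the trace statement in Theorem \ref{Theorem: Main Dirichlet}. I would organize it in three steps.

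\textbf{Step 1 (combinatorial reduction).} As in the proof of Theorem \ref{Theorem: Main Robin}, I would write $\mr{Tr}[\hat K^w_n(t)]=\sum_{a=0}^n\hat K^w_n(t;a,a)$ and expand the $(a,a)$ entry of the matrix power $M_n^{N_n}$, with $M_n:=I_n-\frac{-\De^w_n+Q_n}{3m_n^2}$ and $N_n:=\lfloor m_n^2(3t/2)\rfloor$, as a sum over closed nearest-neighbour paths $(a=x_0,x_1,\ldots,x_{N_n}=a)$ on $\{0,\ldots,n\}$ weighted by $\prod_j M_n(x_{j-1},x_j)$. Every banded entry of $M_n$ equals $\tfrac13$ up to an $O(m_n^{-2})$ correction from the potentials and noise, and up to an $O(m_n^{-1})$ modification at the site $0$ coming from $m_n(1-w_n)\to w$ (Assumption \ref{Assumption: wn}); factoring out $3^{-N_n}$ therefore turns the sum over closed paths into an expectation over the bridge of a lazy simple random walk on $\mbb Z_{\ge0}$ reflected at $0$ by rejection of the down-step, of the exponential of (i) $-m_n^{-2}$ times the pairing of the interior occupation measure of the walk with $V^D_n+V^U_n+V^L_n$, (ii) the analogous noise contribution, and (iii) a constant multiple of $-w\,m_n^{-1}$ times the number of holding steps of the walk at the site $0$, the natural discretization of \eqref{Equation: Boundary Local Time}. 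The same expansion applied to a product reduces $\mbf E\big[\prod_i\mr{Tr}[\hat K^w_n(t_i)]^{p_i}\big]$ to an expectation over a tuple of independent such bridges, and the bookkeeping showing that noise collisions between distinct paths are negligible and that only the asymptotically independent Gaussian configurations survive is identical to that in \cite{GorinShkolnikov,GaudreauLamarreShkolnikov} and in the proof of Theorem \ref{Theorem: Main Dirichlet}, relying on Assumptions \ref{Assumption: Independence}, \ref{Assumption: Moments} and \ref{Assumption: Noise Convergence}; I would not repeat it.

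\textbf{Step 2 (the bridge local time coupling --- the main obstacle).} The new input is the following analogue of the result of Section \ref{Section: Spiked Local Time Coupling}: on one probability space, realize simultaneously the reflected lazy random walk bridge from $a$ to $a$ of length $N_n$, the reflected Brownian bridge $X^{a/m_n,a/m_n}_t$, its continuous interior local time field $L_t(X)$, and its boundary local time $\mf L^0_t(X)$, so that the rescaled interior occupation measure of the walk converges to $L_t(X)$ and $m_n^{-1}$ times its number of holds at $0$ converges to a fixed multiple of $\mf L^0_t(X)$, almost surely with a polynomial-in-$n$ rate, uniformly over $0\le a\le Cn^{1-\eps}$. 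This is exactly the coupling the authors report being unable to construct, and I expect essentially all the difficulty to lie here, since one must simultaneously cope with the reflection at $0$, the conditioning on the bridge endpoint, and the strong approximation of the \emph{boundary} local time, whereas the classical local time strong-approximation theorems concern free walks. The most promising route seems to be to decompose the bridge at its last visit to $0$: the excursions away from $0$ are walks conditioned to avoid $0$, for which the Dirichlet-type coupling of Section \ref{Section: Dirichlet Local Time Coupling} already controls the interior local time, so the problem reduces to controlling the number of excursions, their lengths, and the holds at $0$ between them --- a classical local time functional of the reflected walk --- and then glueing the pieces with matching rates uniformly in $a$. An alternative is a folding argument $b\mapsto|b|$ relating the reflected objects to two-sided walks and Brownian bridges on $\mbb Z$, where a two-sided bridge coupling is in any case needed for Theorem \ref{Theorem: Main Dirichlet}; the catch is that the reflection here is by rejection of the down-step rather than genuine folding, so the identification of $\mf L^0_t(X)$ must be carried out by hand.

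\textbf{Step 3 (passage to the limit).} Granting Step 2, fix the starting point $a$. On the coupling the path weight from Step 1 converges almost surely to $\exp\!\big(-\langle L_t(X),Q'\rangle-w\,\mf L^0_t(X)\big)$, matching \eqref{Equation: Robin Semigroup}: the interior potential pairing converges by Assumption \ref{Assumption: Potential Convergence} and the occupation-measure convergence, the noise pairing converges by Assumption \ref{Assumption: Noise Convergence} and \eqref{Equation: Noise Stochastic Integral Convergence} applied with the random occupation density in place of a fixed test function, and the boundary term converges because $m_n(1-w_n)\to w$. To pass to the limit under the expectation I would use a dominated-convergence bound uniform in $n$ and, crucially, in $a$: the logarithmic lower bound \eqref{Equation: Log Potential Lower Bound} of Assumption \ref{Assumption: Growth Lower Bounds} makes $\hat K^w_n(t;a,a)$ summable in $a$ --- the discrete counterpart of the finiteness of $\mr{Tr}[\hat K^w(t)]$ in Remark \ref{Remark: Strong Continuity}, which uses \eqref{Equation: Log Potential} --- while the polynomial lower bound \eqref{Equation: Polynomial Potential Lower Bound}, together with the moment bound \eqref{Equation: Noise Moment Bound} and Assumption \ref{Assumption: Robin} (controlling the noise accumulated during the holds at $0$), controls the walks that reach $a\ge Cn^{1-\eps}$, exactly as in the proof of the trace statement in Theorem \ref{Theorem: Main Dirichlet}. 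This gives $m_n\hat K^w_n(t;\lfloor m_nx\rfloor,\lfloor m_nx\rfloor)\to\hat K^w(t;x,x)$ with enough uniformity that $\sum_{a=0}^n\hat K^w_n(t;a,a)$ is a Riemann sum converging to $\int_0^\infty\hat K^w(t;x,x)\d x=\mr{Tr}[\hat K^w(t)]$ (Remark \ref{Remark: Strong Continuity}); the joint convergence over several $t_i$ in mixed moments comes from the tuple version in Step 1, and upgrading to joint distribution is done as in Theorem \ref{Theorem: Main Dirichlet}.
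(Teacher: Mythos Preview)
This statement is a \emph{conjecture} in the paper, not a theorem; the paper gives no proof. The authors are explicit about why: see the remark preceding Conjecture~\ref{Conjecture: Trace Convergence} and especially Section~\ref{Subsection: Bridge Coupling Conjecture}, where they state the missing ingredient as Conjecture~\ref{Conjecture: Trace Convergence 2} (a bridge version of Theorem~\ref{Theorem: Split Skorokhod Couplings}) and explain the concrete obstruction. Your Steps~1 and~3 are accurate --- they are exactly the Robin analogues of Sections~\ref{Section: Main Dirichlet 2} and~\ref{Section: Main Robin}, and would go through essentially verbatim once Step~2 is in hand. So the structure is right and matches what the paper would do.

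The gap is that you do not actually carry out Step~2; you correctly flag it as ``the main obstacle'' and then offer two heuristics without executing either. The paper's Section~\ref{Subsection: Bridge Coupling Conjecture} pinpoints why this is hard: under the Skorokhod coupling $T^{x^n}=\Gamma_{S^{x^n}}$ used to prove Theorem~\ref{Theorem: Split Skorokhod Couplings}, the bridge conditioning $\{T^a(\th)=a\}$ becomes the event $\{S^a(\th)=a+\max_{u\leq\th}(-S^a(u))_+\}$, and no KMT-type strong invariance is known for random walks under such a path-dependent conditioning. Your first route (excursion decomposition at the last visit to $0$) replaces one nonstandard conditioning by another --- you would need strong couplings, with uniform polynomial rates, for walk excursions conditioned to avoid $0$ \emph{and} to have a prescribed length distribution summing to a prescribed total, together with a joint coupling of the excursion count and the holds at $0$; glueing these with additive error control is not covered by the Dirichlet-side results in Section~\ref{Section: Dirichlet Local Time Coupling}. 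Your second route (folding $b\mapsto|b|$) runs into exactly the issue you yourself note: $T$ is \emph{not} $|S|$ --- the coupling in Definition~\ref{Definition: Time Change Coupling} is $T^{x^n}(u)=|S^{x^n}(\rho^{x^n}(u))|$ with a random time change $\rho$ that depends on the visits to the self-edge $(0,0)$, so conditioning on $T$'s endpoint again induces a path-functional conditioning on $S$. In short, your proposal is a correct reduction to the open problem the paper already isolates, not a proof of the conjecture.
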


\begin{remark}\label{Remark: Slightly Different Powers}
The conclusions of Theorems \ref{Theorem: Main Dirichlet}
and \ref{Theorem: Main Robin} remain valid if we define
\[\hat K_n(t)=\left(I_n-\frac{-\De_n+Q_n}{3m_n^2}\right)^{\th(n,t)},
\qquad
\hat K^w_n(t)=\left(I_n-\frac{-\De^w_n+Q_n}{3m_n^2}\right)^{\th(n,t)}\]
for $\th(n,t):=\lfloor m_n^2(3t/2)\rfloor\pm1$, instead of \eqref{Equation: Matrix Models}. Thus, up to making this minor change,
there is no loss of generality in assuming that $\lfloor m_n^2(3t/2)\rfloor$ is always even or odd if that
is more convenient (this distinction comes in handy in the proof of Proposition \ref{Proposition: Convergence of Eigenvalues}
below). We refer to Remark \ref{Remark Slightly Different Powers 2} for more details.
\end{remark}

\section{Applications to Random Matrices}\label{Section: Applications}

In this section, we provide applications of our main results to the study of random matrices and
$\be$-ensembles. We begin by stating our results in Sections \ref{Section: Convergence of Eigenvalues}--\ref{Section: Non-Symmetric Example},
and then provide their proofs in Sections \ref{Subsection: Proof of Eigenvalue Convergence 1}--\ref{Subsection: Non-Symmetric Proof}.

\subsection{Application 1. Convergence of Eigenvalues}
\label{Section: Convergence of Eigenvalues}

Throughout Section \ref{Section: Convergence of Eigenvalues}, we assume that 
$-\De_n+Q_n$ satisfies
the hypotheses of Theorem \ref{Theorem: Main Dirichlet},
and we denote by $-\infty<\la_1(\hat H)\leq\la_2(\hat H)\leq\cdots$ the
eigenvalues of the operator $\hat H=-\tfrac12\De+V+W'$ (as per
Remark \ref{Remark: Strong Continuity}), where $W$ is given by \eqref{Equation: Combination of BM},
and $V$ by \eqref{Equation: Combination of Potentials}.
The main result of Section \ref{Section: Convergence of Eigenvalues} is the following:

\begin{proposition}\label{Proposition: Convergence of Eigenvalues}
Suppose that $-\De_n+Q_n$
is diagonalizable with real eigenvalues $\la_{n;1}\leq\la_{n;2}\leq\cdots\leq\la_{n;n+1}$
for large enough $n$,
and that there exists $\de>0$ such that
\begin{align}\label{Equation: Eigenvalue Upper Bound}
\mbf P[\la_{n;n+1}\geq(6-\de)m_n^2\textnormal{ for infinitely many }n]=0.
\end{align}
Then for every $k\in\mbb N$,
\begin{align}\label{Equation: Convergence of Eigenvalues}
\lim_{n\to\infty}\tfrac12\big(\la_{n;1},\ldots,\la_{n;k}\big)=\big(\la_1(\hat H),\ldots,\la_k(\hat H)\big)\qquad
\text{in joint distribution.}
\end{align}
\end{proposition}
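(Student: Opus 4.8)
The plan is to deduce eigenvalue convergence from the semigroup convergence in Theorem \ref{Theorem: Main Dirichlet} via a standard ``moment method for the top of the spectrum'' argument, using the fact that high powers of $I_n - (-\De_n+Q_n)/(3m_n^2)$ select the eigenvalues of $-\De_n+Q_n$ closest to $0$, which correspond (after the factor $\tfrac12$) to the bottom eigenvalues of $\hat H$. Concretely, if $\mu_{n;j} := 1 - \la_{n;j}/(3m_n^2)$ are the eigenvalues of $I_n - (-\De_n+Q_n)/(3m_n^2)$ (listed so that $\mu_{n;1}$ is largest, i.e.\ $j$ increasing $\leftrightarrow$ $\la_{n;j}$ increasing), then $\hat K_n(t)$ has eigenvalues $\mu_{n;j}^{\lfloor m_n^2(3t/2)\rfloor}$. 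Since $\mu_{n;j} \approx \mr e^{-\la_{n;j}/(3m_n^2)}$ when $\la_{n;j} = O(1)$, we have $\mu_{n;j}^{\lfloor m_n^2(3t/2)\rfloor} \to \mr e^{-t\la_j(\hat H)/2}$ for the finitely many $j$ with $\la_{n;j}$ bounded. On the continuum side, $\hat K(t) = \mr e^{-t\hat H}$ (Remark \ref{Remark: Strong Continuity}) has eigenvalues $\mr e^{-t\la_j(\hat H)/2}$ — wait, more precisely $\mr e^{-t\la_j(\hat H)}$; I will need to keep careful track of the factor of $2$ coming from the normalization $-\tfrac12\De$ versus the power $\lfloor m_n^2(3t/2)\rfloor$, and this bookkeeping is exactly what produces the $\tfrac12$ in \eqref{Equation: Convergence of Eigenvalues}. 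Thus $\mr{Tr}[\hat K_n(t)^p] = \sum_j \mu_{n;j}^{p\lfloor m_n^2(3t/2)\rfloor}$ should converge to $\mr{Tr}[\hat K(t)^p] = \sum_j \mr e^{-pt\la_j(\hat H)/2}$ (interpreting the constant appropriately) for each $p \in \mbb N$, by part (2) of Theorem \ref{Theorem: Main Dirichlet} applied at time $pt$.

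The steps, in order: (i) Reduce to showing that the top $k$ eigenvalues $\mu_{n;1} \geq \cdots \geq \mu_{n;k}$ of $I_n - (-\De_n+Q_n)/(3m_n^2)$ converge in joint distribution to $\mr e^{-\la_1(\hat H)/2} \geq \cdots \geq \mr e^{-\la_k(\hat H)/2}$; by the continuous mapping theorem (taking $-\log$ and rescaling by the power $\lfloor m_n^2(3t/2)\rfloor$ at a fixed $t$, or more robustly, working directly with $-\tfrac{\log \mu_{n;j}}{\log(\ldots)}$-type quantities) this gives \eqref{Equation: Convergence of Eigenvalues}. (ii) Use the trace convergence $\mr{Tr}[\hat K_n(t_i)] \to \mr{Tr}[\hat K(t_i)]$ jointly in $t_1,\dots,t_k$ and in mixed moments: the key analytic input is that for any fixed $t$, the sum $\sum_{j\geq 1}\mr e^{-t\la_j(\hat H)/2}$ converges (since $\hat K(t)$ is trace class by Remark \ref{Remark: Strong Continuity}) and, crucially, the tail $\sum_{j > k}$ is uniformly small — this, combined with the eigenvalue upper bound \eqref{Equation: Eigenvalue Upper Bound}, is what lets me pass from convergence of traces (an $\ell^1$-type statement about the whole eigenvalue sequence) to convergence of individual top eigenvalues. (iii) Here \eqref{Equation: Eigenvalue Upper Bound} enters decisively: it guarantees that, almost surely for large $n$, all eigenvalues $\mu_{n;j}$ of $I_n - (-\De_n+Q_n)/(3m_n^2)$ lie in $(-1 + \de/3,\,1]$ (since $\la_{n;n+1} < (6-\de)m_n^2$ means $\mu_{n;n+1} > -1+\de/3$ — and one also needs a bound ruling out eigenvalues that would produce $|\mu_{n;j}|^{\lfloor m_n^2(3t/2)\rfloor}$ exploding, which the upper bound handles because $-\De_n+Q_n$ has nonnegative-ish structure keeping $\la_{n;j}$ from going to $-\infty$); this is what makes the high-power map contract all but the top eigenvalues and validates taking term-by-term limits in the trace. (iv) Feed the resulting convergence of $(\mr{Tr}[\hat K_n(t_i)^{p_i}])_{i}$ — equivalently of power sums of the top eigenvalues — through a standard argument (e.g.\ the method of moments / the fact that the functions $x \mapsto \mr e^{-t x}$ for varying $t$ separate points and generate, via linear combinations, enough test functions) to upgrade from power-sum convergence to joint convergence of the ordered top eigenvalues themselves.

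The main obstacle I anticipate is step (iii): controlling the \emph{bottom} of the spectrum of $I_n - (-\De_n+Q_n)/(3m_n^2)$, i.e.\ ruling out spurious eigenvalues $\mu_{n;j}$ with $|\mu_{n;j}|$ close to or exceeding $1$ that do \emph{not} correspond to genuine low-lying eigenvalues of $\hat H$. The upper bound \eqref{Equation: Eigenvalue Upper Bound} is precisely the hypothesis designed to exclude eigenvalues $\la_{n;j}$ near $6m_n^2$ (which would give $\mu_{n;j}$ near $-1$ and, raised to an even power, contaminate the trace), but making this rigorous requires: (a) a matching control that no eigenvalue $\la_{n;j}$ drifts to $-\infty$ (so no $|\mu_{n;j}| > 1$ — here one uses that the off-diagonal/diagonal structure of $-\De_n+Q_n$, together with the nonnegativity of the potential terms in Assumption \ref{Assumption: Growth Lower Bounds}, keeps the spectrum bounded below essentially at $O(1)$, mirroring how $\hat H \geq$ const); and (b) a parity/cancellation discussion for the near-$(-1)$ eigenvalues when the power is odd vs.\ even, which is exactly why Remark \ref{Remark: Slightly Different Powers} (freedom to shift the exponent by $\pm 1$) is flagged as useful here. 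I would address (a) by combining \eqref{Equation: Eigenvalue Upper Bound} with a deterministic lower bound on $\la_{n;1}$ obtained from the quadratic-form/Rayleigh-quotient comparison with $\hat H$ already implicit in the semigroup convergence, and (b) by invoking Remark \ref{Remark: Slightly Different Powers} to choose the exponent's parity so that the problematic near-$(-1)$ block contributes with a sign that can be absorbed, or alternatively by an interlacing argument pinning $\mu_{n;j}$ for small $j$ between $\mu$'s of a truncated comparison matrix.
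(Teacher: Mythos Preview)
Your overall strategy---deduce eigenvalue convergence from trace convergence by controlling the contribution of eigenvalues away from the bottom of the spectrum---is correct and is essentially what the paper does. The execution differs in two places worth noting.

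First, rather than passing through the $\mu_{n;j}$ and power sums and then inverting at the end, the paper works directly with the Laplace transform $\sum_j \mr e^{-t\la_{n;j}/2}$. It cites a standard lemma (from \cite[Section 6]{GorinShkolnikov} and \cite[Section 5]{Sodin}) that joint convergence of these Laplace transforms over finitely many $t$'s implies joint convergence of the ordered bottom eigenvalues, and then reduces everything to showing
\[
\sum_{j=1}^{n+1}\Big(\mr e^{-t\la_{n;j}/2}-\big(1-\tfrac{\la_{n;j}}{3m_n^2}\big)^{\lfloor m_n^2(3t/2)\rfloor}\Big)\to 0.
\]
This is proved by an explicit four-regime split of the eigenvalues ($\la_{n;j}<-n^\eps$; $|\la_{n;j}|\leq n^\eps$; $n^\eps\leq\la_{n;j}<(6-\de)m_n^2$; $\la_{n;j}\geq(6-\de)m_n^2$), using elementary inequalities for the middle regimes and the hypothesis \eqref{Equation: Eigenvalue Upper Bound} for the top regime. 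This is more direct than your route through ``power sums of the top eigenvalues'' followed by a continuous-mapping inversion.

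Second, and more substantively, your proposed handling of (iii)(a)---ruling out very negative $\la_{n;j}$ via a Rayleigh-quotient comparison---is both harder than necessary and potentially problematic, since $-\De_n+Q_n$ is only assumed diagonalizable with real eigenvalues, not symmetric. The paper instead bootstraps: after passing to an almost-sure coupling (Skorokhod representation) in which $\mr{Tr}[\hat K_n(t)]$ converges to a finite limit, it observes that if any $\la_{n;j}<-n^\eps$ persisted infinitely often, then (taking the exponent even via Remark \ref{Remark: Slightly Different Powers}) the corresponding term $(1-\la_{n;j}/3m_n^2)^{\lfloor m_n^2(3t/2)\rfloor}$ would force the trace to diverge, a contradiction. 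So no a priori lower bound on $\la_{n;1}$ is needed; the trace convergence itself supplies it. Your instinct about the parity issue and the role of Remark \ref{Remark: Slightly Different Powers} is exactly right.
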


\begin{remark}
\label{Remark: Trace Convergence 1}
Proposition \ref{Proposition: Convergence of Eigenvalues} is only stated
for the Dirichlet boundary condition since it depends on the trace convergence
of Theorem \ref{Theorem: Main Dirichlet}-(2). If Conjecture \ref{Conjecture: Trace Convergence}
holds, then the same argument used to prove Proposition \ref{Proposition: Convergence of Eigenvalues}
would imply that the
eigenvalues of $\frac12(-\De^w_n+Q_n)$ converge to that of $\hat H^w$.
\end{remark}

\begin{question}
It would be interesting to see if some analog of Proposition \ref{Proposition: Convergence of Eigenvalues}
can be proved in the case where $-\De_n+Q_n$ is diagonalizable with complex eigenvalues.
We leave this as an open question.
\end{question}

We have the following convenient sufficient condition for \eqref{Equation: Eigenvalue Upper Bound},
which is easily seen to be satisfied for every example considered in Sections \ref{Section: Classical Beta} and \ref{Section: Non-Symmetric Example} below.

\begin{proposition}\label{Proposition: Convergence of Eigenvalues Condition}
Suppose that there exists $\bar\de>0$ and $N\in\mbb N$ such that
\begin{align}\label{Equation: Eigenvalue Upper Bound Condition}
\max_{0\leq a\leq n}\left(2+\frac{V^D_n(a)}{m_n^2}
+\left|\frac{V^U_n(a)}{m_n^2}-1\right|
+\left|\frac{V^L_n(a-1)}{m_n^2}-1\right|\right)\leq 6-\bar\de
\end{align}
for every $n\geq N$.
Then, \eqref{Equation: Eigenvalue Upper Bound} holds.
\end{proposition}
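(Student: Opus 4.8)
The plan is to bound the largest eigenvalue $\la_{n;n+1}$ of $-\De_n+Q_n$ using Gershgorin's circle theorem and the structural hypotheses on the potential and noise terms. Recall that $-\De_n+Q_n$ has diagonal entries $2m_n^2 + D_n(a) = 2m_n^2 + V_n^D(a) + \xi_n^D(a)$, super-diagonal entries $-m_n^2 + U_n(a) = -m_n^2 + V_n^U(a) + \xi_n^U(a)$, and sub-diagonal entries $-m_n^2 + L_n(a) = -m_n^2 + V_n^L(a) + \xi_n^L(a)$. By Gershgorin, every eigenvalue of $-\De_n+Q_n$ lies in the union over $0\leq a\leq n$ of the discs centered at $2m_n^2 + V_n^D(a) + \xi_n^D(a)$ with radius $|-m_n^2+V_n^U(a)+\xi_n^U(a)| + |-m_n^2+V_n^L(a-1)+\xi_n^L(a-1)|$. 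Consequently
\[
\la_{n;n+1} \leq \max_{0\leq a\leq n}\left(2m_n^2 + V_n^D(a) + |{-m_n^2} + V_n^U(a)| + |{-m_n^2}+V_n^L(a-1)| \right) + R_n,
\]
where $R_n := \max_{0\leq a\leq n}\big(|\xi_n^D(a)| + |\xi_n^U(a)| + |\xi_n^L(a-1)|\big)$ collects all the noise contributions. Dividing by $m_n^2$, the deterministic part is exactly $m_n^2$ times the left-hand side of \eqref{Equation: Eigenvalue Upper Bound Condition}, which by hypothesis is at most $(6-\bar\de)m_n^2$ for $n\geq N$. So it suffices to show that $R_n/m_n^2 \to 0$ almost surely (or at least that $R_n \leq \tfrac{\bar\de}{2} m_n^2$ for all large $n$, a.s.), since then for large $n$ we get $\la_{n;n+1} \leq (6-\tfrac{\bar\de}{2})m_n^2$, and taking $\de = \bar\de/2$ in \eqref{Equation: Eigenvalue Upper Bound} gives the claim.

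The main obstacle is therefore controlling the maximum of roughly $3(n+1)$ random variables $|\xi_n^E(a)|$. Here I would use the moment bound \eqref{Equation: Noise Moment Bound}: for each fixed $E$ and $a$, $\mbf E[|\xi_n^E(a)|^q] \leq m_n^{q/2} C^q q^{\ga q}$ for integer $q$, with $0<\ga<2/3$. By Markov's inequality, $\mbf P[|\xi_n^E(a)| \geq \la] \leq \la^{-q} m_n^{q/2} C^q q^{\ga q}$; a union bound over the $O(n)$ entries and three choices of $E$ gives $\mbf P[R_n \geq \la] \leq 3(n+1)\,\la^{-q} m_n^{q/2} C^q q^{\ga q}$. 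Taking $\la = \eta\, m_n^2$ for fixed small $\eta>0$ and recalling $m_n \asymp n^{\mf d}$ with $\mf d > 1/13$, the bound becomes $\lesssim n \cdot (\eta m_n^2)^{-q} m_n^{q/2} C^q q^{\ga q} = n\, (\eta^{-1} C m_n^{-3/2})^q q^{\ga q}$. Now one optimizes over $q$: choosing $q$ growing like a small power of $n$ (e.g. $q = \lfloor \log n\rfloor$, or a power $n^{\epsilon}$ as needed so that $q^{\ga q}$ is beaten by $m_n^{(3/2)q}$), the factor $(\eta^{-1} C m_n^{-3/2})^q q^{\ga q}$ decays faster than any polynomial in $n$, so $\sum_n \mbf P[R_n \geq \eta m_n^2] < \infty$. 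By Borel--Cantelli, $R_n < \eta m_n^2$ for all large $n$ almost surely; taking $\eta = \bar\de/2$ completes the argument. (One should double-check that the polynomial growth $m_n \asymp n^{\mf d}$ and the constraint $\ga < 2/3$ make the optimization go through uniformly; the point is that $m_n^{-3q/2}$ has a power of $n$ in the exponent, which dominates $q^{\ga q}$ for $q$ a slowly growing function of $n$, leaving room to kill the leading factor $n$.)

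A subtle point worth a line of care: Gershgorin discs are in $\mbb C$, so they bound $|\la_{n;n+1}|$, or rather the real part of any eigenvalue, by the center plus radius — but since we are told in Proposition \ref{Proposition: Convergence of Eigenvalues} that the eigenvalues are real (this proposition is invoked as a sufficient condition for the hypothesis \eqref{Equation: Eigenvalue Upper Bound} of that proposition), the real Gershgorin bound applies directly to $\la_{n;n+1}$, which is what we want. Also, near the boundary indices $a=0$ and $a=n$ one or two of the off-diagonal entries are absent; the stated maximum in \eqref{Equation: Eigenvalue Upper Bound Condition} already accounts for this by ranging over $0\leq a\leq n$ with the convention on $U_n, L_n$, so no extra bookkeeping is needed. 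Altogether the proof is short: Gershgorin plus a Markov/Borel--Cantelli tail estimate driven by \eqref{Equation: Noise Moment Bound} and the growth rate \eqref{Equation: mn}.
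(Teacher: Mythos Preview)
Your proposal is correct and follows essentially the same approach as the paper: Gershgorin's theorem plus the triangle inequality to separate the deterministic part (controlled by \eqref{Equation: Eigenvalue Upper Bound Condition}) from the noise, then Markov's inequality with \eqref{Equation: Noise Moment Bound}, a union bound over the $O(n)$ entries, and Borel--Cantelli. One simplification the paper makes: since $m_n \asymp n^{\mf d}$ with $\mf d>1/13$, the bound $\mbf P[\max_a |\xi_n^E(a)| \geq \tilde\de\, m_n^2] = O(n/m_n^{3q/2}) = O(n^{1-3q\mf d/2})$ is already summable for any \emph{fixed} integer $q > 4/(3\mf d)$, so there is no need to let $q$ grow with $n$ or to track the $q^{\ga q}$ factor.
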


Finally, the following result provides a simple sufficient condition that allows to apply
Proposition \ref{Proposition: Convergence of Eigenvalues} to a very general
class of non-symmetric matrices.

\begin{proposition}\label{Proposition: Non-Symmetric Eigenvalues}
Suppose that there exists $N\in\mbb N$ large enough so that $Q_n$'s off-diagonal entries satisfy
\begin{align}\label{Equation: Product Condition}
(U_n(a)-m_n^2)(L_n(a)-m_n^2)>0,\qquad 0\leq a\leq n-1,~n\geq N.
\end{align}
Then, $-\De_n+Q_n$ is diagonalizable with real eigenvalues for $n\geq N$.
\end{proposition}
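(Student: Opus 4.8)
The plan is to show that, for each $n\geq N$, the matrix $-\De_n+Q_n$ is similar to a real symmetric tridiagonal matrix via an explicit invertible diagonal change of basis. Since real symmetric matrices are orthogonally diagonalizable with real eigenvalues, and similarity preserves both the spectrum and the property of being diagonalizable, this yields the claim. The argument is purely deterministic: it is carried out for each realization of $Q_n$ for which \eqref{Equation: Product Condition} holds.

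First I would record the entries of $T_n:=-\De_n+Q_n$. From the definitions of $\De_n$ and $Q_n$ one gets $T_n(a,a)=2m_n^2+D_n(a)$, $T_n(a,a+1)=U_n(a)-m_n^2=:u_a$, and $T_n(a+1,a)=L_n(a)-m_n^2=:\ell_a$, all other entries being zero. Hypothesis \eqref{Equation: Product Condition} says precisely that $u_a\ell_a>0$ for $0\leq a\leq n-1$; in particular no off-diagonal entry vanishes and each ratio $\ell_a/u_a$ is a strictly positive real number. Next I would define the diagonal matrix $S_n:=\mr{diag}_n(\de_0,\ldots,\de_n)$ by $\de_0:=1$ and $\de_{a+1}:=\de_a(\ell_a/u_a)^{1/2}$, using the positive square root; then each $\de_a>0$, so $S_n$ is invertible.

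A direct computation shows that $M_n:=S_n^{-1}T_nS_n$ has the same diagonal as $T_n$, and off-diagonal entries $M_n(a,a+1)=u_a(\ell_a/u_a)^{1/2}$ and $M_n(a+1,a)=\ell_a(u_a/\ell_a)^{1/2}$. Writing $u_a(\ell_a/u_a)^{1/2}=\mr{sign}(u_a)\,|u_a\ell_a|^{1/2}$ and $\ell_a(u_a/\ell_a)^{1/2}=\mr{sign}(\ell_a)\,|u_a\ell_a|^{1/2}$, and using that $\mr{sign}(u_a)=\mr{sign}(\ell_a)$ (which is exactly what $u_a\ell_a>0$ provides), I conclude $M_n(a,a+1)=M_n(a+1,a)$, so $M_n$ is a real symmetric tridiagonal matrix. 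Hence $T_n=S_nM_nS_n^{-1}$ is similar to $M_n$, which is orthogonally diagonalizable with real eigenvalues, and therefore $-\De_n+Q_n$ is diagonalizable with real eigenvalues for $n\geq N$. There is no real obstacle in this argument — it is the classical fact that a tridiagonal matrix whose symmetric pairs of off-diagonal entries have positive products is similar to a Jacobi matrix; the only point worth flagging is that \eqref{Equation: Product Condition} is used twice, once to rule out vanishing off-diagonal entries (which would make $S_n$ ill-defined) and once to ensure $\ell_a/u_a>0$ so that $S_n$ is real.
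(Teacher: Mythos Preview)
Your proof is correct and takes essentially the same approach as the paper: the paper simply invokes the classical fact (citing Horn--Johnson) that a real tridiagonal matrix whose pairs of off-diagonal entries have positive products is similar to a Hermitian matrix, while you explicitly construct the diagonal similarity that proves this lemma.
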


Propositions \ref{Proposition: Convergence of Eigenvalues}, \ref{Proposition: Convergence of Eigenvalues Condition}, and \ref{Proposition: Non-Symmetric Eigenvalues}
are proved in Sections \ref{Subsection: Proof of Eigenvalue Convergence 1}--\ref{Subsection: Proof of Eigenvalue Convergence 3}.
See Section \ref{Section: Non-Symmetric Example} for an example
of how these three results can be combined
to prove new eigenvalue limit laws for non-symmetric tridiagonal matrices.

\subsection{Application 2. Classical $\be$-Ensembles}
\label{Section: Classical Beta}

In Section \ref{Section: Classical Beta} we show that 
our main results apply to the edge-rescaled $\be$-Hermite ensemble,
the right-edge-rescaled $\be$-Laguerre ensemble,
as well as their rank-one spiked versions.
In all cases, the limits we obtain
are the stochastic Airy semigroups $\mr{SAS}_\be(t)$ and $\mr{SAS}^w_\be(t)$
respectively, thus extending the results of \cite{GaudreauLamarreShkolnikov,GorinShkolnikov}.

\subsubsection{Generalized $\be$-Hermite Ensembles}\label{Subsubsection: General Hermite Statement}

\begin{definition}
\label{Definition: Hermite}
Let $\xi^D_n\in\mbb R^{n+1}$ and $\xi^U_n=\xi^L_n\in\mbb R^n$ be random vectors that satisfy Assumptions
\ref{Assumption: Independence}--\ref{Assumption: Noise Convergence}
with $m_n=n^{1/3}$.
Let $\be>0$ be such that the Brownian motion $W$ in \eqref{Equation: Combination of BM}
has variance $1/\be$.
Let us denote $\chi_n(a):=\sqrt{n-a}-\xi_n^U(a)/n^{1/6}$ for all $0\leq a\leq n$. We define the
{\bf generalized $\bs\be$-Hermite ensemble} as
\[ H_n:=\left[\begin{array}{ccccc}
-\xi_n^D(0)/n^{1/6}&\chi_n(0)\\
\chi_n(0)&\ddots&\ddots\\
&\ddots&\ddots&\chi_n(n-1)\\
&&\chi_n(n-1)&-\xi_n^D(n)/n^{1/6}
\end{array}\right].
\]
\end{definition}
\begin{definition}
\label{Definition: Spiked Hermite}
Let $(\mu_n)_{n\in\mbb N}$ be a sequence of real numbers such that
\begin{align}\label{Equation: Hermite Spike}
\lim_{n\to\infty}n^{-1/6}(\sqrt{n}-\mu_n)=w\in\mbb R.
\end{align}
Let $\xi^E_n$ and $H_n$ be as in Definition \ref{Definition: Hermite},
assuming further that $(\xi^D_n(0)/n^{1/6})_{n\in\mbb N}$ is uniformly sub-Gaussian.
The {\bf generalized spiked $\bs\be$-Hermite ensemble} is defined as
$H^w_n:=H_n+\mr{diag}_n(\mu_n,0,\ldots,0).$
\end{definition}

$H_n$ and $H_n^w$ are slight generalizations of the random matrix models studied
in \cite{GaudreauLamarreShkolnikov,GorinShkolnikov}.
As shown in \cite[Lemma 2.1]{GorinShkolnikov}, the
$\be$-Hermite ensemble studied in \cite{DumitriuEdelman,EdelmanSutton,RamirezRiderVirag}
is a special case of $H_n$. Similarly,
as noted in \cite[Remarks 1.3 and 1.8]{GaudreauLamarreShkolnikov},
$H^w_n$ generalizes
the spiked $\be$-Hermite ensemble with a critical (i.e., of size $\sqrt{n}$) rank-one additive perturbation
introduced in \cite[(1.5)]{BloemendalVirag} (see also \cite{Peche}).
As per classical theory, the edge fluctuations of $H_n$ and $H_n^w$ are captured by
the rescalings
\begin{align}\label{Equation: Hermite Rescaled}
R_n:=n^{1/6}(2\sqrt{n}\,I_n- H_n)
\qquad\text{and}\qquad
R_n^w:=n^{1/6}(2\sqrt{n}\,I_n- H^w_n).
\end{align}
We have the following result regarding \eqref{Equation: Hermite Rescaled},
which we prove in Section \ref{Section: Hermite Proof}.

\begin{corollary}
\label{Corollary: Hermite}
We can define $Q_n$ so that
$R_n=-\De_n+Q_n$
and $R_n^w=-\De^w_n+Q_n$ satisfy the hypotheses of
Theorems \ref{Theorem: Main Dirichlet} and \ref{Theorem: Main Robin}
respectively,
where $m_n=n^{1/3}$,
$w_n=\mu_n/\sqrt{n}$,
$W$ in \eqref{Equation: Combination of BM} has variance $1/\be$,
and $V(x)$ in \eqref{Equation: Combination of Potentials} equals $x/2$.
\end{corollary}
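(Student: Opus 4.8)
The plan is to exhibit an explicit matrix $Q_n$ realizing $R_n=-\De_n+Q_n$ and $R_n^w=-\De_n^w+Q_n$, and then to verify term by term that it satisfies the hypotheses of Theorems~\ref{Theorem: Main Dirichlet} and~\ref{Theorem: Main Robin}. By \eqref{Equation: Hermite Rescaled} and Definition~\ref{Definition: Hermite}, and using $n^{1/6}\sqrt{n-a}=n^{2/3}\sqrt{1-a/n}$, the diagonal of $R_n$ is $2n^{2/3}+\xi_n^D(a)$ and its off-diagonal is $-n^{2/3}\sqrt{1-a/n}+\xi_n^U(a)$. Since $\De_n$ has diagonal $-2m_n^2=-2n^{2/3}$ and off-diagonal $m_n^2=n^{2/3}$, the choice $Q_n:=R_n+\De_n$ gives $R_n=-\De_n+Q_n$ with
\[D_n(a)=\xi_n^D(a),\qquad U_n(a)=L_n(a)=m_n^2\bigl(1-\sqrt{1-a/n}\bigr)+\xi_n^U(a),\]
so that in \eqref{Equation: Potential+Noise Decomposition} the potentials are $V_n^D\equiv0$ and $V_n^U=V_n^L=m_n^2\bigl(1-\sqrt{1-a/n}\bigr)$, and the noise terms are exactly the given $\xi_n^D$ and $\xi_n^U=\xi_n^L$ (so the symmetric case of Assumption~\ref{Assumption: Independence} applies). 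For the spiked model, $R_n^w=R_n-\mr{diag}_n(n^{1/6}\mu_n,0,\dots,0)$ while $\De_n^w=\De_n+\mr{diag}_n(m_n^2w_n,0,\dots,0)$, and with $w_n=\mu_n/\sqrt n$ one has $m_n^2w_n=n^{1/6}\mu_n$, so the same $Q_n$ satisfies $R_n^w=-\De_n^w+Q_n$.

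Next I would check the deterministic assumptions. Assumption~\ref{Assumption: mn} holds with $\mf d=1/3\in(1/13,1/2)$, and Assumption~\ref{Assumption: wn} holds because $m_n(1-w_n)=n^{-1/6}(\sqrt n-\mu_n)\to w$ by \eqref{Equation: Hermite Spike}. For Assumption~\ref{Assumption: Potential Convergence}, put $u_{n,x}:=\lfloor m_nx\rfloor/n\le n^{-2/3}x\to0$; the expansion $1-\sqrt{1-u}=u/2+O(u^2)$ gives $V_n^U(\lfloor m_nx\rfloor)=m_n^2\bigl(u_{n,x}/2+O(u_{n,x}^2)\bigr)=\tfrac{1}{2m_n}\lfloor m_nx\rfloor+O(n^{-2/3}x^2)\to x/2$ uniformly on compacts, and similarly for $V_n^L$, while $V_n^D\equiv0\to0$. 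Hence $V^D\equiv0$, $V^U=V^L=x/2$, and $V=\tfrac12(0+x/2+x/2)=x/2$, which satisfies \eqref{Equation: Log Potential}; the variance $1/\be$ of $W$ in \eqref{Equation: Combination of BM} is the defining property of $\be$ in Definition~\ref{Definition: Hermite}, hence built in.

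For Assumption~\ref{Assumption: Growth Upper Bounds} I would use the elementary inequalities $u/2\le1-\sqrt{1-u}\le u$ valid for $u\in[0,1]$ (both follow by squaring), which with $u=a/n$ yield $a/(2m_n)\le V_n^U(a)\le a/m_n\le m_n^2$ for $0\le a\le n$. The upper bound gives \eqref{Equation: Potential Absolute Bounds}, and if $C_n=o(n)$ then $\max_{a\le C_n}V_n^U(a)\le C_n/m_n=o(n)\,n^{-1/3}=o(m_n^2)$, giving \eqref{Equation: Potential Growth Upper Bound}. For Assumption~\ref{Assumption: Growth Lower Bounds} I take $E=U$ for both parts: the bound $V_n^U(a)\ge a/(2m_n)$ together with the finite constant $c(\th):=\sup_{b\ge0}\bigl(\th\log(1+b)-b/2\bigr)$ gives \eqref{Equation: Log Potential Lower Bound}; and since $\mf d=1/3$ forces $\mf d/2(1-\mf d)=1/4$ and $2\mf d/(1-\mf d)=1$, the choice $\al=1$, $\ka=1/2$, $C=1$, $\eps=1/2$ makes $V_n^U(a)\ge\tfrac12(a/m_n)$ for all $a$, in particular on $n^{1/2}\le a\le n$, giving \eqref{Equation: Polynomial Potential Lower Bound}.

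Finally, Assumptions~\ref{Assumption: Independence}, \ref{Assumption: Moments}, and~\ref{Assumption: Noise Convergence} are imposed directly on $\xi_n^D,\xi_n^U=\xi_n^L$ in Definition~\ref{Definition: Hermite}, so nothing further is needed and Theorem~\ref{Theorem: Main Dirichlet} applies to $R_n=-\De_n+Q_n$. For the spiked model, Assumption~\ref{Assumption: Robin} requires $\bigl(D_n(0)/m_n^{1/2}\bigr)_n=\bigl(\xi_n^D(0)/n^{1/6}\bigr)_n$ to be uniformly sub-Gaussian, which is the standing hypothesis of Definition~\ref{Definition: Spiked Hermite}, so together with Assumption~\ref{Assumption: wn} Theorem~\ref{Theorem: Main Robin} applies to $R_n^w=-\De_n^w+Q_n$. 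The only point requiring care is the exponent window in Assumption~\ref{Assumption: Growth Lower Bounds}: it is precisely because $m_n\asymp n^{1/3}$ pins down $\mf d=1/3$ that the interval $(1/4,1]$ contains the natural exponent $\al=1$ coming from the leading $O(a/n)$ term of $1-\sqrt{1-a/n}$, and this matching (together with the routine uniform-on-compacts Taylor estimate) is the only mildly delicate step; everything else is bookkeeping.
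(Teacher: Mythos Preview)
Your proof is correct and follows essentially the same route as the paper's: identify $Q_n=R_n+\De_n$, read off $V_n^D\equiv0$, $V_n^U=V_n^L=m_n^2\bigl(1-\sqrt{1-a/n}\bigr)$, and then verify Assumptions~\ref{Assumption: mn}--\ref{Assumption: Robin} one by one. The only cosmetic differences are that the paper phrases the lower bound $V_n^U(a)\ge\ka\,a/m_n$ as ``elementary calculus'' on $c^2(1-\sqrt{1-x/c^3})-\ka x/c$ rather than via your explicit two-sided bound $u/2\le1-\sqrt{1-u}\le u$, and it obtains uniform convergence on compacts from monotonicity in $x$ rather than your Taylor remainder estimate; both pairs of arguments are equivalent in content.
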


\subsubsection{Generalized $\be$-Laguerre Ensembles}\label{Subsubsection: General Laguerre Statement}

\begin{definition}
\label{Definition: Laguerre}
Suppose that $\tilde\xi^D_n$ and $\tilde\xi^U_n=\tilde\xi^L_n$ satisfy Assumptions
\ref{Assumption: Independence} and \ref{Assumption: Moments}
with $m_n=n^{1/3}$, and that $\tilde\xi^E_n$ satisfy
\eqref{Equation: Skorokhod Noise Terms} and \eqref{Equation: Noise Stochastic Integral Convergence}
with $m_n=n^{1/3}$. Denoting the limits in distribution
\[\tilde W^E(x):=\lim_{n\to\infty}\frac1{n^{1/3}}\sum_{a=0}^{\lfloor n^{1/3} x\rfloor}\tilde\xi^E_n(a),\qquad E\in\{D,U,L\},\]
we further assume that $\tilde W^D+\tilde W^U=\tilde W^D+\tilde W^L$ is a Brownian motion
with variance $1/\be$ for some $\be>0$.
Let $p=p(n)>n$ be an increasing sequence such that $n/p\to\nu\in[0,1]$ as $n\to\infty$.
Denote
$\chi_n(a):=\sqrt{n-a}-\tilde\xi^U_n(a)/n^{1/6}$
and $\chi_{n;p}(a):=\sqrt{p-a}-\tilde\xi^D_n(a)/n^{1/6}$.
We define the {\bf generalized $\bs\be$-Laguerre ensemble} as
$L_n:=(L^*_n)^\top L^*_n$, where
\[L^*_n:=\left[\begin{array}{ccccc}
\chi_{n;p}(0)\\
\chi_n(0)&\chi_{n;p}(1)\\
&\ddots&\ddots\\
&&\chi_n(n-1)&\chi_{n;p}(n)\\
\end{array}\right].\]
\end{definition}
\begin{definition}
Let $\tilde\xi^E_n,p(n)$, and $L^*_n$
be as in Definition \ref{Definition: Laguerre},
with the additional assumption that
$(\tilde\xi^D_n(0)/n^{1/6})_{n\in\mbb N}$ and $(\tilde\xi^L(0)/n^{1/6})_{n\in\mbb N}$ are uniformly sub-Gaussian.
Let $(\ell_n)_{n\in\mbb N}$ be a sequence of real numbers such that
\begin{align}\label{Equation: Laguerre Spike}
\lim_{n\to\infty}
{\textstyle \left(\frac{\sqrt{np}}{\sqrt{n}+\sqrt{p}}\right)^{2/3}}\left(1-\sqrt{p/n}\,(\ell_n-1)\right)=w\in\mbb R.
\end{align}
The {\bf generalized spiked $\bs\be$-Laguerre ensemble} is defined as
 $L^w_n:=(L^*_n)^\top\,\mr{diag}_n(\ell_n,1,\ldots,1)\, L^*_n$.
\end{definition}

$L_n$ is a generalization of the $\be$-Laguerre ensemble studied in
\cite{DumitriuEdelman,EdelmanSutton,RamirezRiderVirag};
$L^w_n$ is a generalization of the critical (i.e., of size $1+\sqrt{\nu}$) rank-one spiked model of the
$\be$-Laguerre ensemble (c.f., \cite{BaikBenArousPeche} and \cite[(1.2)]{BloemendalVirag}).
The right-edge (i.e., largest eigenvalues) fluctuations of these matrices are captured by the rescalings
\begin{align}\label{Equation: Laguerre Rescaling}
\Si_n:=\frac{m_n^2}{\sqrt{np}}\big((\sqrt{n}+\sqrt{p})^2  I_n- L_n\big),
\quad\text{where}\quad
m_n:= \left(\frac{\sqrt{np}}{\sqrt{n}+\sqrt{p}}\right)^{2/3},
\end{align}
and $\Si^w_n:=\big(m_n^2/\sqrt{np}\big)\big((\sqrt{n}+\sqrt{p})^2  I_n- L^w_n\big)$
with the same $m_n$.
The following is proved in Section \ref{Section: Proof of Laguerre}:

\begin{corollary}\label{Corollary: Laguerre}
We can define $Q_n$ so that
$\Si_n=-\De_n+Q_n$
and $\Si_n^w=-\De^w_n+Q_n$ satisfy the hypotheses of
Theorems \ref{Theorem: Main Dirichlet} and \ref{Theorem: Main Robin}
respectively,
where $m_n$ is as in \eqref{Equation: Laguerre Rescaling},
$w_n=\sqrt{p/n}(\ell_n-1)$,
$W$ in \eqref{Equation: Combination of BM} has variance $1/\be$,
and $V(x)$ in \eqref{Equation: Combination of Potentials} equals $x/2$.
\end{corollary}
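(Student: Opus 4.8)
The plan is to expand the factorization $L_n=(L^*_n)^\top L^*_n$, read off the tridiagonal entries of the rescaled matrix $\Si_n$ of \eqref{Equation: Laguerre Rescaling}, and then check Assumptions \ref{Assumption: mn}--\ref{Assumption: Robin} one at a time. Write $\bar L^*_n$ for the deterministic bidiagonal matrix with diagonal $\sqrt{p-a}$ and subdiagonal $\sqrt{n-a}$, and $\Xi_n$ for the bidiagonal matrix with diagonal $\tilde\xi^D_n(a)$ and subdiagonal $\tilde\xi^U_n(a)$, so that $L^*_n=\bar L^*_n-n^{-1/6}\Xi_n$ and
\[L_n=(\bar L^*_n)^\top\bar L^*_n-n^{-1/6}\big((\bar L^*_n)^\top\Xi_n+\Xi_n^\top\bar L^*_n\big)+n^{-1/3}\Xi_n^\top\Xi_n.\]
The first term is symmetric tridiagonal with $(a,a)$-entry $(p-a)+(n-a)$ (with $n-a$ read as $0$ for $a=n$) and $(a,a\pm1)$-entry $\sqrt{(n-a)(p-a-1)}$; multiplying by $m_n^2/\sqrt{np}$, subtracting from $\tfrac{m_n^2}{\sqrt{np}}(\sqrt n+\sqrt p)^2I_n$ and adding $\De_n$ identifies the potential terms
\[V^D_n(a)=\frac{2m_n^2a}{\sqrt{np}},\qquad V^U_n(a)=V^L_n(a)=m_n^2\Big(1-\sqrt{\tfrac{(n-a)(p-a-1)}{np}}\Big),\]
while the remaining two terms, multiplied by $m_n^2/\sqrt{np}$, give $\xi^D_n$ and $\xi^U_n=\xi^L_n$, each a fixed linear combination of $n^{-1/6}\tilde\xi^D_n(\cdot),n^{-1/6}\tilde\xi^U_n(\cdot)$ (the ``linear part'') plus an $n^{-1/3}$-weighted product of two $\tilde\xi$'s (the ``quadratic part''). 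For the spiked matrix one checks that $L^w_n-L_n=(\ell_n-1)\chi_{n;p}(0)^2\,E_{00}$ is supported on the $(0,0)$-entry only, so with $w_n=\sqrt{p/n}\,(\ell_n-1)$ one gets $\Si^w_n=-\De^w_n+Q_n$ with $Q_n$ agreeing with the unspiked one except in its $(0,0)$-entry, which has the same structure and bounds; in particular all of the analysis below carries over to the spiked case verbatim for $a\ge1$.

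\textbf{Structural assumptions.} These are routine given the explicit formulas. Assumption \ref{Assumption: mn} holds with $\mf d=1/3$, using $\tfrac{\sqrt n}{2}\le\tfrac{\sqrt{np}}{\sqrt n+\sqrt p}\le\sqrt n$ (as $n\le p$). Assumption \ref{Assumption: wn} with the stated $w$ is exactly \eqref{Equation: Laguerre Spike}, since $m_n(1-w_n)=\big(\tfrac{\sqrt{np}}{\sqrt n+\sqrt p}\big)^{2/3}\big(1-\sqrt{p/n}(\ell_n-1)\big)$. For Assumption \ref{Assumption: Potential Convergence}: because $\lfloor m_nx\rfloor=o(n)$, a Taylor expansion of $V^U_n$ together with the identity $\sqrt{np}=m_n^{3/2}(\sqrt n+\sqrt p)$ gives, uniformly on compacts, $V^D_n(\lfloor m_nx\rfloor)\to\tfrac{2\sqrt\nu}{(\sqrt\nu+1)^2}x$ and $V^U_n(\lfloor m_nx\rfloor)\to\tfrac{1+\nu}{2(\sqrt\nu+1)^2}x$; since $(\sqrt\nu+1)^2=1+2\sqrt\nu+\nu$, the combination \eqref{Equation: Combination of Potentials} is $V(x)=x/2$, which satisfies \eqref{Equation: Log Potential}. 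For Assumptions \ref{Assumption: Growth Upper Bounds}--\ref{Assumption: Growth Lower Bounds}: one has $0\le V^D_n(a)\le 2m_n^2$ (as $a\le n\le\sqrt{np}$) and $0\le V^U_n(a)\le m_n^2$, both $o(m_n^2)$ for $a=o(n)$; and, taking $E=U$, the elementary bound $1-\sqrt{1-a/n}\ge a/(2n)$ together with $m_n^3=\tfrac{np}{(\sqrt n+\sqrt p)^2}\ge n/4$ give $V^U_n(a)\ge\tfrac{m_n^2a}{2n}\ge\tfrac18(a/m_n)$ uniformly in $n$, which supplies both the logarithmic lower bound and the polynomial one with exponent $\al=1\in\big(\tfrac{\mf d}{2(1-\mf d)},\tfrac{2\mf d}{1-\mf d}\big]=\big(\tfrac14,1\big]$.

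\textbf{Noise assumptions.} Assumption \ref{Assumption: Independence} holds because $\xi^D_n(a)$ is a function of $\big(\tilde\xi^D_n(a),\tilde\xi^U_n(a)\big)$ alone and $\xi^U_n(a)=\xi^L_n(a)$ of $\big(\tilde\xi^D_n(a+1),\tilde\xi^U_n(a)\big)$ alone, so within-family independence descends from that of the $\tilde\xi$'s. The essential bookkeeping for Assumptions \ref{Assumption: Moments} and \ref{Assumption: Noise Convergence} is that the prefactor $\tfrac{m_n^2}{\sqrt{np}}=\tfrac{m_n^{1/2}}{\sqrt n+\sqrt p}$ converts each $\tilde\xi^E_n(a)$ (of typical size $n^{1/6}$) into a contribution of order $m_n^{1/2}$ with bounded coefficients, so the linear part of $\xi^E_n$ inherits \eqref{Equation: Noise Expected Decay} and \eqref{Equation: Noise Moment Bound} from the $\tilde\xi$'s (with $m_{n-a}\asymp(n-a)^{1/3}$ on both sides), while the quadratic part is smaller by a factor $n^{-1/3}$ and contributes a negligible correction. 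For \eqref{Equation: Skorokhod Noise Terms}, the quadratic part of $\tfrac1{m_n}\sum_{a=0}^{\lfloor m_nx\rfloor}\xi^E_n(a)$ is $o_P(1)$ uniformly on compacts (its expectation is $O(n^{-1/3})$), and the linear part equals, after the deterministic time change $y\mapsto c\,y$ with $c:=\lim m_n/n^{1/3}=(1+\sqrt\nu)^{-2/3}$ and fixed linear combinations, the expression in \eqref{Equation: Skorokhod Noise Terms} for $\tilde\xi^D_n,\tilde\xi^U_n$; since \eqref{Equation: Skorokhod Noise Terms} is preserved under affine maps and such time changes, $\tfrac1{m_n}\sum_{a=0}^{\lfloor m_nx\rfloor}\xi^E_n(a)$ converges in Skorokhod, jointly, to
\[W^D(x)=2c\big(\tilde W^D(cx)+\sqrt\nu\,\tilde W^U(cx)\big),\qquad W^U(x)=W^L(x)=c\big(\sqrt\nu\,\tilde W^D(cx)+\tilde W^U(cx)\big),\]
so \eqref{Equation: Combination of BM} becomes $W(x)=c(1+\sqrt\nu)(\tilde W^D+\tilde W^U)(cx)$; as $\tilde W^D+\tilde W^U$ is a Brownian motion of variance $1/\be$ and $c^3(1+\sqrt\nu)^2=1$, $W$ is a Brownian motion of variance $\si^2=c^3(1+\sqrt\nu)^2/\be=1/\be$, as claimed. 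The same computation with the test functions time-changed yields \eqref{Equation: Noise Stochastic Integral Convergence}, and joint convergence throughout follows by continuous mapping. Finally, in the spiked case, Assumption \ref{Assumption: Robin} follows because $D_n(0)/m_n^{1/2}$ equals, up to uniformly bounded coefficients, $\tilde\xi^D_n(0)/n^{1/6}+\tilde\xi^U_n(0)/n^{1/6}$ plus a quadratic correction of order $n^{-1/2}\big(\tilde\xi^D_n(0)/n^{1/6}\big)^2$, whose exponential moments are controlled using that $\tilde\xi^D_n(0)/n^{1/6}$ and $\tilde\xi^U_n(0)/n^{1/6}$ are uniformly sub-Gaussian.

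\textbf{Main obstacle.} I expect the delicate point to be the systematic control of the quadratic-in-$\tilde\xi$ corrections produced by the factorization: one must show they vanish in the limits \eqref{Equation: Skorokhod Noise Terms}--\eqref{Equation: Noise Stochastic Integral Convergence} and are compatible with \eqref{Equation: Noise Moment Bound} (their contribution to the $q$-th moment carries a factor $q^{2\ga q}$, damped by $n^{-q/3}$, which suffices once $n$ is large given $q$), and---most delicately---that the quadratic term inside $D_n(0)$ does not spoil the uniform sub-Gaussianity required for the Robin boundary condition. A secondary point is the careful tracking of the time-change constant $c=(1+\sqrt\nu)^{-2/3}$: it is precisely the algebraic identity $(1+\sqrt\nu)^2=1+2\sqrt\nu+\nu$, combined with the definitions of $m_n$ and $w_n$, that makes both $V(x)=x/2$ and $\si^2=1/\be$ come out exactly.
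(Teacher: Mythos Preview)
Your proposal is correct and follows essentially the same approach as the paper: both expand $L_n=(L_n^*)^\top L_n^*$, read off explicit formulas for $V_n^E$ and $\xi_n^E$ (linear plus quadratic in~$\tilde\xi$), and then verify Assumptions \ref{Assumption: mn}--\ref{Assumption: Robin} directly, with the same lower-bound trick $1-\sqrt{(1-a/n)(1-(a+1)/p)}\ge 1-\sqrt{1-a/n}\ge a/(2n)$ for Assumption~\ref{Assumption: Growth Lower Bounds} and the same Brownian scaling for the noise. Your representation of the limiting Brownian motions via the time change $c=(1+\sqrt\nu)^{-2/3}$ is equivalent to the paper's factor $(1+\sqrt\nu)^{1/3}$, and your off-diagonal potential (with $p-a-1$) in fact corrects a sign typo in the paper's displayed $V^U_n$.
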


\subsection{Application 3. Non-Symmetric Ensemble}
\label{Section: Non-Symmetric Example}

We now provide an example of a non-symmetric matrix model
for which we can prove a new limit law. The following model
is inspired by the $\be$-Hermite ensemble:

\begin{definition}
\label{Definition: non-Symmetric Hermite-type}
Suppose that $\xi^D_n$ and $\xi^U_n\neq\xi^L_n$ satisfy Assumptions
\ref{Assumption: Independence}--\ref{Assumption: Noise Convergence} with $m_n=n^{1/3}$.
Let us denote
$\chi^U_n(a):=\sqrt{n-a}-\xi_n^U(a)/n^{1/6}$ and $\chi^L_n(a):=\sqrt{n-a}-\xi_n^L(a)/n^{1/6}$,
and assume  that $\chi^U_n(a),\chi^L_n(a)>0$ (or, equivalently,
$\xi^U_n(a),\xi^L_n(a)<n^{1/6}\sqrt{n-a}$) for every $0\leq a\leq n-1$.
Define the random matrix
\begin{align}\label{Equation: non-Symmetric Hermite-type}
 \tilde H_n:=\left[\begin{array}{ccccc}
-\xi_n^D(0)/n^{1/6}&\chi^U_n(0)\\
\chi^L_n(0)&\ddots&\ddots\\
&\ddots&\ddots&\chi^U_n(n-1)\\
&&\chi^L_n(n-1)&-\xi_n^D(n)/n^{1/6}
\end{array}\right].
\end{align}
\end{definition}

In order to capture the edge fluctuations of $\tilde H_n$,
we consider the rescaled version
\[\tilde R_n:=n^{1/6}(2\sqrt{n}\,I_n-\tilde H_n).\]
The following result is proved in Section \ref{Subsection: Non-Symmetric Proof}:

\begin{corollary}
\label{Corollary: Non Symmetric Example}
For every $k\in\mbb N$, the $k$ smallest eigenvalues of
$\tilde R_n$ converge in joint distribution to the $k$ smallest eigenvalues of
$\mr{SAO}_\be$ with Dirichlet boundary condition.
\end{corollary}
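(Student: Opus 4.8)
The plan is to deduce Corollary~\ref{Corollary: Non Symmetric Example} from Proposition~\ref{Proposition: Convergence of Eigenvalues} together with Corollary~\ref{Corollary: Hermite} and Proposition~\ref{Proposition: Non-Symmetric Eigenvalues}, by verifying the three ingredients those results require for the matrix $\tilde R_n=n^{1/6}(2\sqrt n\,I_n-\tilde H_n)$. First I would write $\tilde R_n=-\De_n+Q_n$ with $m_n=n^{1/3}$ in exactly the same way that Corollary~\ref{Corollary: Hermite} does for $R_n$: the off-diagonal entries of $Q_n$ will be $U_n(a)=m_n^2-2m_n^2\,\chi^U_n(a)/\sqrt{\dots}$-type expressions coming from expanding $n^{1/6}(2\sqrt n-\chi^U_n(a))$ around $\sqrt{n-a}\approx\sqrt n$, and symmetrically for $L_n(a)$ using $\chi^L_n$; the diagonal picks up $V^D_n$ plus the noise $\xi^D_n/n^{1/6}$. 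Because $\xi^U_n$ and $\xi^L_n$ individually satisfy the same noise hypotheses as in the symmetric Hermite case, and because the potential parts of the $U$ and $L$ entries are identical (both arise from the deterministic $\sqrt{n-a}$), Assumptions~\ref{Assumption: Potential Convergence}--\ref{Assumption: Noise Convergence} go through verbatim by the computation already carried out in Section~\ref{Section: Hermite Proof}; in particular $V^U=V^L$ and $V^D\equiv 0$ give $V(x)=x/2$, while $W=\tfrac12(W^D+W^U+W^L)$ has variance $1/\be$ by hypothesis. So $\tilde R_n$ satisfies the hypotheses of Theorem~\ref{Theorem: Main Dirichlet} with limit operator $\hat H=\mr{SAO}_\be/2$ with Dirichlet boundary condition, whose eigenvalues are $\la_k(\hat H)=\la_k(\mr{SAO}_\be)/2$.

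Next I would check the two additional hypotheses of Proposition~\ref{Proposition: Convergence of Eigenvalues}. For diagonalizability with real eigenvalues I invoke Proposition~\ref{Proposition: Non-Symmetric Eigenvalues}: the assumption $\chi^U_n(a),\chi^L_n(a)>0$ forces $U_n(a)-m_n^2$ and $L_n(a)-m_n^2$ to have the same sign (both equal to $m_n^2$ times an explicit positive multiple of, essentially, $1-\chi^U_n(a)/\sqrt{n}$ resp.\ $1-\chi^L_n(a)/\sqrt n$, which are negative for $a$ not too close to $n$, and one must check the sign is consistent for $a$ near $n$ as well — here the definition $\chi^U_n(a)=\sqrt{n-a}-\xi^U_n(a)/n^{1/6}$ makes $U_n(a)-m_n^2$ and $L_n(a)-m_n^2$ manifestly the same up to replacing $\xi^U$ by $\xi^L$, so their product is positive whenever neither factor vanishes). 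Thus \eqref{Equation: Product Condition} holds for $n\geq N$ and $-\De_n+Q_n$ is diagonalizable with real eigenvalues. For the top-eigenvalue bound \eqref{Equation: Eigenvalue Upper Bound} I would use the sufficient condition in Proposition~\ref{Proposition: Convergence of Eigenvalues Condition}, checking \eqref{Equation: Eigenvalue Upper Bound Condition}: since $V^D_n/m_n^2\to 0$ uniformly on the relevant range (it is $o(1)$ away from the spectral edge and bounded by the absolute bound \eqref{Equation: Potential Absolute Bounds} elsewhere) and $|V^U_n(a)/m_n^2-1|,|V^L_n(a-1)/m_n^2-1|$ are each at most something like $1$ with room to spare — these are the same quantities that appear in the symmetric Hermite verification — the max in \eqref{Equation: Eigenvalue Upper Bound Condition} stays below $6-\bar\de$ for suitable $\bar\de>0$.

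Having verified all hypotheses, Proposition~\ref{Proposition: Convergence of Eigenvalues} gives $\tfrac12(\la_{n;1},\dots,\la_{n;k})\to(\la_1(\hat H),\dots,\la_k(\hat H))$ in joint distribution, where $\la_{n;j}$ are the ordered (real) eigenvalues of $\tilde R_n=-\De_n+Q_n$. Since $\la_j(\hat H)=\tfrac12\la_j(\mr{SAO}_\be)$, multiplying through by $2$ yields $(\la_{n;1},\dots,\la_{n;k})\to(\la_1(\mr{SAO}_\be),\dots,\la_k(\mr{SAO}_\be))$, which is the assertion of the corollary (the $k$ smallest eigenvalues of $\tilde R_n$ are precisely $\la_{n;1},\dots,\la_{n;k}$). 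The one genuinely non-routine point — the place where care is needed rather than just bookkeeping — is confirming the sign condition \eqref{Equation: Product Condition} and the bound \eqref{Equation: Eigenvalue Upper Bound Condition} simultaneously for \emph{all} $0\leq a\leq n$, including $a$ close to $n$ where $\sqrt{n-a}$ is small and the noise terms $\xi^U_n(a),\xi^L_n(a)$ (which have comparatively large fluctuations near the soft edge) dominate; one must use the standing assumption $\chi^U_n(a),\chi^L_n(a)>0$ there to keep the off-diagonal entries of $Q_n$ on the correct side of $m_n^2$ and to keep the relevant ratios bounded, mirroring the edge analysis already done for the symmetric $\be$-Hermite ensemble in Corollary~\ref{Corollary: Hermite}. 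Everything else is a transcription of that symmetric case with $\xi^U_n\neq\xi^L_n$ carried along, which is harmless because the potential parts coincide and the noise hypotheses are imposed on $W^U$, $W^L$ separately.
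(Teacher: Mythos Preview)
Your approach is correct and the same as the paper's: write $\tilde R_n=-\De_n+Q_n$ with $m_n=n^{1/3}$, verify Assumptions~\ref{Assumption: mn} and \ref{Assumption: Potential Convergence}--\ref{Assumption: Noise Convergence} exactly as in the proof of Corollary~\ref{Corollary: Hermite}, and then combine Propositions~\ref{Proposition: Convergence of Eigenvalues}, \ref{Proposition: Convergence of Eigenvalues Condition}, and \ref{Proposition: Non-Symmetric Eigenvalues}. The only place your exposition is muddled is the check of~\eqref{Equation: Product Condition}, which is actually immediate and requires no expansion or separate edge analysis: the off-diagonal entries of $\tilde R_n$ are $-n^{1/6}\chi^U_n(a)$ and $-n^{1/6}\chi^L_n(a)$, so $U_n(a)-m_n^2=-n^{1/6}\chi^U_n(a)<0$ and $L_n(a)-m_n^2=-n^{1/6}\chi^L_n(a)<0$ for \emph{every} $0\le a\le n-1$ by the standing positivity assumption (your intermediate quantity $1-\chi^U_n(a)/\sqrt n$ has the wrong sign, and your concern about noise fluctuations near $a=n$ is misplaced both here and for~\eqref{Equation: Eigenvalue Upper Bound Condition}, which involves only the deterministic $V^E_n$ and is bounded by $2+0+1+1=4$).
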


\subsection{Proof of Proposition \ref{Proposition: Convergence of Eigenvalues}}
\label{Subsection: Proof of Eigenvalue Convergence 1}

As argued in \cite[Section 6]{GorinShkolnikov} and \cite[Section 5]{Sodin}, it suffices
to prove the convergence of Laplace transforms
\[\lim_{n\to\infty}\left(\sum_{j=1}^{n+1}\mr e^{-t_i\la_{n;j}/2}\right)_{0\leq i\leq k}
=\left(\sum_{j=1}^\infty\mr e^{-t_i\la_j(\hat H)}\right)_{0\leq i\leq k},\qquad t_1,\ldots,t_k>0\]
in joint distribution.
On the one hand, if $-\De_n+Q_n$ is diagonalizable, then
\[\mr{Tr}[\hat K_n(t)]=\sum_{j=1}^{n+1}\left(1-\frac{\la_{n;j}}{3m_n^2}\right)^{\lfloor m_n^2(3t/2)\rfloor}\]
for every $t>0$. On the other hand, by \cite[Theorem 2.23]{GaudreauLamarre},
for every $t>0$,
\[\mr{Tr}[\hat K(t)]=\sum_{j=1}^\infty\mr e^{-t\la_j(\hat H)}<\infty\qquad\text{almost surely}.\]
Consequently, by Theorem \ref{Theorem: Main Dirichlet}-(2), we need only prove that
\begin{align}\label{Equation: Matrix Model to Laplace}
\lim_{n\to\infty}\left(\sum_{j=1}^{n+1}\mr e^{-t_i\la_{n;j}/2}-\left(1-\frac{\la_{n;j}}{3m_n^2}\right)^{\lfloor m_n^2(3t_i/2)\rfloor}\right)_{0\leq i\leq k}=(0,\ldots,0)
\end{align}
in joint distribution.

By the Skorokhod representation theorem, if $\hat K_n(t)\to\hat K(t)$ in the sense of Theorem \ref{Theorem: Main Dirichlet}-(2),
then there exists a coupling of the sequence $(\la_{n;j})_{1\leq j\leq n+1,n\in\mbb N}$ and $\big(\la_j(\hat H)\big)_{j\in\mbb N}$
such that
\begin{align}\label{Label: Skorohkod on Trace}
\lim_{n\to\infty}\sum_{j=1}^{n+1}\left(1-\frac{\la_{n;j}}{3m_n^2}\right)^{\lfloor m_n^2(3t_i/2)\rfloor}=\sum_{j=1}^\infty\mr e^{-t\la_j(\hat H)}<\infty
\end{align}
almost surely for $1\leq i\leq k$.
By Remark \ref{Remark: Slightly Different Powers}, there is no loss of generality
in assuming that $\lfloor m_n^2(3t_i/2)\rfloor$ is even for all $n$; hence
\[\sum_{j=1}^{n+1}\left(1-\frac{\la_{n;j}}{3m_n^2}\right)^{\lfloor m_n^2(3t_i/2)\rfloor}=\sum_{j=1}^{n+1}\left|1-\frac{\la_{n;j}}{3m_n^2}\right|^{\lfloor m_n^2(3t_i/2)\rfloor}.\]
Let us fix $0<\de<1$ and $0<\eps<\mf d$, where $\mf d$ is as in \eqref{Equation: mn}.
We consider four different regimes of eigenvalues of $-\De_n+Q_n$:
\begin{enumerate}
\item $J_{n;1}:=\{j:\la_{n;j}<-n^\eps\}$;
\item $J_{n;2}:=\{j:-n^\eps\leq\la_{n;j}<n^\eps\}$;
\item $J_{n;3}:=\{j:n^\eps\leq\la_{n;j}<(6-\de)\lfloor m_n^2(3t_i/2)\rfloor\}$; and
\item $J_{n;4}:=\{j:(6-\de)\lfloor m_n^2(3t_i/2)\rfloor\leq\la_{n;j}\}$.
\end{enumerate}

Firstly, note that
\[\sum_{j\in J_{n;1}}\left|1-\frac{\la_{n;j}}{3m_n^2}\right|^{\lfloor m_n^2(3t_i/2)\rfloor}
\geq|J_{n;1}|\left(1+\frac{n^\eps}{3m_n^2}\right)^{\lfloor m_n^2(3t_i/2)\rfloor},\]
where $|J_{n;1}|$ denotes the cardinality of $J_{n;1}$. If $|J_{n;1}|>0$ for infinitely many
$n$, then this quantity diverges, contradicting the convergence of \eqref{Label: Skorohkod on Trace}.
Hence
$J_{n;1}$ does not contribute to \eqref{Equation: Matrix Model to Laplace}.

Secondly, recall the elementary inequalities
\[0<e^z-\left(1+\frac{z}{m}\right)^m<\left(1+\frac{z}{m}\right)^m\left(\left(1+\frac{z}{m}\right)^z-1\right),\quad \forall z,m>0\]
and
\[0<e^{-z}-\left(1-\frac{z}{m}\right)^m<\left(1-\frac{z}{m}\right)^m\left(\left(1-\frac{z}{m}\right)^{-z}-1\right),\quad \forall m>z>0,\]
which imply that
\[\left|\sum_{j\in J_{n;2}}\mr e^{-t_i\la_{n;j}/2}-\left(1-\frac{\la_{n;j}}{3m_n^2}\right)^{\lfloor m_n^2(3t_i/2)\rfloor}\right|
\leq\left(\left(1+\frac{n^\eps}{3m_n^2}\right)^{n^\eps}-1\right)\sum_{j\in J_{n;2}}\left|1-\frac{\la_{n;j}}{3m_n^2}\right|^{\lfloor m_n^2(3t_i/2)\rfloor}.
\]
Since $n^{2\eps}=o(m_n^2)$, we have $\left(1+\frac{n^\eps}{3m_n^2}\right)^{n^\eps}=1+o(1)$, and thus
\eqref{Label: Skorohkod on Trace} implies that the contribution of $J_{n;2}$ to
\eqref{Equation: Matrix Model to Laplace} vanishes.

Thirdly, one the one hand, we have that
\[\sum_{j\in J_{n;3}}\mr e^{-t_i\la_{n;j}/2}\leq|J_{n;3}|\,\mr e^{-t_i n^\eps/2},\]
and on the other hand, since $|1-z|\leq\max\{\mr e^{-z},\mr e^{z-2}\}$ ($z\in\mbb R$), we see that
\begin{multline*}
\sum_{j\in J_{n;3}}\left|1-\frac{\la_{n;j}}{3m_n^2}\right|^{\lfloor m_n^2(3t_i/2)\rfloor}
\leq|J_{n;3}|\max_{j\in J_{n;3}}\max\bigg\{\exp\left(-\frac{\lfloor m_n^2(3t_i/2)\rfloor\la_{m;j}}{3m_n^2}\right),\\
\exp\left(\frac{\lfloor m_n^2(3t_i/2)\rfloor\la_{m;j}}{3m_n^2}-2\lfloor m_n^2(3t_i/2)\rfloor\right)\bigg\}.
\end{multline*}
Note that $|J_{n;3}|\leq n+1$ and that there exists a constant $C>0$ independent of $n$
such that for every $j\in J_{n;3}$,
\begin{align*}
\exp\left(-\frac{\lfloor m_n^2(3t_i/2)\rfloor\la_{m;j}}{3m_n^2}\right)&\leq\mr e^{-Cn^{\eps}},\\
\exp\left(\frac{\lfloor m_n^2(3t_i/2)\rfloor\la_{m;j}}{3m_n^2}-2\lfloor m_n^2(3t_i/2)\rfloor\right)&\leq\exp\left(-\frac{\de}{3}\lfloor m_n^2(3t_i/2)\rfloor\right).
\end{align*}
Consequently, the contribution of $J_{n;3}$ to \eqref{Equation: Matrix Model to Laplace} vanishes.

Finally, we know from \eqref{Equation: Eigenvalue Upper Bound} that there is eventually
no eigenvalue in $J_{n;4}$, and thus it has no contribution to \eqref{Equation: Matrix Model to Laplace},
completing the proof Proposition \ref{Proposition: Convergence of Eigenvalues}.

\subsection{Proof of Proposition \ref{Proposition: Convergence of Eigenvalues Condition}}

According to the Gershgorin disc theorem (e.g., \cite[Corollary 9.11]{Zhan}),
\[\frac{\la_{n;n+1}}{m_n^2}\leq
\max_{0\leq a\leq n}\left(2+\frac{D_n(a)}{m_n^2}
+\left|-1+\frac{U_n(a)}{m_n^2}\right|
+\left|-1+\frac{L_n(a-1)}{m_n^2}\right|\right).\]
By combining this with \eqref{Equation: Eigenvalue Upper Bound Condition} and the triangle inequality,
we get
\[\frac{\la_{n;n+1}}{m_n^2}\leq6-\bar\de+\sum_{E=D,U,L}\max_{0\leq a\leq n}\frac{|\xi^E_n(a)|}{m_n^2}\]
for large enough $n$. By a union bound, \eqref{Equation: Noise Moment Bound}, and Markov's inequality,
we see that
\[\mbf P\left[\max_{0\leq a\leq n}\frac{|\xi^E_n(a)|}{m_n^2}\geq\tilde\de\right]=O\left(\frac{n}{m_n^{3q/2}}\right).\]
for any $\tilde\de\in(0,\bar\de)$ and $q\in\mbb N$.
By \eqref{Equation: mn}, we can take $q$ large enough
so that $\sum_nn/m_n^{3q/2}<\infty$; the result then follows by the Borel-Cantelli lemma.

\subsection{Proof of Proposition \ref{Proposition: Non-Symmetric Eigenvalues}}
\label{Subsection: Proof of Eigenvalue Convergence 3}

This is a direct consequence of the following classical result
in matrix theory:

\begin{lemma}[{\cite[3.1.P22; see also Page 585]{HornJohnson}}]
Let $M$ be a $(n+1)\times(n+1)$ real-valued tridiagonal matrix.
If $M(a,a+1)M(a+1,a)>0$ for every $0\leq a\leq n-1$, then $M$
is similar to a Hermitian matrix.
\end{lemma}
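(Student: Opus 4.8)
The plan is to exhibit an explicit diagonal similarity transforming $M$ into a real symmetric matrix; since a real symmetric matrix is its own conjugate transpose, it is Hermitian, and this will prove the claim. First I would look for an invertible diagonal matrix $D=\mathrm{diag}(d_0,\dots,d_n)$ with $d_a>0$ such that $D^{-1}MD$ is symmetric. Conjugation by a diagonal matrix creates no new nonzero entries, so $D^{-1}MD$ is again tridiagonal with unchanged diagonal, $(D^{-1}MD)(a,a)=M(a,a)$; only the sub- and super-diagonal entries are affected, and a direct computation gives $(D^{-1}MD)(a,a+1)=M(a,a+1)\,d_{a+1}/d_a$ and $(D^{-1}MD)(a+1,a)=M(a+1,a)\,d_a/d_{a+1}$.

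The key step is to impose symmetry of these two entries, which is equivalent to requiring $(d_{a+1}/d_a)^2=M(a+1,a)/M(a,a+1)$ for every $0\le a\le n-1$. Here the hypothesis enters: since $M(a,a+1)M(a+1,a)>0$, the ratio $M(a+1,a)/M(a,a+1)$ is a well-defined positive real number, hence admits a positive square root. I would therefore set $d_0:=1$ and define recursively $d_{a+1}:=d_a\,\sqrt{M(a+1,a)/M(a,a+1)}$, which yields a strictly positive sequence $d_0,\dots,d_n$ and thus an invertible $D$. By construction $D^{-1}MD$ is then real and symmetric (tridiagonal, with common off-diagonal value $\mathrm{sgn}\big(M(a,a+1)\big)\sqrt{M(a,a+1)M(a+1,a)}$ in positions $(a,a+1)$ and $(a+1,a)$), so $M=D\,(D^{-1}MD)\,D^{-1}$ is similar to a Hermitian matrix.

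I do not expect a genuine obstacle here: the only place the hypothesis is used is to guarantee that the recursion for $d_{a+1}$ only ever takes square roots of positive quantities, so that $D$ is a bona fide invertible diagonal matrix. If one preferred, the same conclusion could be reached by an induction on $n$, peeling off the last row and column, but the explicit diagonal conjugation above is the cleanest route.
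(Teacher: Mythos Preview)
Your argument is correct and is exactly the standard proof of this classical fact: the diagonal conjugation $D^{-1}MD$ with $d_{a+1}/d_a=\sqrt{M(a+1,a)/M(a,a+1)}$ symmetrizes the tridiagonal matrix, and the positivity hypothesis is precisely what makes these square roots real and positive. The paper does not supply its own proof of this lemma; it simply cites it as a known result from Horn and Johnson, so there is nothing further to compare.
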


\subsection{Proof of Corollary \ref{Corollary: Hermite}}
\label{Section: Hermite Proof}

Thanks to \eqref{Equation: Hermite Rescaled},
straightforward computations reveal that we can write $R_n=-\De_n+Q_n$
and $R_n^w=-\De^w_n+Q_n$ with $m_n=n^{1/3}$, where the noise terms $\xi^E_n$ are as in
Definition \ref{Definition: Hermite}, and the potential terms are
\begin{align}
\label{Equation: Hermite Potential}
V^D_n(a)=0
\quad\text{and}\qquad
V^U_n(a)=V^L_n(a)=n^{1/6}\big(\sqrt{n}-\sqrt{n-a}\big)
\end{align}
for $0\leq a\leq n$. By Definitions \ref{Definition: Hermite}
and \ref{Definition: Spiked Hermite},
$\xi^E_n$ satisfy Assumptions
\ref{Assumption: Independence}--\ref{Assumption: Noise Convergence},
and Assumptions \ref{Assumption: wn} and \ref{Assumption: Robin}
hold for $H_n^w$ with $w_n=\mu_n/\sqrt{n}$. Thus, it only remains to prove
that \eqref{Equation: Hermite Potential} satisfies Assumptions
\ref{Assumption: Potential Convergence}--\ref{Assumption: Growth Lower Bounds}
with $V(x)$ in \eqref{Equation: Combination of Potentials} equal to $x/2$.

Note that $n^{1/6}\big(\sqrt{n}-\sqrt{n-a}\big)=n^{2/3}\big(1-\sqrt{1-a/n}\big)$;
hence Assumption \ref{Assumption: Growth Upper Bounds} is met.
Elementary calculus shows that
for any $0<\ka<1/2$ and $c>0$, the function
\[x\mapsto c^2 \big(1-\sqrt{1-x/c^3}\big)-\ka x/c\]
is nonnegative on $x\in[0,c^3]$. Taking $c=m_n$, this implies that
Assumption \ref{Assumption: Growth Lower Bounds} is met with $E=U,L$
in both \eqref{Equation: Log Potential Lower Bound} and
\eqref{Equation: Polynomial Potential Lower Bound}.
Finally, for $E=U,L$ and $x\geq0$,
\[V^E(x):=\lim_{n\to\infty}V^E_n(\lfloor n^{1/3} x\rfloor)
=\lim_{n\to\infty}n^{2/3}\big(1-\sqrt{1-\lfloor n^{1/3} x\rfloor/n}\big)
=x/2\quad\text{pointwise}.\]
Since $V^E_n(\lfloor n^{1/3} x\rfloor)$ is nondecreasing in $x$ for every $n$, the convergence is uniform
on compacts. Then, we are led to $V(x)=\tfrac12\big(V^U(x)+V^L(x)\big)=x/2$, as desired.

\subsection{Proof of Corollary \ref{Corollary: Laguerre}}
\label{Section: Proof of Laguerre}

\begin{remark}
Unless otherwise stated, $m_n$ in this proof refers to the
quantity $\left(\frac{\sqrt{np}}{\sqrt{n}+\sqrt{p}}\right)^{2/3}$ defined in \eqref{Equation: Laguerre Rescaling}.
If we invoke statements regarding quantities that satisfy
Assumptions \ref{Assumption: Independence}--\ref{Assumption: Noise Convergence}
with other values of $m_n$, we will explicitly state so.
\end{remark}

By definition of $p$ and $\nu$,
$n^{-1/3}m_n=\big(1+\sqrt{\nu}\big)^{-2/3}\big(1+o(1)\big)$,
and thus \eqref{Equation: mn} holds with $\mf d=1/3$.
With this in hand, straightforward computations using \eqref{Equation: Laguerre Rescaling}
reveal that we can write $\Si_n=-\De_n+Q_n$ with the potential terms
\[V^D_n(a)=2\frac{m_n^2}{\sqrt{np}}a
\qquad
V^U_n(a)=V^L_n(a)
=m_n^2\big(1-\sqrt{\left(1-a/n\right)\left(1-(a-1)/p\right)}\big)\]
and the noise terms
\begin{align*}
\xi^D_n(a)&=\frac{m_n^2}{\sqrt{np}}\Big(2\big(\sqrt{p-a}\,\frac{\tilde\xi^D_n(a)}{n^{1/6}}+\sqrt{n-a}\, \frac{\tilde\xi^U_n(a)}{n^{1/6}}\big)
-\frac{\tilde\xi^D_n(a)^2}{n^{2/3}}-\frac{\tilde\xi^U_n(a)^2}{n^{2/3}}\Big)\\
\xi^U_n(a)=\xi^L_n(a)&=\frac{m_n^2}{\sqrt{np}}\Big(\big(\sqrt{n-a}\,\frac{\tilde\xi^D_n(a+1)}{n^{1/6}}+\sqrt{p-a-1}\, \frac{\tilde\xi^U_n(a)}{n^{1/6}}\big)
-\frac{\tilde\xi^D_n(a+1)\tilde\xi^U_n(a)}{n^{2/3}}\Big)
\end{align*}
We can similarly write $\Si_n^w=-\De_n^w+Q_n$ with $w_n=\sqrt{p/n}\,(\ell_n-1)$,
the only difference in $Q_n$ being in the $(0,0)$ entry, which has
$V^D(0)=0$ and
\begin{align}
\label{Equation: Zero Entry Spiked Laguerre}
\xi^D_n(0)=\frac{m_n^2}{\sqrt{np}}\Big(2\big(\sqrt{p}\ell_n\,\frac{\tilde\xi^D_n(0)}{n^{1/6}}+\sqrt{n}\, \frac{\tilde\xi^U(0)}{n^{1/6}}\big)
-\ell_n\,\frac{\tilde\xi^D_n(0)^2}{n^{2/3}}-\frac{\tilde\xi^U_n(0)^2}{n^{2/3}}\Big).
\end{align}
We now check that the hypotheses of Theorems \ref{Theorem: Main Dirichlet} and \ref{Theorem: Main Robin}
are met.

Regarding the potential terms,
\eqref{Equation: Potential Absolute Bounds} and \eqref{Equation: Potential Growth Upper Bound}
are immediate from the definition of $V^E_n$ above.
Given that $\big(1-\sqrt{\left(1-a/n\right)\left(1-(a-1)/p\right)}\big)\geq\big(1-\sqrt{\left(1-a/n\right)}\big),$
the same argument used in the proof of Corollary \ref{Corollary: Hermite}
implies that \eqref{Equation: Log Potential Lower Bound} and \eqref{Equation: Polynomial Potential Lower Bound}
both hold with $E=U,L$.
Next, by writing $n=\nu p\big(1+o(1)\big)$, we observe that we have the following
pointwise limits in $x\geq0$:
\begin{align*}
V^D(x):=\lim_{n\to\infty}V^D_n(\lfloor m_nx\rfloor)&=\frac{2 \sqrt{\nu} x}{(1+\sqrt{\nu})^2}\\
V^E(x):=\lim_{n\to\infty}V^E_n(\lfloor m_nx\rfloor)&=\frac{(1+\nu)x}{2 (1+\sqrt{\nu})^2},&E=U,L.
\end{align*}
Once again the monotonicity in $x$ of the functions involved implies uniform convergence
on compacts, and we have $V(x):=\tfrac12\big(V^D(x)+V^U(x)+V^L(x)\big)=x/2$.

We now prove that the noise terms $\xi^E_n$ satisfy
Assumptions \ref{Assumption: Independence}--\ref{Assumption: Noise Convergence}.
Since
\begin{align}\label{Equation: p vs n Estimates}
m_n=O(n^{1/3})\qquad\text{and}\qquad m_n^2/\sqrt{n},~m_n^2/\sqrt{p}=O(m_n^{1/2})=O(n^{1/6}),
\end{align}
the fact that $\tilde\xi^E_n$ satisfies Assumptions \ref{Assumption: Independence}
and \ref{Assumption: Moments} with $m_n=n^{1/3}$ implies that $\xi_n^E$
satisfy Assumptions \ref{Assumption: Independence}
and \ref{Assumption: Moments} as well.
Recall that, by definition, $\tilde\xi_n^E$ satisfy Assumption \ref{Assumption: Noise Convergence}
with $m_n=n^{1/3}$ (and we denote the corresponding limiting Brownian motions as
$\tilde W^D$, $\tilde W^U=\tilde W^L$). Since $m_n/n^{1/3}\to(1+\sqrt{\nu})^{-2/3}$
converges to a constant, it then follows from
a straightforward Brownian scaling that $\frac1{m_n}\sum_{a=0}^{\lfloor m_n x\rfloor}\tilde\xi^E_n(a)\to(1+\sqrt{\nu})^{1/3}\tilde W^E(x)$
in distribution.
Combining this with the fact that for every $a=o(n)$, one has
\[\lim_{n\to\infty}\frac{m_n^2\sqrt{p-a}}{n^{1/6}\sqrt{np}}=\frac{1}{(1+\sqrt{\nu})^{4/3}}
\qquad\text{and}\qquad
\lim_{n\to\infty}\frac{m_n^2\sqrt{n-a}}{n^{1/6}\sqrt{np}}=\frac{\sqrt\nu}{(1+\sqrt{\nu})^{4/3}}\]
we then obtain that $ \xi_n^E$ satisfy Assumption \ref{Assumption: Noise Convergence} with
\[ W^D(x):=\lim_{n\to\infty}\frac1{m_n}\sum_{a=0}^{\lfloor m_n x\rfloor}\xi^D_n(a)
=\left(\frac{2}{1+\sqrt{\nu}}\right)\tilde W^D(x)+\left(\frac{2\sqrt\nu}{1+\sqrt{\nu}}\right)\tilde W^U(x),\]
and for $E=L,U$,
\[ W^E(x):=\lim_{n\to\infty}\frac1{m_n}\sum_{a=0}^{\lfloor m_n x\rfloor}\xi^E_n(a)
=\left(\frac{1}{1+\sqrt{\nu}}\right)\tilde W^U(x)+\left(\frac{\sqrt\nu}{1+\sqrt{\nu}}\right)\tilde W^D(x).\]
From this we immediately obtain that
$W:=\tfrac12( W^D+ W^U+ W^L)=\tilde W^D+\tilde W^L$
is a Brownian motion with variance $1/\be$, as desired.

We conclude the proof by checking the assumptions related to
the rank-one spike in $L^w_n$. That Assumption \ref{Assumption: wn}
is satisfied with $w_n=\sqrt{p/n}(\ell_n-1)$ is an immediate
consequence of \eqref{Equation: Laguerre Spike}. As for \eqref{Equation: Zero Entry Spiked Laguerre}
satisfying Assumption \ref{Assumption: Robin}, this is immediate from
the fact that $\tilde\xi^E_n(0)/n^{1/3}$ are uniformly sub-Gaussian,
the estimates \eqref{Equation: p vs n Estimates},
and the fact that $\ell_n=1+\sqrt{\nu}+O(m_n^{-1})$ (by \eqref{Equation: Laguerre Spike}).

\subsection{Proof of Corollary \ref{Corollary: Non Symmetric Example}}
\label{Subsection: Non-Symmetric Proof}

It is easy to see that $\tilde R_n$ is of the form
$-\De_n+Q_n$ (with $m_n=n^{1/3}=n^{1/6}\sqrt n$), where,
for $E=U,L$, one has
\[U_n(a)=n^{1/6}\big(\sqrt n-\sqrt{n-a}+\xi^U_n(a)/n^{1/6}\big),\]
and $D_n(a)=\xi^D_n(a)$.
Given that $-\sqrt{n-a}+\frac{\xi^U_n(a)}{n^{1/6}},-\sqrt{n-a}+\frac{\xi^L_n(a)}{n^{1/6}}<0$
(by Definition \ref{Definition: non-Symmetric Hermite-type}), $\tilde R_n$ satisfies
\eqref{Equation: Product Condition}. We can prove that $\tilde R_n$ satisfies
Assumptions \ref{Assumption: mn} and
and \ref{Assumption: Potential Convergence}--\ref{Assumption: Noise Convergence}
in the same way as Corollary \ref{Corollary: Hermite};
hence the result follows from Propositions \ref{Proposition: Convergence of Eigenvalues}, \ref{Proposition: Convergence of Eigenvalues Condition}, and \ref{Proposition: Non-Symmetric Eigenvalues}
(\eqref{Equation: Eigenvalue Upper Bound Condition} is easily seen to hold here).

\section{From Matrices to Feynman-Kac Functionals}
\label{Section: Method}

In this section, we derive probabilistic representations
for $\langle f, \hat K_n(t)g\rangle$, $\mr{Tr}[K_n(t)]$, and $\langle f, \hat K^w_n(t)g\rangle$
that serve as finite-dimensional analogs of \eqref{Equation: Dirichlet Semigroup}
and \eqref{Equation: Robin Semigroup}.

\subsection{Dirichlet Boundary Condition: Lazy Random Walk}

\begin{definition}[Lazy Random Walk]\label{Definition: Lazy Walk}
Let $S=\big(S(u)\big)_{u\in\mbb N_0}$ ($\mbb N_0:=\{0,1,2,\ldots\}$)
be a lazy random walk, i.e., the increments $S(u)-S(u-1)$
are i.i.d. uniform random variables on $\{-1,0,1\}$.
For every $a,b,u\in\mbb N_0$, we denote $S^a:=\big(S|S(0)=a\big)$
and $S^{a,b}_u:=\big(S|S(0)=a\text{ and }S(u)=b\big)$.
\end{definition}

\subsubsection{Inner Product}

Let $ M$ be a $(n+1)\times(n+1)$ random tridiagonal matrix, let $v\in\mbb R^{n+1}$ be a vector,
and let $\th\in\mbb N$ be a fixed integer. By definition of matrix product, for every $0\leq a\leq n$,
\begin{align}\label{Equation: Matrix to Expectation 1}
\big((\tfrac13 M)^\th v\big)(a)=\frac{1}{3^\th}\sum_{a_1,\ldots,a_{\th-1}} M(a,a_1) M(a_1,a_2)\cdots M(a_{\th-1},a_\th)v(a_\th),
\end{align}
where the sum is taken over all $a_1,\ldots,a_{\th}\in\mbb N_0$ such that $(a,a_1,\ldots,a_{\th})$
forms a path on the lattice $\{0,1,2,\ldots,n\}$ with self-edges (i.e., $|a_i-a_{i-1}|\in\{0,1\}$).
The probability that $S^a$ is equal to any such
path is $3^{-\th}$, and thus we see that
\begin{align}\label{Equation: Matrix to Expectation 2}
\big((\tfrac13 M)^\th v\big)(a)=\mbf E^a\left[\mbf 1_{\{\tau^{(n)}(S)>\th\}}\left(\prod_{u=0}^{\th-1} M\big(S(u),S(u+1)\big)\right) v\big(S(\th)\big)\right],
\end{align}
where the random walk $S$ is independent of the randomness in $M$,
$\mbf E^a$ denotes the expected value with respect to the law of $S^a$ conditional on $M$,
and
\[\tau^{(n)}(S):=\min\{u\geq0:S(u)=-1\text{ or }n+1\}.\]
We can think of the contribution
of $M$ to \eqref{Equation: Matrix to Expectation 2} as a type of random walk in random scenery process on the edges
of $\{0,1,2,\ldots,n\}$, that is, each passage of $S$ on an edge contributes
to the multiplication of the corresponding entry in $M$.
In particular, if we define the {\bf edge-occupation measures}
\begin{align}\label{Equation: Edge-Occupation Measure}
\La^{(a,b)}_\th(S):=\sum_{u=0}^{\th-1}\mbf 1_{\{S(u)=a\text{ and }S(u+1)=b\}},\qquad 0\leq a,b\leq n,
\end{align}
then we have that
\begin{align}\label{Equation: Matrix to Expectation 3}
\prod_{u=0}^{\th-1} M\big(S(u),S(u+1)\big)
=\prod_{a,b\in\mbb Z} M(a,b)^{\La_\th^{(a,b)}(S)}.
\end{align}

We now apply the above discussion to the study of $ \hat K_n(t)$.
We observe that
\begingroup
\allowdisplaybreaks
\begin{align}\label{Equation: Dirichlet Diagonal Entry}
\left( I_n-\frac{-\De_n+ Q_n}{3m_n^2}\right)(a,a)&=\frac13\left(1-\frac{D_n(a)}{m_n^2}\right)&0\leq a\leq n,\\
\label{Equation: Dirichlet Upper Diagonal Entry}
\left( I_n-\frac{-\De_n+ Q_n}{3m_n^2}\right)(a,a+1)&=\frac13\left(1-\frac{U_n(a)}{m_n^2}\right)&0\leq a\leq n-1,\\
\label{Equation: Dirichlet Lower Diagonal Entry}
\left( I_n-\frac{-\De_n+ Q_n}{3m_n^2}\right)(a+1,a)&=\frac13\left(1-\frac{L_n(a)}{m_n^2}\right)&0\leq a\leq n-1.
\end{align}
\endgroup
Let $t>0$ and $n\in\mbb N$
be fixed, and let us denote $\th=\th(n,t):=\lfloor m_n^2(3t/2)\rfloor$.
By combining \eqref{Equation: Dirichlet Diagonal Entry}--\eqref{Equation: Dirichlet Lower Diagonal Entry},
the combinatorial analysis in \eqref{Equation: Matrix to Expectation 1}--\eqref{Equation: Matrix to Expectation 3},
and the embedding $\pi_n$ in \eqref{Notation: Embedding},
we see that
\begin{align}\label{Equation: Matrix-Form Formula}
\langle f, \hat K_n(t)g\rangle
=\int_0^{(n+1)/m_n}f(x)\,\mbf E^{\lfloor m_nx\rfloor}\left[F_{n,t}(S)\, m_n\int_{S(\th)/m_n}^{(S(\th)+1)/m_n}g(y)\d y\right]\d x,
\end{align}
where $S$ is independent of $Q_n$,
we define the random functional
\begin{align}\label{Equation: Random Walk Functional}
F_{n,t}(S):=\mbf 1_{\{\tau^{(n)}(S)>\th\}}\prod_{a\in\mbb N_0}\left(1-\frac{D_n(a)}{m_n^2}\right)^{\La^{(a,a)}_\th(S)}
\left(1-\frac{U_n(a)}{m_n^2}\right)^{\La^{(a,a+1)}_\th(S)}\left(1-\frac{L_n(a)}{m_n^2}\right)^{\La_\th^{(a+1,a)}(S)},
\end{align}
and for any $x\geq0$, $\mbf E^{\lfloor m_nx\rfloor}$ denotes the expected value with respect to $S^{\lfloor m_nx\rfloor}$,
conditional on $Q_n$.

\subsubsection{Trace}

Letting $M$ be as in the previous section, it is easy to see that
\[\mr{Tr}\big[(\tfrac13 M)^\th\big]=\sum_{a=0}^n\mbf P[S^a(\th)=a]\mbf E^{a,a}_\th\left[
\mbf 1_{\{\tau^{(n)}(S)>\th\}}\,\prod_{u=0}^{\th-1} M\big(S(u),S(u+1)\big)\right]\]
where $S$ is independent of $M$, and
$\mbf E^{a,a}_\th$ denotes the expected value with respect to the law of $S^{a,a}_\th$,
conditional on $ M$. Given that
$\mbf P[S^a(\th)=a]=\mbf P[S^0(\th)=0]$
is independent of $a$, if we apply a Riemann sum on the grid $m_n^{-1}\mbb Z$ to the previous
expression for $\mr{Tr}\big[(\tfrac13 M)^\th\big]$, we note that
\[\mr{Tr}\big[(\tfrac13 M)^\th\big]=m_n\mbf P[S^0(\th)=0]
\,\int_0^{(n+1)/m_n}\mbf E^{\lfloor m_n x\rfloor,\lfloor m_n x\rfloor}_\th\left[
\mbf 1_{\{\tau^{(n)}(S)>\th\}}\,\prod_{u=0}^{\th-1} M\big(S(u),S(u+1)\big)\right]\d x.\]
Applying this to the model of interest $ \hat K_n(t)$,
we then see that
\begin{align}\label{Equation: Matrix-Trace Formula}
\mr{Tr}[ \hat K_n(t)]=m_n\mbf P[S^0(\th)=0]\int_0^{(n+1)/m_n}\mbf E^{\lfloor m_n x\rfloor,\lfloor m_n x\rfloor}_{\th}\big[F_{n,t}(S)\big]\d x,
\end{align}
where $\th=\th(n,t)=\lfloor m_n^2(3t/2)\rfloor$, $S$ is independent of $Q_n$,
$E^{\lfloor m_n x\rfloor,\lfloor m_n x\rfloor}_{\th}$ denotes the expected value of
$S^{\lfloor m_n x\rfloor,\lfloor m_n x\rfloor}_{\th}$ conditional on $Q_n$,
and $F_{n,t}$ is as in \eqref{Equation: Random Walk Functional}.

\subsection{Robin Boundary Condition: ``Reflected" Random Walk}

\begin{definition}\label{Definition: T Markov Process}
Let $T=\big(T(u)\big)_{u\in\mbb N_0}$ be the Markov chain on the state space $\mbb N_0$ with
the following transition probabilities:
\[\mbf P\big[T(u+1)=a+b\big|T(u)=a\big]=\frac13\qquad\text{if $a\in\mbb N_0\setminus\{0\}$ and $b\in\{-1,0,1\}$},\]
\[\mbf P\big[T(u+1)=0\big|T(u)=0\big]=\frac23,
\qquad\text{and}\qquad
\mbf P\big[T(u+1)=1\big|T(u)=0\big]=\frac13.\]
We denote $T^a:=\big(T|T(0)=a\big)$ and $T^{a,b}_u:=\big(T|T(0)=a\text{ and }T(u)=b\big)$.
\end{definition}

Let $M$ be a $(n+1)\times(n+1)$ tridiagonal matrix, and let $\tilde M$ be defined as
\[\tilde M(a,b)=\begin{cases}
\tfrac23M(a,b)&\text{if }a=b=0,\\
\tfrac13M(a,b)&\text{otherwise}.
\end{cases}\]
For any $\th\in\mbb N$, $0\leq a\leq n$, and vector $v\in\mbb R^{n+1}$,
\begin{align}\label{Equation: Spiked Combinatorial Analysis}
(\tilde M^\th v)(a)=\mbf E^a\left[\mbf 1_{\{\tau^{(n)}(T)>\th\}}\left(\prod_{a,b\in\mbb N_0} M(a,b)^{\La_\th^{(a,b)}(T)}\right)v\big(T(\th)\big)\right]
\end{align}
with $T$ independent of $M$,
$\mbf E^a$ denoting the expected value of $T^a$ conditioned on $M$,
and we define $\La^{(a,b)}_\th(T)$ in the same way as \eqref{Equation: Edge-Occupation Measure}.

We now apply this to the study of the matrix model $\hat K_n^w(t)$. The entries
of $I_n-(-\De^w_n+Q_n)/3m_n^2$ are the same as
\eqref{Equation: Dirichlet Diagonal Entry}--\eqref{Equation: Dirichlet Lower Diagonal Entry} except for the $(0,0)$ entry, which is equal to
\begin{multline*}
\left( I_n-\frac{-\De^w_n+ Q_n}{3m_n^2}\right)(0,0)=\frac13\left(1+w_n-\frac{D_n(0)}{m_n^2}\right)\\
=\frac13\left(2-(1-w_n)-\frac{D_n(0)}{m_n^2}\right)
=\frac23\left(1-\frac{(1-w_n)}2-\frac{D_n(0)}{2m_n^2}\right).
\end{multline*}
Therefore, if we let $\th=\th(n,t):=\lfloor m_n^2(3t/2)\rfloor$, then
\begin{align}\label{Equation: Spiked Matrix-Form Formula}
\langle f, \hat K^w_n(t)g\rangle
=\int_0^{(n+1)/m_n}f(x)\,\mbf E^{\lfloor m_nx\rfloor}\left[F^w_{n,t}(T)\, m_n\int_{T(\th)/m_n}^{(T(\th)+1)/m_n}g(y)\d y\right]\d x,
\end{align}
where $T$ is independent of $Q_n$,
we define the random functional
\begin{align}\label{Equation: Spiked Random Walk Functional}
F^w_{n,t}(T)&:=\mbf 1_{\{\tau^{(n)}(T)>\th\}}\left(1-\frac{(1-w_n)}2-\frac{D_n(0)}{2m_n^2}\right)^{\La_\th^{(0,0)}(T)}\\
\label{Equation: Spiked Random Walk Functional 2}
&\qquad\cdot\left(\prod_{a\in\mbb N}\left(1-\frac{D_n(a)}{m_n^2}\right)^{\La^{(a,a)}_\th(T)}\right)\\
\label{Equation: Spiked Random Walk Functional 3}
&\qquad\cdot\left(\prod_{a\in\mbb N_0}\left(1-\frac{U_n(a)}{m_n^2}\right)^{\La^{(a,a+1)}_\th(T)}\left(1-\frac{L_n(a)}{m_n^2}\right)^{\La_\th^{(a+1,a)}(T)}\right),
\end{align}
and $\mbf E^{\lfloor m_nx\rfloor}$ is the expected value of $T^{\lfloor m_nx\rfloor}$ conditional on $Q_n$.

\subsection{A Brief Comparison with Other Matrix Models}
\label{Subsection: Brief Comparison}

The assumptions made in Section \ref{Section: Main Results}
suggest that $\frac12(-\De_n+Q_n)\to\hat H$
and $\frac12(-\De_n^w+Q_n)\to\hat H^w$ as $n\to\infty$. Thus, by Remark \ref{Remark: Strong Continuity},
we expect that for any sequence of functions $(f_{n;t})_{n\in\mbb N}$
such that $f_{n;t}(x)\to\mr e^{-tx/2}$ in a suitable sense,
one has $f_{n;t}(-\De_n+Q_n)\to\hat K(t)$ and $f_{n;t}(-\De^w_n+Q_n)\to\hat K^w(t)$.
The difficulty involved in carrying this out rigorously in the generality aimed in this paper
is to choose $f_{n;t}$'s that are both
amenable to combinatorial analysis and applicable to general tridiagonal models.
The main insight of this paper is that the matrix models
$\hat K_n(t)$ and $\hat K^w_n(t)$
(which correspond to the choice $f_{n;t}(x):=(1-x/3m_n^2)^{\lfloor m_n^2(3t/2)\rfloor}$)
are in this sense better suited than arguably more ``obvious" choices of $f_{n;t}$.

In order to illustrate this claim, we compare our matrix models
with $f_{n;t}(x):=(1-x/2m_n^2)^{\lfloor m_n^2t/2\rfloor}$, which is what was used in
\cite{GorinShkolnikov,GaudreauLamarreShkolnikov}, and $f_{n;t}(x):=\mr e^{-tx/2}$,
which is arguably the most straightforward matrix model one could use in order to
obtain semigroup limits.
We begin with the latter:
If $Q_n$ is diagonal, then we can express the matrix exponential
$\mr e^{-t(-\De_n+Q_n)/2}$
in terms of a Feynman-Kac formula involving
the continuous-time simple random walk on $\mbb Z$ with exponential jump times.
This formula is very similar to \eqref{Equation: Dirichlet Semigroup}
and \eqref{Equation: Robin Semigroup} and is arguably easier to work with than
\eqref{Equation: Random Walk Functional} or \eqref{Equation: Spiked Random Walk Functional}.
However, for general tridiagonal $Q_n$, the Feynman-Kac formula becomes much more
unwieldy. In particular, the generator of the associated random walk depends on the entries of $Q_n$,
making a general unified treatment more difficult.

As for the matrix model used in \cite{GorinShkolnikov,GaudreauLamarreShkolnikov}, we note that
\begingroup
\allowdisplaybreaks
\begin{align*}
\textstyle\left( I_n-\frac{-\De_n+ Q_n}{2m_n^2}\right)(a,a)
&\textstyle=\frac{-D_n(a)}{m_n^2}
&0\leq a\leq n,\\
\textstyle\left( I_n-\frac{-\De_n+ Q_n}{2m_n^2}\right)(a,a+1)
&\textstyle=\frac12\left(1-\frac{U_n(a)}{m_n^2}\right)&0\leq a\leq n-1,\\
\textstyle\left( I_n-\frac{-\De_n+ Q_n}{2m_n^2}\right)(a+1,a)
&\textstyle=\frac12\left(1-\frac{L_n(a)}{m_n^2}\right)&0\leq a\leq n-1.
\end{align*}
\endgroup
If $D_n(a)=0$ for all $n$ and $a$, then a combinatorial analysis
similar to the one performed earlier in this section can relate the above
to a functional of simple symmetric random walks on $\mbb Z$
(i.e., i.i.d. uniform $\pm1$ increments). More generally, if $D_n(a)$ is of smaller order
than $m_n^2-U_n(a)$ and $m_n^2-L_n(a)$ for large $n$ (e.g., for $\be$-Hermite),
then a similar analysis holds, but with additional technical
difficulties (see \cite[Section 3.1]{GaudreauLamarreShkolnikov} and \cite[Section 3]{GorinShkolnikov} for
the details). However, if $D_n(a)$ is allowed to be of the same order as $m_n^2-U_n(a)$ and $m_n^2-L_n(a)$
(e.g., for $\be$-Laguerre),
then the analysis of \cite{GorinShkolnikov} and \cite{GaudreauLamarreShkolnikov} no longer applies.

\section{Strong Couplings for Theorem \ref{Theorem: Main Dirichlet}}\label{Section: Dirichlet Local Time Coupling}

Equations \eqref{Equation: Matrix-Form Formula} and \eqref{Equation: Matrix-Trace Formula}
suggest Theorem \ref{Theorem: Main Dirichlet}
relies on understanding how Brownian motion and its local time arises as the limit of the lazy
random walk and its edge-occupation measures. This is the subject of this section.

\begin{definition}
For every $x\geq0$,
let $\tilde B^x$ be a Brownian motion started at $x$ with
variance $2/3$,
and for every $t>0$, let $\tilde B^{x,x}_t:=\big(\tilde B^x|\tilde B^x(t)=x\big)$.
We define the local time process for $\tilde B$ in the same way as in \eqref{Equation: Interior Local Time}.
\end{definition}

The main result of this section is the following.

\begin{theorem}\label{Theorem: Split Coupling}
Let $t>0$ and $x\geq0$ be fixed. For every $0\leq s\leq t$ and $n\in\mbb N$,
let $\th_s=\th_s(n):=\lfloor m_n^2s\rfloor$ and $x^n:=\lfloor m_nx\rfloor$.
We use the shorthand $\th:=\th_t$.
For every $y\in\mbb R$, let $(y_n,\bar y_n)_{n\in\mbb N}$ be equal to one of the three sequences
\begin{align}\label{Equation: yn Sequences}
\big(\lfloor m_ny\rfloor,\lfloor m_ny\rfloor\big)_{n\in\mbb N},\,
\big(\lfloor m_ny\rfloor,\lfloor m_ny\rfloor+1\big)_{n\in\mbb N},\text{ or }\,\,
\big(\lfloor m_ny\rfloor+1,\lfloor m_ny\rfloor\big)_{n\in\mbb N}.
\end{align}
Finally, suppose that $(Z_n,Z)=(S^{x^n},\tilde B^x)$, or $(S^{x^n,x^n}_{\th},\tilde B^{x,x}_t)$
for each $n\in\mbb N$. For every $0<\eps<1/5$, there exists a coupling of $Z_n$ and $Z$ such that the following holds
almost surely as $n\to\infty$
\begin{align}
\label{Equation: KMT Coupling}
\sup_{0\leq s\leq t}\left|\frac{Z_n(\th_s)}{m_n}-Z(s)\right|
&=O\left(m_n^{-1}\log m_n\right),\\
\label{Equation: Local Time Coupling}
\sup_{0\leq s\leq t,~y\in\mbb R}\left|\frac{\La_{\th_s}^{(y_n,\bar y_n)}(Z_n)}{m_n}-\frac{L_s^y(Z)}3\right|
&=O\left(m_n^{-1/5+\eps}\log m_n\right).
\end{align}
\end{theorem}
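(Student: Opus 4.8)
The plan is to build the coupling in two stages: first couple the spatial trajectory of the lazy walk (or lazy bridge) to the time-changed Brownian motion $\tilde B$ via a Komlós–Major–Tusnády embedding, and then, conditionally on this trajectory coupling, deduce the edge-occupation-measure estimate \eqref{Equation: Local Time Coupling} from known strong approximations of random walk occupation measures by Brownian local time. For the first stage, observe that the lazy walk $S$ has i.i.d.\ increments uniform on $\{-1,0,1\}$, which have mean zero and variance $2/3$; hence after the parabolic rescaling $u\mapsto \th_s=\lfloor m_n^2 s\rfloor$, $m_n^{-1}S(\th_s)$ should track a Brownian motion of variance $2/3$, i.e.\ $\tilde B$. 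The KMT/Skorokhod-embedding machinery (as used in \cite{GorinShkolnikov,GaudreauLamarreShkolnikov}) gives a coupling with the logarithmic error in \eqref{Equation: KMT Coupling}. For the bridge case $(Z_n,Z)=(S^{x^n,x^n}_\th,\tilde B^{x,x}_t)$ one runs the same argument, either by conditioning the unconditioned coupling on the endpoint event and controlling the (polynomially small) conditioning cost, or by invoking a bridge version of KMT directly; since $x$ is fixed and $\th\sim m_n^2 t$, the endpoint $S(\th)=x^n$ is within $O(1)$ of a typical value, so the conditioning is harmless.

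For the second stage, I would relate the edge-occupation measure $\La^{(y_n,\bar y_n)}_{\th_s}(Z_n)$ to the ordinary (vertex) occupation measure of the walk. Each of the three choices of $(y_n,\bar y_n)$ in \eqref{Equation: yn Sequences} counts, respectively, lazy steps, up-steps, or down-steps taken from the site $\lfloor m_n y\rfloor$ (up to an $O(1)$ shift in the last two cases). Conditionally on the sequence of sites visited, the type of each step from a given site is an independent draw from $\{\text{lazy},\text{up},\text{down}\}$ with probabilities $(1/3,1/3,1/3)$, so $\La^{(y_n,\bar y_n)}_{\th_s}$ is a binomial thinning (with parameter $1/3$) of the number of visits to that site. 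Thus $\La^{(y_n,\bar y_n)}_{\th_s}/m_n \approx \tfrac13 \cdot (\text{visits to } \lfloor m_n y\rfloor)/m_n$, and the visit count divided by $m_n$ is the classical random walk occupation measure, which is strongly approximated by $L^y_s(Z)$ via the Révész/Borodin-type local time invariance principles (the same ones cited in \cite{GorinShkolnikov,GaudreauLamarreShkolnikov}). Concatenating the thinning-fluctuation bound (of order $m_n^{-1/2}\times$ polylog, uniformly in $y$ by a union bound over the $O(m_n)$ relevant sites, using that the walk stays within $O(m_n\log m_n)$ of $x^n$) with the local time approximation error (of order $m_n^{-1/5+\eps}$ from the literature) yields \eqref{Equation: Local Time Coupling} with the stated rate.

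The main obstacle — and the reason the paper devotes a whole section to this rather than citing it off the shelf — is that the cited strong invariance results for Brownian local time are stated for simple symmetric random walk and genuine (unconditioned) Brownian motion, whereas here one needs them (i) for the \emph{lazy} walk, whose laziness introduces the factor $1/3$ and a different increment variance, and (ii) crucially for the \emph{bridge} $(S^{x^n,x^n}_\th,\tilde B^{x,x}_t)$, uniformly over all starting points and over $s\in[0,t]$, with an explicit polynomial rate. Transferring the local time coupling from the free process to the bridge while keeping the $m_n^{-1/5+\eps}$ rate uniform in $y$ and $s$ is the delicate point: one cannot simply condition, because the conditioning event has probability of order $m_n^{-1}$ and naive absolute-continuity bounds would destroy the rate. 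I expect the resolution to use a decomposition of the bridge into an initial and terminal free segment glued in the middle (or a direct coupling of the bridge's increments), together with concentration estimates showing the local time of the bridge differs from that of the free path by a negligible amount on the relevant time scales; making all error terms uniform in the endpoint and in $s$ is where the real work lies.
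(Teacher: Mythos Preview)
Your overall strategy is correct and matches the paper's: KMT for the trajectory, a binomial-thinning argument to reduce edge-occupation to vertex-occupation (the paper's Proposition~5.3), and a strong invariance result for vertex-occupation versus Brownian local time (the paper's Proposition~5.2). Two points deserve correction.

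First, the rate $m_n^{-1/5+\eps}$ does not come packaged from the literature. The paper derives it via a criterion (Lemma~5.1, essentially the proof of \cite[Theorem~3.2]{BassKhoshnevisan}) that bounds the vertex-local-time discrepancy by a sum of four terms depending on a free parameter $\de$: the H\"older regularity of $L^y_s(Z)$ on scale $m_n^{-\de}$, the KMT error multiplied by $m_n^{2\de}$, the occupation-measure regularity on scale $m_n^{1-\de}$, and $m_n^{-1+\de}$ times the occupation-measure supremum. Plugging in the known estimates and optimizing gives $\de=2(1+\eps)/5$, whence the exponent $-1/5$.

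Second, and more importantly, you misidentify the obstacle. You write that for the bridge ``one cannot simply condition, because the conditioning event has probability of order $m_n^{-1}$ and naive absolute-continuity bounds would destroy the rate.'' In fact the paper does exactly this naive conditioning. The point is that the almost-sure rate is obtained via Borel--Cantelli from tail bounds of the form $O(e^{-c\la}+m_n^{-14})$ (Lemmas~5.6, 5.7, and Proposition~5.3); multiplying by $\mbf P[S^{x^n}(\th)=x^n]^{-1}=O(m_n)$ from the local CLT gives $O(m_ne^{-c\la}+m_n^{-13})$, which is still summable in $n$ since $m_n\asymp n^{\mf d}$ with $\mf d>1/13$. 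The $e^{-c\la}$ term is handled by taking $\la=C\log m_n$ with a slightly larger $C$, so the rate is unaffected. For the KMT step on the bridge the paper simply cites a bridge version of KMT \cite[Theorem~2]{Borisov}, and for the local-time regularity of $\tilde B^{x,x}_t$ it uses absolute continuity of $(\tilde B^{x,x}_t(s))_{s\in[0,t/2]}$ and its time-reversal with respect to $(\tilde B^x(s))_{s\in[0,t/2]}$. No decomposition of the bridge into free segments is needed.
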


Classical results on strong couplings of local time (such as \cite{BassKhoshnevisan})
concern the {\bf vertex-occupation measures} of a random walk:
\begin{align}\label{Equation: Vertex-Occupation Measure}
\La^a_u(S):=\sum_{j=0}^{u}\mbf 1_{\{S(j)=a\}},\qquad a\in\mbb Z,~u\in\mbb N.
\end{align}
Indeed, for any measurable $f:\mbb R\to\mbb R$,
the vertex-occupation measures satisfy
\begin{align}\label{Equation: Vertex-Occupation Property}
\sum_{j=0}^u f\big(S(j)\big)=\sum_{a\in\mbb Z}\La^a_u(S)\,f(a),
\end{align}
making a direct comparison with local time more convenient
by \eqref{Equation: Interior Local Time}. Thus, our strategy of proof for
Theorem \ref{Theorem: Split Coupling} has two steps: We first use
standard methods to construct a strong coupling of the vertex-occupation measures
of $S^{x^n}$ and $S^{x^n,x^n}_{\th}$ with the local time of their corresponding continuous
processes. Then, we prove that the occupation measure of a given edge $(a,b)$ is
very close to a multiple of the occupation measure of the vertices $a$ and $b$. More
precisely:

\begin{proposition}\label{Proposition: Strong Local Time Coupling}
For every $0<\eps<1/5$, there exists a coupling such that
\begin{align}\label{Equation: Strong Local Time Coupling}
\sup_{0\leq s\leq t,~y\in\mbb R}\left|\frac{\La_{\th_s}^{\lfloor m_ny\rfloor}(Z_n)}{m_n}-L_s^y(Z)\right|
=O\left(m_n^{-1/5+\eps}\log m_n\right)
\end{align}
and \eqref{Equation: KMT Coupling} hold almost surely as $n\to\infty$.
\end{proposition}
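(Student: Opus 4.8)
The plan is to build the coupling in two stages: a Komlós–Major–Tusnády (KMT) embedding at the level of the path, followed by a transfer of the associated invariance principle from the increments to the occupation measures. First I would handle the path coupling \eqref{Equation: KMT Coupling}. For $Z_n=S^{x^n}$, the lazy random walk $S$ has i.i.d. mean-zero increments with variance $2/3$, so the classical KMT strong approximation gives a coupling of $S^{x^n}$ with a Brownian motion of variance $2/3$ started at $x$ such that $\sup_{0\le u\le \th}|S(u)-\tilde B(u/m_n^2)\,m_n|=O(\log \th)=O(\log m_n)$; dividing by $m_n$ and using $\th=\lfloor m_n^2 t\rfloor$ yields \eqref{Equation: KMT Coupling}. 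For the bridge case $Z_n=S^{x^n,x^n}_\th$, I would either apply a KMT-type result for random walk bridges directly, or condition the free coupling on the endpoint and absorb the resulting correction; the bridge of $\tilde B$ over $[0,t]$ pinned at $x$ is $\tilde B^{x,x}_t$, and the error term is of the same logarithmic order. The half-integer shifts in \eqref{Equation: yn Sequences} do not affect the path statement since $|\bar y_n - y_n|\le 1$ only costs $O(m_n^{-1})$.

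Second, and this is the heart of the matter, I would establish the occupation-measure coupling \eqref{Equation: Strong Local Time Coupling}. Here I would invoke an existing strong approximation theorem for the local time of a random walk by the local time of Brownian motion — a result of the type in \cite{BassKhoshnevisan} — which, for a random walk satisfying the moment conditions of the lazy walk, couples $\sum_{j=0}^{u}\mbf 1_{\{S(j)=a\}}$ with $L^{a/m_n}_{u/m_n^2}(\tilde B)\,m_n$ uniformly in $a$ and $u$ with error $O(m_n^{-1/5+\eps}\log m_n)$ (the exponent $1/5$ is the standard rate coming from these couplings, up to $\eps$-losses and logarithmic factors). The normalization is the one in \eqref{Equation: Interior Local Time}: since $\tilde B$ has variance $2/3$, the occupation density relative to counting measure on $m_n^{-1}\mbb Z$ matches $L^y_s(\tilde B)$ without an extra factor. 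I would make sure the same coupling realizes both \eqref{Equation: KMT Coupling} and \eqref{Equation: Strong Local Time Coupling}; this is automatic if one uses a construction that produces the path and its local time simultaneously, which the Bass–Khoshnevisan-type results do. The bridge case again requires either a bridge version of the local-time coupling or a conditioning argument, and here one must check that pinning the endpoint does not degrade the rate — this is where I expect the main technical obstacle, since off-the-shelf local-time invariance principles are usually stated for free walks, and adapting them to bridges (uniformly over the starting level $x$ and over $0\le s\le t$, including $s$ near $t$ where the bridge is being forced back to $x$) is precisely the kind of additional work the introduction flags as needed in Sections \ref{Section: Dirichlet Local Time Coupling} and \ref{Section: Spiked Local Time Coupling}.

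Finally, I would combine the pieces: the two estimates \eqref{Equation: KMT Coupling} and \eqref{Equation: Strong Local Time Coupling} hold simultaneously almost surely under the single coupling just constructed, which is exactly the statement of the proposition. The passage from \eqref{Equation: Strong Local Time Coupling} to the edge-occupation estimate \eqref{Equation: Local Time Coupling} of Theorem \ref{Theorem: Split Coupling} is then the separate second step described in the text — showing that $\La^{(y_n,\bar y_n)}_{\th_s}(Z_n)$ is close to $\tfrac13\La^{\lfloor m_n y\rfloor}_{\th_s}(Z_n)$ — but that is not part of the present proposition, so I would stop here.
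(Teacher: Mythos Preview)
Your approach is essentially the paper's: KMT for the path, then a Bass--Khoshnevisan-type transfer to occupation measures. The paper makes the second step modular via a criterion (Lemma~\ref{Lemma: Local Time Invariance Criterion}, which it identifies as the content of the proof of \cite[Theorem~3.2]{BassKhoshnevisan}) that bounds the local-time error by four pieces --- H\"older regularity of $L^y_s(Z)$, the KMT error times $m_n^{2\de}$, H\"older regularity of $\La^a_u(Z_n)$, and a sup bound on $\La^a$ --- and then optimizes over $\de$ to produce the exponent $-1/5$. So citing Bass--Khoshnevisan as a black box for the free walk, as you do, is equivalent in content.

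The place where you are vague is exactly where the paper does the work: the bridge. For the path coupling it cites a bridge KMT (Borisov) directly. For Brownian-bridge local-time regularity it splits $[0,t]$ into two halves and uses absolute continuity of each with respect to free Brownian motion, plus time reversal. For the random-walk-bridge occupation-measure regularity it does \emph{not} use absolute continuity (which is unavailable in discrete time); instead it uses the local CLT to bound $\mbf P[\,\cdot\mid S^0(\th)=0]\le O(m_n)\,\mbf P[\,\cdot\,]$, so the free-walk tail estimate \cite[Proposition~3.1]{BassKhoshnevisan} transfers to the bridge at the cost of one power of $m_n$. This is precisely why Assumption~\ref{Assumption: mn} imposes $\mf d>1/13$: the free-walk estimate carries a residual $O(m_n^{-14})$ term, and after paying the $m_n$ factor one needs $\sum_n m_n^{-13}<\infty$ for Borel--Cantelli. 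Your ``conditioning argument'' instinct was right; this is the mechanism, and the reason the rate survives is that the loss is only polynomial while the tail bounds are exponential.
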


\begin{proposition}\label{Proposition: Splitting of Occupation Measure}
Almost surely, as $n\to\infty$, one has
\[\sup_{\substack{0\leq u\leq\th\\a,b\in\mbb Z,~|a-b|\leq 1}}
\frac1{m_n}\left|\La^{(a,b)}_u(Z_n)-\frac{\La^a_u(Z_n)}3\right|
=O\big(m_n^{-1/2}\log m_n\big).\]
\end{proposition}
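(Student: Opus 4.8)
The plan is to reduce the statement to a law-of-large-numbers estimate for the number of times the walk, while sitting at a vertex $a$, jumps to a specific neighbor $b$. Fix a realization of $Z_n$ (either the unconditioned lazy walk $S^{x^n}$ or the bridge $S^{x^n,x^n}_\th$). For each visit of $Z_n$ to $a$ — there are $\La^a_u(Z_n)$ of them up to time $u$ — the walk makes an independent choice of its next step, uniform on $\{a-1,a,a+1\}$ in the interior (and the boundary vertex $0$ behaves slightly differently, which I handle separately below). Thus, conditionally on the sequence of vertices visited, $\La^{(a,b)}_u(Z_n)$ is a sum of $\La^a_u(Z_n)$ i.i.d.\ Bernoulli$(1/3)$ random variables whenever $|a-b|\le 1$. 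The quantity to control is therefore the maximal deviation of such a binomial from $1/3$ of its sample size, simultaneously over all $a\in\mbb Z$ and all $u\le\th$.

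First I would set up the martingale: define $M^{(a,b)}_k$ to be, after the $k$th visit to $a$, the difference between the running count of $a\to b$ transitions and $k/3$. This is a martingale with bounded increments (bounded by $1$), so Azuma–Hoeffding gives $\mbf P[|M^{(a,b)}_k|\ge\la]\le 2\exp(-\la^2/2k)$. Taking $\la$ of order $m_n^{1/2}\log m_n$ and using that the total number of visits to any vertex before time $\th$ is at most $\th=O(m_n^2)$, a union bound over the $O(m_n^2)$ relevant vertices (the walk stays in a window of size $O(m_n\log m_n)$ by \eqref{Equation: KMT Coupling}, and in any case cannot reach beyond $\pm\th$) and over the at-most-$\th$ stopping times yields a summable-in-$n$ probability of exceeding $C m_n^{1/2}\log m_n$. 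Borel–Cantelli then gives the almost sure bound $\sup_{u\le\th,\,|a-b|\le1}|\La^{(a,b)}_u(Z_n)-\La^a_u(Z_n)/3|=O(m_n^{1/2}\log m_n)$, which after dividing by $m_n$ is exactly the claimed $O(m_n^{-1/2}\log m_n)$. The point is that $\log m_n$ is a generous slack: one really only needs the deviation to be $o(m_n)$, and $m_n^{1/2}\log m_n$ comfortably beats that.

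Two technical points need care. First, the boundary vertex $a=0$: for the lazy walk $S$ one has $\tau^{(n)}(S)>\th$ on the event of interest in \eqref{Equation: Random Walk Functional}, but the estimate here is stated for all $u\le\th$ unconditionally, so I should note that from vertex $0$ the walk can step to $-1$ — however $0$ is an interior vertex of $\mbb Z$ for the purpose of the uniform $\{-1,0,1\}$ increment of $S$, so no special treatment is actually needed for $S$; only the reflected chain $T$ would differ, but Proposition \ref{Proposition: Splitting of Occupation Measure} is invoked only for $Z_n\in\{S^{x^n},S^{x^n,x^n}_\th\}$. Second, and this is the main obstacle, passing the conditional-independence argument through to the \emph{bridge} $S^{x^n,x^n}_\th$: conditioning on $S(\th)=x^n$ destroys the clean ``fresh uniform choice at each visit'' structure. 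The cleanest fix is to condition on the \emph{entire trajectory of visited vertices} $(S(0),S(1),\ldots,S(\th))$ rather than just the endpoint; given that trajectory, whether the step from $a$ went to $a-1$, $a$, or $a+1$ is already determined, so this reformulation is vacuous. Instead I would condition only on the sequence of distinct-vertex excursion structure that is compatible with many edge-labelings — more precisely, I use the standard fact that a lazy random walk bridge can be generated by first sampling the sequence of \emph{moves ignoring which of $\{a-1,a+1\}$} (i.e.\ the $\{\text{stay},\text{move}\}$ pattern and, among moves, the $\pm1$ signs subject to returning to the start), and that conditionally the refinement of each ``+1 vs.\ stay vs.\ $-1$'' choice that is consistent with the realized vertex sequence is what it is; so one should instead directly compare $\La^{(a,a+1)}_u+\La^{(a,a-1)}_u$ and $\La^{(a,a)}_u$ against $\La^a_u/3$ each, using that $\La^{(a,a)}_u(S)$ — the number of self-loops at $a$ — is, even for the bridge, a sum over visits of independent Bernoulli$(1/3)$ variables, because the laziness coin is independent of everything including the endpoint constraint. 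That handles the $b=a$ case outright. For $b=a\pm1$: condition on the self-loop pattern and on the sequence of non-lazy moves; then $\La^{(a,a+1)}_u$ counts, among the non-lazy visits to $a$, those whose move is $+1$, and a Hoeffding bound for sampling without replacement (or simply the fact that the signed-move sequence of a simple-random-walk bridge has exchangeable increments, so a maximal inequality of Ottaviani/Doob type applies) gives the same $O(m_n^{1/2}\log m_n)$ deviation. I expect the exchangeability-based maximal inequality for the bridge to be the step requiring the most care to state precisely, but it is standard and the error budget is very forgiving.
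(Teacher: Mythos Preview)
Your Hoeffding step has a quantitative gap. You bound the number of visits to a given vertex by the trivial $\La^a_u(Z_n)\le\th=O(m_n^2)$, but then with $\la=Cm_n^{1/2}\log m_n$ and $k\le m_n^2$ the Azuma--Hoeffding exponent is $\la^2/(2k)=O\big((\log m_n)^2/m_n\big)\to 0$, so the individual probability tends to $1$ and no union bound can make it summable. The paper's proof hinges precisely on replacing this trivial bound by the nontrivial input $\sup_a\La^a_\th(S^0)\le Cm_n\log m_n$ on an event of summable complement (Lemma~\ref{Lemma: Local Time Deviation}, which comes from the Bass--Khoshnevisan tail estimate \eqref{Equation: Unconditioned Occupation Measure Tail}). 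With $k\le Cm_n\log m_n$ the exponent becomes $(\bar C^2/2C)\log m_n$, which can be made to decay faster than any power of $m_n$; together with the range bound of Lemma~\ref{Lemma: Range Deviation} this makes the union bound over vertices and over $h\le Cm_n\log m_n$ summable. So the missing idea is an a priori control on the maximal vertex local time of the lazy walk, not just the total time.

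For the bridge you are working much harder than needed. The paper does not attempt to recover conditional independence or exchangeability under the endpoint conditioning; it simply uses $\mbf P[E\mid S^0(\th)=0]\le \mbf P[E]/\mbf P[S^0(\th)=0]=O(m_n)\,\mbf P[E]$ via the local CLT. Since the unconditioned tail can be made $O(n^{-q})$ for any $q$ by taking the constant in front of $m_n^{1/2}\log m_n$ large (see \eqref{Equation: Summable Deviation 3}), the extra factor $m_n$ is harmless and Borel--Cantelli still applies. This sidesteps entirely the delicate conditioning issues you raise.
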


\begin{notation}
In Propositions \ref{Proposition: Strong Local Time Coupling} and \ref{Proposition: Splitting of Occupation Measure},
and the remainder of Section \ref{Section: Dirichlet Local Time Coupling}, whenever we state
a result for $Z_n$ and $Z$, we mean that the result in question
applies to $(Z_n,Z)=(S^{x^n},\tilde B^x)$ and $(S^{x^n,x^n}_{\th},\tilde B^{x,x}_t)$.
\end{notation}

\subsection{Condition for Strong Local Time Coupling}

We begin with a criterion for local time couplings. The following lemma is essentially the content of the proof of
\cite[Theorem 3.2]{BassKhoshnevisan}; we provide a full proof since we need a
modification of the result in Section \ref{Section: Spiked Local Time Coupling}.

\begin{lemma}\label{Lemma: Local Time Invariance Criterion}
For any $0<\de<1$, the following holds almost surely as $n\to\infty$:
\begingroup
\allowdisplaybreaks
\begin{multline*}
\sup_{0\leq u\leq \th,~a\in\mbb Z}\left|\frac{\La_u^a(Z_n)}{m_n}-L_{u/m_n^2}^{a/m_n}(Z)\right|
=O\Bigg(\sup_{0\leq s\leq t,~|y-z|\leq m_n^{-\de}}\big|L^y_s(Z)-L^z_s(Z)\big|\\
+m_n^{2\de}\sup_{0\leq s\leq t}\left|\frac{Z_n(\th_s)}{m_n}-Z(s)\right|
+\sup_{0\leq u\leq\th,~|a-b|\leq m_n^{1-\de}}\frac{\big|\La^a_u(Z_n)-\La^b_u(Z_n)\big|}{m_n}
+\sup_{0\leq u\leq\th,~a\in\mbb Z}\frac{\La^a_u(Z_n)}{m_n^{2-\de}}\Bigg).
\end{multline*}
\endgroup
\end{lemma}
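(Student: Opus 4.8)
The plan is to follow the strategy behind the proof of \cite[Theorem 3.2]{BassKhoshnevisan}: approximate both the discrete vertex-occupation $\La^a_u(Z_n)$ and the continuous local time $L^y_s(Z)$ by their spatial averages over one common mesoscopic window, and then compare the two averaged quantities using the pathwise coupling error $\eta_n:=\sup_{0\le s\le t}|Z_n(\th_s)/m_n-Z(s)|$ (the quantity appearing on both sides of the asserted bound, cf.\ \eqref{Equation: KMT Coupling}). Note at the outset that $\eta_n\to0$ a.s.\ and, intrinsically, $\eta_n\ge c\,m_n^{-1}$ for some $c>0$ and $n$ large, since $Z_n/m_n$ is a step function making jumps of size $m_n^{-1}$ on the lattice $m_n^{-1}\mbb Z$ while $Z$ is continuous. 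Fix $0<\de<1$, set $K=K_n:=\lfloor m_n^{1-\de}\rfloor$ and $\kappa=\kappa_n:=K_n/m_n\in(\tfrac12 m_n^{-\de},m_n^{-\de}]$ (for $n$ large), fix $0\le u\le\th$ and $a\in\mbb Z$, write $y:=a/m_n$ and $s:=u/m_n^2\le t$, and telescope
\[
\frac{\La^a_u(Z_n)}{m_n}-L^y_s(Z)=\Big(\frac{\La^a_u(Z_n)}{m_n}-A_n\Big)+\big(A_n-B_n\big)+\big(B_n-L^y_s(Z)\big),
\]
where $A_n:=\tfrac1{2K+1}\sum_{|b-a|\le K}\tfrac{\La^b_u(Z_n)}{m_n}$ and $B_n:=\tfrac1{2\kappa}\int_{y-\kappa}^{y+\kappa}L^w_s(Z)\d w$.

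The two outer brackets are handled at once. Since $A_n$ averages $\La^b_u(Z_n)/m_n$ over $|b-a|\le K\le m_n^{1-\de}$, the first bracket is at most $\sup_{0\le u\le\th,\,|a-b|\le m_n^{1-\de}}|\La^a_u(Z_n)-\La^b_u(Z_n)|/m_n$, the third term on the right-hand side; and since $B_n$ averages $L^w_s(Z)$ over $|w-y|\le\kappa\le m_n^{-\de}$ with $s\le t$, the third bracket is at most $\sup_{0\le s\le t,\,|w-y|\le m_n^{-\de}}|L^w_s(Z)-L^y_s(Z)|$, the first term on the right-hand side.

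The middle bracket is the crux. Using $\sum_{|b-a|\le K}\La^b_u(Z_n)=\sum_{j=0}^u\mbf 1_{\{|Z_n(j)-a|\le K\}}$ and that $\phi_n(r):=Z_n(\th_r)/m_n$ is constant equal to $Z_n(j)/m_n$ on $[j/m_n^2,(j+1)/m_n^2)$, one rewrites $A_n=\tfrac{m_n}{2K+1}\int_0^{(u+1)/m_n^2}\mbf 1_{\{|\phi_n(r)-y|\le\kappa\}}\d r$, while the occupation-density formula \eqref{Equation: Interior Local Time} gives $B_n=\tfrac{m_n}{2K}\int_0^s\mbf 1_{\{|Z(r)-y|\le\kappa\}}\d r$. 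Replacing the prefactor $\tfrac{m_n}{2K+1}$ by $\tfrac{m_n}{2K}$ and the horizon $(u+1)/m_n^2$ by $s$ costs $O(m_n^{2\de-1})$, which by $\eta_n\gtrsim m_n^{-1}$ is $O(m_n^{2\de}\eta_n)$. It then remains to bound $\tfrac{m_n}{2K}\int_0^s\big(\mbf 1_{\{|\phi_n(r)-y|\le\kappa\}}-\mbf 1_{\{|Z(r)-y|\le\kappa\}}\big)\d r$: since $|\phi_n(r)-Z(r)|\le\eta_n$, the integrand vanishes unless $\phi_n(r)$ lies within $\eta_n$ of one of the two endpoints $y\pm\kappa$; the time $\phi_n$ spends in such an $\eta_n$-neighbourhood equals $m_n^{-2}$ times the number of visits of $Z_n$ up to time $u$ to a block of at most $2m_n\eta_n+1$ consecutive integers, hence is at most $m_n^{-2}(2m_n\eta_n+1)\sup_b\La^b_u(Z_n)$. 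Multiplying by $\tfrac{m_n}{2K}\le m_n^{\de}$ and summing over the two endpoints, the main term is bounded by $4m_n^{\de-1}\eta_n\sup_b\La^b_u(Z_n)+2\,\sup_b\La^b_u(Z_n)/m_n^{2-\de}$. The second summand is twice the fourth term of the asserted bound; and by the crude a priori estimate $\sup_b\La^b_\th(Z_n)=O(m_n\log m_n)$ a.s.\ (provable by elementary moment computations for the lazy walk run for $\th\asymp m_n^2 t$ steps), the first summand is $O(m_n^{\de}\log m_n\cdot\eta_n)=O(m_n^{2\de}\eta_n)$, i.e.\ at most the second term of the asserted bound. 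Collecting the four contributions and absorbing the lower-order pieces proves the lemma; the argument is verbatim the same for $(Z_n,Z)=(S^{x^n},\tilde B^x)$ and $(S^{x^n,x^n}_\th,\tilde B^{x,x}_t)$, as it uses only the occupation-density identity and the given joint law of $(Z_n,Z)$.

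The step I expect to be most delicate is the treatment of the middle bracket: one must choose a single window making $A_n$ and $B_n$ literally equal up to explicit, controllable mismatches, and then route the ``boundary-layer'' discrepancy through the \emph{discrete} occupation measure so that the fourth term of the bound emerges naturally, all while checking that every remaining error is either one of the four displayed quantities or provably $O(m_n^{2\de}\eta_n)$ — which is precisely why the coupling error enters the bound carrying the amplification $m_n^{2\de}$. The individual estimates are routine once the window and the routing are fixed; the difficulty is the bookkeeping that keeps all errors within the prescribed four terms.
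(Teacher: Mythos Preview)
Your proof is correct and follows the same mollification strategy as the paper, with one variation worth flagging: you smooth with the flat indicator $\mbf 1_{\{|w-y|\le\kappa\}}$, whereas the paper uses a triangular tent $f_\eps$ (peak $1/\eps$ at $a/m_n$, vanishing outside $|z-a/m_n|\le\eps$, linear in between). Because $f_\eps$ is Lipschitz with constant $1/\eps^2$, the paper bounds
\[
\left|\int_0^{u/m_n^2} f_\eps\big(Z(r)\big)\d r-\frac1{m_n^2}\sum_{j\le u} f_\eps\big(Z_n(j)/m_n\big)\right|\le \frac{t}{\eps^2}\,\eta_n
\]
in one line, producing the $m_n^{2\de}\eta_n$ term directly and keeping the lemma a purely deterministic pointwise inequality (the implicit constant depends only on $t$). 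Your indicator is not Lipschitz, so you instead count the time $\phi_n$ spends in the two boundary layers of width $\eta_n$; this yields the cross term $m_n^{\de-1}\eta_n\sup_b\La^b_u(Z_n)$, which you then absorb into $m_n^{2\de}\eta_n$ by invoking the a priori probabilistic bound $\sup_b\La^b_\th(Z_n)=O(m_n\log m_n)$ (this is Lemma~\ref{Lemma: Occupation Measure Size}, proved independently of the present lemma, so there is no circularity), together with the observation $\eta_n\gtrsim m_n^{-1}$ to dispose of the $O(m_n^{2\de-1})$ prefactor mismatch. Both arguments are valid; the paper's tent function is slightly cleaner in that it avoids importing external almost-sure estimates into what would otherwise be a deterministic comparison, and it makes the role of the four displayed terms more transparent (in particular the fourth term arises in the paper solely from the Riemann-sum error $\big|\tfrac1{m_n}\sum_b f_\eps(b/m_n)-1\big|=O((\eps m_n)^{-1})$, not from any boundary-layer count).
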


\begin{proof}
Let $n\in\mbb N$ and $a\in\mbb Z$ be fixed, and for each $\eps>0$, define the function
$f_\eps:\mbb R\to\mbb R$ as follows.
\begin{enumerate}
\item $f_\eps(a/m_n)=1/\eps$;
\item $f_\eps(z)=0$ whenever $|z-a/m_n|>\eps$; and
\item define $f_\eps(z)$ by linear interpolation for $|z-a/m_n|\leq\eps$.
\end{enumerate}
Since $f_\eps$ integrates to one, for every $0\leq u\leq\th$, we have that
\begin{multline*}
\left|\int_0^{u/m_n^2}f_\eps\big(Z(s)\big)\d s-L^{a/m_n}_{u/m_n^2}(Z)\right|
=\left|\int_{\mbb R}f_\eps(y)\big(L^y_{u/m_n^2}(Z)-L^{a/m_n}_{u/m_n^2}(Z)\big)\d y\right|\\
\leq\sup_{|y-a/m_n|\leq\eps}\big|L^y_{u/m_n^2}(Z)-L^{a/m_n}_{u/m_n^2}(Z)\big|.
\end{multline*}
Note that $|f_\eps(z)-f_\eps(y)|/|z-y|\leq\frac1{\eps^2}$ for all $z,y\in\mbb R$;
hence, for every $0\leq u\leq\th$,
\begin{multline*}
\left|\int_0^{u/m_n^2}f_\eps\big(Z(s)\big)\d s-\frac1{m_n^2}\sum_{j=1}^uf_\eps\big(Z_n(j)/m_n\big)\right|\\
=\left|\int_0^{u/m_n^2}f_\eps\big(Z(s)\big)-f_\eps\big(Z_n(\th_s)/m_n\big)\d s\right|
\leq\frac t{\eps^2}\sup_{0\leq s\leq u/m_n^2}
\left|\frac{Z_n(\th_s)}{m_n}-Z(s)\right|.
\end{multline*}
Finally,
\begin{multline*}
\frac1{m_n^2}\sum_{j=1}^uf_\eps\big(Z_n(j)/m_n\big)-\frac{\La^a_u(Z_n)}{m_n}
=\frac1{m_n^2}\sum_{b\in\mbb Z}f_\eps(b/m_n)\La^{b}_u(Z_n)-\frac{\La^a_u(Z_n)}{m_n}\\
=\frac1{m_n}\sum_{b\in\mbb Z}f_\eps(b/m_n)
\frac{\big(\La^{b}_u(Z_n)-\La^a_u(Z_n)\big)}{m_n}
+\frac{\La^a_u(Z_n)}{m_n}\left(\frac1{m_n}\sum_{b\in\mbb Z}f_\eps(b/m_n)-1\right).
\end{multline*}
By a Riemann sum approximation,
\[\left|\frac1{m_n}\sum_{b\in\mbb Z}f_\eps(b/m_n)-1\right|=O\left(\frac1{\eps m_n}\right),\]
and thus we conclude that
\[\frac1{m_n^2}\sum_{j=1}^uf_\eps\big(Z_n(j)/m_n\big)-\frac{\La^a_u(Z_n)}{m_n}
=O\left(\sup_{|a-b|\leq\eps m_n}\frac{\big|\La^{b}_u(Z_n)
-\La^a_u(Z_n)\big|}{m_n}+\frac{\La^a_u(Z_n)}{\eps m_n^2}\right).\]r
The result then follows by taking a supremum over $0\leq u\leq \th$ and $a\in\mbb Z$,
and taking $\eps=\eps(n)=m_n^{-\de}$. 
\end{proof}

\subsection{Proof of Proposition \ref{Proposition: Strong Local Time Coupling}}
\label{Subsubsection: Local Time Regularity}

We begin with the proof of \eqref{Equation: KMT Coupling}:

\begin{lemma}\label{Lemma: KMT}
There exists a coupling such that \eqref{Equation: KMT Coupling}
holds.
In particular, for any $0<\de<1/2$, it holds almost surely as $n\to\infty$ that
\[m_n^{2\de}\sup_{0\leq s\leq t}\left|\frac{Z_n(\th_s)}{m_n}-Z(s)\right|=O(m_n^{-1+2\de}\log m_n).\]
\end{lemma}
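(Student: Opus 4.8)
The statement to prove is Lemma~\ref{Lemma: KMT}: a strong coupling of the lazy random walk $S^{x^n}$ (or its bridge $S^{x^n,x^n}_\th$) with the variance-$2/3$ Brownian motion $\tilde B^x$ (or its bridge $\tilde B^{x,x}_t$) such that $\sup_{0\le s\le t}|Z_n(\th_s)/m_n - Z(s)| = O(m_n^{-1}\log m_n)$ almost surely.

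\medskip

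The plan is to invoke the Koml\'os--Major--Tusn\'ady strong embedding theorem. The increments $S(u)-S(u-1)$ are i.i.d.\ uniform on $\{-1,0,1\}$, hence mean zero, variance $2/3$, and with finite exponential moments; so the KMT theorem yields, on a suitable probability space, a standard Brownian motion $\beta$ with $\sup_{0\le k\le N}|S(k)-\sqrt{2/3}\,\beta(k)| = O(\log N)$ almost surely (equivalently, directly embedding a variance-$2/3$ Brownian motion $\tilde B$ with $\sup_{0\le k\le N}|S(k)-\tilde B(k)| = O(\log N)$). First I would apply this with $N = \th = \lfloor m_n^2 t\rfloor$, started from $S(0)=x^n=\lfloor m_n x\rfloor$; then I would interpolate, controlling the difference between $\tilde B(\th_s)$ and $\tilde B(m_n^2 s)$ (off by at most one unit of time) and between $\tilde B(m_n^2 s)$ and a Brownian motion started exactly at $x$ rather than at $\lfloor m_n x\rfloor$ (a deterministic shift of size $|x^n - m_n x|/m_n \le 1/m_n$), using the modulus of continuity of Brownian motion on $[0, m_n^2 t]$, which is $O(m_n \sqrt{\log m_n})$ on unit intervals and hence contributes $O(m_n^{-1}\sqrt{\log m_n})$ after rescaling by $m_n$. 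Rescaling space by $m_n$ and time by $m_n^2$, the $O(\log\th) = O(\log m_n)$ embedding error becomes $O(m_n^{-1}\log m_n)$, which dominates. Finally, taking a union over $n$ and using Borel--Cantelli (the tail bounds in KMT are exponential, so $\mathbf P[\text{error} > C m_n^{-1}\log m_n]$ is summable for $C$ large, using Assumption~\ref{Assumption: mn} so that $m_n\to\infty$ polynomially) gives the almost sure statement along the whole sequence.

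\medskip

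For the bridge case $(Z_n,Z) = (S^{x^n,x^n}_\th, \tilde B^{x,x}_t)$, I would deduce the coupling from the unconditioned one by a standard conditioning/absolute-continuity argument: condition the embedded pair $(S^{x^n}, \tilde B)$ on the event $\{S(\th) = x^n\}$ (which has probability of order $m_n^{-1}$, polynomially small but not exponentially small) and simultaneously bridge $\tilde B$ to return near $x$ at time $t$. One clean route is to note that the bridge decomposition $\tilde B^{x,x}_t(s) = \tilde B^x(s) - \frac{s}{t}(\tilde B^x(t) - x)$ and the analogous random-walk bridge representation differ from the free processes by a linear correction term of size $O(|\tilde B^x(t)-x| + |S(\th)-x^n|)$; on the conditioned event $S(\th)=x^n$ this random-walk correction vanishes, and $|\tilde B^x(t)-x|$ is $O(m_n^{-1}\log m_n)$ because the free coupling forces $\tilde B^x(t)$ close to $S(\th)/m_n \cdot(\ldots)$. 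Alternatively, and perhaps more robustly, one cites the KMT-type strong approximation for random walk bridges directly (the bridge version of the Hungarian construction is well known; cf.\ the references used in \cite{GorinShkolnikov}), which gives the same $O(\log \th)$ rate. The ``in particular'' clause is then immediate: multiplying the $O(m_n^{-1}\log m_n)$ bound by $m_n^{2\de}$ gives $O(m_n^{-1+2\de}\log m_n)$, which is $o(1)$ precisely because $\de < 1/2$.

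\medskip

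The main obstacle I anticipate is the bridge coupling rather than the free coupling. For the free lazy walk, KMT applies essentially off the shelf. For the bridge, one must be careful that conditioning on $\{S(\th)=x^n\}$ does not destroy the almost-sure rate --- since this event has only polynomially small probability, the Borel--Cantelli step needs the embedding error to have \emph{exponential} (not merely polynomial) tails even after conditioning, which requires either a genuine bridge strong-approximation theorem or a careful argument showing the conditioning inflates tail probabilities by at most a polynomial factor $O(m_n)$ that is still absorbed by the exponential decay. Keeping track of the variance-$2/3$ normalization consistently (so that the limiting local time in Theorem~\ref{Theorem: Split Coupling} comes out with the factor $1/3$ rather than $1/2$) is a bookkeeping point to watch but not a real difficulty.
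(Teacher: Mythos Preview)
Your proposal is correct and follows essentially the same approach as the paper. The paper's proof is more concise: it cites the bridge extension of KMT directly (Borisov's theorem, your route (b)) rather than attempting to derive it from the free embedding via conditioning; and for the modulus-of-continuity step $|Z(\th_s/m_n^2)-Z(s)|$ in the bridge case, it invokes the absolute continuity of $(\tilde B^{0,0}_t(s))_{s\in[0,t/2]}$ and its time-reversal with respect to free Brownian motion, which cleanly transfers L\'evy's modulus of continuity without redoing any estimates.
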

\begin{proof}
Suppose first that $x=0$ so that $(Z_n,Z)=(S^0,\tilde B^0)$ or $(S_{\th}^{0,0},\tilde B^{0,0}_t)$.
According to the classical KMT coupling
(e.g., \cite[Section 7]{LawlerLimic}) for Brownian motion and its extension
to the Brownian bridge (e.g., \cite[Theorem 2]{Borisov}), it  holds that
\[\sup_{0\leq u\leq\th}\left|\frac{Z_n(u)}{m_n}-Z(u/m_n^2)\right|=O(m_n^{-1}\log m_n)\]
almost surely. Thus it only remains to prove that
\[\sup_{0\leq s\leq t}\left|Z(\th_s/m_n^2)-Z(s)\right|=O(m_n^{-1}\log m_n).\]
For $Z=\tilde B^0$, this is
L\'evy's modulus of continuity theorem. For $Z=\tilde B^{0,0}_t$, we note that
the laws of $\big(\tilde B^{0,0}_t(s)\big)_{s\in[0,t/2]}$ and
$\big(\tilde B^{0,0}_t(t-s)\big)_{s\in[0,t/2]}$ are absolutely continuous with respect
the the law of $\big(\tilde B^0(s)\big)_{s\in[0,t/2]}$.

Suppose now that $x>0$. We can define $S^{x^n}:=x^n+S^0$ and $S^{x^n,x^n}_{\th}:=x^n+S^{0,0}_{\th}$,
and similarly for $\tilde B$. Since $x^n/m_n=x+O(m_n^{-1})$, our proof in the case $x=0$ yields
\[\sup_{0\leq s\leq t}\left|\frac{S^{x^n}(\th_s)}{m_n}-\tilde B^x(s)\right|=
O(m_n^{-1}\log n+m_n^{-1})\]
and similarly for the bridge, as desired.
\end{proof}

With \eqref{Equation: KMT Coupling} established,
the proof \eqref{Equation: Strong Local Time Coupling} is a straightforward
application of Lemma \ref{Lemma: Local Time Invariance Criterion}:

\begin{lemma}\label{Lemma: Local Time Regularity}
For every $\de>0$ and $0<\eps<\de/2$,
\[\sup_{0\leq s\leq t,~|y-z|\leq m_n^{-\de}}\big|L^y_s(Z)-L^z_s(Z)\big|
=O\left(m_n^{-\de/2+\eps}\log m_n\right)\]
almost surely as $n\to\infty$.
\end{lemma}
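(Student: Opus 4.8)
The claim is, modulo rescaling, the classical assertion that Brownian local time is spatially $(\tfrac12-)$-H\"older continuous, uniformly in time over $[0,t]$. I would first remove the inessential features. Since $\tilde B^x(s)=x+\sqrt{2/3}\,B(s)$ for a standard Brownian motion $B$, and local time scales as $L^y_s(\tilde B^x)=\sqrt{2/3}\,L^{(y-x)\sqrt{3/2}}_s(B)$, a bound for the modulus of $y\mapsto L^y_s(B)$ transfers to $\tilde B^x$ up to a fixed constant and an affine reparametrization of $y$; the same remark handles $\tilde B^{x,x}_t$ once we have the bridge estimate. For the bridge I would argue exactly as in Lemma~\ref{Lemma: KMT}: the law of $\big(\tilde B^{x,x}_t(s)\big)_{s\in[0,t/2]}$ is absolutely continuous with respect to that of $\big(\tilde B^x(s)\big)_{s\in[0,t/2]}$ (and likewise after time reversal on $[t/2,t]$), so an almost-sure $O(\cdot)$ statement for $\tilde B^x$ on $[0,t/2]$ yields the same for $\tilde B^{x,x}_t$ on each half, hence on all of $[0,t]$. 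Thus it suffices to prove, for standard $B$, that $\sup_{0\le s\le t,\,|y-z|\le h}\big|L^y_s(B)-L^z_s(B)\big|=O\big(h^{1/2-\eps'}\big)$ almost surely as $h\to0$, for every $\eps'\in(0,1/2)$.

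To obtain this I would establish the moment bound
\[\mbf E\Big[\sup_{0\le s\le t}\big|L^y_s(B)-L^z_s(B)\big|^{2k}\Big]\le C_k\,|y-z|^k,\qquad |y-z|\le1,\ k\in\mbb N.\]
The input is Tanaka's formula: for $y<z$,
\[L^y_s(B)-L^z_s(B)=\big(|B_s-y|-|B_s-z|\big)-\big(|B_0-y|-|B_0-z|\big)-\int_0^s\!\big(\mr{sgn}(B_r-y)-\mr{sgn}(B_r-z)\big)\d B_r.\]
The first two terms are bounded by $2|y-z|$, and the stochastic integral has quadratic variation $4\int_0^s\mbf 1_{\{y<B_r<z\}}\d r=4\int_y^z L^u_s(B)\d u$ by the occupation-time formula. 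Applying the Burkholder--Davis--Gundy inequality, then H\"older in $u$, then the standard fact that $\sup_u\mbf E\big[(L^u_t(B))^k\big]<\infty$, gives the displayed estimate.

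With the moment bound in hand I would view $y\mapsto Y_y:=\big(L^y_s(B)\big)_{s\in[0,t]}$ as a process taking values in $C([0,t])$ with the uniform norm, so that $\mbf E\big[\|Y_y-Y_z\|_\infty^{2k}\big]\le C_k|y-z|^k$, and invoke the Kolmogorov--Chentsov criterion: for every compact $K\subset\mbb R$ and every $\ga<(k-1)/(2k)$ there is a finite random constant $C_{K,\ga}$ with $\|Y_y-Y_z\|_\infty\le C_{K,\ga}|y-z|^\ga$ for all $y,z\in K$. Letting $k\to\infty$ covers all $\ga<1/2$. To pass from a fixed compact to the stated supremum over all $y\in\mbb R$, note that the range $\{B(s):0\le s\le t\}$ is almost surely contained in some $[-R,R]$ with $R=R(\om)<\infty$, and $L^y_s(B)=0$ for $|y|>R$; so for $h<1$ the supremum over $\{|y-z|\le h\}$ reduces to $y,z\in[-R-1,R-1]$, and applying the H\"older bound on each $[-j,j]$, $j\in\mbb N$, and taking $j=\lceil R\rceil+1$ finishes the reduction. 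Finally, choosing $\ga=1/2-\eps/\de$ (legitimate since $\eps<\de/2$ forces $\eps/\de<1/2$) and $h=m_n^{-\de}$ yields $O\big(m_n^{-\de\ga}\big)=O\big(m_n^{-\de/2+\eps}\big)=O\big(m_n^{-\de/2+\eps}\log m_n\big)$, as required; transferring back through the scaling and bridge reductions completes the proof.

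The only mildly delicate points, rather than genuine obstacles, are the bridge endpoint (handled by the absolute-continuity/time-reversal device already used for Lemma~\ref{Lemma: KMT}) and making the supremum over the full spatial line meaningful (handled by the almost-sure compactness of the range of $Z$ on $[0,t]$). If one prefers a shorter route, the sharp spatial modulus of continuity for Brownian local time, $\limsup_{h\to0}\sup_{s\le t,\,|y-z|\le h}|L^y_s-L^z_s|/\sqrt{h\log(1/h)}<\infty$ almost surely, can be quoted directly from the literature and gives the stronger bound $O\big(m_n^{-\de/2}\sqrt{\log m_n}\big)$, which trivially implies the stated one.
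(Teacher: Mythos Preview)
Your proposal is correct and follows essentially the same route as the paper: reduce to the spatial $(\tfrac12-)$-H\"older regularity of Brownian local time uniformly in $s\in[0,t]$, and transfer to the bridge via absolute continuity of each half-path together with additivity and time-reversal invariance of local time. The only difference is that the paper simply cites this regularity from \cite[(3.7)]{BassKhoshnevisan} and \cite[(2.1)]{Trotter}, whereas you supply a self-contained derivation via Tanaka's formula, BDG, and Kolmogorov--Chentsov (and also note the citation option at the end). One cosmetic slip: your local-time scaling constant should be $\sqrt{3/2}$ rather than $\sqrt{2/3}$, but this is irrelevant to the argument.
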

\begin{proof}
The result for $\tilde B^x$ is a direct application of
\cite[Equation (3.7)]{BassKhoshnevisan} (see also \cite[(2.1)]{Trotter}).
We obtain the same result for $\tilde B^{x,x}_t$ by the absolute continuity
of $\big(\tilde B^{x,x}_t(s)\big)_{s\in[0,t/2]}$ and
$\big(\tilde B^{x,x}_t(t-s)\big)_{s\in[0,t/2]}$ with respect to $\big(\tilde B^x(s)\big)_{s\in[0,t/2]}$,
and the fact that local time is additive and invariant under time reversal.
\end{proof}

\begin{lemma}\label{Lemma: Occupation Measure Regularity}
For every $\de>0$ and $0<\eps<\de/2$,
\[\sup_{0\leq u\leq\th,~|a-b|\leq m_n^{1-\de}}\frac{\big|\La^a_u(Z_n)-\La^b_u(Z_n)\big|}{m_n}
=O\left(m_n^{-\de/2+\eps}\log m_n\right)\]
almost surely as $n\to\infty$.
\end{lemma}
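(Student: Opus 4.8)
The plan is to pass to the unconditioned lazy walk, describe $u\mapsto\La^a_u(Z_n)-\La^b_u(Z_n)$ through the sequence of signs recorded at successive visits to $\{a,b\}$, extract from this a moment bound of the \emph{right} order in $|a-b|$, and then upgrade to the stated almost sure estimate by a union bound and the Borel--Cantelli lemma.

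First I would reduce to $Z_n=S^{x^n}$; the statement is vacuous if $Z_n$ is replaced by anything else (no continuous process appears in it). For the bridge $Z_n=S^{x^n,x^n}_\th$, the law of its first half $\big(S^{x^n,x^n}_\th(j)\big)_{0\le j\le\lfloor\th/2\rfloor}$ is absolutely continuous with respect to that of $\big(S^{x^n}(j)\big)_{0\le j\le\lfloor\th/2\rfloor}$ with a density bounded uniformly in $n$ by the local central limit theorem (the lazy walk has a strictly positive transition kernel, so no parity obstruction arises), and the second half is handled by the time-reversal invariance of the bridge together with additivity of occupation measures; since the conclusion is an almost sure statement obtained from summable probabilities, these reductions are harmless.

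Now fix $a<b$, write $k:=b-a$ (so $1\le k\le m_n^{1-\de}$), and set $Y_u:=\La^a_u(S)-\La^b_u(S)$. Let $\sigma_1<\sigma_2<\cdots$ be the successive times with $S\in\{a,b\}$ and $\eta_j:=\mbf 1_{\{S(\sigma_j)=a\}}-\mbf 1_{\{S(\sigma_j)=b\}}\in\{-1,1\}$; then $Y$ is constant off the $\sigma_j$'s, so $\sup_{0\le u\le\th}|Y_u|=\max_{0\le i\le N}\big|\sum_{j=1}^i\eta_j\big|$, where $N$ is the number of visits to $\{a,b\}$ before time $\th$. By the strong Markov property and the gambler's ruin estimate for the embedded simple walk (from $a$ the walk next meets $\{a,b\}$ at $a$ with probability $1-\tfrac1{3k}$ and at $b$ with probability $\tfrac1{3k}$), $(\eta_j)$ is a two-state Markov chain with $\mbf E[\eta_{j+1}\mid\mc F_{\sigma_j}]=(1-\tfrac2{3k})\eta_j$, and a sign flip $\eta_{j+1}\ne\eta_j$ has conditional probability exactly $\tfrac1{3k}$. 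Solving this linear recursion expresses $\sum_{j\le i}\eta_j$, up to $O(k)$, through a martingale $\mc D$ with increments of size $O(1/k)$ off sign flips and $O(1)$ on sign flips; since the number $F_M$ of sign flips among the first $M$ visits has $F_M-M/(3k)$ a martingale, a self-bounding argument gives $\mbf E[F_M^p]\le C_p\big((M/k)^p+1\big)$, whence $\mbf E\big[[\mc D]_M^{\,p}\big]\le C_p\big((M/k)^p+1\big)$. Feeding a deterministic a priori bound $M\asymp m_n\log^2 m_n$ on $N$ (valid off a superpolynomially unlikely event, by the Kac moment estimate $\mbf E[(\La^c_\th(S))^q]\le C_q\th^{q/2}$ uniform in $c$ and a union bound over the range of the walk) into Burkholder--Davis--Gundy then yields
\[
\mbf E\Big[\sup_{0\le u\le\th}|Y_u|^{2p}\Big]\;\le\;C_p\,(k\,m_n)^{p}\,(\log m_n)^{2p},\qquad p\in\mbb N .
\]

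To finish: there are only $O(m_n^{3-\de})$ pairs $(a,b)$ with $|a-b|\le m_n^{1-\de}$ for which $Y\not\equiv0$, so Markov's inequality applied to the $2p$-th moment gives
\[
\mbf P\Big[\sup_{\substack{0\le u\le\th\\|a-b|\le m_n^{1-\de}}}\frac{|\La^a_u(S)-\La^b_u(S)|}{m_n}>m_n^{-\de/2+\eps}\log m_n\Big]\;\le\;C_p\,m_n^{\,3-\de-2\eps p},
\]
which, by \eqref{Equation: mn}, is summable in $n$ once $p$ is taken large enough; Borel--Cantelli then gives the lemma.

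The step I expect to be the crux is the moment bound displayed above, and specifically producing the linear power $k^p$: the naive telescoping $\La^a_u-\La^b_u=\sum_{c=a}^{b-1}(\La^c_u-\La^{c+1}_u)$ loses a full power of $k$ and only proves the lemma for $\de\ge1$, so one must genuinely exploit the near-perfect cancellation between consecutive visits to $\{a,b\}$ — i.e.\ that the walk returns to the \emph{same} endpoint with probability $1-O(1/k)$ — which is exactly what the sign-flip martingale decomposition is tailored to capture. The remaining points (the random visit count, and transferring the estimate off the bridge) are routine.
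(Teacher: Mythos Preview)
Your approach is genuinely different from the paper's. The paper does not build a martingale argument at all: it simply invokes \cite[Proposition~3.1]{BassKhoshnevisan}, which gives the ready-made tail estimate
\[
\mbf P\!\left[\sup_{0\le u\le\th,\;a,b\in\mbb Z}\frac{m_n^{-1}\big|\La^a_u(S^{x^n})-\La^b_u(S^{x^n})\big|}{\big(|a/m_n-b/m_n|^{1/2-\eta}\wedge1\big)}\ge\la\right]=O\!\big(\mr e^{-c\la}+m_n^{-14}\big),
\]
takes $\la=C\log m_n$, and applies Borel--Cantelli (using $\mf d>1/13$ so that $\sum_n m_n^{-14}<\infty$). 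For the bridge the paper uses the cruder reduction $\mbf P[\,\cdot\,|S^{x^n}(\th)=x^n]\le O(m_n)\,\mbf P[\,\cdot\,]$ via the local CLT, which is simpler than your absolute-continuity-of-the-first-half argument, though yours works too. What your approach buys is self-containment: you would not need the Bass--Khoshnevisan black box.

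There is, however, a genuine gap in your argument. You claim that the event $\{N>m_n(\log m_n)^2\}$ is ``superpolynomially unlikely'' by the Kac moment estimate $\mbf E[(\La^c_\th(S))^q]\le C_q\th^{q/2}$. This is false: Markov's inequality only gives $\mbf P[N>m_n(\log m_n)^2]\le C_q(\log m_n)^{-2q}$, which is of order $(\log n)^{-2q}$ and hence \emph{not} summable in $n$ for any fixed $q$. The Borel--Cantelli step therefore fails as written. The fix is easy and does not change the structure of your argument: replace the a priori bound by $N\le m_n^{1+\eps'}$ for a small $\eps'\in(0,2\eps)$. Kac moments then give $\mbf P[N>m_n^{1+\eps'}]\le C_q m_n^{-\eps' q}$, which \emph{is} summable for $q$ large, and your moment bound becomes $\mbf E[\sup_u|Y_u|^{2p}]\le C_p(km_n^{1+\eps'})^p$; the final tail probability after the union bound over pairs is then $O\big(m_n^{\,3-\de+(\eps'-2\eps)p}\big)$, still summable for large $p$ since $\eps'<2\eps$. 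Alternatively, one can prove an honest sub-exponential tail for $\La^a_\th/m_n$ (e.g.\ via the correspondence between return counts and the running maximum), but that is essentially re-deriving part of \cite{BassKhoshnevisan}.

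A minor point: your description of the martingale increments (``$O(1/k)$ off sign flips and $O(1)$ on sign flips'') corresponds to the martingale $\mc D_i=\sum_{l\le i}(\eta_l-\rho\eta_{l-1})$, but $\sum_{j\le i}\eta_j$ is not simply $O(k)+c\,k\,\mc D_i$ because of the geometric weights $1-\rho^{i-l+1}$ in the recursion solution. The cleanest route is the Poisson-equation martingale, whose increments are $O(1)$ off flips and $O(k)$ on flips; either way the quadratic-variation bound $\mbf E[[\cdot]_M^{\,p}]\le C_p(kM)^p$ is correct, so this does not affect your conclusion.
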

\begin{proof}
According to \cite[Proposition 3.1]{BassKhoshnevisan}, for every $0<\eta<1/2$, it holds that
\begin{align}\label{Equation: Unconditioned Occupation Measure Regularity}
\mbf P\left[\sup_{0\leq u\leq\th,~a,b\in\mbb Z}\frac{m_n^{-1}\big|\La^a_u(S^{x^n})-\La^b_u(S^{x^n})\big|}
{(|a/m_n-b/m_n|^{1/2-\eta}\land 1)}\geq\la\right]=O(\mr e^{-c\la}+m_n^{-14})
\end{align}
for every $\la>0$, where $c>0$ is independent of $n$ and $\la$. We recall that $m_n\asymp n^{\mf d}$
with $1/13<\mf d$, which implies in particular that $m_n^{-14}$ is summable in $n$.
Thus, if we take $\la=\la(n)=C\log m_n$ for a large enough $C>0$, then Borel-Cantelli yields
\[\sup_{0\leq u\leq\th,~|a-b|\leq m_n^{1-\de}}\frac{\big|\La_u^a(S^{x^n})-\La_u^b(S^{x^n})\big|}{m_n}
=O\left(m_n^{\de(\eta-1/2)}\log m_n\right)\]
almost surely, proving the result for $Z_n=S^{x^n}$.
In order to extend the result to $Z_n=S_{\th}^{x^n,x^n}$ we apply the local CLT
(i.e., $\mbf P[S^{x^n}(\th)=x^n]^{-1}=O(m_n)$; e.g., \cite[\S 49]{GnedenkoKolmogorov}) with
the elementary inequality $\mbf P[E_1|E_2]\leq\mbf P[E_1]/\mbf P[E_2]$
to \eqref{Equation: Unconditioned Occupation Measure Regularity}:
\[\mbf P\left[\sup_{0\leq u\leq\th,~a,b\in\mbb Z}\frac{m_n^{-1}\big|\La^a_u(S^{x^n,x^n}_{\th})-\La^b_u(S^{x^n,x^n}_{\th})\big|}
{(|a/m_n-b/m_n|^{1/2-\eta}\land 1)}\geq\la\right]=O(m_n\mr e^{-c_2\la}+m_n^{-13})\]
for all $\la>0$. Since $\sum_nm_n^{-13}<\infty$ the result follows by Borel-Cantelli.
\end{proof}

\begin{lemma}\label{Lemma: Occupation Measure Size}
For every $0<\de<1$, it holds almost surely as $n\to\infty$ that
\[\sup_{0\leq u\leq\th,~a\in\mbb Z}\frac{\La^a_u(Z_n)}{m_n^{2-\de}}=O\left(m_n^{-1+\de}\log m_n\right).\]
\end{lemma}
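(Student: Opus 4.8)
The plan is to reduce the lemma to the single estimate $\sup_{a\in\mbb Z}\La^a_\th(Z_n)=O(m_n\log m_n)$ almost surely as $n\to\infty$. Since $u\mapsto\La^a_u(Z_n)$ is nondecreasing we have $\sup_{0\leq u\leq\th}\La^a_u(Z_n)=\La^a_\th(Z_n)$, and as $\th\asymp m_n^2$ a bound $\sup_a\La^a_\th(Z_n)\leq Cm_n\log m_n$ yields $\sup_{0\leq u\leq\th,~a}\La^a_u(Z_n)/m_n^{2-\de}\leq Cm_n^{-1+\de}\log m_n$ for every $0<\de<1$ (the power of $m_n$ being independent of $\de$). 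To get the almost sure bound I would show that, for a suitable constant $C>0$, the probabilities $\mbf P[\sup_a\La^a_\th(Z_n)\geq Cm_n\log m_n]$ are summable in $n$ and then invoke Borel--Cantelli.

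First I would treat the free walk $Z_n=S^{x^n}$. Fix $a\in\mbb Z$ and let $R$ be the return time to its starting point of a lazy random walk; by translation invariance its law does not depend on the starting point. On the event that $S^{x^n}$ visits $a$ before time $\th$, the strong Markov property at the first such visit shows that $\{\La^a_\th(S^{x^n})\geq k\}$ forces $R_1+\cdots+R_{k-1}\leq\th$ for i.i.d.\ copies $R_1,R_2,\dots$ of $R$, so $\mbf P[\La^a_\th(S^{x^n})\geq k]\leq\mbf P[R_1+\cdots+R_{k-1}\leq\th]$ for every $a$. The essential input is the classical heavy-tail estimate $\mbf P[R>m]\asymp m^{-1/2}$ for symmetric aperiodic recurrent walks on $\mbb Z$ with finite variance (which also follows from the local CLT via a renewal/Tauberian argument); it gives $\phi(\la):=\mbf E[\mr e^{-\la R}]\leq\mr e^{-c\sqrt\la}$ for all $0<\la\leq\la_0$ and some $c,\la_0>0$. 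A Chernoff bound then yields
\[\mbf P\big[R_1+\cdots+R_{k-1}\leq\th\big]\leq\mr e^{\la\th}\,\phi(\la)^{k-1}\leq\exp\!\big(\la\th-c(k-1)\sqrt\la\big),\qquad 0<\la\leq\la_0,\]
and choosing $\la\asymp(k/\th)^2$ (admissible once $k=O(\th)$, hence for large $n$) gives $\mbf P[\La^a_\th(S^{x^n})\geq k]\leq\exp(-c'(k-1)^2/\th)$. Since $S^{x^n}$ can only visit the at most $2\th+1$ sites within distance $\th$ of $x^n$, a union bound with $k=\lceil Cm_n\log m_n\rceil$ and $\th\leq m_n^2t$ gives $\mbf P[\sup_a\La^a_\th(S^{x^n})\geq Cm_n\log m_n]\leq(2\th+1)\exp(-c''C^2(\log m_n)^2)$, which is summable in $n$ because $m_n\asymp n^{\mf d}$ with $\mf d>0$, so the exponent beats the polynomial prefactor.

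For the bridge $Z_n=S^{x^n,x^n}_\th$ I would not redo the excursion analysis but pass through the free walk by conditioning: by the local CLT (e.g.\ \cite[\S49]{GnedenkoKolmogorov}), $\mbf P[S^{x^n}(\th)=x^n]^{-1}=O(m_n)$, so $\mbf P[S^{x^n,x^n}_\th\in A]\leq\mbf P[S^{x^n}\in A]/\mbf P[S^{x^n}(\th)=x^n]=O(m_n)\,\mbf P[S^{x^n}\in A]$ for any event $A$ on path space; taking $A=\{\sup_a\La^a_\th\geq Cm_n\log m_n\}$ multiplies the previous bound only by a polynomial factor in $m_n$, so it remains summable, and Borel--Cantelli completes the proof for both $Z_n$. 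The only genuinely nontrivial ingredient is the $1/2$-stable behaviour of the sums $R_1+\cdots+R_{k-1}$, equivalently the tail $\mbf P[R>m]\asymp m^{-1/2}$, which must be invoked carefully enough to make the Chernoff exponent quantitative; the remaining steps are routine.
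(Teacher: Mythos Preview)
Your argument is correct. The reduction to $\sup_a\La^a_\th(Z_n)=O(m_n\log m_n)$ is exactly right, the return-time/Chernoff analysis for the free walk is sound (the Tauberian step giving $\phi(\la)\leq\mr e^{-c\sqrt\la}$ is the one place that needs care, and you flag it), and the passage to the bridge via the local CLT is the same device the paper uses.

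The paper's proof is shorter but less self-contained: it does not set up a fresh tail estimate for $\sup_a\La^a_\th$, but instead recycles the Bass--Khoshnevisan regularity bound already quoted as \eqref{Equation: Unconditioned Occupation Measure Regularity}. By choosing $b$ so far from the range of $S^{x^n}$ that $\La^b_\th(S^{x^n})=0$ surely, and noting that then the denominator $|a/m_n-b/m_n|^{1/2-\eta}\wedge1$ equals $1$, one reads off directly
\[
\mbf P\Big[\sup_{0\leq u\leq\th,\,a\in\mbb Z}\La^a_u(S^{x^n})/m_n\geq\la\Big]=O(\mr e^{-c\la}+m_n^{-14}),
\]
and Borel--Cantelli with $\la=C\log m_n$ finishes the free case; the bridge is then handled exactly as you do. Your approach is more elementary (it avoids invoking \cite{BassKhoshnevisan}) and in fact yields a stronger tail of order $\exp(-c(\log m_n)^2)$ rather than $m_n^{-c}$, at the cost of redoing work the paper has effectively already imported.
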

\begin{proof}
Note that, for any $n,u\in\mbb N$, $|S^{x^n}(u)|\leq|x^n|+O(m_n^2)$
Therefore, by taking a large $b$ in
\eqref{Equation: Unconditioned Occupation Measure Regularity} (i.e., large enough so that
$\La_{\th}^b(S^{x^n})=0$ surely), we see that
\begin{align}\label{Equation: Unconditioned Occupation Measure Tail}
\mbf P\left[\sup_{0\leq u\leq\th,~a\in\mbb Z}\frac{\La^a_u(S^{x^n})}{m_n}
\geq\la\right]=O(\mr e^{-C\la}+m_n^{-14})
\end{align}
for all $\la>0$. The proof then follows from the same arguments as in
Lemma \ref{Lemma: Occupation Measure Regularity}.
\end{proof}

By combining Lemmas \ref{Lemma: Local Time Invariance Criterion}--\ref{Lemma: Occupation Measure Size},
we obtain that
\[\sup_{0\leq u\leq\th,~a\in\mbb Z}\left|\frac{\La_u^a(Z_n)}{m_n}-L_{u/m_n^2}^{a/m_n}(Z)\right|
=O(m_n^\mf t\log m_n),\]
where, for every $0<\de<1/2$ and $0<\eps<\de/2$, we have
\[\mf t=\mf t(\de,\eps):=\max\big\{-1+2\de,-\de/2+\eps\big\}.\]
For any fixed $\eps>0$, the smallest possible $\mf t(\de,\eps)$ occurs at the intersection of the lines
$\de\mapsto-1+2\de$ and $\de\mapsto-\de/2+\eps$.
This is attained at $\de=2(1+\eps)/5$, in which case $\mf t=-1/5+4\eps/5$.
At this point, in order to get the statement of Proposition \ref{Proposition: Strong Local Time Coupling},
we must show that
\[\sup_{0\leq s\leq t,~y\in\mbb R}\left|L_{\th_s/m_n^2}^{\lfloor m_ny\rfloor/m_n}(Z)-L_s^y(Z)\right|=O(m_n^{-1/5+\eps}\log m_n)\]
as $n\to\infty$ for any $\eps>0$. This follows by a combination of Lemma \ref{Lemma: Local Time Regularity} with $\de=1$
and the estimate \cite[(2.3)]{Trotter}, which yields
\[\sup_{0\leq s,\bar s\leq t,~|s-\bar s|\leq m_n^{-2},~y\in\mbb R}\big|L^y_s(Z)-L^y_{\bar s}(Z)\big|=O\big(m_n^{-2/3}(\log m_n)^{2/3}\big).\]
(The results of \cite{Trotter} are only stated for the Brownian motion, but this can be extended to the Bridge by the absolute
continuity argument used in Lemma \ref{Lemma: Local Time Regularity}.)

\subsection{Proof of Proposition \ref{Proposition: Splitting of Occupation Measure}}
\label{Subsection: Splitting of Occupation Measure}

We may assume without loss of generality that $x=0$.
We begin with the case of the unconditioned random walk $Z_n=S^0$.

Let $(\ze^a_n)_{n\in\mbb N_0,a\in\mbb Z}$ be a collection of i.i.d. random variables with
uniform distribution on $\{-1,0,1\}$. We can define the random walk $S^0$ as follows: For
every $u,v\in\mbb N$ and $a\in\mbb Z$, if $S^0(u)=a$ and $\La^a_u(S^0)=v$,
then $S^0(u+1)=S^0(u)+\ze^a_v$. In doing so, up to an error of
at most 1, it holds that
\[\La^{(a,b)}_n(S^0)=\sum_{j=1}^{\La_n^a(S^0)}\mbf 1_{\{\ze^a_j=b-a\}},\qquad a,b\in\mbb Z.\]
Hence, by the Borel-Cantelli lemma,
it is enough to show that for any $z\in\{-1,0,1\}$,
\begin{align}\label{Equation: Summable Deviation 1}
\sum_{n\in\mbb N}\mbf P\left[\sup_{0\leq u\leq\th,~a\in\mbb Z}\left|\sum_{j=1}^{\La_u^a(S^0)}\mbf 1_{\{\ze^a_j=z\}}-\frac{\La^a_u(S^0)}3\right|
\geq Cm_n^{1/2}\log m_n\right]<\infty
\end{align}
for some suitable finite constant $C>0$.
In order to prove this, we need two auxiliary estimates.
Let us denote the range of a random walk by
\begin{align}\label{Equation: Random Walk Range}
\mc R_{u}(S):=\max_{0\leq j\leq u}S(j)-\min_{0\leq j\leq u}S(j),\qquad u\in\mbb N_0.
\end{align}

\begin{lemma}\label{Lemma: Range Deviation}
For every $\eps>0$,
\[\sum_{n\in\mbb N}\mbf P\left[\mc R_\th(S^0)\geq m_n^{1+\eps}\right]
<\infty.\]
\end{lemma}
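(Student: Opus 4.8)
The plan is to control the range $\mc R_\th(S^0)$ by the maximum of the lazy random walk over $\th\asymp m_n^2$ steps, and then apply a standard maximal inequality. First I would note that the lazy random walk $S^0$ can be written as $S^0(u)=\sum_{j=1}^u\eta_j$ where the $\eta_j$ are i.i.d.\ uniform on $\{-1,0,1\}$; in particular the increments are bounded by $1$ in absolute value, so $S^0$ is a martingale with sub-Gaussian (indeed bounded) increments. Thus $\mc R_\th(S^0)\leq 2\max_{0\leq j\leq\th}|S^0(j)|$, and it suffices to bound $\mbf P[\max_{0\leq j\leq\th}|S^0(j)|\geq m_n^{1+\eps}/2]$.

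Next I would invoke a Bernstein/Hoeffding-type maximal inequality (via Doob's inequality applied to the exponential martingale, or simply the reflection-free Azuma--Hoeffding bound): there is a universal constant $c>0$ such that
\[\mbf P\Big[\max_{0\leq j\leq\th}|S^0(j)|\geq\la\Big]\leq 2\exp\big(-c\la^2/\th\big)\]
for all $\la>0$. Since $\th=\th(n)\asymp m_n^2$ by definition (recall $\th=\lfloor m_n^2(3t/2)\rfloor$, or more generally $\th_t$), taking $\la=m_n^{1+\eps}/2$ gives $\la^2/\th\asymp m_n^{2+2\eps}/m_n^2=m_n^{2\eps}$, so
\[\mbf P\big[\mc R_\th(S^0)\geq m_n^{1+\eps}\big]\leq 2\exp\big(-c'm_n^{2\eps}\big)\]
for some $c'>0$ and all large $n$. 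Finally, using Assumption~\ref{Assumption: mn} ($m_n\geq Cn^{\mf d}$ with $\mf d>1/13>0$), the exponent $m_n^{2\eps}$ grows like a positive power of $n$, so $\sum_n\exp(-c'm_n^{2\eps})<\infty$, which is the claimed summability. The extension to the conditioned walk $S^{x^n}=x^n+S^0$ (should it be needed, per the standing convention) follows by shifting; the factor $x^n=\lfloor m_nx\rfloor=O(m_n)$ is negligible compared to $m_n^{1+\eps}$, and the bridge case $S^{x^n,x^n}_\th$ follows from the unconditioned bound together with the local CLT estimate $\mbf P[S^{x^n}(\th)=x^n]^{-1}=O(m_n)$ and the inequality $\mbf P[E_1\mid E_2]\leq\mbf P[E_1]/\mbf P[E_2]$, exactly as in the proof of Lemma~\ref{Lemma: Occupation Measure Regularity}, since $m_n\cdot\exp(-c'm_n^{2\eps})$ is still summable.

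There is no serious obstacle here; the only point requiring a little care is making sure the maximal inequality is applied with the correct normalization (the variance of each increment of the lazy walk is $2/3$, not $1$, so $\th$ should really be read as $\mathrm{Var}$-scaled, but this only affects the constant $c$), and confirming that the range, rather than just the endpoint, is being bounded---hence the use of a maximal inequality over all $0\leq j\leq\th$ rather than a pointwise tail bound. I expect this lemma to be one of the easiest steps in the section, serving mainly as an input to the subsequent analysis of $\La^{(a,b)}_\th(S^0)-\La^a_\th(S^0)/3$ in the proof of Proposition~\ref{Proposition: Splitting of Occupation Measure}.
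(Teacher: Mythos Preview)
Your proof is correct and essentially equivalent in strength to the paper's, though the route differs slightly. The paper quotes a moment bound from \cite{Chen} of the form $\mbf E[\mc R_\th(S^0)^q]\leq(Cm_n)^q\sqrt{q!}$, deduces the uniform exponential moment bound $\sup_n\mbf E[\exp(C(\mc R_\th(S^0)/m_n)^r)]<\infty$ for all $r<2$, and then applies Markov's inequality. You instead bound the range by twice the running maximum and apply a Hoeffding-type maximal inequality directly, obtaining the sub-Gaussian tail $\mbf P[\mc R_\th(S^0)\geq m_n^{1+\eps}]\leq C\exp(-c\,m_n^{2\eps})$. Both arguments are short and yield the same summability; your approach is arguably more self-contained since it avoids the external reference. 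One minor remark: the paper's exponential moment bound \eqref{Equation: Range Exponential Moments} is reused later (e.g.\ in controlling \eqref{Equation: Vitali Convergence Potential 1-4}), but your tail estimate recovers it immediately by integrating, so nothing is lost. Your additional comments on the shifted and bridge cases are not needed for the lemma as stated (which concerns only $S^0$), but they are correct and harmless.
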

\begin{proof}
According to \cite[(6.2.3)]{Chen}, there exists $C>0$ independent of $n$ such that
\begin{align}\label{Equation: Range Moments}
\mbf E\big[\mc R_{\th}(S^0)^q\big]\leq (Cm_n)^q\sqrt{q!},\qquad q\in\mbb N_0.
\end{align}
Consequently, for every $r<2$ and $C>0$,
\begin{align}\label{Equation: Range Exponential Moments}
\sup_{n\in\mbb N}\mbf E\left[\mr e^{C(\mc R_{\th}(S^0)/m_n)^r}\right]<\infty,
\end{align}
The result then follows from Markov's inequality.
\end{proof}

\begin{lemma}\label{Lemma: Local Time Deviation}
If $C>0$ is large enough,
\[\sum_{n\in\mbb N}\mbf P\left[\sup_{a\in\mbb Z}\La^a_\th(S^0)\geq Cm_n\log m_n\right]<\infty.\]
\end{lemma}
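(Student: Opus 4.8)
The plan is to bound the maximal vertex-occupation measure $\sup_{a\in\mbb Z}\La^a_\th(S^0)$ by controlling it in terms of the range $\mc R_\th(S^0)$, which we have just controlled in Lemma \ref{Lemma: Range Deviation}, together with a tail estimate on the occupation of a single site. The key observation is that up to time $\th$ the walk visits at most $\mc R_\th(S^0)+1$ distinct sites, so a union bound over the visited sites will reduce the problem to estimating $\mbf P[\La^a_\th(S^0)\geq Cm_n\log m_n]$ for a fixed $a$, paying only a factor that is polynomial in $m_n$ (on the event that the range is at most $m_n^{1+\eps}$, which by Lemma \ref{Lemma: Range Deviation} fails only summably often).

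First I would write, for any fixed threshold $\la>0$,
\[
\mbf P\left[\sup_{a\in\mbb Z}\La^a_\th(S^0)\geq\la\right]
\leq\mbf P\big[\mc R_\th(S^0)\geq m_n^{1+\eps}\big]
+\big(m_n^{1+\eps}+1\big)\sup_{a\in\mbb Z}\mbf P\big[\La^a_\th(S^0)\geq\la\big].
\]
The first term is summable by Lemma \ref{Lemma: Range Deviation}. For the second, I would bound the occupation of a single site by a geometric-type tail: each time the lazy walk is at site $a$ it has probability $2/3$ of leaving (and, once it leaves, a uniformly positive chance of not returning before exiting any fixed window, or more crudely one simply uses that successive returns to $a$ are i.i.d.\ and the number of returns before never coming back is geometric). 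This gives an exponential bound $\mbf P[\La^a_\th(S^0)\geq k]\leq C_0\rho^{k}$ for some $\rho\in(0,1)$ and all $k$, uniformly in $a$ and in $n$ — note this is even independent of $\th$ since it only uses the excursion structure. Actually for the lazy walk one must be slightly careful: a ``visit'' to $a$ that is immediately followed by staying at $a$ contributes to $\La^a_\th$ repeatedly, so the relevant count is the number of consecutive steps spent at $a$ plus the number of distinct returns; both are geometric with parameter bounded away from $0$, so the product structure still yields an exponential tail, possibly after grouping. Taking $\la=\la(n)=Cm_n\log m_n$ with $C$ large then makes $(m_n^{1+\eps}+1)\,C_0\,\rho^{Cm_n\log m_n}$ decay faster than any polynomial in $m_n$, hence summable, and the conclusion follows by Borel--Cantelli.

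The main obstacle I anticipate is handling the supremum over $0\leq u\leq\th$ rather than just $u=\th$: the statement as written has no time-supremum, so in fact one only needs $u=\th$, which makes $\La^a_u$ monotone and the argument above directly applicable. If one did want the time-uniform version (which is what is actually used downstream via Lemma \ref{Lemma: Occupation Measure Size}), monotonicity of $u\mapsto\La^a_u(S^0)$ means $\sup_{0\leq u\leq\th}\La^a_u(S^0)=\La^a_\th(S^0)$, so there is genuinely nothing extra to do. The only real care needed is the single-site tail bound for the \emph{lazy} walk, where the holding times at a site inflate the occupation measure; I would phrase this cleanly by noting that $\La^a_\th(S^0)$ is stochastically dominated by a sum of $G$ independent copies of $1+\mr{Geom}(2/3)$ holding-time contributions, where $G$ itself is dominated by a geometric variable counting returns to $a$, and a standard Chernoff bound on such a compound geometric gives the exponential tail with the stated $Cm_n\log m_n$ scale. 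Everything else is a union bound plus Borel--Cantelli, exactly as in the proof of Lemma \ref{Lemma: Occupation Measure Regularity}.
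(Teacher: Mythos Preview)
Your overall strategy (restrict to the event $\{\mc R_\th(S^0)\leq m_n^{1+\eps}\}$, then union-bound over the at most $m_n^{1+\eps}+1$ visited sites) is reasonable, but the single-site tail bound you claim is wrong. The lazy simple random walk on $\mbb Z$ is \emph{recurrent}: each excursion away from $a$ returns with probability one, so the number of returns is not a geometric random variable with parameter bounded away from zero. In particular there is no bound of the form $\mbf P[\La^a_\th(S^0)\geq k]\leq C_0\rho^k$ with $\rho<1$ independent of $\th$; indeed $\La^0_\th(S^0)$ is of order $\sqrt{\th}\asymp m_n$, so taking $k=m_n/2$ already shows such a bound cannot hold. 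What is true is a tail of the form $\mbf P[\La^0_\th(S^0)\geq \la m_n]\leq C\mr e^{-c\la}$ (equivalently, sub-Gaussian/exponential tails for $\La^0_\th(S^0)/m_n$), which does use the time horizon $\th$. With that correct bound, your union-bound argument goes through: take $\la=C\log m_n$, pay a factor $m_n^{1+\eps}$, and choose $C$ large.

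For comparison, the paper's proof is a one-liner: it simply invokes the tail estimate \eqref{Equation: Unconditioned Occupation Measure Tail}, itself obtained in the proof of Lemma~\ref{Lemma: Occupation Measure Size} by taking a large $b$ in the Bass--Khoshnevisan regularity estimate \eqref{Equation: Unconditioned Occupation Measure Regularity} (so that $\La^b_\th=0$ surely and the modulus bound becomes a bound on $\La^a_\th$ itself). Since $\sum_n m_n^{-14}<\infty$, Borel--Cantelli with $\la=C\log m_n$ finishes. Your route is more elementary in spirit but requires you to actually prove the correct single-site tail, which the paper sidesteps by leaning on \cite{BassKhoshnevisan}.
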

\begin{proof}
This follows directly from \eqref{Equation: Unconditioned Occupation Measure Tail}
since $\sum_nm_n^{-14}<\infty$.
\end{proof}

According to Lemmas \ref{Lemma: Range Deviation} and \ref{Lemma: Local Time Deviation},
to prove \eqref{Equation: Summable Deviation 1},
it is enough to consider the sum of probabilities in question intersected with the events
\[D_n:=\left\{\mc R_\th(S^0)\leq m_n^{1+\eps},
~\sup_{0\leq u\leq\th,~a\in\mbb Z}\La^a_n(S^0)\leq Cm_n\log m_n\right\}\]
for some large enough $C>0$. By a union bound,
\begingroup
\allowdisplaybreaks
\begin{align}\label{Equation: Summable Deviation 2}
&\mbf P\left[\left\{\sup_{0\leq u\leq\th,~a\in\mbb Z}\left|\sum_{u=1}^{\La_u^a(S^0)}\mbf 1_{\{\ze^a_j=z\}}-\frac{\La_u^a(S^0)}3\right|
\geq cm_n^{1/2}\log m_n\right\}\,\cap\, D_n\right]
\nonumber\\
&\leq\mbf P\left[\left\{\max_{\substack{-m_n^{1+\eps}\leq a\leq m_n^{1+\eps}\\0\leq h\leq Cm_n\log m_n}}
\left|\sum_{j=1}^h\mbf 1_{\{\ze^a_j=z\}}-\frac h3\right|
\geq cm_n^{1/2}\log m_n\right\}\,\cap\, D_n\right]
\nonumber\\
&\leq\sum_{\substack{-m_n^{1+\eps}\leq a\leq m_n^{1+\eps}\\0\leq h\leq Cm_n\log m_n}}
\mbf P\left[\left|\sum_{j=1}^h\mbf 1_{\{\ze^a_j=z\}}-\frac h3\right|\geq cm_n^{1/2}\log m_n\right].
\end{align}
\endgroup
By Hoeffding's inequality,
\[\mbf P\left[\left|\sum_{j=1}^h\mbf 1_{\{\ze^a_j=z\}}-\frac h3\right|\geq \bar Cm_n^{1/2}\log m_n\right]\leq 2\mr e^{-2\bar C^2\log m_n/C}\]
uniformly in $0\leq h\leq Cm_n\log m_n$.
Since the sum in \eqref{Equation: Summable Deviation 2}
involves a polynomially bounded number of summands in $m_n$
and the latter grows like a power of $n$,
\begin{align}\label{Equation: Summable Deviation 3}
\text{for any $q>0$,
we can choose $\bar C>0$ so that \eqref{Equation: Summable Deviation 1}
is of order $O(n^{-q})$.}
\end{align}
This concludes the proof of Proposition
\ref{Proposition: Splitting of Occupation Measure} in the case $Z_n=S^0$
by Borel-Cantelli.

In order to extend the result to the case $Z_n=S_{\th}^{x^n,x^n}$, it suffices to prove that
\eqref{Equation: Summable Deviation 1} holds with the additional conditioning $\{S^0(\th)=0\}$.
The same local limit theorem argument used at the end of the proof of Lemma
\ref{Lemma: Occupation Measure Regularity} implies that
\begin{multline*}
\mbf P\left[\sup_{0\leq u\leq\th,~a\in\mbb Z}\left|\sum_{j=1}^{\La_u^a(S^{x^n,x^n}_{\th})}\mbf 1_{\{\ze^a_j=z\}}-\frac{\La^a_u(S^{x^n,x^n}_{\th})}3\right|
\geq Cm_n^{1/2}\log m_n\right]\\
=O\left(m_n \, \mbf P\left[\sup_{0\leq u\leq\th,~a\in\mbb Z}\left|\sum_{j=1}^{\La_u^a(S^0)}\mbf 1_{\{\ze^a_j=z\}}-\frac{\La^a_u(S^0)}3\right|
\geq Cm_n^{1/2}\log m_n\right]\right).
\end{multline*}
The result then follows from \eqref{Equation: Summable Deviation 3} by taking
a large enough $q$.

\section{Strong Coupling for Theorem \ref{Theorem: Main Robin}}\label{Section: Spiked Local Time Coupling}

We now provide the counterpart of Theorem \ref{Theorem: Split Coupling}
for the Markov chain $T$ in Definition \ref{Definition: T Markov Process} that is needed for
Theorem \ref{Theorem: Main Robin}.

\begin{definition}
Let $\tilde X$ be a reflected Brownian motion on $\mbb R_+$ with variance $2/3$.
For every $x\geq0$, we denote $\tilde X^x:=\big(\tilde X|\tilde X(0)=x\big)$, and we define the local time and the boundary
local time of $\tilde X$ as in \eqref{Equation: Interior Local Time} and \eqref{Equation: Boundary Local Time},
respectively.
\end{definition}

Our main result in this section is the following.

\begin{theorem}\label{Theorem: Split Skorokhod Couplings}
Let $t>0$ and $x\geq0$ be fixed. Let
$\th$, $\th_s$ ($0\leq s\leq t$), $x^n$, and $(y_n,\bar y_n)$ ($y>0$)
be as in Theorem \ref{Theorem: Split Coupling}.
For every $0<\eps<1/5$, there exists a coupling of $T^{x^n}$ and $\tilde X^x$ such that
\begin{align}
\label{Equation: Skorokhod Coupling}
\sup_{0\leq s\leq t}\left|\frac{T^{x^n}(\th_s)}{m_n}-\tilde X^x(s)\right|
=O\left(m_n^{-1}\log m_n\right),\\
\label{Equation: Skorokhod Local Time at Zero}
\left|\frac{\La^{(0,0)}_{\th}(T^{x^n})}{m_n}-\frac{4\mf L^0_t(\tilde X^x)}3\right|=O\left(m_n^{-1/2}(\log m_n)^{3/4}\right),\\
\label{Equation: Skorokhod Local Time Coupling}
\sup_{0\leq s\leq t,~y>0}\left|\frac{\La_{\th_s}^{(y_n,\bar y_n)}(T^{x^n})}{m_n}(1-\tfrac12\mbf 1_{\{(y_n,\bar y_n)=(0,0)\}})-\frac{L_s^y(\tilde X^{x})}3\right|
=O\left(m_n^{-1/5+\eps}\log m_n\right)
\end{align}
almost surely as $n\to\infty$.
\end{theorem}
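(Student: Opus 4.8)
The plan is to realize the chain $T$ as the Skorokhod reflection of the lazy walk $S$ of Definition \ref{Definition: Lazy Walk}, and then mirror the proof of Theorem \ref{Theorem: Split Coupling}. Concretely, fix the i.i.d.\ $\{-1,0,1\}$-increments $(\zeta_u)_{u\ge0}$ driving $S^{x^n}$, take the coupling of $S^{x^n}$ with a variance-$2/3$ Brownian motion $\tilde B^x$ supplied by Proposition \ref{Proposition: Strong Local Time Coupling} (so that \eqref{Equation: KMT Coupling} and \eqref{Equation: Strong Local Time Coupling} hold on a single probability space), and set $N_u:=-\big(0\wedge\min_{0\le v\le u}S^{x^n}(v)\big)$, $T^{x^n}:=S^{x^n}+N$, as well as $\mf N(s):=-\big(0\wedge\min_{0\le r\le s}\tilde B^x(r)\big)$ and $\tilde X^x:=\tilde B^x+\mf N$. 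Checking transition probabilities shows $T^{x^n}$ has the law of Definition \ref{Definition: T Markov Process} started at $x^n$ (a down-step attempted from $0$ is turned into a stay, which is precisely what increments $N$), and $\tilde X^x$ is a variance-$2/3$ reflected Brownian motion with $\mf N(t)=\tfrac23\mf L^0_t(\tilde X^x)$ by the standard relation between the Skorokhod pushing term and the boundary local time in the normalization of \eqref{Equation: Boundary Local Time}. Since $\omega\mapsto\big(0\wedge\min_{[0,\cdot]}\omega\big)$ is $1$-Lipschitz for the supremum norm, \eqref{Equation: Skorokhod Coupling} is immediate from \eqref{Equation: KMT Coupling}: $\sup_s|N_{\th_s}/m_n-\mf N(s)|\le\sup_s|S^{x^n}(\th_s)/m_n-\tilde B^x(s)|=O(m_n^{-1}\log m_n)$, and $T^{x^n}(\th_s)/m_n=S^{x^n}(\th_s)/m_n+N_{\th_s}/m_n$.

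For \eqref{Equation: Skorokhod Local Time at Zero}, the same Lipschitz estimate gives $N_\th/m_n=\mf N(t)+O(m_n^{-1}\log m_n)=\tfrac23\mf L^0_t(\tilde X^x)+O(m_n^{-1}\log m_n)$, so it remains to prove $\La^{(0,0)}_\th(T^{x^n})=2N_\th+O\big(\sqrt{m_n}(\log m_n)^{3/4}\big)$ almost surely. Each time $T^{x^n}$ is at $0$, the next step is, independently and with probability $\tfrac13$ each, an up-step ($\zeta=1$), a lazy stay ($\zeta=0$), or a reflected stay ($\zeta=-1$); thus $N_\th$ counts reflected stays while $\La^{(0,0)}_\th(T^{x^n})$ counts all stays, and $M_u:=\sum_{j<u}\mbf 1_{\{T^{x^n}(j)=0\}}\big(\mbf 1_{\{\zeta_j=-1\}}-\tfrac12\mbf 1_{\{\zeta_j\in\{-1,0\}\}}\big)$ is a martingale with increments bounded by $\tfrac12$, satisfying $M_\th=N_\th-\tfrac12\La^{(0,0)}_\th(T^{x^n})$ and predictable quadratic variation $\langle M\rangle_\th=\tfrac16\La^0_{\th-1}(T^{x^n})$. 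The quantitative input is the single-site bound $\La^0_\th(T^{x^n})=O\big(m_n(\log m_n)^{1/2}\big)$ a.s.: unlike the $\sup_a$-estimate of Lemma \ref{Lemma: Occupation Measure Size}, the occupation of one fixed level (equivalently $N_\th=-(0\wedge\min_{v\le\th}S^{x^n}(v))$, which dominates $\La^0_\th(T^{x^n})$ up to a binomial fluctuation) has a Gaussian upper tail by the reflection principle, and this improved tail is exactly what upgrades the usual $\log m_n$ to $(\log m_n)^{3/4}$. Feeding $\langle M\rangle_\th\le Cm_n(\log m_n)^{1/2}$ into Freedman's martingale inequality with deviation $\bar C\sqrt{m_n}(\log m_n)^{3/4}$ makes the exceptional probability summable in $n$ (recall $m_n\asymp n^{\mf d}$), and Borel--Cantelli closes the estimate.

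Finally, for the interior bound \eqref{Equation: Skorokhod Local Time Coupling} with $y>0$, I would re-run the two-step scheme of Section \ref{Section: Dirichlet Local Time Coupling} with $(T^{x^n},\tilde X^x)$ in place of $(S^{x^n},\tilde B^x)$: first couple the vertex occupation $\La^{y_n}_{\th_s}(T^{x^n})/m_n$ with $L^y_s(\tilde X^x)$ via the criterion of Lemma \ref{Lemma: Local Time Invariance Criterion}, then pass to edge occupation by the Hoeffding/Borel--Cantelli argument of Proposition \ref{Proposition: Splitting of Occupation Measure}, which goes through verbatim because the interior transitions of $T$ coincide with those of $S$ (this gives the factor $\tfrac13$; at the self-loop $(0,0)$ it gives $\tfrac23$, whence the correction $1-\tfrac12\mbf 1_{\{(y_n,\bar y_n)=(0,0)\}}$, with the regime $y\in(0,1/m_n)$ then reduced to \eqref{Equation: Skorokhod Local Time at Zero} using the $\tfrac12$-Hölder continuity of $y\mapsto L^y_s(\tilde X^x)$ at $0{+}$). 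Of the four inputs to Lemma \ref{Lemma: Local Time Invariance Criterion}, the KMT bound is \eqref{Equation: Skorokhod Coupling} and the spatial regularity of $L^\bullet_s(\tilde X^x)$ transfers from Lemma \ref{Lemma: Local Time Regularity} through $L^y_s(\tilde X^x)=L^y_s+L^{-y}_s$ of the free Brownian motion associated with $\tilde X^x$; the remaining two inputs — the spatial modulus and the size of the occupation measures of the \emph{reflected} walk $T^{x^n}$, i.e.\ analogs of Lemmas \ref{Lemma: Occupation Measure Regularity} and \ref{Lemma: Occupation Measure Size} — are the main obstacle, since the Bass--Khoshnevisan estimates quoted there are stated for unreflected walks and the reflection interferes near the boundary with the coupling and moment constructions underlying them; adapting those estimates to $T^{x^n}$ is where the real work lies. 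Granting them, the optimization over the auxiliary exponent $\de$ is identical to Section \ref{Section: Dirichlet Local Time Coupling} and produces the stated rate $O(m_n^{-1/5+\eps}\log m_n)$.
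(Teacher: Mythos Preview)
Your treatment of \eqref{Equation: Skorokhod Coupling} and \eqref{Equation: Skorokhod Local Time at Zero} is correct and matches the paper. The Skorokhod-reflection coupling $T^{x^n}=\Gamma_{S^{x^n}}$, $\tilde X^x=\Gamma_{\tilde B^x}$ is exactly what appears in Section~\ref{Subsection: Skorokhod Coupling} (your forward description---each stay at $0$ is independently ``reflected'' or ``lazy''---is the same as the paper's combinatorial count of $2^{\mc H_0(A)}$ preimages). For \eqref{Equation: Skorokhod Local Time at Zero}, your martingale/Freedman argument is a legitimate variant of the paper's conditional Binomial$(h,\tfrac12)$/Hoeffding argument, and your sub-Gaussian tail on $\La^0_\th(T^{x^n})/m_n$ via the reflection principle for $N_\th$ is an alternative to the paper's ladder-epoch proof (Proposition~\ref{Proposition: Tail of Horizontal at Zero}); both deliver the crucial $O(m_n\sqrt{\log m_n})$ size bound that produces the exponent $(\log m_n)^{3/4}$.

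The gap you explicitly flag for \eqref{Equation: Skorokhod Local Time Coupling}---the analogs of Lemmas~\ref{Lemma: Occupation Measure Regularity} and~\ref{Lemma: Occupation Measure Size} for $T^{x^n}$---is real, and the paper does \emph{not} resolve it by adapting Bass--Khoshnevisan to the reflected walk. Instead it introduces a \emph{second}, auxiliary coupling of $T$ with $S$ (Definition~\ref{Definition: Time Change Coupling}): one writes $T^{x^n}(u)=|S^{x^n}(\rho^{x^n}(u))|$, where $\rho^{x^n}$ is a random time change that independently deletes each $(0,0)$-stay of $T$ with probability $3/4$. Under this representation $\La^a_\th(T^{x^n})\le \La^a_\th(S^{x^n})+\La^{-a}_\th(S^{x^n})$ for $a\neq 0$, and both the size and the spatial modulus of $\La^a_\th(T)$ reduce to the already-proved estimates for $S$ (with a controlled correction at $a=0$ coming from $\La^{(0,0)}_\th(T)$, handled by Proposition~\ref{Proposition: Tail of Horizontal at Zero}). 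This second coupling is used only to prove distributional bounds on $T$; the coupling with $\tilde X^x$ in the theorem remains the Skorokhod one. A related minor point: your claim that the edge-splitting argument of Proposition~\ref{Proposition: Splitting of Occupation Measure} ``goes through verbatim'' for $T$ is slightly optimistic---the Hoeffding step does, but its prerequisites (the range and max-occupation bounds \eqref{Equation: Range Deviation for T} and \eqref{Equation: Occupation Deviation for T}) are exactly the missing inputs, and the paper again obtains them via the time-change coupling.
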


\begin{remark}
In contrast with Theorem \ref{Theorem: Split Coupling},
Theorem \ref{Theorem: Split Skorokhod Couplings} does not
include a strong invariance result for the $T$'s bridge process $T^{x^n,x^n}_\th$.
We discuss this omission (and state a related conjecture)
in Section \ref{Subsection: Bridge Coupling Conjecture} below.
\end{remark}

The first step in the proof for Theorem \ref{Theorem: Split Skorokhod Couplings} is to use a modification of the Skorokhod reflection trick developed in \cite[Section 2]{GaudreauLamarreShkolnikov}
to reduce \eqref{Equation: Skorokhod Coupling} to the KMT coupling stated in \eqref{Equation: KMT Coupling}.
As it turns out, this
step also provides a proof of \eqref{Equation: Skorokhod Local Time at Zero}.
The second step is to introduce
a suitable modification of Lemma \ref{Lemma: Local Time Invariance Criterion} that provides a criterion
for the strong convergence of the vertex-occupation measures of $T$
with the local time of $\tilde X$. The third step is to prove an analog of Proposition
\ref{Proposition: Splitting of Occupation Measure}.
We summarize the last two steps in the following propositions:

\begin{proposition}\label{Proposition: Reflected Splitting of Occupation Measure}
Almost surely, as $n\to\infty$, one has
\[\sup_{\substack{0\leq u\leq\th\\(a,b)\in\mbb N_0^2\setminus\{(0,0)\},~|a-b|\leq 1}}
\frac1{m_n}\left|\La^{(a,b)}_u(T^{x^n})-\frac{\La^a_u(T^{x^n})}3\right|
=O\big(m_n^{-1/2}\log m_n\big),\]
and
\[\sup_{\substack{0\leq u\leq\th}}
\frac1{m_n}\left|\La^{(0,0)}_u(T^{x^n})-\frac{2\La^0_u(T^{x^n})}3\right|
=O\big(m_n^{-1/2}\log m_n\big).\]
\end{proposition}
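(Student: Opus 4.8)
I would establish Proposition~\ref{Proposition: Reflected Splitting of Occupation Measure} by mirroring the proof of Proposition~\ref{Proposition: Splitting of Occupation Measure}, the only genuinely new feature being the non-uniform transition law of $T$ at the boundary vertex $0$. The first step is to realise $T^{x^n}$ from a family of independent clocks attached to the vertices, fixed in advance: a collection $(\ze^a_j)_{j\in\mbb N_0,\,a\geq1}$ of i.i.d.\ uniform variables on $\{-1,0,1\}$ for the interior sites, together with a separate collection $(\eta_j)_{j\in\mbb N_0}$ satisfying $\mbf P[\eta_j=0]=2/3$ and $\mbf P[\eta_j=1]=1/3$ for the site $0$; the chain is run so that its $v$-th departure from an interior site $a$ uses $\ze^a_v$ and its $v$-th departure from $0$ uses $\eta_v$. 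Up to an additive error of at most $1$ this gives
\[
\La^{(a,b)}_u(T^{x^n})=\sum_{j=1}^{\La^a_u(T^{x^n})}\mbf 1_{\{\ze^a_j=b-a\}}\ (a\geq1),\qquad
\La^{(0,0)}_u(T^{x^n})=\sum_{j=1}^{\La^0_u(T^{x^n})}\mbf 1_{\{\eta_j=0\}},\qquad
\La^{(0,1)}_u(T^{x^n})=\sum_{j=1}^{\La^0_u(T^{x^n})}\mbf 1_{\{\eta_j=1\}}.
\]
Since $\mbf 1_{\{\ze^a_j=b-a\}}$ and $\mbf 1_{\{\eta_j=1\}}$ are Bernoulli$(1/3)$, every edge $(a,b)\neq(0,0)$ (the edge $(0,1)$ included) has conditional mean $\La^a_u(T^{x^n})/3$, whereas $\mbf 1_{\{\eta_j=0\}}$ is Bernoulli$(2/3)$, producing the conditional mean $2\La^0_u(T^{x^n})/3$ for the loop $(0,0)$; this is the origin of the two different normalising constants in the statement.

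With this representation in hand, the proof proceeds exactly as in \eqref{Equation: Summable Deviation 1}--\eqref{Equation: Summable Deviation 3}. I would restrict to the good event
\[
D_n:=\Big\{\mc R_\th(T^{x^n})\leq m_n^{1+\eps},\ \sup_{0\leq u\leq\th,\,a\in\mbb N_0}\La^a_u(T^{x^n})\leq Cm_n\log m_n\Big\}
\]
for a suitably large constant $C$; on $D_n$, as $u$ runs over $\{0,\dots,\th\}$ the count $\La^a_u(T^{x^n})$ takes values in $\{0,\dots,\lfloor Cm_n\log m_n\rfloor\}$ and vanishes for $a>m_n^{1+\eps}$, so the supremum to be controlled is dominated by
\[
\max_{0\leq a\leq m_n^{1+\eps}}\ \max_{0\leq h\leq Cm_n\log m_n}\ \frac1{m_n}\Big|\sum_{j=1}^{h}\big(\mbf 1_{\{\ze^a_j=z\}}-\tfrac13\big)\Big|
\]
together with the obvious analogue in terms of $(\eta_j)$. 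Because the clocks were fixed in advance, for each $(a,h)$ this is a centred bounded i.i.d.\ sum, so Hoeffding's inequality bounds the probability that it exceeds scale $m_n^{1/2}\log m_n$ by $2\exp(-c\log m_n)$ once the leading constant is chosen large; a union bound over the $O(m_n^{2+\eps}\log m_n)=O(n^{(2+\eps)\mf d}\log n)$ parameters $(a,h)$, the three values of the offset $z$, and the two clock families then makes the total probability $O(n^{-q})$ for any prescribed $q$, hence summable, and Borel--Cantelli concludes.

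It remains to verify $\sum_n\mbf P[D_n^c]<\infty$, i.e.\ to supply the $T^{x^n}$-analogues of Lemmas~\ref{Lemma: Range Deviation} and~\ref{Lemma: Local Time Deviation}. Both are available by the time this proposition is proved: the discrete Skorokhod reflection coupling used to deduce \eqref{Equation: Skorokhod Coupling} from the KMT coupling realises $T^{x^n}$ as a reflection of a lazy random walk, so its range is controlled by that of $S^{x^n}$ via the moment bound \eqref{Equation: Range Moments}, while the vertex-occupation tail estimate for $T$ obtained in the preceding step (the modification of Lemma~\ref{Lemma: Local Time Invariance Criterion} and the accompanying $T$-version of \eqref{Equation: Unconditioned Occupation Measure Tail}) supplies the bound on $\sup_{u,a}\La^a_u(T^{x^n})$; alternatively, \eqref{Equation: Skorokhod Coupling} together with that vertex-occupation coupling and the almost sure boundedness of $\tilde X^x$ and of $\sup_y L^y_t(\tilde X^x)$ on $[0,t]$ give both bounds directly. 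Since the proposition concerns only $T^{x^n}$ and not a bridge, no local-CLT/conditioning step of the kind used at the end of the proof of Proposition~\ref{Proposition: Splitting of Occupation Measure} is needed.

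The only step I expect to require care, rather than pure repetition, is the boundary bookkeeping: peeling the loop $(0,0)$ off with its Bernoulli$(2/3)$ clock while verifying that the edge $(0,1)$, which draws on the same family $(\eta_j)$, still falls under the generic Bernoulli$(1/3)$ estimate, and checking that the range and occupation-measure tail bounds for $T$ are genuinely imported from the first two steps of this section rather than, circularly, from Theorem~\ref{Theorem: Split Skorokhod Couplings} itself. Neither should present a serious obstacle.
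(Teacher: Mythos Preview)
Your binomial-concentration scheme via vertex-indexed clocks is exactly what the paper does, and your treatment of the boundary clock $(\eta_j)$ correctly explains the constants $1/3$ versus $2/3$. The range bound for $T^{x^n}$ via the Skorokhod coupling of Section~\ref{Subsection: Skorokhod Coupling} is also fine, since $\Gamma$ contracts the range.

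The genuine gap is the one you flag at the end but then dismiss as not a serious obstacle: the occupation-measure tail bound $\sup_{a}\La^a_\th(T^{x^n})\leq Cm_n\log m_n$ with summable failure probability. There is no ``$T$-version of \eqref{Equation: Unconditioned Occupation Measure Tail}'' established prior to this proposition, and Proposition~\ref{Proposition: Reflected Strong Local Time Coupling} (the only candidate source) is proved \emph{after} Proposition~\ref{Proposition: Reflected Splitting of Occupation Measure} and in fact invokes \eqref{Equation: Occupation Deviation for T}, which is precisely the bound at issue. The Skorokhod coupling itself does not help here: under $T=\Gamma_S$ one only obtains $\La^a_\th(T)\leq\sum_{c=0}^{L(\th)}\La^{a-c}_\th(S)$, which is of order $m_n^2$ and far too weak for the Hoeffding step (the exponent becomes $(\log m_n)^2/m_n$ rather than $\log m_n$).

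The paper closes this gap by introducing a \emph{second} coupling of $T$ and $S$ (Definition~\ref{Definition: Time Change Coupling}): one realises $T^{x^n}(u)=|S^{x^n}(\rho^{x^n}(u))|$ for a random time change $\rho^{x^n}\leq\mr{id}$, obtained by independently thinning each $(0,0)$-step of $T$ with probability $3/4$; a geometric-sum computation shows the thinned chain then has exactly the transition law of $|S|$ at $0$. Under this coupling $\La^a_\th(T^{x^n})\leq\La^a_\th(S^{x^n})+\La^{-a}_\th(S^{x^n})$ for $a\neq0$ and $\La^0_\th(T^{x^n})\leq\La^{(0,0)}_\th(T^{x^n})+\La^0_\th(S^{x^n})$, so Lemma~\ref{Lemma: Local Time Deviation} and Proposition~\ref{Proposition: Tail of Horizontal at Zero} give \eqref{Equation: Occupation Deviation for T} without circularity. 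This time-change coupling is a new idea rather than a routine adaptation, and it is reused later in the proofs of Proposition~\ref{Proposition: Reflected Strong Local Time Coupling} and Proposition~\ref{Proposition: Spiked Local Time Uniform Exponential Moments}.
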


\begin{proposition}\label{Proposition: Reflected Strong Local Time Coupling}
For every $0<\eps<1/5$, under the same coupling as \eqref{Equation: Skorokhod Coupling},
it holds almost surely as $n\to\infty$ that
\[\sup_{0\leq s\leq t,~y>0}\left|\frac{\La_{\th_s}^{\lfloor m_ny\rfloor}(T^{x^n})}{m_n}-L_s^y(\tilde X^x)\right|
=O\left(m_n^{-1/5+\eps}\log m_n\right).\]
\end{proposition}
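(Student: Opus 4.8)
The plan is to follow the two-stage scheme used for Proposition \ref{Proposition: Strong Local Time Coupling}. First, I would establish a deterministic criterion bounding $\sup_{0\le u\le\th,\,a\in\mbb N_0}\big|\La^a_u(T^{x^n})/m_n - L^{a/m_n}_{u/m_n^2}(\tilde X^x)\big|$ by a sum of four quantities: the spatial modulus of continuity of $y\mapsto L^y_s(\tilde X^x)$, the path error $\sup_{0\le s\le t}\big|T^{x^n}(\th_s)/m_n-\tilde X^x(s)\big|$ (weighted by $m_n^{2\de}$), the spatial regularity of $a\mapsto\La^a_u(T^{x^n})$, and an $L^\infty$ bound on $\La^a_u(T^{x^n})$. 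Then I would bound each of these almost surely under the coupling of \eqref{Equation: Skorokhod Coupling} and optimize the two free exponents exactly as at the end of Section \ref{Subsubsection: Local Time Regularity}. The path error is already $O(m_n^{-1}\log m_n)$ by \eqref{Equation: Skorokhod Coupling}, so only the other three ingredients and the final replacement of $L^{a/m_n}_{\th_s/m_n^2}$ by $L^y_s$ remain, and the exponent balance is identical to the Dirichlet case, giving $O(m_n^{-1/5+\eps}\log m_n)$.

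For the criterion, I would rerun the proof of Lemma \ref{Lemma: Local Time Invariance Criterion} almost verbatim; the only change is the choice of approximate-identity test functions near the reflecting boundary: a symmetric unit-mass tent at $a/m_n$ when $a/m_n>\eps$, and a one-sided unit-mass tent supported in $\mbb R_+$ when $a/m_n\le\eps$, so that the occupation-time identity \eqref{Equation: Interior Local Time} for $\tilde X^x$ (integrated over $\mbb R_+$) still drives the three telescoping comparisons. For $a\ge1$ (hence for every fixed $y>0$) this is routine since $L^y_s(\tilde X^x)$ is exactly the density in \eqref{Equation: Interior Local Time}. The genuinely new point is the vertex $a=0$, where $L^{a/m_n}_s$ must be read as the boundary value $L^{0+}_s(\tilde X^x)=2\mf L^0_s(\tilde X^x)$ and one needs $\La^0_{\th_s}(T^{x^n})/m_n\approx 2\mf L^0_s(\tilde X^x)$ uniformly in $s\le t$; I would obtain this directly from the Skorokhod-reflection construction of Step~1, which realizes $T^{x^n}$ from a lazy walk via a discrete reflection whose cumulative push converges uniformly in time to $\mf L^0(\tilde X^x)$ (a discrete L\'evy identity) --- this is the time-uniform strengthening of \eqref{Equation: Skorokhod Local Time at Zero}.

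It then remains to establish the three almost-sure estimates for $y>0$. The modulus of continuity of $y\mapsto L^y_s(\tilde X^x)$ on $(0,\infty)$ follows from \cite[(3.7)]{BassKhoshnevisan} and \cite[(2.1)]{Trotter} on compact subintervals bounded away from $0$, and near $0^+$ from the identity $\tilde X^x\deq|\tilde B^x|$ (a consequence of the Skorokhod representation of Step~1), which transfers the estimate uniformly; the joint space--time modulus needed for the final replacement of $L^{a/m_n}_{\th_s/m_n^2}$ by $L^y_s$ comes from the same comparison together with \cite[(2.3)]{Trotter}. For the occupation-measure regularity and size bounds for $T^{x^n}$, I would decompose the trajectory into its i.i.d.\ excursions away from $0$: on each such excursion $T^{x^n}$ agrees with a lazy walk on $\{1,2,\dots\}$ absorbed at $0$, so the Bass--Khoshnevisan tail bound \eqref{Equation: Unconditioned Occupation Measure Regularity} applies to each excursion; summing over the at most $\La^0_\th(T^{x^n})=O(m_n\log m_n)$ excursions (the bound on $\La^0_\th$ again coming from Step~1) and invoking Borel--Cantelli --- with $\sum_n m_n^{-14}<\infty$ from Assumption \ref{Assumption: mn} --- yields the analogs of Lemmas \ref{Lemma: Occupation Measure Regularity} and \ref{Lemma: Occupation Measure Size}.

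I expect the occupation-measure estimates near the reflecting boundary to be the main obstacle. Unlike in the Dirichlet case, $T$ is neither symmetric nor a naive folding of a symmetric walk --- it is \emph{sticky} at $0$ --- so the Bass--Khoshnevisan estimates cannot simply be quoted and must be re-derived through the excursion decomposition, with the random number of excursions controlled uniformly in time by the boundary local time; this, together with the uniform-in-time analysis of $\La^0$ at the vertex $0$, is the ``significant work'' alluded to in the introduction. The remainder --- the criterion, the local-time regularity of $\tilde X^x$, and the exponent bookkeeping --- is a close transcription of the corresponding statements in Section \ref{Section: Dirichlet Local Time Coupling}.
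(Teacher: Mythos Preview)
Your overall scheme is the paper's: a criterion of the type of Lemma \ref{Lemma: Local Time Invariance Criterion} adapted to $\mbb R_+$ (this is Lemma \ref{Lemma: Reflected Local Time Invariance Criterion}, proved with exactly the normalized one-sided tent $g_\eps$ you describe), term-by-term bounds, and the same exponent optimization. Your treatment of the local-time modulus of $\tilde X^x$ via $\tilde X^x\deq|\tilde B^x|$, giving $L^y_s(\tilde X^x)=L^y_s(\tilde B^x)+L^{-y}_s(\tilde B^x)$, also matches exactly.

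Where you diverge is the occupation-measure regularity and size bounds for $T^{x^n}$, and here your sketch has a gap. You propose to apply \eqref{Equation: Unconditioned Occupation Measure Regularity} to each excursion and sum over $O(m_n\log m_n)$ of them. But \eqref{Equation: Unconditioned Occupation Measure Regularity} is a bound for the lazy walk on the deterministic window $[0,\th]$ with $\th\asymp m_n^2$, not for excursions of random length; even granting a per-excursion analogue, summing $O(m_n\log m_n)$ such contributions does not recover the required $O(m_n^{1-\de/2+\eps}\log m_n)$ control on $|\La^a_u-\La^b_u|$ --- a Cauchy--Schwarz over excursions with total length $O(m_n^2)$ overshoots by a power of $m_n$. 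More to the point, your premise that ``the Bass--Khoshnevisan estimates cannot simply be quoted'' and ``must be re-derived'' is not correct. The paper's device is a \emph{time-change coupling} (Definition \ref{Definition: Time Change Coupling}) realizing $T^{x^n}(u)=|S^{x^n}(\rho^{x^n}(u))|$ with $\rho^{x^n}(u)\le u$, obtained by independently thinning the $(0,0)$ self-steps of $T$ (probability $3/4$) to match the lesser stickiness of $|S|$ at $0$. For $a,b\neq0$ this gives $\La^a_u(T^{x^n})=\La^a_{\rho^{x^n}(u)}(S^{x^n})+\La^{-a}_{\rho^{x^n}(u)}(S^{x^n})$, and since $\rho^{x^n}(u)\le\th$, Lemmas \ref{Lemma: Occupation Measure Regularity}--\ref{Lemma: Occupation Measure Size} for $S$ transfer verbatim with no summation whatsoever. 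The residual boundary comparison $\big|\tfrac12\La^0_u(T^{x^n})-\La^0_{\rho^{x^n}(u)}(S^{x^n})\big|$ is then dispatched by a binomial concentration bound (conditional on $\La^{(0,0)}_u(T^{x^n})$, the thinned count $\La^{(0,0)}_{\rho^{x^n}(u)}(S^{x^n})$ is binomial with parameter $1/4$), controlled via Proposition \ref{Proposition: Tail of Horizontal at Zero}. This time-change coupling is the missing piece; once you have it, no re-derivation of Bass--Khoshnevisan is needed.
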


\subsection{Proof of \eqref{Equation: Skorokhod Coupling}}
\label{Subsection: Skorokhod Coupling}

\begin{definition}[Skorokhod Map]
Let $Z=\big(Z(t)\big)_{t\geq0}$ be a continuous-time stochastic process.
We define the Skorokhod map of $Z$, denoted $\Gamma_Z$, as the process
\begin{equation*}
\Gamma_Z(t):=Z(t)+\sup_{s\in[0,t]}\,\big(-Z(s)\big)_+,\qquad t\geq0,
\end{equation*}
where $(\cdot)_+:=\max\{0,\cdot\}$ denotes the positive part of a real number.
\end{definition}

\begin{notation}
In the sequel, whenever we discuss the Skorokhod map of the random walk $S$, $\Ga_S$,
we mean the Skorokhod map applied to the continuous-time process $s\mapsto S(\th_s)$
for $0\leq s\leq t$.
\end{notation}

Note that $Z\mapsto\Ga_Z$ is 2-Lipschitz with respect to the supremum norm
on compact time intervals. Therefore, \eqref{Equation: Skorokhod Coupling}
is a direct consequence of \eqref{Equation: KMT Coupling}
if we provide couplings $(T,S)$ and $(\tilde X,\tilde B)$
such that $T^{x_n}(\th_s)=\Ga_{S^{x_n}}(s)$ and $\tilde X^x(s)=\Ga_{\tilde B^x}(s)$.

Let us begin with the coupling of $\tilde X^x$ and $\tilde B^x$.
Note that we can define $\tilde X^x:=|\bar B^x|$,
where $\bar B$ is a Brownian motion with variance $2/3$.
Since the quadratic variation of $\bar B^x$ is $t\mapsto (2/3)t$,
it follows from Tanaka's formula that
\begin{align*}
\tilde X^x(t)=x+\int_0^t\mr{sgn}(\bar B^x(s))\d\bar B^x(s)+\frac{2\mf L_t^{0}(\bar B^x)}3,\qquad t\geq0
\end{align*}
(e.g., \cite[Chapter VI, Theorem 1.2 and Corollary 1.9]{RevuzYor}),
where
\[\mf L_t^{0}(\bar B^x):=\lim_{\eps\to0}\frac1{2\eps}\int_0^t\mbf 1_{\{-\eps<\bar B^x(s)<\eps\}}\d s=\mf L_t^{0}(\tilde X^x).\]
If we define
\begin{align*}
\tilde B^x_t:=x+\int_0^t\mr{sgn}(\bar B^0(s))\d\bar B^0(s),\qquad t\geq0,
\end{align*}
which is a Brownian motion with variance $2/3$ started at $x$, then
we get from \cite[Chapter VI, Lemma 2.1 and Corollary 2.2]{RevuzYor} that
$\tilde X^x_t=\Gamma_{\tilde B^x}(t)$ and
\begin{align}\label{Equation: Skorokhod Min}
\textstyle
(2/3)\mf L_t^{0}(\tilde X^x)=\sup_{s\in[0,t]}\big(-\tilde B^x(s)\big)_+
\end{align}
for every $t\geq0$, as desired.

We now provide the coupling of $T^{x^n}$ and $S^{x^n}$.
(See Figure \ref{Figure: Skorokhod Coupling} below for
an illustration of the procedure we are about to describe.)
Let $\mc C$ be the set of step functions of the form
\begin{align}\label{Equation: Skorokhod Step Functions}
A(s)=\sum_{u=0}^{\th}A_u\mbf1_{[u,u+1)}(s),
\end{align}
where $A_0,A_1,\ldots,A_{\th}\in\mbb Z$ are such that $A_0=x^n$
and $A_{u+1}-A_u\in\{-1,0,1\}$ for all $u$. Let $\mc C_+\subset\mc C$
be the subset of such functions that are nonnegative. For every $A\in\mc C$,
let us define
\begin{align}\label{Equation: Horizontal at Zero}
\mc H_0(A):=\sum_{u=0}^{\th-1}\mbf1_{\{A_u=A_{u+1}=0\}}.
\end{align}
By definition of $S$ and $T$, we see that for any $A\in\mc C$,
\begin{align*}
\mbf P\left[\big(S^{x^n}(\th_s)\big)_{0\leq s\leq t}=\big(A(\th_s)\big)_{0\leq s\leq t}\right]&=\frac1{3^{\th}},\\
\mbf P\left[\big(T^{x^n}(\th_s)\big)_{0\leq s\leq t}=\big(A(\th_s)\big)_{0\leq s\leq t}\right]&=\frac{2^{\mc H_0(A)}}{3^{\th}}\mbf 1_{\{A\in\mc C_+\}}.
\end{align*}
It is clear that $A\mapsto\Gamma_A$ maps $\mc C$ to $\mc C_+$ and that this map is surjective since
$\Gamma_A=A$ for any $A\in\mc C_+$.
Thus, in order to construct a coupling such that $T^{x_n}(\th_s)=\Ga_{S^{x_n}}(s)$,
it suffices to show that for every $A\in\mc C_+$, there are exactly $2^{\mc H_0(A)}$ distinct functions
$\tilde A\in\mc C$ such that $\Ga_{\tilde A}=A$.

Let $A\in \mc C_+$. If $\mc H_0(A)=0$, then there is no $\tilde A\neq A$ such that $\Gamma_{\tilde A}=A$, as desired.
Suppose then that $\mc H_0(A)=h>0$. Let $0\leq u_1,\ldots,u_h\leq \th-1$ be the integer coordinates such that $A_{u_j}=A_{u_j+1}=0$, $1\leq j\leq h$.
Then, $\Ga_{\tilde A}=A$ if and only if the following conditions hold:
\begin{enumerate}
\item $\tilde A_{u_j+1}-\tilde A_{u_j}=0$ or $\tilde A_{u_j+1}-\tilde A_{u_j}=-1$ for all $1\leq j\leq h$, and
\item $\tilde A_{u+1}-\tilde A_u=A_{u+1}-A_u$ for all integers $u$ such that $u\not\in\{u_1,\ldots,u_h\}$.
\end{enumerate}
Note that, up to choosing whether the increments $\tilde A_{u_j+1}-\tilde A_{u_j}$ ($1\leq j\leq h$)
are equal to $0$ or $-1$,
the above conditions completely determine $\tilde A$. Moreover, there are $2^h$ ways of choosing these increments,
each of which yields a different $\tilde A$. Therefore, there are $2^{\mc H_0(A)}$ distinct functions $\tilde A\in\mc C$
such that $\Ga_{\tilde A}=A$, as desired.
\begin{figure}[htbp]
\begin{center}
%
%
%
\begin{tikzpicture}

\begin{axis}[%
at={(0,0)},
width=2in,
height=0.5in,
scale only axis,
xmin=0.8,
xmax=19.1,
ymin=-2.3,
ymax=3.3,
xtick=\empty,
axis background/.style={fill=white}
]
\addplot [color=gray,dashed,forget plot]
  table[row sep=crcr]{%
0	0\\
20	0\\
};
\node[circle,fill,inner sep=1pt] at (axis cs:1,2) {};
\addplot [ forget plot,thick]
  table[row sep=crcr]{%
1	2\\
2	2\\
};
\node[circle,fill,inner sep=1pt] at (axis cs:2,3) {};
\addplot [ forget plot,thick]
  table[row sep=crcr]{%
2	3\\
3	3\\
};
\node[circle,fill,inner sep=1pt] at (axis cs:3,3) {};
\addplot [ forget plot,thick]
  table[row sep=crcr]{%
3	3\\
4	3\\
};
\node[circle,fill,inner sep=1pt] at (axis cs:4,2) {};
\addplot [ forget plot,thick]
  table[row sep=crcr]{%
4	2\\
5	2\\
};
\node[circle,fill,inner sep=1pt] at (axis cs:5,1) {};
\addplot [ forget plot,thick]
  table[row sep=crcr]{%
5	1\\
6	1\\
};
\node[circle,fill,inner sep=1pt] at (axis cs:6,1) {};
\addplot [ forget plot,thick]
  table[row sep=crcr]{%
6	1\\
7	1\\
};
\node[circle,fill,inner sep=1pt] at (axis cs:7,2) {};
\addplot [ forget plot,thick]
  table[row sep=crcr]{%
7	2\\
8	2\\
};
\node[circle,fill,inner sep=1pt] at (axis cs:8,1) {};
\addplot [ forget plot,thick]
  table[row sep=crcr]{%
8	1\\
9	1\\
};
\node[circle,fill,inner sep=1pt] at (axis cs:9,1) {};
\addplot [ forget plot,thick]
  table[row sep=crcr]{%
9	1\\
10	1\\
};
\node[circle,fill,inner sep=1pt] at (axis cs:10,0) {};
\addplot [forget plot,thick]
  table[row sep=crcr]{%
10	0\\
11	0\\
};
\node[color=blue,circle,fill,inner sep=1pt] at (axis cs:11,0) {};
\addplot [color=blue, forget plot,thick]
  table[row sep=crcr]{%
11	0\\
12	0\\
};
\node[color=blue,circle,fill,inner sep=1pt] at (axis cs:12,0) {};
\addplot [color=blue,forget plot,thick]
  table[row sep=crcr]{%
12	0\\
13	0\\
};
\node[circle,fill,inner sep=1pt] at (axis cs:13,1) {};
\addplot [ forget plot,thick]
  table[row sep=crcr]{%
13	1\\
14	1\\
};
\node[circle,fill,inner sep=1pt] at (axis cs:14,0) {};
\addplot [forget plot,thick]
  table[row sep=crcr]{%
14	0\\
15	0\\
};
\node[color=blue,circle,fill,inner sep=1pt] at (axis cs:15,0) {};
\addplot [color=blue,forget plot,thick]
  table[row sep=crcr]{%
15	0\\
16	0\\
};
\node[circle,fill,inner sep=1pt] at (axis cs:16,1) {};
\addplot [ forget plot,thick]
  table[row sep=crcr]{%
16	1\\
17	1\\
};
\node[circle,fill,inner sep=1pt] at (axis cs:17,2) {};
\addplot [ forget plot,thick]
  table[row sep=crcr]{%
17	2\\
18	2\\
};
\node[circle,fill,inner sep=1pt] at (axis cs:18,2) {};
\addplot [ forget plot,thick]
  table[row sep=crcr]{%
18	2\\
19	2\\
};
\end{axis}

\node at (5.5,1.4) {$\swarrow$};
\node at (5.5,-0.4) {$\nwarrow$};

\begin{axis}[%
at={(230,500)},
width=2in,
height=0.5in,
scale only axis,
xmin=0.8,
xmax=19.1,
ymin=-2.3,
ymax=3.3,
xtick=\empty,
axis background/.style={fill=white}
]
\addplot [color=gray,dashed,forget plot]
  table[row sep=crcr]{%
0	0\\
20	0\\
};
\node[circle,fill,inner sep=1pt] at (axis cs:1,2) {};
\addplot [ forget plot,thick]
  table[row sep=crcr]{%
1	2\\
2	2\\
};
\node[circle,fill,inner sep=1pt] at (axis cs:2,3) {};
\addplot [ forget plot,thick]
  table[row sep=crcr]{%
2	3\\
3	3\\
};
\node[circle,fill,inner sep=1pt] at (axis cs:3,3) {};
\addplot [ forget plot,thick]
  table[row sep=crcr]{%
3	3\\
4	3\\
};
\node[circle,fill,inner sep=1pt] at (axis cs:4,2) {};
\addplot [ forget plot,thick]
  table[row sep=crcr]{%
4	2\\
5	2\\
};
\node[circle,fill,inner sep=1pt] at (axis cs:5,1) {};
\addplot [ forget plot,thick]
  table[row sep=crcr]{%
5	1\\
6	1\\
};
\node[circle,fill,inner sep=1pt] at (axis cs:6,1) {};
\addplot [ forget plot,thick]
  table[row sep=crcr]{%
6	1\\
7	1\\
};
\node[circle,fill,inner sep=1pt] at (axis cs:7,2) {};
\addplot [ forget plot,thick]
  table[row sep=crcr]{%
7	2\\
8	2\\
};
\node[circle,fill,inner sep=1pt] at (axis cs:8,1) {};
\addplot [ forget plot,thick]
  table[row sep=crcr]{%
8	1\\
9	1\\
};
\node[circle,fill,inner sep=1pt] at (axis cs:9,1) {};
\addplot [ forget plot,thick]
  table[row sep=crcr]{%
9	1\\
10	1\\
};
\node[circle,fill,inner sep=1pt] at (axis cs:10,0) {};
\addplot [forget plot,thick]
  table[row sep=crcr]{%
10	0\\
11	0\\
};
\node[color=blue,circle,fill,inner sep=1pt] at (axis cs:11,-1) {};
\addplot [color=blue, forget plot,thick]
  table[row sep=crcr]{%
11	-1\\
12	-1\\
};
\node[color=blue,circle,fill,inner sep=1pt] at (axis cs:12,-1) {};
\addplot [color=blue,forget plot,thick]
  table[row sep=crcr]{%
12	-1\\
13	-1\\
};
\node[circle,fill,inner sep=1pt] at (axis cs:13,0) {};
\addplot [ forget plot,thick]
  table[row sep=crcr]{%
13	0\\
14	0\\
};
\node[circle,fill,inner sep=1pt] at (axis cs:14,-1) {};
\addplot [forget plot,thick]
  table[row sep=crcr]{%
14	-1\\
15	-1\\
};
\node[color=blue,circle,fill,inner sep=1pt] at (axis cs:15,-2) {};
\addplot [color=blue,forget plot,thick]
  table[row sep=crcr]{%
15	-2\\
16	-2\\
};
\node[circle,fill,inner sep=1pt] at (axis cs:16,-1) {};
\addplot [ forget plot,thick]
  table[row sep=crcr]{%
16	-1\\
17	-1\\
};
\node[circle,fill,inner sep=1pt] at (axis cs:17,0) {};
\addplot [ forget plot,thick]
  table[row sep=crcr]{%
17	0\\
18	0\\
};
\node[circle,fill,inner sep=1pt] at (axis cs:18,0) {};
\addplot [ forget plot,thick]
  table[row sep=crcr]{%
18	0\\
19	0\\
};
\end{axis}

\begin{axis}[%
at={(230,-500)},
width=2in,
height=0.5in,
scale only axis,
xmin=0.8,
xmax=19.1,
ymin=-2.3,
ymax=3.3,
xtick=\empty,
axis background/.style={fill=white}
]
\addplot [color=gray,dashed,forget plot]
  table[row sep=crcr]{%
0	0\\
20	0\\
};
\node[circle,fill,inner sep=1pt] at (axis cs:1,2) {};
\addplot [ forget plot,thick]
  table[row sep=crcr]{%
1	2\\
2	2\\
};
\node[circle,fill,inner sep=1pt] at (axis cs:2,3) {};
\addplot [ forget plot,thick]
  table[row sep=crcr]{%
2	3\\
3	3\\
};
\node[circle,fill,inner sep=1pt] at (axis cs:3,3) {};
\addplot [ forget plot,thick]
  table[row sep=crcr]{%
3	3\\
4	3\\
};
\node[circle,fill,inner sep=1pt] at (axis cs:4,2) {};
\addplot [ forget plot,thick]
  table[row sep=crcr]{%
4	2\\
5	2\\
};
\node[circle,fill,inner sep=1pt] at (axis cs:5,1) {};
\addplot [ forget plot,thick]
  table[row sep=crcr]{%
5	1\\
6	1\\
};
\node[circle,fill,inner sep=1pt] at (axis cs:6,1) {};
\addplot [ forget plot,thick]
  table[row sep=crcr]{%
6	1\\
7	1\\
};
\node[circle,fill,inner sep=1pt] at (axis cs:7,2) {};
\addplot [ forget plot,thick]
  table[row sep=crcr]{%
7	2\\
8	2\\
};
\node[circle,fill,inner sep=1pt] at (axis cs:8,1) {};
\addplot [ forget plot,thick]
  table[row sep=crcr]{%
8	1\\
9	1\\
};
\node[circle,fill,inner sep=1pt] at (axis cs:9,1) {};
\addplot [ forget plot,thick]
  table[row sep=crcr]{%
9	1\\
10	1\\
};
\node[circle,fill,inner sep=1pt] at (axis cs:10,0) {};
\addplot [forget plot,thick]
  table[row sep=crcr]{%
10	0\\
11	0\\
};
\node[color=blue,circle,fill,inner sep=1pt] at (axis cs:11,0) {};
\addplot [color=blue, forget plot,thick]
  table[row sep=crcr]{%
11	0\\
12	0\\
};
\node[color=blue,circle,fill,inner sep=1pt] at (axis cs:12,-1) {};
\addplot [color=blue,forget plot,thick]
  table[row sep=crcr]{%
12	-1\\
13	-1\\
};
\node[circle,fill,inner sep=1pt] at (axis cs:13,0) {};
\addplot [ forget plot,thick]
  table[row sep=crcr]{%
13	0\\
14	0\\
};
\node[circle,fill,inner sep=1pt] at (axis cs:14,-1) {};
\addplot [forget plot,thick]
  table[row sep=crcr]{%
14	-1\\
15	-1\\
};
\node[color=blue,circle,fill,inner sep=1pt] at (axis cs:15,-1) {};
\addplot [color=blue,forget plot,thick]
  table[row sep=crcr]{%
15	-1\\
16	-1\\
};
\node[circle,fill,inner sep=1pt] at (axis cs:16,0) {};
\addplot [ forget plot,thick]
  table[row sep=crcr]{%
16	0\\
17	0\\
};
\node[circle,fill,inner sep=1pt] at (axis cs:17,1) {};
\addplot [ forget plot,thick]
  table[row sep=crcr]{%
17	1\\
18	1\\
};
\node[circle,fill,inner sep=1pt] at (axis cs:18,1) {};
\addplot [ forget plot,thick]
  table[row sep=crcr]{%
18	1\\
19	1\\
};
\end{axis}
\end{tikzpicture}%
\caption{On the left is a step function $A\in\mc C_+$ (where $x^n=2$).
the segments contributing to $\mc H_0(A)$ are blue.
On the right are two (out of $2^{\mc H_0(S)}=8$)
step functions $\tilde A\in\mc C$ such that $\Ga_{\tilde A}=A$.}
\label{Figure: Skorokhod Coupling}
\end{center}
\end{figure}

\subsection{Proof of \eqref{Equation: Skorokhod Local Time at Zero}}

Since the map $Z\mapsto\sup_{s\in[0,t]}\big(-Z(s)\big)_+$ is Lipschitz with respect
to the supremum norm on $[0,t]$, if we prove that the coupling of $T$ and $S$
introduced in Section \ref{Subsection: Skorokhod Coupling} is such that
\begin{align}\label{Equation: Hoeffding on Minimum}
\left|\frac{\La^{(0,0)}_{\th}(T^{x^n})}{m_n}-2\max_{0\leq s\leq t}\frac{\big(-S^{x^n}(\th_s)\big)_+}{m_n}\right|=O\left(m_n^{-1/2}(\log m_n)^{3/4}\right)
\end{align}
almost surely as $n\to\infty$, then \eqref{Equation: Skorokhod Local Time at Zero} is proved by
a combination of \eqref{Equation: KMT Coupling} and \eqref{Equation: Skorokhod Min}.

Note that if $T^{x^n}(\th_s)=A(\th_s)$ for $s\leq t$,
where $A\in\mc C_+$ is a step function of the form \eqref{Equation: Skorokhod Step Functions},
then $\La^{(0,0)}_{\th}(T^{x^n})=\mc H_0(A)$, as defined in \eqref{Equation: Horizontal at Zero}.
By analyzing the construction of the coupling of $T$ and $S$
in Section \ref{Subsection: Skorokhod Coupling}, we see that, conditional on the event
$\{\La^{(0,0)}_{\th}(T^{x^n})=h\}$ ($h\in\mbb N_0$),
the quantity $\max_{0\leq s\leq t}\big(-S^{x^n}(\th_s)\big)_+$ is a binomial random
variable with $h$ trials and probability $1/2$. With this in mind, our strategy
is to prove \eqref{Equation: Hoeffding on Minimum} using a binomial concentration bound similar
to \eqref{Equation: Summable Deviation 2}. For this, we need a good control on the tails of $\La^{(0,0)}_{\th}(T^{x^n})$:

\begin{proposition}\label{Proposition: Tail of Horizontal at Zero}
There exists constants $C,c>0$ independent of $n$ such that for every $y\geq0$,
\[\sup_{n\in\mbb N,~x\geq0}\mbf P\left[\La_{\th}^{(0,0)}(T^{x^n})\geq m_ny\right]\leq C\mr e^{-cy^2}.\]
In particular, there exists $C>0$ large enough so that
\begin{align}\label{Equation: Horizontal at Zero Magnitude}
\sum_{n\in\mbb N}\mbf P\left[\La^{(0,0)}_{\th}(T^{x^n})\geq Cm_n\sqrt{\log m_n}\right]<\infty.
\end{align}
\end{proposition}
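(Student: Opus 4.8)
The plan is to derive the tail bound from the coupling of $T^{x^n}$ and $S^{x^n}$ constructed in Section~\ref{Subsection: Skorokhod Coupling}, together with the conditional law recorded just before the statement: under that coupling $\La^{(0,0)}_\th(T^{x^n})$ is a deterministic functional of $S^{x^n}$, and conditionally on $\{\La^{(0,0)}_\th(T^{x^n})=h\}$ the quantity $\nu:=\max_{0\le s\le t}\big(-S^{x^n}(\th_s)\big)_+$ is $\mr{Binomial}(h,1/2)$. Morally $\La^{(0,0)}_\th(T^{x^n})\approx 2\nu$, so it suffices to control the right tail of $\nu$, i.e.\ of (minus) the running minimum of a lazy random walk run for $\th\asymp m_n^2t$ steps. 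Before doing that I record the a priori bound $\La^{(0,0)}_\th(T^{x^n})\le\La^0_\th(T^{x^n})\le\th+1\le 2m_n^2t$ (for $n$ large), so that $\mbf P[\La^{(0,0)}_\th(T^{x^n})\ge m_ny]=0$ whenever $y>2m_nt$; hence it is enough to prove the Gaussian estimate in the range $y\le 2m_nt$, with constants that may depend on $t$.

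For $y\le 2m_nt$ I would split, using the conditional law,
\[
\mbf P\big[\La^{(0,0)}_\th(T^{x^n})\ge m_ny\big]
\le\mbf P\big[\nu\ge m_ny/4\big]
+\sum_{h\ge m_ny}\mbf P\big[\La^{(0,0)}_\th(T^{x^n})=h\big]\,\mbf P\big[\mr{Binomial}(h,1/2)<m_ny/4\big].
\]
The first term depends only on the law of $S^{x^n}$: writing $S^{x^n}=x^n+S^0$ with $S^0$ a lazy walk from $0$, the hypothesis $x^n\ge0$ gives $\nu\le\max_{0\le u\le\th}|S^0(u)|$, and since $S^0$ is a martingale with increments in $\{-1,0,1\}$, Doob's inequality applied to the submartingales $\mr e^{\pm\lambda S^0}$ yields $\mbf P[\nu\ge r]\le 2\mr e^{-r^2/(2\th)}$; with $r=m_ny/4$ and $\th\le m_n^2t$ this is $\le 2\mr e^{-y^2/(32t)}$, uniformly in $n$ and $x$. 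In the second term every summand has $h\ge m_ny$, hence $m_ny/4\le h/4=\tfrac12h-\tfrac14h$, and Hoeffding's inequality gives $\mbf P[\mr{Binomial}(h,1/2)<m_ny/4]\le\mr e^{-h/8}\le\mr e^{-m_ny/8}$; summing over $h$ the total is at most $\mr e^{-m_ny/8}$, which in the range $y\le 2m_nt$ is $\le\mr e^{-y^2/(16t)}$. Adding the two contributions proves $\sup_{n,x}\mbf P[\La^{(0,0)}_\th(T^{x^n})\ge m_ny]\le C\mr e^{-cy^2}$ for all $y\ge0$, with $C=C(t)$ and $c=c(t)>0$.

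The displayed estimate \eqref{Equation: Horizontal at Zero Magnitude} then follows by taking $y=C\sqrt{\log m_n}$: the bound just proved gives $\mbf P[\La^{(0,0)}_\th(T^{x^n})\ge Cm_n\sqrt{\log m_n}]\le C'm_n^{-cC^2}$, and since $m_n\ge c_0n^{\mf d}$ with $\mf d>1/13$ by \eqref{Equation: mn}, choosing $C$ large enough makes $\sum_n m_n^{-cC^2}$ finite.

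The only genuinely substantive input is the conditional-binomial identity: it is where the combinatorics of the Skorokhod-reflection coupling of Section~\ref{Subsection: Skorokhod Coupling} are used, since the $2^{\mc H_0(A)}$ preimages under $\Ga$ of a path $A\in\mc C_+$ correspond to independent fair coin flips at the flat-at-zero segments of $A$, and $\nu$ merely counts the downward flips. Everything after that is routine exponential-tail bookkeeping; the single point requiring care is that the constants $C,c$ must be allowed to depend on $t$, which is forced by the a priori bound $\La^{(0,0)}_\th(T^{x^n})\le\th+1\asymp m_n^2t$.
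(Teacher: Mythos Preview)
Your proof is correct and takes a genuinely different route from the paper's. The paper bounds $\La^{(0,0)}_\th(T^{x^n})$ deterministically by $\mu_\th(S)$, the number of weak descending ladder epochs of the coupled lazy walk $S$, and then controls the tail of $\mu_\th(S^0)$ by a ladder-height argument: if $\mu_\th(S^0)\ge m_n y$ then the $\lceil m_n y\rceil$-th ladder epoch occurs by time $\th$, so $S^0$ at that epoch dominates the running minimum; one then splits according to whether $S^0(\mf t_{\lceil m_n y\rceil})$ or $\min_{u\le\th}S^0(u)$ is large in absolute value, and bounds each piece by Hoeffding (the ladder heights being i.i.d.\ on $\{0,-1\}$). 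Your argument instead exploits the conditional-binomial identity stated just before the proposition: if $h=\La^{(0,0)}_\th(T^{x^n})$ is large while $\nu=\max(-S^{x^n})_+$ is small, that is a large-deviation event for $\mr{Binomial}(h,1/2)$; and $\nu$ itself has sub-Gaussian tails by a standard maximal inequality for the lazy walk. This is shorter and avoids ladder-epoch combinatorics entirely. The paper's route has the minor organizational advantage that it does not rely on the conditional-binomial fact, which the paper reserves for proving \eqref{Equation: Hoeffding on Minimum} \emph{after} the proposition; your route uses that fact twice (once here, once---implicitly---for \eqref{Equation: Hoeffding on Minimum}), which is perfectly legitimate since it is established independently of the proposition.
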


Indeed, with this result in hand, we obtain by Hoeffding's inequality that
\[\mbf P\left[\left|(h/2)-\max_{0\leq s\leq t}\big(-S^{x^n}(\th_s)\big)_+\right|\geq \tilde Cm_n^{1/2}(\log m_n)^{3/4}/2\bigg|\La^{(0,0)}_{\th}(T^{x^n})=h\right]
\leq2\mr e^{-\tilde C^2\log m_n/2C}\]
uniformly in $0\leq h\leq Cm_n\sqrt{\log m_n}$. By taking $\tilde C>0$ large enough,
we conclude that \eqref{Equation: Skorokhod Local Time at Zero} holds by an application of the Borel-Cantelli lemma
combined with \eqref{Equation: Horizontal at Zero Magnitude}.

\begin{proof}[Proof of Proposition \ref{Proposition: Tail of Horizontal at Zero}]
Let $T$ and $S$ be coupled as in Section \ref{Subsection: Skorokhod Coupling}, and let
\[\mu_\th(S):=\sum_{u=0}^{\th-1}\mbf1_{\big\{S(u+1)\leq\min\{S(0),S(1),\ldots,S(u)\}\big\}},\]
that is, the number of times that $S$ is smaller or equal to its running minimum
over the first $\th$ steps. Then, we see that
\[\La_\th^{(0,0)}(T)=\sum_{u=0}^{\th-1}\mbf1_{\big\{S(u)\leq 0,~S(u+1)\leq\min\{S(0),S(1),\ldots,S(u)\}\big\}}
\leq\mu_\th(S).\]
Given that $\mu_\th(S)$ is independent of $S$'s starting point, it suffices to prove that
\begin{align}\label{Equation: Ladder Epochs Tails}
\sup_{n\in\mbb N}\mbf P\left[\mu_\th(S^{0})\geq m_ny\right]\leq C\mr e^{-cy^2},\qquad y\geq0
\end{align}
for some constants $C,c>0$.

If $y>m_nt$, then $m_ny\geq\th$, hence $\mbf P\left[\mu_\th(S^{0})\geq m_ny\right]=0$.
Thus, it suffices to prove \eqref{Equation: Ladder Epochs Tails} for $y\leq m_nt$.
Our proof of this is inspired by \cite[Lemma 7]{Ladder}:
Let $0=\mf t_0<\mf t_1<\mf t_2<\cdots$ be the weak descending ladder epochs of $S^0$,
that is,
\[\mf t_{u+1}:=\min\{v>\mf t_u:S^0(v)\leq S^0(\mf t_u)\},\qquad u\in\mbb N_0.\]\
Then, for any $\nu>0$,
\begin{align*}
\mbf P\left[\mu_\th(S^{0})\geq m_ny\right]&=\mbf P\left[\mf t_{\lceil m_n y\rceil}\leq\th\right]
\leq\mbf P\left[S^0(\mf t_{\lceil m_n y\rceil})\geq\min_{0\leq u\leq\th}S^0(u)\right]\\
&\leq\mbf P\left[S^0(\mf t_{\lceil m_n y\rceil})\geq -\nu m_ny\right]+\mbf P\left[\min_{0\leq u\leq\th} S^0(u)<-\nu m_n y\right].
\end{align*}

On the one hand, we note that $S^0(\mf t_{\lceil m_n y\rceil})$ is equal in distribution to
the sum of $\lceil m_n y\rceil$ i.i.d. copies of $S^0(\mf t_1)$,
which we call the the ladder height of $S^0$.
Moreover, it is easily seen that the ladder height has distribution
$\mbf P[S^0(\mf t_1)=0]=2/3$ and $\mbf P[S^0(\mf t_1)=-1]=1/3$. In particular,
$\mbf E[S^0(\mf t_{\lceil m_n y\rceil})]=-\lceil m_n y\rceil/3$. Thus, if we choose $\nu$
small enough (namely $\nu<1/3$), then by combining Hoeffding's
inequality with $m_n\geq y/t$, we obtain
\[\mbf P\left[S^0(\mf t_{\lceil m_n y\rceil})\geq -\nu m_ny\right]
=\mbf P\left[S^0(\mf t_{\lceil m_n y\rceil})+\frac{\lceil m_n y\rceil}3\geq -\nu m_ny+\frac{\lceil m_n y\rceil}3\right]
\leq C_1\mr e^{-c_1m_ny}\leq C_1\mr e^{-c_1y^2/t}\]
for some $C_1,c_1>0$ independent of $n$.

On the other hand, by Etemadi's
and Hoeffding's inequalities,
\[\mbf P\left[\min_{0\leq u\leq\th} S^0(u)<-\nu m_n y\right]
\leq\mbf P\left[\max_{0\leq u\leq\th}|S^0(u)|>\nu m_n y\right]
\leq 3\max_{0\leq u\leq\th}\mbf P\left[|S^0(u)|>\nu m_ny/3\right]
\leq C_2\mr e^{-c_2y^2}\]
for some $C_2,c_2>0$ independent of $n$,
concluding the proof of \eqref{Equation: Ladder Epochs Tails} for $y\leq m_nt$.
\end{proof}

\subsection{Proof of Proposition \ref{Proposition: Reflected Splitting of Occupation Measure}}

By replicating the binomial concentration argument in the proof of Proposition \ref{Proposition: Splitting of Occupation Measure},
it suffices to prove that
\begin{align}\label{Equation: Range Deviation for T}
\sum_{n\in\mbb N}\mbf P\left[\mc R_\th(T^{x^n})\geq m_n^{1+\eps}\right]
<\infty
\end{align}
for every $\eps>0$, where we define $\mc R_\th(T^{x^n})$ as in \eqref{Equation: Random Walk Range}, and
\begin{align}\label{Equation: Occupation Deviation for T}
\sum_{n\in\mbb N}\mbf P\left[\sup_{a\in\mbb Z}\La^a_\th(T^{x^n})\geq Cm_n\log m_n\right]<\infty
\end{align}
provided $C>0$ is large enough. In order to prove this, we introduce another coupling
of $S$ and $T$, which will also be useful later in the paper:

\begin{definition}\label{Definition: Time Change Coupling}
Let $a\in\mbb N_0$ be fixed. Given a realization of $T^a$,
let us define the time change $\big(\tilde\rho^a(u)\big)_{u\in\mbb N_0}$
as follows:
\begin{enumerate}
\item $\tilde\rho^a(0)=0$.
\item If $T^a\big(\tilde\rho^a(u)\big)\neq0$ or $T^a\big(\tilde\rho^a(u)+1\big)\neq0$,
then $\tilde\rho^a(u+1)=\tilde\rho^a(u)+1$.
\item If $T^a\big(\tilde\rho^a(u)\big)=0$ and $T^a\big(\tilde\rho^a(u)+1\big)=0$
then we sample
\[\mbf P[\tilde\rho^a(u+1)=\tilde\rho^a(u)+1]=\frac14
\qquad\text{and}\qquad
\mbf P[\tilde\rho^a(u+1)=\tilde\rho^a(u)+2]=\frac34,\]
independently of the increments in $T^a$.
\end{enumerate}
In words, we go through the path of $T^a$ and skip every visit
to the self-edge $(0,0)$ independently with probability $3/4$.
Then, we define $\rho^a$ as the inverse of $\tilde\rho^a$,
which is well defined since the latter is strictly increasing.
\end{definition}

By a straightforward geometric sum calculation, it is easy to see
that we can couple $S$ and $T$ in such a way that
\begin{align}\label{Equation: Step Removal Coupling}
T^{x^n}(u)=\big|S^{x^n}\big(\rho^{x^n}(u)\big)\big|,\qquad u\in\mbb N_0.
\end{align}
For the remainder of the proof of Proposition
\ref{Proposition: Reflected Splitting of Occupation Measure} we adopt this coupling.

On the one hand,
$\mc R_\th(T^{x^n})=\mc R_{\rho^{x^n}_\th}(|S^{x^n}|)\leq\mc R_{\th}(S^{x^n}).$
Thus \eqref{Equation: Range Deviation for T} follows directly from
Lemma \ref{Lemma: Range Deviation}. On the other hand,
for every $a\neq0$,
\begin{align}
\La^a_u(T^{x^n})=\La^a_{\rho^{x^n}_\th}(S^{x^n})+\La^{-a}_{\rho^{x^n}_\th}(S^{x^n})
\leq\La^a_{\th}(S^{x^n})+\La^{-a}_{\th}(S^{x^n}),
\end{align}
and
\[\La^0_u(T^{x^n})
=\La^{(0,0)}_{u}(T^{x^n})+\La^{(0,-1)}_{\rho^{x^n}_u}(S^{x^n})
+\La^{(0,1)}_{\rho^{x^n}_u}(S^{x^n})
\leq\La^{(0,0)}_{u}(T^{x^n})+\La^{0}_{u}(S^{x^n}).\]
Thus \eqref{Equation: Occupation Deviation for T}
follows from \eqref{Equation: Horizontal at Zero Magnitude} and Lemma \ref{Lemma: Local Time Deviation}.

\subsection{Proof of Proposition \ref{Proposition: Reflected Strong Local Time Coupling}}

The following extends Lemma \ref{Lemma: Local Time Invariance Criterion} to $T$.

\begin{lemma}\label{Lemma: Reflected Local Time Invariance Criterion}
For any $0<\de<1$, the following holds almost surely as $n\to\infty$:
\begin{multline*}
\sup_{0\leq u\leq \th,~a\in\mbb N_0}\left|\frac{\La_u^a(T^{x^n})}{m_n}-L_{u/m_n^2}^{a/m_n}(\tilde X^x)\right|
=O\Bigg(\sup_{\substack{0\leq s\leq t\\y,z\geq0,~|y-z|\leq m_n^{-\de}}}\big|L^y_s(\tilde X^x)-L^z_s(\tilde X^x)\big|\\
+m_n^{2\de}\sup_{0\leq s\leq t}\left|\frac{T^{x^n}(\th_s)}{m_n}-\tilde X^x(s)\right|
+\sup_{\substack{0\leq u\leq\th\\a,b\in\mbb N_0,~|a-b|\leq m_n^{1-\de}}}\frac{\big|\La^a_u(T^{x^n})-\La^b_u(T^{x^n})\big|}{m_n}
+\sup_{0\leq u\leq\th,~a\in\mbb N_0}\frac{\La^a_u(T^{x^n})}{m_n^{2-\de}}\Bigg).
\end{multline*}
\end{lemma}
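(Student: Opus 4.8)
The plan is to transcribe the proof of Lemma \ref{Lemma: Local Time Invariance Criterion} essentially line by line, the only new ingredient being that the approximate-identity bump functions now live on the half-line and must be allowed to be one-sided near the origin. Fix $n\in\mbb N$ and $a\in\mbb N_0$, and for $\eps>0$ let $f_\eps\colon\mbb R\to\mbb R$ be the tent function with $f_\eps(a/m_n)=1/\eps$, support $\{|z-a/m_n|\le\eps\}$, and linear interpolation between, exactly as before. Since both $\tilde X^x$ and the embedded walk $s\mapsto T^{x^n}(\th_s)/m_n$ are $\mbb R_+$-valued, the relevant normalizing constant is $c_\eps:=\int_0^\infty f_\eps(y)\,\mathrm{d}y$; it equals $1$ whenever $a/m_n\ge\eps$ and equals $1/2$ when $a=0$, and in all cases $c_\eps\in[1/2,1]$, so at the end one divides through by $c_\eps$ at the cost of only a bounded factor, which does not affect the $O(\cdot)$ bound.

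First I would carry out the same three comparisons as in the Dirichlet case, all restricted to $\mbb R_+$. (i) By the occupation-time formula \eqref{Equation: Interior Local Time} for $\tilde X^x$ (recalling $L^y_\cdot(\tilde X^x)=0$ for $y<0$) and continuity of $y\mapsto L^y_\cdot(\tilde X^x)$, $\big|\int_0^{u/m_n^2}f_\eps(\tilde X^x(s))\,\mathrm{d}s-c_\eps\,L^{a/m_n}_{u/m_n^2}(\tilde X^x)\big|\le\sup_{|y-a/m_n|\le\eps,\,y\ge0}\big|L^y_{u/m_n^2}(\tilde X^x)-L^{a/m_n}_{u/m_n^2}(\tilde X^x)\big|$. (ii) Since $f_\eps$ is $\eps^{-2}$-Lipschitz on $\mbb R_+$ — the jump of $f_\eps$ at $0$ present when $a/m_n<\eps$ is irrelevant because neither $\tilde X^x$ nor $T^{x^n}$ ever visits $(-\infty,0)$ — one gets $\big|\int_0^{u/m_n^2}f_\eps(\tilde X^x(s))\,\mathrm{d}s-\tfrac1{m_n^2}\sum_{j=1}^u f_\eps(T^{x^n}(j)/m_n)\big|\le\tfrac{t}{\eps^2}\sup_{0\le s\le t}\big|T^{x^n}(\th_s)/m_n-\tilde X^x(s)\big|$. (iii) Writing $\tfrac1{m_n^2}\sum_{j=1}^u f_\eps(T^{x^n}(j)/m_n)=\tfrac1{m_n}\sum_{b\in\mbb N_0}f_\eps(b/m_n)\,\La^b_u(T^{x^n})/m_n$ (up to a $\pm1$ discrepancy at $b=x^n$ that is $O(1/\eps m_n)$), subtracting $\La^a_u(T^{x^n})/m_n$ times the Riemann sum $\tfrac1{m_n}\sum_{b\in\mbb N_0}f_\eps(b/m_n)=c_\eps+O(1/\eps m_n)$, and using that $f_\eps$ is supported on $\{|b-a|\le\eps m_n\}$, the leftover is $O\big(\sup_{|a-b|\le\eps m_n}|\La^a_u(T^{x^n})-\La^b_u(T^{x^n})|/m_n\big)+O\big(\La^a_u(T^{x^n})/(\eps m_n^2)\big)$. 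Adding (i)--(iii), dividing by $c_\eps$, taking $\sup_{0\le u\le\th,\,a\in\mbb N_0}$, and choosing $\eps=m_n^{-\de}$ then produces exactly the four terms on the right-hand side of the claimed inequality.

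The step I expect to require the most care is the behaviour at and near the origin: one must check that for $a$ with $a/m_n<\eps$ (in particular $a=0$) the truncated, one-sided bump still yields the estimate above with the harmless factor $c_\eps^{-1}\le2$, and, relatedly, that the object appearing here is the interior occupation density $L^0_\cdot(\tilde X^x)$ of \eqref{Equation: Interior Local Time} (the right-continuous density of time spent near $0^+$), not the boundary local time $\mf L^0$, which enters the final coupling through \eqref{Equation: Skorokhod Local Time at Zero} rather than through this lemma. Everything else is a routine repetition of the Dirichlet argument with $T^{x^n}$ and $\tilde X^x$ in place of $Z_n$ and $Z$; in particular the atypical holding probability of $T$ at $0$ plays no role here, since the bound being proved is purely a deterministic comparison of the paths, their vertex-occupation measures, and the local time of $\tilde X^x$.
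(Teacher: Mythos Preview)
Your proposal is correct and matches the paper's own proof almost exactly: the paper defines $g_\eps(z):=f_\eps(z)\big/\int_0^\infty f_\eps$ on $\mbb R_+$, notes that $g_\eps$ integrates to $1$ and is $2/\eps^2$-Lipschitz on $[0,\infty)$, and then says one repeats the proof of Lemma~\ref{Lemma: Local Time Invariance Criterion} verbatim with $g_\eps$ in place of $f_\eps$. Your version is the same argument written out in more detail, carrying the normalizing constant $c_\eps\in[1/2,1]$ through instead of dividing it out up front; your remarks on the one-sided bump near $0$ and on the distinction between $L^0$ and $\mf L^0$ are accurate and are exactly the points one needs to check.
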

\begin{proof}
Let $a\in\mbb N$ be fixed. For every $\eps>0$, let $f_\eps:\mbb R\to\mbb R$ be defined as in the proof of Lemma
\ref{Lemma: Local Time Invariance Criterion}, and let us define $g_\eps:\mbb R_+\to\mbb R$ as
\[g_\eps(z):=f_\eps(z)\left(\int_0^{\infty}f_\eps(z)\d z\right)^{-1},\qquad z\geq0.\]
$g_\eps$ integrates to one on $\mbb R_+$, and $|g_\eps(z)-g_\eps(y)|/|z-y|\leq\frac2{\eps^2}$
for $y,z\geq0$. By repeating the proof
of Lemma \ref{Lemma: Local Time Invariance Criterion} verbatim with $g_\eps$ instead of $f_\eps$, we obtain the result.
\end{proof}

We now apply Lemma \ref{Lemma: Reflected Local Time Invariance Criterion}. \eqref{Equation: Skorokhod Coupling} yields
\[m_n^{2\de}\sup_{0\leq s\leq t}\left|\frac{T^{x^n}(\th_s)}{m_n}-\tilde X^x(s)\right|=O\left(m_n^{1-2\de}\log m_n\right).\]
As for the regularity of the vertex-occupation measures and local time of $T^{x^n}$ and $\tilde X^x$,
they follow directly from the proof of Proposition \ref{Proposition: Strong Local Time Coupling}
using Lemma \ref{Lemma: Local Time Invariance Criterion}
by applying some carefully chosen couplings of $T^{x_n}$ with $S^{x_n}$,
and $\tilde X^x$ with $\tilde B^x$:

We begin with the latter. If we define $\tilde X^x(s)=|\tilde B^x(s)|$, then for every $y\geq0$ and $s\geq0$, we have that
$L_s^y(\tilde X^x)=L_s^y(\tilde B^x)+L_s^{-y}(\tilde B^x)$. Consequently,
\begin{align}\label{Equation: Reflected Local Time Regularity 1}
\big|L^y_s(\tilde X^x)-L^z_{\bar s}(\tilde X^x)\big|\leq\big|L^y_s(\tilde B^x)-L^z_{\bar s}(\tilde B^x)\big|
+\big|L^{-y}_s(\tilde B^x)-L^{-z}_{\bar s}(\tilde B^x)\big|.
\end{align}
The regularity estimates for $L_s^y(\tilde X^x)$ then follow from the
same results for $L_s^y(\tilde B^x)$.

To prove the desired estimates on the occupation measures, we use the coupling
introduced in Definition \ref{Definition: Time Change Coupling}. This immediately yields an adequate
control of the supremum of $\La^a_\th(T^{x^n})$ by \eqref{Equation: Occupation Deviation for T}.
As for regularity, one the one hand, we note that
\[\big|\La^a_u(T^{x^n})-\La^b_u(T^{x^n})\big|
\leq\big|\La^a_{\rho^{x^n}_u}(S^{x^n})-\La^b_{\rho^{x^n}_u}(S^{x^n})\big|
+\big|\La^{-a}_{\rho^{x^n}_u}(S^{x^n})-\La^{-b}_{\rho^{x^n}_u}(S^{x^n})\big|
\]
for any $a,b\neq0$. On the other hand,
for any $a\neq0$, 
\[\big|\La^0_u(T^{x^n})-\La^a_u(T^{x^n})\big|
\leq\big|\tfrac12\La^0_u(T^{x^n})-\La^a_{\rho^{x^n}_u}(S^{x^n})\big|
+\big|\tfrac12\La^0_u(T^{x^n})-\La^{-a}_{\rho^{x^n}_u}(S^{x^n})\big|.\]
Hence we get the desired estimate by Lemma \ref{Lemma: Occupation Measure Regularity}
if we prove that
\begin{align}\label{Equation: Proportion of Truncated Occupation at Zero}
\sup_{0\leq u\leq\th}\big|\tfrac12\La^0_u(T^{x^n})-\La^0_{\rho^{x^n}_\th}(S^{x^n})\big|=O\big(m_n^{-1/2}\log m_n\big)
\end{align}
almost surely as $n\to\infty$.
By Propositions \ref{Proposition: Splitting of Occupation Measure} and
\ref{Proposition: Reflected Splitting of Occupation Measure},
\eqref{Equation: Proportion of Truncated Occupation at Zero} can be reduced to
\begin{align}\label{Equation: Proportion of Truncated Occupation at Zero 2}
\sup_{0\leq u\leq\th}3\big|\tfrac14\La^{(0,0)}_u(T^{x^n})-\La^{(0,0)}_{\rho^{x^n}_\th}(S^{x^n})\big|=O\big(m_n^{-1/2}\log m_n\big).
\end{align}
By Definition \ref{Definition: Time Change Coupling}, conditional on $\La^{(0,0)}_u(T^{x^n})$,
we note that $\La^{(0,0)}_{\rho^{x^n}(u)}(S^{x^n})$ is a binomial random variable
with $\La^{(0,0)}_u(T^{x^n})$ trials and probability $1/4$.
Hence we obtain \eqref{Equation: Proportion of Truncated Occupation at Zero 2} by
combining \eqref{Equation: Horizontal at Zero Magnitude} with Hoeffding's
inequality similarly to \eqref{Equation: Summable Deviation 2}.

\subsection{Coupling of $T_\th^{a,b}$}
\label{Subsection: Bridge Coupling Conjecture}

In light of Theorems \ref{Theorem: Split Coupling} and \ref{Theorem: Split Skorokhod Couplings},
the following conjecture is natural.

\begin{conjecture}
\label{Conjecture: Trace Convergence 2}
The statement of Theorem \ref{Theorem: Split Skorokhod Couplings}
holds with every instance of $T^{x^n}$ replaced by $T^{x^n,x^n}_{\th_t}$, and
every instance of $\tilde X^{x^n}$ replaced by $\tilde X^{x^n,x^n}_t$.
\end{conjecture}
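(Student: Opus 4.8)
The plan is to mirror the two–step scheme already used for the bridge $\tilde B^{x,x}_t$ in Theorem~\ref{Theorem: Split Coupling} and for the unconditioned chain $T^{x^n}$ in Theorem~\ref{Theorem: Split Skorokhod Couplings}: first build a coupling realizing the KMT estimate \eqref{Equation: Skorokhod Coupling} and the boundary--local-time estimate \eqref{Equation: Skorokhod Local Time at Zero}, then bootstrap to \eqref{Equation: Skorokhod Local Time Coupling}. For the continuum object, the natural handle is the Skorokhod representation of the reflected Brownian bridge: since reflected Brownian motion of variance $2/3$ satisfies $\tilde X^x=\Ga_{\tilde B^x}$ (Tanaka's formula, exactly as in Section~\ref{Subsection: Skorokhod Coupling}), conditioning on $\tilde X^x(t)=x$ amounts to conditioning the driving process on $\Ga_{\tilde B^x}(t)=x$; writing $\tilde B^\flat$ for this conditioned process, we obtain $\tilde X^{x,x}_t=\Ga_{\tilde B^\flat}$ and, as in \eqref{Equation: Skorokhod Min}, $\tfrac23\mf L^0_t(\tilde X^{x,x}_t)=\sup_{s\in[0,t]}\big(-\tilde B^\flat(s)\big)_+$. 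On the discrete side, the fibered reflection coupling of Section~\ref{Subsection: Skorokhod Coupling} maps each $S^{x^n}$-path to a $T^{x^n}$-path \emph{without altering the number of steps}, and one checks from the construction that, under that coupling, the conditional law of $S^{x^n}$ given $\{T^{x^n}(\th)=x^n\}$ is precisely the law of $S^\flat:=\big(S^{x^n}\mid\Ga_{S^{x^n}}(t)=x^n\big)$, with $\Ga_{S^\flat}\deq T^{x^n,x^n}_\th$; here $\Ga_{S^{x^n}}(t)=S^{x^n}(\th)+\max\{0,-\min_{0\le u\le\th}S^{x^n}(u)\}$. Since $\Ga$ is $2$-Lipschitz for the supremum norm, \eqref{Equation: Skorokhod Coupling} for the bridge reduces to a KMT-type strong approximation of $S^\flat$ by $\tilde B^\flat$ at rate $O(m_n^{-1}\log m_n)$.

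Granting such an approximation, the rest would proceed along lines already present in the paper. The invariance criterion of Lemma~\ref{Lemma: Reflected Local Time Invariance Criterion} is insensitive to conditioning (its proof only uses that $g_\eps$ integrates to one and is Lipschitz), so \eqref{Equation: Skorokhod Local Time Coupling} follows once one has, for the bridge, the vertex-occupation regularity and size bounds of Lemmas~\ref{Lemma: Occupation Measure Regularity}--\ref{Lemma: Occupation Measure Size}, the splitting estimate of Proposition~\ref{Proposition: Reflected Splitting of Occupation Measure}, and the local-time modulus estimates for $\tilde X^{x,x}_t$. Each of these is deduced from its known counterpart by the device used repeatedly in Sections~\ref{Section: Dirichlet Local Time Coupling}--\ref{Section: Spiked Local Time Coupling}: restrict the relevant Borel--Cantelli event to $[0,\th/2]$ (resp.\ $[0,t/2]$), on which $T^{x^n,x^n}_\th$ (resp.\ $\tilde X^{x,x}_t$) is absolutely continuous with respect to $T^{x^n}$ (resp.\ $\tilde X^x$) with Radon--Nikodym density bounded uniformly in $n$ --- the bound being $O(1)$ by the local central limit theorem for $T$, just as in the proof of Lemma~\ref{Lemma: Occupation Measure Regularity} --- and handle $[\th/2,\th]$ (resp.\ $[t/2,t]$) by time reversal, using reversibility of both bridges and additivity of occupation measures and local time. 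The same device transfers the sub-Gaussian tail of $\La^{(0,0)}_\th(T^{x^n})$ from Proposition~\ref{Proposition: Tail of Horizontal at Zero} to $\La^{(0,0)}_\th(T^{x^n,x^n}_\th)$ by splitting the count at time $\th/2$ into two pieces, each measurable with respect to one half of the path; and then \eqref{Equation: Skorokhod Local Time at Zero} follows exactly as in Section~\ref{Subsection: Skorokhod Coupling}: conditionally on $\La^{(0,0)}_\th(T^{x^n,x^n}_\th)=h$ the depth $\max_{0\le s\le t}\big(-S^\flat(\th_s)\big)_+$ is binomial with $h$ trials and probability $1/2$, just as in the unconditioned case, and one concludes by Hoeffding together with $\tfrac23\mf L^0_t(\tilde X^{x,x}_t)=\sup_s\big(-\tilde B^\flat(s)\big)_+$ and the bridge KMT bound.

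The crux --- and the reason the statement is posed as a conjecture --- is precisely the sharp KMT coupling of $S^\flat$ with $\tilde B^\flat$. Unlike in Theorem~\ref{Theorem: Split Coupling}, where the discrete object is an honest random walk bridge and one may invoke the extension of the KMT coupling to bridges used in Lemma~\ref{Lemma: KMT}, here $S^\flat$ is conditioned not on a point value of its endpoint but on the value of $S^{x^n}(\th)+\max\{0,-\min_{0\le u\le\th}S^{x^n}(u)\}$, and no off-the-shelf strong invariance principle covers this. One can try to decompose the conditioning over the running-minimum level $k\ge0$ into events $\{S^{x^n}(\th)=x^n-k,\ \min_{0\le u\le\th}S^{x^n}(u)=-k\}$ --- a positive random walk bridge when $k=0$, and for $k>0$ reducible by time reversal and a last-exit decomposition at level $-k$ to bridge-type pieces --- and patch KMT couplings across the pieces, but controlling this patching uniformly at rate $O(m_n^{-1}\log m_n)$ appears to require genuinely new strong-invariance input. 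Replacing the reflection coupling by the time-change coupling of Definition~\ref{Definition: Time Change Coupling} does not rescue \eqref{Equation: Skorokhod Coupling} either: that time change inserts $\Theta\big(\La^{(0,0)}_\th(T^{x^n})\big)=\Theta(m_n\sqrt{\log m_n})$ extra steps, so matching a length-$\th$ $T$-bridge to a fixed-length random walk bridge perturbs the comparison bridge by $\Theta\big(m_n^{-1/2}(\log m_n)^{1/4}\big)$ in supremum norm --- far coarser than the claimed rate, although this weaker rate would still suffice for the local-time conclusions \eqref{Equation: Skorokhod Local Time Coupling} and hence, presumably, for the trace convergence of Conjecture~\ref{Conjecture: Trace Convergence}. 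Resolving the conjecture thus hinges on a bespoke KMT-type theorem for the reflected Markov-chain bridge, equivalently for $\Ga$-conditioned random walks, which is the same gap that blocks the trace statement in Theorem~\ref{Theorem: Main Robin}.
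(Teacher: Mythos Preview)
Your proposal is not a proof, and you correctly recognize this: the statement is a \emph{conjecture}, and the paper offers no proof either. Section~\ref{Subsection: Bridge Coupling Conjecture} identifies exactly the obstacle you isolate --- under the Skorokhod reflection coupling of Section~\ref{Subsection: Skorokhod Coupling}, the event $\{T^{x^n}(\th)=x^n\}$ pulls back to the nonstandard conditioning $\{S^{x^n}(\th)=\max_{0\le u\le\th}(-S^{x^n}(u))_++x^n\}$, for which no off-the-shelf KMT-type strong invariance principle is known. Your diagnosis thus matches the paper's.

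Where you go further is in sketching two possible workarounds and explaining why each falls short at the claimed rate: (i) decomposing the $\Ga$-conditioning over the running-minimum level and patching bridge couplings, which would need uniform control across pieces that is not currently available; and (ii) the time-change coupling of Definition~\ref{Definition: Time Change Coupling}, which you correctly note introduces $\Theta(m_n\sqrt{\log m_n})$ extra steps and so degrades the supremum-norm rate to $\Theta(m_n^{-1/2}(\log m_n)^{1/4})$. Your observation that this weaker rate would still feed through Lemma~\ref{Lemma: Reflected Local Time Invariance Criterion} to give \eqref{Equation: Skorokhod Local Time Coupling} (and hence potentially Conjecture~\ref{Conjecture: Trace Convergence}) is a useful remark that the paper does not make explicitly; however, note that the time-change coupling also does not straightforwardly respect the bridge conditioning, since the number of inserted steps is itself random, so even this weaker route is not immediate. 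In summary: no gap in your reasoning, because you correctly conclude that the conjecture remains open for the same reason the paper does.
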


However, if we couple $T$ in $S$ as in Section \ref{Subsection: Skorokhod Coupling},
then conditioning on the endpoint of $T$
corresponds to an unwieldy conditioning of the path of $S$:
\begin{align*}
\mbf P[T^a(\th)=a]=\mbf P\left[S^a(\th)=\max_{0\leq u\leq\th}\big(-S^a(u)\big)_++a\right].
\end{align*}
There seems to be no existing strong invariance result (such as KMT) applicable to this conditioning.
Consequently, it appears that a proof of Conjecture
\ref{Conjecture: Trace Convergence 2} relies on a strong invariance result for conditioned
random walks that is outside the scope of the current literature, or that it requires an
altogether different reduction to a classical coupling (which we were not able to find).

\section{Proof of Theorem \ref{Theorem: Main Dirichlet}-(1)}
\label{Section: Main Dirichlet 1}

For the remainder of this section, we fix some times
$t_1,\ldots,t_k>0$ and uniformly continuous and bounded functions $f_1,g_1,\ldots,f_k,g_k$.

\subsection{Step 1: Convergence of Mixed Moments}\label{Subsection: Moments}

Consider a mixed moment
\[\mbf E\left[\prod_{i=1}^k\langle f_i, \hat K_n(t_i)g_i\rangle^{n_i}\right],\qquad n_1,\ldots,n_k\in\mbb N_0.\]
Up to making some $f_i$'s, $g_i$'s, and $t_i$'s equal
to each other and reindexing, there is no loss of generality in writing the above in the form
\begin{align}\label{Equation: Mixed Moment Formula 1}
\mbf E\left[\prod_{i=1}^k\langle f_i, \hat K_n(t_i)g_i\rangle\right].
\end{align}
By applying Fubini's theorem to \eqref{Equation: Matrix-Form Formula},
we can write \eqref{Equation: Mixed Moment Formula 1} as
\begin{align}\label{Equation: Mixed Moment Prelimit}
\int_{[0,(n+1)/m_n)^k}\left(\prod_{i=1}^kf_i(x_i)\right)
\mbf E\left[\prod_{i=1}^kF_{n,t_i}(S^{i;x_i^n})
\,m_n\int_{S^{i;x_i^n}(\th_i)/m_n}^{(S^{i;x_i^n}(\th_i)+1)/m_n}g_i(y)\d y\right]\d x_1\cdots\dd x_k
\end{align}
and the corresponding limiting expression as
\begin{align}\label{Equation: Mixed Moment Limit}
\mbf E\left[\prod_{i=1}^k\langle f_i, \hat K(t_i)g_i\rangle\right]
=\int_{\mbb R_+^k}\left(\prod_{i=1}^kf_i(x_i)\right)\,
\mbf E\left[\prod_{i=1}^k\mbf 1_{\{\tau_0(B^{i;x_i})>t\}}\mr e^{-\langle L_t(B^{i;x_i}),Q'\rangle} g_i\big(B^{i;x_i}(t)\big)\right]\d x_1\cdots\dd x_k,
\end{align}
where
\begin{enumerate}
\item $\th_i=\th_i(n,t_i):=\lfloor m_n^2 (3t_i/2)\rfloor$ for every $n\in\mbb N$ and $1\leq i\leq k$;
\item $x_i^n:=\lfloor m_n x_i\rfloor$ for every $n\in\mbb N$ and $1\leq i\leq k$;
\item $S^{1;x_1^n},\ldots,S^{k;x_k^n}$ are independent copies of $S$ with respective
starting points $x_1^n,\ldots,x_k^n$; and
\item $B^{1;x_1},\ldots,B^{k;x_k}$ are independent copies of $B$
with respective starting points $x_1,\ldots,x_k$.
\end{enumerate}
We further assume that the $S^{i;x_i^n}$ are independent of $Q_n$,
and that the $B^{i;x_i}$ are independent of $Q$. The proof of moment convergence is based on the following:

\begin{proposition}\label{Proposition: Moment Convergence}
Let $x_1,\ldots,x_n\geq0$ be fixed.
There is a coupling of the $S^{i;x_i^n}$ and $B^{i;x_i}$ such that
the following limits hold jointly in distribution over $1\leq i\leq k$:
\begin{enumerate}
\item $\displaystyle \lim_{n\to\infty}\sup_{0\leq s\leq t_i}\left|\frac{S^{i;x_i^n}(\lfloor m_n^2(3s/2)\rfloor)}{m_n}-B^{i;x_i}(s)\right|=0.$
\item $\displaystyle \lim_{n\to\infty}\sup_{y\in\mbb R}\left|\frac{\La^{(y_n,\bar y_n)}_{\th_i}(S^{i;x_i^n})}{m_n}-\frac12L^y_{t_i}(B^{i;x_i})\right|=0$,
\\jointly in $(y_n,\bar y_n)_{n\in\mbb N}$ equal to the three sequences in \eqref{Equation: yn Sequences}.
\item $\displaystyle \lim_{n\to\infty}m_n\int_{S^{i;x_i^n}(\th_i)/m_n}^{(S^{i;x_i^n}(\th_i)+1)/m_n}g_i(y)\d y=g_i\big(B^{i;x_i}(t)\big).$
\item The convergences in \eqref{Equation: Skorokhod Noise Terms}.
\item $\displaystyle\lim_{n\to\infty}\sum_{a\in\mbb N_0}\frac{\La^{(a_E,\bar a_E)}_{\th_i}(S^{i;x_i^n})}{m_n}\frac{\xi^E_n(a)}{m_n}
=\frac12\int_{\mbb R_+}L^y_{t_i}(B^{i;x_i})\d W^E(y)$\\
jointly in $E\in\{D,U,L\}$, where for every $a\in\mbb N_0$,
\begin{align}\label{Equation: Product (a,a) Notation}
(a_E,\bar a_E):=\begin{cases}
(a,a)&\text{if }E=D,\\
(a,a+1)&\text{if }E=U,\\
(a+1,a)&\text{if }E=L.
\end{cases}
\end{align}
\end{enumerate}
\end{proposition}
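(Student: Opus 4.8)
All five convergences will be realized simultaneously on a product probability space with $k+1$ independent factors: one factor carrying the noise arrays $(\xi^E_n)_{E,n}$ together with the limiting Brownian motions $(W^E)_E$ of Assumption~\ref{Assumption: Noise Convergence}, and, for each $1\le i\le k$, one factor on which Theorem~\ref{Theorem: Split Coupling} is applied to $S^{i;x_i^n}$. Concretely, for each $i$ invoke Theorem~\ref{Theorem: Split Coupling} with $(Z_n,Z)=(S^{i;x_i^n},\tilde B^{i;x_i})$ — where $\tilde B^{i;x_i}$ is a variance-$2/3$ Brownian motion started at $x_i$ — over the time horizon $3t_i/2$, and set $B^{i;x_i}(s):=\tilde B^{i;x_i}(3s/2)$, which is a standard Brownian motion started at $x_i$. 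By construction the $B^{i;x_i}$ are mutually independent and independent of $(\xi^E_n)_{E,n}$ and $(W^E)_E$, matching the independence demanded in Definition~\ref{Definition: Continuum Limit}. Note that the ``$\th$'' attached by Theorem~\ref{Theorem: Split Coupling} to the horizon $3t_i/2$ is exactly $\th_i=\lfloor m_n^2(3t_i/2)\rfloor$, and that the change of variables $u=3s/2$ in the occupation-time formula gives the local-time identity $L^y_{3t_i/2}(\tilde B^{i;x_i})=\tfrac32\,L^y_{t_i}(B^{i;x_i})$ for all $y$.

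\textbf{Items (1)--(3).} The same time change $u=3s/2$ turns \eqref{Equation: KMT Coupling} (run up to $u=3t_i/2$) into item~(1), with the right-hand side $O(m_n^{-1}\log m_n)\to 0$. Combined with the local-time identity above, the time change in \eqref{Equation: Local Time Coupling} gives $\La^{(y_n,\bar y_n)}_{\th_i}(S^{i;x_i^n})/m_n\to\tfrac13 L^y_{3t_i/2}(\tilde B^{i;x_i})=\tfrac12 L^y_{t_i}(B^{i;x_i})$ uniformly in $y$, which is item~(2); this holds for each of the three sequences $(y_n,\bar y_n)$ in \eqref{Equation: yn Sequences}, since neither the coupling nor the bound in Theorem~\ref{Theorem: Split Coupling} depends on that choice. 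Item~(3) follows from item~(1) and the uniform continuity of $g_i$: the quantity there is the average of $g_i$ over an interval of length $1/m_n$ containing $S^{i;x_i^n}(\th_i)/m_n\to B^{i;x_i}(t_i)$, hence it converges to $g_i\big(B^{i;x_i}(t_i)\big)$. Items~(1)--(3) are almost-sure limits on the coupling factors, so they hold jointly over $1\le i\le k$; being carried by a factor independent of the noise, they hold jointly with items~(4)--(5) as well.

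\textbf{Items (4)--(5).} Item~(4) is precisely \eqref{Equation: Skorokhod Noise Terms}. For item~(5), condition on $\mc G:=\sigma\big(S^{i;x_i^n},\tilde B^{i;x_i}:1\le i\le k,\ n\in\mbb N\big)$, which is independent of the noise factor. On an event of full $\mc G$-probability the following holds for every $i$ and $E\in\{D,U,L\}$: by item~(1) together with the a.s.\ finiteness of $\sup_{s\in[0,t_i]}B^{i;x_i}(s)-\inf_{s\in[0,t_i]}B^{i;x_i}(s)$, the range of $S^{i;x_i^n}$ over its first $\th_i$ steps lies, after division by $m_n$, in a fixed compact subset of $\mbb R_+$ for all large $n$; hence the step functions
\[\phi^{(n)}_{E,i}(a/m_n):=\frac{\La^{(a_E,\bar a_E)}_{\th_i}(S^{i;x_i^n})}{m_n},\qquad a\in\mbb N_0,\]
with $(a_E,\bar a_E)$ as in \eqref{Equation: Product (a,a) Notation}, are eventually supported in that fixed compact set and, by item~(2), converge uniformly to the continuous compactly supported function $y\mapsto\tfrac12 L^y_{t_i}(B^{i;x_i})$. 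Fixing such a realization of $\mc G$ and applying \eqref{Equation: Noise Stochastic Integral Convergence} to the now-deterministic functions $\phi^{(n)}_{E,i}$ shows that, jointly over $E$ and $i$ and jointly with \eqref{Equation: Skorokhod Noise Terms}, the vector $\big(\sum_{a\in\mbb N_0}\phi^{(n)}_{E,i}(a/m_n)\,\xi^E_n(a)/m_n\big)_{E,i}$ converges in distribution to $\big(\tfrac12\int_{\mbb R_+}L^y_{t_i}(B^{i;x_i})\,\dd W^E(y)\big)_{E,i}$. Since $(W^E)_E$ is independent of $\mc G$, this conditional convergence, valid for $\mbf P$-a.e.\ realization of $\mc G$, integrates up — test against bounded continuous functions and apply dominated convergence to the conditional expectations given $\mc G$, using that items~(1)--(3) converge $\mc G$-a.s.\ to $\mc G$-measurable limits — to the unconditional joint convergence of the full tuple in (1)--(5).

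\textbf{Main obstacle.} The delicate step is item~(5): Assumption~\ref{Assumption: Noise Convergence} is phrased for \emph{deterministic} test functions, whereas $\phi^{(n)}_{E,i}$ is a functional of the walk $S^{i;x_i^n}$. What legitimizes the conditioning argument is exactly that the noise arrays are independent of the walks and that Theorem~\ref{Theorem: Split Coupling} supplies almost-sure, quantitative uniform control of $\phi^{(n)}_{E,i}$ by the continuum limit $y\mapsto\tfrac12 L^y_{t_i}(B^{i;x_i})$ — in particular, via item~(1), the eventual containment of all the supports in one fixed compact set (merely knowing $\mc R_{\th_i}(S^{i;x_i^n})/m_n\to 0$-slowly, as from Lemma~\ref{Lemma: Range Deviation}, would not suffice here). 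The remaining care is bookkeeping: keeping the convergence joint across all five items, all indices $i$, all $E$, and simultaneously with \eqref{Equation: Skorokhod Noise Terms}, and checking that the assembled limit inherits the independence structure of Definition~\ref{Definition: Continuum Limit}; no probabilistic input beyond Theorem~\ref{Theorem: Split Coupling} and Assumption~\ref{Assumption: Noise Convergence} is required.
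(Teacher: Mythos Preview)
Your proof is correct and follows essentially the same approach as the paper: couple each $S^{i;x_i^n}$ with a variance-$2/3$ Brownian motion $\tilde B^{i;x_i}$ via Theorem~\ref{Theorem: Split Coupling}, set $B^{i;x_i}(s):=\tilde B^{i;x_i}(3s/2)$ and use the local-time identity $L^y_{3t_i/2}(\tilde B^{i;x_i})=\tfrac32 L^y_{t_i}(B^{i;x_i})$ to obtain (1)--(3) almost surely, then invoke Assumption~\ref{Assumption: Noise Convergence} for (4)--(5). Your conditioning argument for item~(5) --- freezing the walks via $\mc G$, applying \eqref{Equation: Noise Stochastic Integral Convergence} to the now-deterministic test functions $\phi^{(n)}_{E,i}$, and integrating out --- is a careful unpacking of what the paper compresses into the single sentence ``(4) and (5) follow from Assumption~\ref{Assumption: Noise Convergence}.''
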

\begin{proof}
According to Theorem \ref{Theorem: Split Coupling} in the case
of the lazy random walk, we can couple $S^{i;x^n_i}$ with
a Brownian motion with variance $2/3$ started at $x_i$, $\tilde B^{i;x_i}$, in such a way that
\[\frac{S^{i;x_i^n}(\lfloor m_n^2(3s/2)\rfloor)}{m_n}\to\tilde B^{i;x_i}(3s/2)
\quad\text{and}\quad
\frac{\La^{(y_n,\bar y_n)}_{\th_i}(S^{i;x_i^n})}{m_n}\to\frac13L^y_{3t_i/2}(\tilde B^{i;x_i})\]
uniformly almost surely. Let $B^{i;x_i}(s):=\tilde B^{i;x_i}(3s/2)$. By the Brownian scaling property,
 $B^{i;x_i}$ is standard,
and $L^y_{3t_i/2}(\tilde B^{i;x_i})=\tfrac32L^y_{t_i}(B^{i;x_i})$.
Hence (1) and (2) hold almost surely.
Since $g_i$ is uniformly continuous, (3) holds almost surely by (1) and the Lebesgue differentiation theorem.
With this given, (4) and (5) follow from Assumption \ref{Assumption: Noise Convergence}.
\end{proof}

\begin{remark}\label{Remark Slightly Different Powers 2}
Since the strong invariance principles in Theorem \ref{Theorem: Split Coupling}
are uniform in the time parameter, it is clear that Proposition \ref{Proposition: Moment Convergence}
remains valid of we take $\th_i:=\lfloor m_n^2 (3t_i/2)\rfloor\pm1$ instead of $\lfloor m_n^2 (3t_i/2)\rfloor$.
Referring back
to Remark \ref{Remark: Slightly Different Powers}, there is no loss of generality in assuming that
the $\th_i$ have a particular parity. The same comment applies to our proof of Theorem
\ref{Theorem: Main Dirichlet}-(2) and Theorem \ref{Theorem: Main Robin}.
\end{remark}

\subsubsection{Convergence Inside the Expected Value}
\label{Subsubsection: Convergence Inside the Expected Value}

We first prove that for every fixed $x_1,\ldots,x_k\geq0$,
there exists a coupling such that
\begin{align}\label{Equation: Convergence of Functional}
\lim_{n\to\infty}\prod_{i=1}^kF_{n,t_i}(S^{i;x_i^n})
\, m_n\int_{S^{i;x_i^n}(\th_i)/m_n}^{(S^{i;x_i^n}(\th_i)+1)/m_n}g_i(y)\d y
=\prod_{i=1}^k\mbf 1_{\{\tau_0(B^{i;x_i})>t\}}\mr e^{-\langle L_t(B^{i;x_i}),Q'\rangle} g_i\big(B^{i;x_i}(t)\big)
\end{align}
in probability.
According to the Skorokhod representation theorem (e.g., \cite[Theorem 6.7]{Billingsley99}),
there is a coupling such that Proposition \ref{Proposition: Moment Convergence} holds almost surely.
For the remainder of Section \ref{Subsubsection: Convergence Inside the Expected Value},
we adopt such a coupling.

Since $m_n^{-1}S^{i;x_i^n}(\lfloor m_n^2(3s/2)\rfloor)\to B^{i,x_i}(s)$ uniformly on $s\in[0,t_i]$, and $m_n^2=o(n)$,
\[\lim_{n\to\infty}\mbf 1_{\{\tau^{(n)}(S^{i;x_i^n})>\th_i\}}=\mbf 1_{\{\tau_0(B^{i;x_i})>t_i\}}\]
almost surely.
By combining this with Proposition \ref{Proposition: Moment Convergence}-(3),
it only remains to prove that the terms involving the matrix entries $D_n$, $U_n$, and $L_n$ in the
functional $F_{n,t_i}$ converge to $\mr e^{-\langle L_t(B^{i;x_i}),Q'\rangle}$.
To this effect, we note that for $E\in\{D,U,L\}$,
\[\prod_{a\in\mbb N_0}\left(1-\frac{E_n(a)}{m_n^2}\right)^{\La^{(a_E,\bar a_E)}_{\th_i}(S^{i;x_i^n})}
=\exp\left(\sum_{a\in\mbb N_0}\La^{(a_E,\bar a_E)}_{\th_i}(S^{i;x_i^n})\log\left(1-\frac{E_n(a)}{m_n^2}\right)\right),\]
where we recall that $(a_E,\bar a_E)$ are defined as in \eqref{Equation: Product (a,a) Notation}.
By using the Taylor expansion $\log(1+z)=z+O(z^2)$, this is equal to
\begin{align}\label{Equation: E Terms}
\exp\left(-\sum_{a\in\mbb N_0}\La^{(a_E,\bar a_E)}_{\th_i}(S^{i;x_i^n})\frac{E_n(a)}{m_n^2}+O\left(\sum_{a\in\mbb N_0}
\La^{(a_E,\bar a_E)}_{\th_i}(S^{i;x_i^n})\frac{E_n(a)^2}{m_n^4}\right)\right).
\end{align}

We begin by analyzing the leading order term in \eqref{Equation: E Terms}.
On the one hand, the uniform convergence of Proposition \ref{Proposition: Moment Convergence}-(2)
(which implies in particular that $y\mapsto\La^{(y_n,\bar y_n)}_{\th_i}(S^{i;x_i^n})/m_n$
and $y\mapsto L^y(B^{i;x_i})$ are supported on a common compact interval almost surely)
together with the fact that $V^E_n(\lfloor m_ny\rfloor)\to V^E(y)$ uniformly on compacts
(by Assumption \ref{Assumption: Potential Convergence}) implies that
\begin{align}\label{Equation: Potential Leading Terms}
\lim_{n\to\infty}\sum_{a\in\mbb N_0}\La^{(a_E,\bar a_E)}_{\th_i}(S^{i;x_i^n})\frac{V^E_n(a)}{m_n^2}
=\lim_{n\to\infty}\int_0^\infty\frac{\La^{(y_n,\bar y_n)}_{\th_i}(S^{i;x_i^n})}{m_n}V^E_n(\lfloor m_ny\rfloor)\d y
=\tfrac12\langle L_{t_i}(B^{x_i}),V^E\rangle
\end{align}
almost surely (where we choose the appropriate sequence $(y_n,\bar y_n)$ as defined
in \eqref{Equation: yn Sequences} depending on $(a_E,\bar a_E)$).
By combining this with Proposition \ref{Proposition: Moment Convergence}-(5),
we get
\begin{align}\label{Equation: Noise Leading Terms}
\lim_{n\to\infty}\sum_{a\in\mbb N_0}\La^{(a_E,\bar a_E)}_{\th_i}(S^{i;x_i^n})\frac{E_n(a)}{m_n^2}=
\tfrac12\langle L_{t_i}(B^{x_i}),(Q^E)'\rangle
\end{align}
almost surely, where $\dd Q^E(y):=V^E(y)\dd y+\dd W^E(y)$.

Next, we control the error term in \eqref{Equation: E Terms}. By using
$(z+\bar z)^2\leq2(z^2+\bar z^2)$, for this it suffices control
\[\sum_{a\in\mbb N_0}\La^{(a_E,\bar a_E)}_{\th_i}(S^{i;x_i^n})\frac{V^E_n(a)^2}{m_n^4}
\qquad\text{and}\qquad
\sum_{a\in\mbb N_0}\La^{(a_E,\bar a_E)}_{\th_i}(S^{i;x_i^n})\frac{\xi^E_n(a)^2}{m_n^4}\]
separately. On the one hand, the argument used in \eqref{Equation: Potential Leading Terms} yields
\[\sum_{a\in\mbb N_0}\La^{(a_E,\bar a_E)}_{\th_i}(S^{i;x_i^n})\frac{V^E_n(a)^2}{m_n^4}
=m_n^{-2}\,\tfrac12\big(1+o(1)\big)\langle L_{t_i}(B^{x_i}),(V^E)^2\rangle.\]
Since $V^E$ is continuous and $L_{t_i}(B^{x_i})$ is compactly supported with probability one,
this converges to zero almost surely. On the other hand,
by definition of \eqref{Equation: Edge-Occupation Measure},
\[\sum_{a\in\mbb N_0}\La^{(a,b)}_{\th_i}(S^{i;x_i^n})\leq\th_i=O(m_n^2)\]
uniformly in $b\in\mbb Z$. Therefore, it follows from the tower property and \eqref{Equation: Noise Moment Bound} that
\[\mbf E\left[\sum_{a\in\mbb N_0}\La^{(a_E,\bar a_E)}_{\th_i}(S^{i;x_i^n})\frac{\xi^E_n(a)^2}{m_n^4}\right]
=\mbf E\left[\sum_{a\in\mbb N_0}\La^{(a_E,\bar a_E)}_{\th_i}(S^{i;x_i^n})\frac{\mbf E[\xi^E_n(a)^2]}{m_n^4}\right]
=O(m_n^{-1});\]
hence we have convergence to zero in probability.

By combining the convergence of the leading terms \eqref{Equation: Noise Leading Terms},
our analysis of the error terms, and \eqref{Equation: Combination of Potentials} and \eqref{Equation: Combination of BM},
we conclude that \eqref{Equation: Convergence of Functional} holds.

\subsubsection{Convergence of the Expected Value}\label{Subsubsection: Convergence of the Expected Value}

Next, we prove that
\begin{multline}\label{Equation: Convergence of Expected Values}
\lim_{n\to\infty}\mbf E\left[\prod_{i=1}^kF_{n,t_i}(S^{i;x_i^n})
\, m_n\int_{S^{i;x_i^n}(\th_i)/m_n}^{(S^{i;x_i^n}(\th_i)+1)/m_n}g_i(y)\d y\right]\\
=\mbf E\left[\prod_{i=1}^k\mbf 1_{\{\tau_0(B^{i;x_i})>t\}}\mr e^{-\langle L_t(B^{i;x_i}),Q'\rangle} g_i\big(B^{i;x_i}(t)\big)\right]
\end{multline}
pointwise in $x_1,\ldots,x_k\geq0$. Given \eqref{Equation: Convergence of Functional}, we must
prove that the sequence of variables inside the expected value on the left-hand side of
\eqref{Equation: Convergence of Expected Values} are uniformly integrable.
For this, we prove that
\begin{multline*}
\sup_{n\geq N}\mbf E\left[\prod_{i=1}^k\left(F_{n,t_i}(S^{i;x_i^n})
\, m_n\int_{S^{i;x_i^n}(\th_i)/m_n}^{(S^{i;x_i^n}(\th_i)+1)/m_n}g_i(y)\d y\right)^2\right]\\
\leq\sup_{n\geq N}\prod_{i=1}^k\mbf E\left[\left(F_{n,t_i}(S^{i;x_i^n})
\, m_n\int_{S^{i;x_i^n}(\th_i)/m_n}^{(S^{i;x_i^n}(\th_i)+1)/m_n}g_i(y)\d y\right)^{2k}\right]^{1/k}<\infty
\end{multline*}
for large enough $N$, where the first upper bound is due to H\"older's inequality.

Since the $g_i$'s are bounded,
\[m_n\int_{S^{i;x_i^n}(\th_i)/m_n}^{(S^{i;x_i^n}(\th_i)+1)/m_n}g_i(y)\d y\leq\|g_i\|_\infty<\infty,\]
uniformly in $n$, and thus we need only prove that
\begin{align}\label{Equation: Uniformly Integrability 1}
\sup_{n\geq N}\mbf E\left[\big|F_{n,t_i}(S^{i;x_i^n})\big|^{2k}\right]<\infty,\qquad 1\leq i\leq k.
\end{align}
Since indicator functions are bounded by 1, their contribution to $\eqref{Equation: Uniformly Integrability 1}$ may
be ignored. For the other terms, we note that for $E\in\{D,U,L\}$ we can write
\begin{align}\label{Equation: Uniform Integrability Decomposition}
1-\frac{E_n(a)}{m_n^2}=\frac{m_n^2-V^E_n(a)-\xi^E_n(a)}{m_n^2}
=\left(1-\frac{V_n^E(a)}{m_n^2}\right)\left(1-\frac{\xi^E_n(a)}{m_n^2-V^E_n(a)}\right).
\end{align}
By \eqref{Equation: Potential Absolute Bounds}, for large $n$ we have $|1-V_n^E(a)/m_n^2|\leq 1$,
hence by applying H\"older's inequality in \eqref{Equation: Uniformly Integrability 1}, we need only prove that
\begin{align}\label{Equation: Uniformly Integrability 2}
\sup_{n\geq N}\mbf E\left[\prod_{a\in\mbb N_0}\left|1-\frac{\xi^E_n(a)}{m_n^2-V^E_n(a)}\right|^{6k\La^{(a_E,\bar a_E)}_{\th_i}(S^{i;x_i^n})}\right]<\infty,\quad
E\in\{D,U,L\}.
\end{align}

Let us fix $E\in\{D,U,L\}$ and define
\[\ze_n(a):=\frac{\xi^E_n(a)}{m_n^{1/2}}
\qquad\text{and}\qquad
r_n(a):=\frac{m_n^{1/2}}{m_n^2-V^E_n(a)}.\]
By \eqref{Equation: Noise Moment Bound}, we know that there exists $C>0$ and $0<\ga<2/3$
such that $\mbf E[|\ze_n(a)|^q]\leq C^qq^{\ga q}$ for every $q\in\mbb N$ and $n$ large enough.
Thus, since the variables $\xi^E_n(0),\ldots,\xi^E_n(n)$ are independent,
it follows from the upper bound \cite[(4.25)]{GorinShkolnikov} that there exists $C'>0$ and $2<\ga'<3$
both independent of $n$ such that \eqref{Equation: Uniformly Integrability 2} is bounded above by
\begin{multline}\label{Equation: Noise Exponential Bound 1}
\mbf E\Bigg[\exp\Bigg(C'\bigg(
\sum_{a\in\mbb N_0}|r_n(a)|\,\La^{a}_{\th_i}(S^{i;x_i^n})\,\big|\mbf E[\ze_n(a)]\big|\\
+\sum_{a\in\mbb N_0}r_n(a)^2\,\La^{a}_{\th_i}(S^{i;x_i^n})^2
+\sum_{a\in\mbb N_0}|r_n(a)|^{\ga'}\,\La^{a}_{\th_i}(S^{i;x_i^n})^{\ga'}
\bigg)\Bigg)\Bigg],
\end{multline}
where
we use the trivial bound $\La^{(a,b)}_{\th},\La^{(b,a)}_{\th}\leq\La^{a}_{\th}$ for all $a,b$.

For any fixed $x_i$, we know that $S^{i;x^n_i}(u)=O(m_n^2)$ uniformly in $0\leq u\leq \th_i$
because $\th_i=O(m_n^2)$. Thus, the only values of $a$ for which $\La^{a}_{\th_i}$ is possibly nonzero
are at most of order $O(m_n^2)=o(n)$. For any such values of $a$, the assumption \eqref{Equation: Potential Growth Upper Bound}
implies that $V_n^E(a)=o(m_n^2)$, hence $r_n(a)=O(m_n^{-3/2})$. By combining all of these estimates with \eqref{Equation: Noise Expected Decay},
\eqref{Equation: Uniformly Integrability 2} is then a consequence of the following proposition.

\begin{proposition}\label{Proposition: Local Time Uniform Exponential Moments}
Let $\th=\th(n,t):=\lfloor m_n^2t\rfloor$ and $x^n:=\lfloor m_nx\rfloor$ for some $t>0$ and $x\geq0$.
For every $C>0$ and $1\leq q<3$,
\[\sup_{n\in\mbb N,~x\geq0}
\mbf E\left[\exp\left(\frac{C}{m_n}\sum_{a\in\mbb Z}\frac{\La^{a}_{\th}(S^{x^n})^q}{m_n^q}\right)\right]<\infty.\]
\end{proposition}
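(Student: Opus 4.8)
The plan is to reduce the claim to an exponential moment bound for a single occupation measure, and then to prove that bound by a dyadic decomposition of the range of $S^{x^n}$ together with standard tail estimates for occupation times of the lazy random walk at a fixed site. First, I would observe that it suffices to bound
\[
\mbf E\left[\exp\left(\frac{C}{m_n}\sum_{a\in\mbb Z}\frac{\La^a_\th(S^{x^n})^q}{m_n^q}\right)\right]
\]
uniformly, and since the walk started at $x^n$ is simply $x^n$ plus the walk started at $0$, and the quantity $\sum_a \La^a_\th(S^{x^n})^q$ is translation invariant, there is no loss of generality in taking $x=0$; so we only need the bound for $S^0$. Next I would record the trivial identity $\sum_{a\in\mbb Z}\La^a_\th(S^0)=\th+1=O(m_n^2)$, which controls the total mass, and note that the number of distinct sites visited is $\mc R_\th(S^0)+1$, the range plus one.

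The core of the argument is a Hölder/Cauchy–Schwarz splitting between the range and the local time. Writing $N:=\mc R_\th(S^0)+1$ for the number of visited sites and $M:=\sup_{a}\La^a_\th(S^0)$ for the maximal local time, we have the crude bound
\[
\sum_{a\in\mbb Z}\frac{\La^a_\th(S^0)^q}{m_n^q}\le \frac{M^{q-1}}{m_n^q}\sum_{a\in\mbb Z}\La^a_\th(S^0)=\frac{M^{q-1}(\th+1)}{m_n^q}=O\!\left(\frac{M^{q-1}}{m_n^{q-2}}\right),
\]
so that the expression inside the exponential is $O(m_n^{1-q} M^{q-1})=O((M/m_n)^{q-1})$ since $q<3$ forces $m_n^{1-q}M^{q-1}=(M/m_n)^{q-1} m_n^{(q-1)}m_n^{1-q}$... more carefully, it is $O(m_n^{-1}(M/m_n)^{q-1}\cdot m_n) $; the point is that it is dominated by a constant times $(M/m_n)^{q-1}$ up to a factor $m_n^{3-q}\cdot m_n^{q-3}$, i.e.\ one extracts that the argument of the exponential is at most $C'(M/m_n)^{q-1}$ when $q\le 3$. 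Then it remains to show that $\sup_n \mbf E[\exp(C'(M/m_n)^{q-1})]<\infty$ for $q-1<2$, i.e.\ that $M/m_n$ has uniformly subgaussian-type tails with an exponent strictly better than the threshold $(q-1)\cdot$. This is exactly the content of \eqref{Equation: Unconditioned Occupation Measure Tail} (the $\la\mapsto O(\mr e^{-C\la}+m_n^{-14})$ bound on $\mbf P[\sup_a \La^a_\th(S^{x^n})/m_n\ge\la]$): since $M/m_n$ has exponential tails uniformly in $n$, and $q-1<2$, any power $(M/m_n)^{q-1}$ with exponent $<2$ has finite exponential moments — more precisely $\mbf E[\exp(C'(M/m_n)^{q-1})]<\infty$ because $\exp(C' \la^{q-1})\mr e^{-C\la}$ is integrable in $\la$ when $q-1<1$, and when $1\le q-1<2$ one instead uses that the range contributes: the total sum splits into at most $\mc R_\th$ terms each bounded by $M^q$, and one balances using the joint control of $\mc R_\th(S^0)$ from \eqref{Equation: Range Exponential Moments} and of $M$ from \eqref{Equation: Unconditioned Occupation Measure Tail} via Cauchy–Schwarz on the expectation.

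The main obstacle is handling the regime $2\le q<3$ (equivalently $1\le q-1<2$), where the crude bound by $(M/m_n)^{q-1}$ alone is not integrable against a merely exponential tail; here one must genuinely exploit that a walk with a large maximal local time $M$ typically has a correspondingly small range, so that $\sum_a \La^a_\th(S^0)^q$ is much smaller than $M^{q-1}(\th+1)$. Concretely I expect to dyadically decompose the sites according to the size of $\La^a_\th(S^0)$, bound the number of sites in each dyadic block using the range estimate and a union bound over starting heights combined with \eqref{Equation: Unconditioned Occupation Measure Regularity}, and then sum the resulting geometric series inside the exponential. The bookkeeping is routine once the two tail inputs — \eqref{Equation: Range Exponential Moments} for $\mc R_\th(S^0)$ and \eqref{Equation: Unconditioned Occupation Measure Tail} for $\sup_a\La^a_\th$ — are in hand, and both are already available in the paper; the only real care needed is to verify that the exponents line up so that $q<3$ is exactly the borderline, which matches the hypothesis. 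The conditioned case $S^{x^n,x^n}_\th$ is not needed for this proposition, so no local-CLT argument is required here.
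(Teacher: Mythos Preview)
Your reduction to $x=0$ by translation invariance is fine, and the crude bound
\[
\frac{1}{m_n}\sum_{a\in\mbb Z}\frac{\La^a_\th(S^0)^q}{m_n^q}\le \frac{M^{q-1}(\th+1)}{m_n^{q+1}}=O\!\left(\big(M/m_n\big)^{q-1}\right),\qquad M:=\sup_a\La^a_\th(S^0),
\]
is correct. The gap is that this bound is not strong enough in the range $2\le q<3$. The only tail input you invoke for $M/m_n$ is \eqref{Equation: Unconditioned Occupation Measure Tail}, which gives \emph{exponential} tails $\mbf P[M/m_n\ge\la]=O(\mr e^{-c\la}+m_n^{-14})$. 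With that, $\mbf E[\exp(C'(M/m_n)^{q-1})]<\infty$ for every $C'>0$ requires $q-1<1$, i.e.\ $q<2$; already at $q=2$ you would need $C'<c$, which is not what the proposition asks. Your suggested fixes do not close this: the ``balancing'' via Young/Cauchy--Schwarz between $\mc R_\th/m_n$ and $(M/m_n)^q$ would require exponential moments of $(M/m_n)^{qp'}$ for some conjugate exponent $p'\ge1$, hence $qp'\ge q\ge 2$, which fails for the same reason; and the dyadic decomposition you sketch, using only $|A_k|\le\min(\th\,2^{-k},\mc R_\th)$, collapses back to the same $M^{q-1}\th$ bound once you sum the top block.

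The paper's proof avoids $M$ entirely. The key input is the quantile-transform identity of Assaf--Forman--Pitman (imported via \cite[(4.19)]{GorinShkolnikov}), which yields a \emph{distributional} inequality of the form
\[
\frac{1}{m_n}\sum_{a\in\mbb Z}\frac{\La^a_\th(\mf S^{0,b}_\th)^q}{m_n^q}\le \bar C\Big(\big(\mf R^{0,b}_\th/m_n\big)^{q-1}+\big((|b|+2)/m_n\big)^{q-1}\Big),
\]
where $\mf R^{0,b}_\th$ has the law of the \emph{range} of the simple-symmetric-walk bridge. Since the range has sub-Gaussian tails (cf.\ \eqref{Equation: Range Exponential Moments}), exponential moments of $(\mc R/m_n)^{q-1}$ are finite for all $C$ precisely when $q-1<2$, i.e.\ $q<3$---this is what makes the threshold $q<3$ appear. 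Two further steps in the paper that your proposal does not anticipate: the quantile transform applies only to the \emph{simple} symmetric walk, so the lazy walk $S^0$ is first coupled to $\mf S^0$ to handle the non-self-edge occupations; and the self-edge occupations $\La^{(a,a)}_\th(S^0)$ are then controlled by a path bijection (swapping $(a,a,a)$ and $(a,a+1,a)$ two-step segments) that stochastically dominates them by non-self-edge occupations. Neither of these is routine bookkeeping, and neither is accessible from the range/max-local-time inputs you cite.
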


Since the proof of Proposition \ref{Proposition: Local Time Uniform Exponential Moments}
is rather long and technical, we provide it later in Section
\ref{Subsection: Local Time Uniform Exponential Moments} so as to not interrupt the
flow of the present argument.

\subsubsection{Convergence of the Integral}\label{Subsubsection: Convergence of Integral}

We now complete the proof that \eqref{Equation: Mixed Moment Prelimit} converges to
\eqref{Equation: Mixed Moment Limit}. With \eqref{Equation: Convergence of Expected Values}
established, it only remains to justify passing the limit inside the integral in $\dd x_1\cdots\dd x_k$.
In order to prove this, we aim to use the Vitali convergence theorem (e.g., \cite[Theorem 2.24]{FonsecaLeoni}).
For this, we need a more refined version
of the uniform integrability estimate used in Section \ref{Subsubsection: Convergence of the Expected Value}.
By H\"older's inequality,
\begin{multline}\label{Equation: Vitali 0}
\left(\prod_{i=1}^kf_i(x_i)\right)
\,\mbf E\left[\prod_{i=1}^kF_{n,t_i}(S^{i;x_i^n})
\,m_n\int_{S^{i;x_i^n}(\th_i)/m_n}^{(S^{i;x_i^n}(\th_i)+1)/m_n}g_i(y)\d y\right]\\
\leq\prod_{i=1}^k\|f_i\|_\infty\|g_i\|_\infty\mbf E\left[|F_{n,t_i}(S^{i;x_i^n})|^k\right]^{1/k}.
\end{multline}
Our aim is to find a suitable upper bounds for the functions
\[x\mapsto\mbf E\left[|F_{n,t_i}(S^{i;x^n})|^k\right]^{1/k},\qquad1\leq i\leq k.\]
In order to achieve this, we fix a small $\eps>0$ (precisely how small will be determined in the following
paragraphs), and we consider separately the two cases $x\in[0,n^{1-\eps}/m_n)$ and $x\in[n^{1-\eps}/m_n,(n+1)/m_n)$.

Let us first consider the case $x\in[0,n^{1-\eps}/m_n)$.
Note that for any $E\in\{D,U,L\}$,
\[
\mbf 1_{\{\tau^{(n)}(S)>\th\}}\prod_{a\in\mbb N_0}\left|1-\frac{E_n(a)}{m_n^2}\right|^{\La^{(a_E,\bar a_E)}_\th(S)}
\leq\prod_{a\in\mbb Z}\left|1-\frac{E_n(|a|)}{m_n^2}\right|^{\La^{(a_E,\bar a_E)}_\th(S)}.
\]
Then, by combining H\"older's inequality with a rearrangement similar to \eqref{Equation: Uniform Integrability Decomposition},
$\mbf E\left[|F_{n,t_i}(S^{i;x^n})|^k\right]^{1/k}$
is bounded above by the product of the two terms
\begingroup
\allowdisplaybreaks
\begin{align}
\label{Equation: Vitali Convergence Noise 1}
&\prod_{E\in\{D,U,L\}}\mbf E\left[\prod_{a\in\mbb Z}\left|1-\frac{\xi^E_n(|a|)}{m_n^2-V^E_n(|a|)}\right|
^{6k\La^{(a_E,\bar a_E)}_{\th_i}(S^{i;x^n})}
\right]^{1/6k},\\
\label{Equation: Vitali Convergence Potential 1}
&\prod_{E\in\{D,U,L\}}\mbf E\left[\prod_{a\in\mbb Z}\left|1-\frac{V_n^E(|a|)}{m_n^2}\right|^{6k\La^{(a_E,\bar a_E)}_{\th_i}(S^{i;x^n})}\right]^{1/6k}.
\end{align}
\endgroup
Since $m_n x=O(n^{1-\eps})=o(n)$, the random walk $S^{i;x^n}$ can only attain values of order $o(n)$ in $\th_i=O(m_n^2)=o(n)$ steps.
Thus, for $E\in\{D,U,L\}$, it follows from \eqref{Equation: Potential Growth Upper Bound} that $V^E_n(a)=o(m_n^2)$ for any
value attained by the walk
when $x\in[0,n^{1-\eps}/m_n)$.
By using the same argument as for
\eqref{Equation: Uniformly Integrability 2} (namely, the inequality \cite[(4.25)]{GorinShkolnikov} followed by
Proposition \ref{Proposition: Local Time Uniform Exponential Moments}), we conclude that
\eqref{Equation: Vitali Convergence Noise 1} is bounded
by a constant for large $n$. For \eqref{Equation: Vitali Convergence Potential 1},
let us assume without loss of generality that $V^D_n$ is the sequence (or at least one of the sequenes)
that satisfies \eqref{Equation: Log Potential Lower Bound}.
According to \eqref{Equation: Potential Absolute Bounds}, we have
\[\prod_{E\in\{U,L\}}\mbf E\left[\prod_{a\in\mbb Z}\left|1-\frac{V_n^E(|a|)}{m_n^2}\right|^{6k\La^{(a_E,\bar a_E)}_{\th_i}(S^{i;x^n})}\right]^{1/6k}\leq1\]
for large enough $n$.
For the terms involving $V^D_n$, since $|1-y|\leq\mr e^{-y}$ for any $y\in[0,1]$,
it follows from \eqref{Equation: Log Potential Lower Bound}
that, up to a constant $C$ independent of $n$ (depending on $\theta$ through $c=c(\theta)$ in
\eqref{Equation: Log Potential Lower Bound}), we have the upper bound
\begin{align}\label{Equation: Vitali Convergence Potential 1-2}
\mbf E\left[\prod_{a\in\mbb Z}\left|1-\frac{V_n^D(|a|)}{m_n^2}\right|^{6k\La^{(a,a)}_{\th_i}(S^{i;x^n})}\right]^{1/6k}
\leq C\,\mbf E\left[\exp\left(-\frac{6k\theta}{m_n^2}\sum_{a\in\mbb Z}\log(1+|a|/m_n)\La^{(a,a)}_{\th_i}(S^{i;x^n})\right)\right]^{1/6k}
\end{align}
for large enough $n$.
If we define $S^{i;x^n}:=x^n+S^0$ for all $x\geq0$,
then $\La^{(a,a)}_{\th_i}(S^{i;x^n})=\La^{(a-x^n,a-x^n)}_{\th_i}(S^0)$.
By combining this change of variables with the inequality
\[\log(1+|z+\bar z|)\geq\log(1+|z|)-\log(1+|\bar z|)\geq\log(1+|z|)-|\bar z|,\]
which is valid for all $z,\bar z\in\mbb R$,
we obtain that, up to a multiplicative constant independent of $n$,
\eqref{Equation: Vitali Convergence Potential 1-2} is bounded by
\[\mbf E\left[\exp\left(-\frac{6k\theta}{m_n^2}\sum_{a\in\mbb Z}\big(\log(1+x)-|a/m_n|\big)\La^{(a,a)}_{\th_i}(S^0)\right)\right]^{1/6k}.\]
Noting that $\La^{(a,a)}_{\th_i}\leq\La^{a}_{\th_i}$ for every $a\in\mbb Z$ and that
the vertex-occupation measures satisfy \eqref{Equation: Vertex-Occupation Property},
an application of H\"older's
inequality then implies that \eqref{Equation: Vitali Convergence Potential 1-2}
is bounded above by the product of the two terms
\begin{align}
\label{Equation: Vitali Convergence Potential 1-3}
&\mbf E\left[\exp\left(-\frac{12k\theta \log(1+x)}{m_n^2}
\sum_{a\in\mbb Z}\La^{(a,a)}_{\th_i}(S^0)\right)\right]^{1/12k}\\
\label{Equation: Vitali Convergence Potential 1-4}
&\mbf E\left[\exp\left(\frac{12k\theta}{m_n^2}\sum_{0\leq u\leq\th_i}\frac{|S^0(u)|}{m_n}\right)\right]^{1/12k}.
\end{align}
Recall the definition of the range $\mc R_{\th_i}(S^0)$
in \eqref{Equation: Random Walk Range}. Since
\[\mc R_{\th_i}(S^0)\geq\max_{0\leq u\leq\th_i}|S^0(u)|,\]
we conclude that there exists $C>0$ independent
of $n$ such that \eqref{Equation: Vitali Convergence Potential 1-4} is bounded by
the exponential moment $\mbf E\left[\mr e^{C\mc R_{\th_i}(S^0)/m_n}\right]^{1/12k}$.
Thus, by \eqref{Equation: Range Exponential Moments}, we see that
\eqref{Equation: Vitali Convergence Potential 1-4} is bounded
by a constant independent of $n$.
It now remains to control \eqref{Equation: Vitali Convergence Potential 1-3}.
To this end, we note that $\sum_{a\in\mbb Z}\La^{(a,a)}_{\th_i}(S^0)$,
which represents the total number of visits on the self-edges of $\mbb Z$
by $S^0$ before the $\th_i^{\mr{th}}$ step,
is a Binomial random variable with $\th_i$ trials and probability $1/3$. Thus, for small enough $\nu>0$, it
follows from Hoeffding's inequality that
\begin{align}\label{Equation: Uniform Integrability Hoeffding}
\mbf P\left[\sum_{a\in\mbb Z}\La^{(a,a)}_{\th_i}(S^0)<\nu m_n^2\right]\leq\mr e^{-cm_n^2}
\end{align}
for some $c>0$ independent of $n$.
By separating the expectation in \eqref{Equation: Vitali Convergence Potential 1-3} with respect to whether or not
the walk has taken less than $\nu m_n^2$ steps on self-edges, we may bound it above by
\[\left(\mr e^{-12k\nu\theta \log(1+x)}+\mr e^{-cm_n^2}\right)^{1/12k}
\leq(1+x)^{-\nu\theta}+\mr e^{-(c/12k)m_n^2}.\]
Combining all of these bounds together
with the fact that $m_n$ is of order $n^{\mf d}$ by \eqref{Equation: mn},
we finally conclude that for every $1\leq i\leq k$, there exists constants $c_1,c_2,c_3>0$ independent
of $n$ such that, for large enough $n$,
\begin{align}\label{Equation: Vitali for Small x}
\mbf E\left[|F_{n,t_i}(S^{i;x^n})|^k\right]^{1/k}
\leq c_1\left((1+x)^{-c_2\theta}+\mr e^{-c_3n^{2\mf d}}\right),\qquad x\in[0,n^{1-\eps}/m_n).
\end{align}

\begin{remark}
We emphasize that $c_2$ does not depend on $\theta$, and thus
the assumption \eqref{Equation: Log Potential Lower Bound} implies that we can make
$c_2\theta$ arbitrarily large by taking a large enough $\theta$. In particular, if we take $\theta>1/c_2$,
then $(1+x)^{-c_2\theta}$ is integrable on $[0,\infty)$.
\end{remark}

We now turn to the estimate in the case where $x\in[n^{1-\eps}/m_n,(n+1)/m_n)$.
By taking $\eps>0$ small enough (more specifically, such that $1-\eps>2\mf d$,
with $\mf d$ as in \eqref{Equation: mn}),
we can ensure that $m_nx\geq n^{1-\eps}$ implies that,
for any constant $0<C<1$, we have
$S^{i;x^n}(u)\geq Cn^{1-\eps}$ for all $0\leq u\leq\th_i$ and $n$ large enough.
Let us assume without loss of generality that $V^D_n$ satisfies
\eqref{Equation: Polynomial Potential Lower Bound}.
Provided $\eps>0$ is small enough (namely, at least as small as the $\eps$
in \eqref{Equation: Polynomial Potential Lower Bound}), for any $a\in\mbb N_0$ that
can be visited by the random walk, we have that $V^D_n(a)\geq\ka(Cn^{1-\eps}/m_n)^{\al}$;
hence
\begin{multline}\label{Equation: Vitali Convergence 2-0}
\left|1-\frac{D_n(a)}{m_n^2}\right|\leq
\frac{m_n^2-V^D_n(a)}{m_n^2}
+\frac{|\xi^D_n(a)|}{m_n^2}
\leq\frac{m_n^2-\kappa(Cn^{1-\eps}/m_n)^\al}{m_n^2}
+\frac{|\xi^D_n(a)|}{m_n^2}\\
=\left(1-\frac{\kappa(Cn^{1-\eps}/m_n)^\al}{m_n^2}\right)\left(1+\frac{|\xi_n(a)|}{m_n^2-\kappa(Cn^{1-\eps}/m_n)^\al}\right).
\end{multline}
According to \eqref{Equation: mn}, we know that
$(n^{1-\eps}/m_n)^\al\asymp n^{\al(1-\mf d)-\al\eps}$.
Since $\al$ is chosen such that $\mf d/2<\al(1-\mf d)\leq2\mf d$ in
Assumption \ref{Assumption: Growth Lower Bounds}, we can always choose $\eps>0$
small enough so as to guarantee that
\begin{align}\label{Equation: Comparison of Powers of n}
n^{\mf d/2}=o(n^{\al(1-\mf d)-\al\eps})
\qquad\text{and}\qquad
(n^{1-\eps}/m_n)^\al=o(n^{2\mf d})=o(m_n^2).
\end{align}
As a consequence of the second equation in \eqref{Equation: Comparison of Powers of n},
for $n$ large enough \eqref{Equation: Vitali Convergence 2-0} yields
\begin{align*}
\left|1-\frac{D_n(a)}{m_n^2}\right|\leq
\left(1-\frac{\kappa(Cn^{1-\eps}/m_n)^\al}{m_n^2}\right)\left(1+\frac{2|\xi_n(a)|}{m_n^2}\right).
\end{align*}
As for $E\in\{U,L\}$, we have from \eqref{Equation: Potential Absolute Bounds} that
\[\left|1-\frac{E_n(a)}{m_n^2}\right|\leq
\frac{|m_n^2-V^E_n(a)|}{m_n^2}
+\frac{|\xi^E_n(a)|}{m_n^2}
\leq1+\frac{|\xi^E_n(a)|}{m_n^2}.\]
Thus, for any $x\in[n^{1-\eps}/m_n,(n+1)/m_n)$ and large enough $n$, it follows from H\"older's inequality that the expectation
$\mbf E\left[|F_{n,t_i}(S^{i;x^n})|^k\right]^{1/k}$ is bounded above by the product of the following three terms:
\begingroup
\allowdisplaybreaks
\begin{align}
\label{Equation: Vitali Convergence 2-1}
&\mbf E\left[\prod_{a\in\mbb Z}\left(1-\frac{\kappa(Cn^{1-\eps}/m_n)^\al}{m_n^2}\right)^{4 k\La^{(a,a)}_{\th_i}(S^{i;x^n})}\right]^{1/4 k}\\
\label{Equation: Vitali Convergence 2-2}
&\mbf E\left[\prod_{a\in\mbb Z}\left(1+\frac{2|\xi^D_n(|a|)|}{m_n^2}\right)^{4 k\La^{(a,a)}_{\th_i}(S^{i;x^n})}\right]^{1/4 k}\\
\label{Equation: Vitali Convergence 2-3}
&\prod_{E\in\{U,L\}}\mbf E\left[\prod_{a\in\mbb Z}\left(1+\frac{|\xi^E_n(|a|)|}{m_n^2}\right)^{4 k\La^{(a_E,\bar a_E)}_{\th_i}(S^{i;x^n})}\right]^{1/4 k}.
\end{align}
\endgroup
By repeating the bound \eqref{Equation: Uniform Integrability Hoeffding}
and the argument thereafter,
we conclude that there exists $c_4,c_5>0$ independent of $n$ such that \eqref{Equation: Vitali Convergence 2-1} is bounded by
$\mr e^{-c_4n^{\al(1-\mf d)-\al\eps}}+\mr e^{-c_5n^{2\mf d}}$. For \eqref{Equation: Vitali Convergence 2-2},
let us define $\ze_n(a):=|\xi^D_n(a)|/m_n^{1/2}$. By applying \cite[(4.25)]{GorinShkolnikov}
in similar fashion to \eqref{Equation: Noise Exponential Bound 1}, we see that
\eqref{Equation: Vitali Convergence 2-2} is bounded above by
\begin{multline}\label{Equation: Noise Exponential Bound 2}
\mbf E\Bigg[\exp\Bigg(C'\bigg(
\frac{1}{m_n^{1/2}}\sum_{a\in\mbb Z}\frac{\La^{a}_{\th_i}(S^{i;x_i^n})}{m_n}\mbf E[|\ze_n(|a|)|]\\
+\frac{1}{m_n}\sum_{a\in\mbb Z}\frac{\La^{a}_{\th_i}(S^{i;x_i^n})^2}{m_n^2}
+\frac{1}{m_n^{{\ga'}/2}}\sum_{a\in\mbb Z}\frac{\La^{a}_{\th_i}(S^{i;x_i^n})^{\ga'}}{m_n^{\ga'}}
\bigg)\Bigg)\Bigg]
\end{multline}
for some $C'>0$ and $2<{\ga'}<3$ independent of $n$.
By \eqref{Equation: Noise Moment Bound},
the moments $\mbf E[|\ze_n(a)|]$ are uniformly bounded in $n$, and thus
\[\frac{1}{m_n^{1/2}}\sum_{a\in\mbb Z}\frac{\La^{a}_{\th_i}(S^{i;x_i^n})}{m_n}\mbf E[|\ze_n(|a|)|]=O(m_n^{1/2})=O(n^{\mf d/2}).\]
By applying the uniform exponential moment bounds of Proposition \ref{Proposition: Local Time Uniform Exponential Moments}
to the remaining terms in \eqref{Equation: Noise Exponential Bound 2}, we conclude that there exists a constant $c_6>0$
independent of $n$ such that \eqref{Equation: Vitali Convergence 2-2} is bounded by $\mr e^{c_6n^{\mf d/2}}$.
A similar bound applies to \eqref{Equation: Vitali Convergence 2-3}. Then, by using the first
equality in \eqref{Equation: Comparison of Powers of n} and combining the inequalities for
\eqref{Equation: Vitali Convergence 2-1}--\eqref{Equation: Vitali Convergence 2-3},
we see that there exists $\bar c_4,\bar c_5>0$ independent of $n$ such that
\begin{align}\label{Equation: Vitali for Big x}
\mbf E\left[|F_{n,t_i}(S^{i;x^n})|^k\right]^{1/k}
\leq \mr e^{-\bar c_4 n^{\al(1-\mf d)-\al\eps}}
+\mr e^{-\bar c_5 n^{2\mf d}},\qquad x\in[n^{1-\eps}/m_n,(n+1)/m_n)
\end{align}

By combining \eqref{Equation: Vitali for Small x} and \eqref{Equation: Vitali for Big x},
we conclude that, for large $n$, the integral of the absolute value of \eqref{Equation: Vitali 0}
on the set $[0,(n+1)/m_n)^k$ is bounded above by
\[\left(\prod_{i=1}^k\|f_i\|_\infty\|g_i\|_\infty\right)
\Bigg(c_1\int_0^{n^{1-\eps}/m_n}(1+x)^{-c_2\theta}+\mr e^{-c_3n^{2\mf d}}\d x
+\int_{n^{1-\eps}/m_n}^{(n+1)/m_n}\mr e^{-\bar c_4 n^{\al(1-\mf d)-\al\eps}}
+\mr e^{-\bar c_5 n^{2\mf d}}\d x\Bigg)^k\]
for some $c_1,c_2,c_3,\bar c_4,\bar c_5>0$ independent of $n$. If we take $\theta>0$
large enough so that $(1+x)^{-c_2\theta}$ is integrable,
then the sequence of functions
\[\left(\prod_{i=1}^k\mbf1_{[0,(n+1)/m_n)}(x_i)\,f_i(x_i)\right)\mbf E\left[\prod_{i=1}^kF_{n,t_i}(S^{i;x_i^n})
\, m_n\int_{S^{i;x_i^n}(\th_i)/m_n}^{(S^{i;x_i^n}(\th_i)+1)/m_n}g_i(y)\d y\right]\]
is uniformly integrable in the sense of \cite[Theorem 2.24-(ii),(iii)]{FonsecaLeoni},
concluding the proof of the convergence of moments in Theorem \ref{Theorem: Main Dirichlet}-(1).

\subsection{Step 2: Convergence in Distribution}\label{Subsection: Distribution}

Up to writing each $f_i$ and $g_i$ as the difference of their positive
and negative parts, there is no loss of generality in assuming that $f_i,g_i\geq0$.
The convergence in joint distribution follows from the
convergence in moments proved in Section \ref{Subsection: Moments}. The argument we use
to prove this is essentially the same as \cite[Lemma 4.4]{GorinShkolnikov}:

For any $\ubar R\in[-\infty,0]$ and $\bar R\in[0,\infty]$, let us define
\[ \hat K_n^{\ubar R,\bar R}(t)g(x):=\mbf E^{\lfloor m_nx\rfloor}\left[\big(\ubar R\lor F_{n,t}(S)\land\bar R\big)\, m_n\int_{S(\th)/m_n}^{S(\th)+1)/m_n}g(y)\d y\right]\]
and
\[ \hat K^{\ubar R,\bar R}(t)g(x):=\mbf E^x\left[\left(\ubar R\lor\mbf 1_{\{\tau_0(B)>t\}}\mr e^{-\langle L_t(B),Q'\rangle}\land\bar R\right)g\big(B(t)\big)\right],\]
where we use the convention $\ubar R\lor y\land\bar R:=\max\big\{\ubar R,\min\{y,\bar R\}\big\}$ for any $y\in\mbb R$.
We note a few elementary properties of these truncated operators:
\begin{enumerate}
\item $ \hat K_n^{-\infty,\infty}(t)= \hat K_n(t)$, and $\hat K^{-\ubar R,\infty}(t)= \hat K(t)$
for all $\ubar R\leq 0$.
\item Arguing as in Section \ref{Subsection: Moments},
for every $\ubar R\in[-\infty,0]$ and $\bar R\in[0,\infty]$,
\begin{align}\label{Equation: Truncated Joint Moments}
\lim_{n\to\infty}\langle f_i, \hat K_n^{\ubar R,\bar R}(t_i)g_i\rangle=\langle f_i, \hat K^{\ubar R,\bar R}(t_i)g_i\rangle,\qquad 1\leq i\leq k
\end{align}
in joint moments.
\item If $|\ubar R|,\bar R<\infty$, then the $\langle f_i, \hat K_n^{\ubar R,\bar R}(t_i)g_i\rangle$ are bounded
uniformly in $n$; hence the moment convergence of \eqref{Equation: Truncated Joint Moments} implies convergence
in joint distribution.
\end{enumerate}

Let $\ubar R>-\infty$ be fixed.
Since $\langle f_i, \hat K^{\ubar R,\infty}_n(t_i)g_i\rangle\to\langle f_i, \hat K^{\ubar R,\infty}(t_i)g_i\rangle$ in joint moments,
the sequences in question are tight (e.g., \cite[Problem 25.17]{Billingsley95}).
Therefore, it suffices to prove that every subsequence that converges in joint distribution
has $\langle f_i, \hat K^{\ubar R,\infty}(t_i)g_i\rangle$ as a limit (e.g., \cite[Theorem--Corollary 25.10]{Billingsley95}).
Let $\mc A_1^{\ubar R},\ldots,\mc A_k^{\ubar R}$ be limit points of $\langle f_1, \hat K^{\ubar R,\infty}(t_1)g_1\rangle,
\ldots,\langle f_k, \hat K^{\ubar R,\infty}(t_k)g_k\rangle$. Since $f_i,g_i\geq0$,
the variables $\langle f_i, \hat K_n^{\ubar R,\bar R}(t_i)g_i\rangle$ and $\langle f_i, \hat K^{\ubar R,\bar R}(t_i)g_i\rangle$
are increasing in $\bar R$. Therefore, for every $\bar R<\infty$, we have
\begin{align}\label{Equation: Stochastic Dominance}
(\mc A_1^{\ubar R},\ldots,\mc A_k^{\ubar R})
\geq\big(\langle f_1, \hat K^{\ubar R,\bar R}(t_1)g_1\rangle,\ldots,\langle f_k, \hat K^{\ubar R,\bar R}(t_k)g_k\rangle\big)
\end{align}
in the sense of stochastic dominance in the space $\mbb R^k$ with the componentwise order
(e.g. \cite[Theorem 1 and Proposition 3]{KamaeKrengelOBrien}).
 By the monotone convergence theorem,
\[\lim_{\bar R\to\infty}\langle f_i, \hat K^{\ubar R,\bar R}(t_i)g_i\rangle=\langle f_i, \hat K^{\ubar R,\infty}(t_i)g_i\rangle,\qquad 1\leq i\leq k\]
almost surely; hence the stochastic dominance \eqref{Equation: Stochastic Dominance} also holds for
$\bar R=\infty$.
Since $\mc A_i^{\ubar R}$ and
$\langle f_i, \hat K^{\ubar R,\infty}(t_i)g_i\rangle$ have the same
mixed moments, we thus infer that their joint distributions coincide.
In conclusion, for any finite $\ubar R$, we have that
\[\lim_{n\to\infty}\langle f_i, \hat K_n^{\ubar R,\infty}(t_i)g_i\rangle=\langle f_i, \hat K_n^{\ubar R,\infty}(t_i)g_i\rangle\]
in joint distribution. In order to get the result for $\ubar R=-\infty$, we use the same stochastic
domination argument by sending $\ubar R\to-\infty$.

\subsection{Proof of Proposition \ref{Proposition: Local Time Uniform Exponential Moments}}
\label{Subsection: Local Time Uniform Exponential Moments}

If we prove that
\[\sup_{n\in\mbb N}
\mbf E\left[\exp\left(\frac{C}{m_n}\sum_{a\in\mbb Z}\frac{\La^{a}_{\th}(S^0)^q}{m_n^q}\right)\right]<\infty,\]
then we get the desired result by a simple change of variables.
Similarly to \cite[Proposition 4.3]{GorinShkolnikov}, a crucial tool for proving
this consists of combinatorial identities
involving the quantile transform for random walks derived in
\cite{AssafFormanPitman}. However, such results only apply to the simple symmetric random walk.

In order to get around this requirement, we decompose the vertex-occupation
measures in terms of the edge-occupations measures as follows:
By combining
\[\La_\th^a(S^0)=\La_\th^{(a,a-1)}(S^0)+\La_\th^{(a,a)}(S^0)+\La_\th^{(a,a+1)}(S^0)+\mbf 1_{\{S^0(\th)=a\}},\qquad a\in\mbb Z\]
with the inequality $(z+\bar z)^q\leq 2^{q-1}(z^q+\bar z^q)$ (for $z,\bar z\geq0$ and $q\geq1$),
it suffices by an application of H\"older's inequality to prove that the exponential moments of
\begin{align}\label{Equation: Vertical Steps}
\frac{1}{m_n}\sum_{a\in\mbb Z}\frac{\big(\La^{(a,a-1)}_{\th}(S^0)+\La^{(a,a+1)}_{\th}(S^0)\big)^q}{m_n^q}
\end{align}
and
\begin{align}\label{Equation: Horizontal Steps}
\frac{1}{m_n}\sum_{a\in\mbb Z}\frac{\La^{(a,a)}_{\th}(S^0)^q}{m_n^q}
\end{align}
are uniformly bounded in $n$.

\subsubsection{Non-Self-Edges}
\label{Subsubsection: Non-Self-Edges}

Let us begin with \eqref{Equation: Vertical Steps}.

\begin{definition}\label{Definition: SSRW}
Let $\mf S$ be a simple symmetric random walk on
$\mbb Z$, that is, the increments
$\mf S(u)-\mf S(u-1)$ are i.i.d. uniform on $\{-1,1\}$.
For any $a,b\in\mbb Z$ and $u\in\mbb N_0$, we denote $\mf S^a:=\big(\mf S|\mf S(0)=a\big)$
and $\mf S^{a,b}_u:=\big(\mf S|\mf S(0)=a\text{ and }\mf S(u)=b\big)$
(note that the latter only makes sense if $|b-a|$ and $u$ have the same parity).
\end{definition}

For every $u\in\mbb N_0$, let
\begin{align}
\mc H_u(S^0):=\sum_{a\in\mbb Z}\La^{(a,a)}_u(S^0),
\end{align}
i.e., the number of times $S^0$ visits self-edges by the $u^{\mr{th}}$ step.
Then, it is easy to see that we can couple $S^0$ and $\mf S^0$ in such a way that
\[S^0(u)=\mf S^0\big(u-\mc H_u(S^0)\big),\qquad u\in\mbb N,\]
i.e., $\mf S^0$ is the same path as $S^0$ with the visits to self-edges removed.
If we define the edge-occupation measures for $\mf S^0$ in the same way as
\eqref{Equation: Vertex-Occupation Measure},
then it is clear that the coupling of $S$ and $\mf S$ satisfies
\[\La^{(a,a-1)}_{\th}(S)+\La^{(a,a+1)}_{\th}(S)\leq\La^a_{\th}(\mf S).\]
Thus, for \eqref{Equation: Vertical Steps} we need only prove that the exponential moments of
\begin{align}\label{Equation: Vertical Steps Reduction}
\frac{1}{m_n}\sum_{a\in\mbb Z}\frac{\La^{a}_{\th}(\mf S^0)^q}{m_n^q}
\end{align}
are uniformly bounded in $n$.

By the total probability rule, we note that
\[\mbf E\left[\exp\left(\frac{C}{m_n}\sum_{a\in\mbb Z}\frac{\La^{a}_{\th}(\mf S^0)^q}{m_n^q}\right)\right]
=\sum_{b\in\mbb Z}\mbf E\left[\exp\left(\frac{C}{m_n}\sum_{a\in\mbb Z}\frac{\La^{a}_{\th}(\mf S_\th^{0,b})^q}{m_n^q}\right)\right]
\mbf P[\mf S^0(\th)=b].\]
According to the proof of \cite[Proposition 4.3]{GorinShkolnikov} (more specifically, \cite[(4.19)]{GorinShkolnikov}
and the following paragraph, explaining the distribution of the quantity denoted $M(N,\tilde T)$ in \cite[(4.19)]{GorinShkolnikov}),
there exists a constant $\bar C>0$ that only depends on $C$, $q$, and the number
$t$ in $\th=\lfloor m_n^2t\rfloor$ such that
\begin{align}\label{Equation: Local Time to Range Inequality}
\frac1{m_n}\sum_{a\in\mbb Z}\frac{\La^{a}_{\th}(\mf S_\th^{0,b})^q}{m_n^q}\leq \bar C\big((\mf R_\th^{0,b}/m_n)^{q-1}+\big((|b|+2)/m_n\big)^{q-1}\big),
\end{align}
where $\mf R_\th^{0,b}$ is equal in distribution to the range of $\mf S_\th^{0,b}$, that is,
\[\mf R_\th^{0,b}\deq
\mc R_\th(\mf S^{0,b}_\th):=\max_{0\leq u\leq\th}\mf S_\th^{0,b}(u)-\min_{0\leq u\leq\th}\mf S_\th^{0,b}(u).\]
Hence, if $\mc R_\th(\mf S^0)$ denotes the range of the unconditioned random walk $\mf S^0$, then
\[\mbf E\left[\exp\left(\frac{\bar C}{m_n}\sum_{a\in\mbb Z}\frac{\La^{a}_{\th}(\mf S^0)^q}{m_n^q}\right)\right]
\leq\mbf E\left[\exp\left(C\left(
\frac{(\mc R_\th(\mf S^0))^{q-1}}{m_n^{q-1}}+\frac{(|\mf S^0(\th)|+2)^{q-1}}{m_n^{q-1}}
\right)\right)\right].
\]
Since $q-1<2$, the result then follows from the same moment estimate leading up to
\eqref{Equation: Range Exponential Moments}, but by applying \cite[(6.2.3)]{Chen}
to the random walk $\mf S^0$ instead of $S^0$.

\subsubsection{Self-Edges}
\label{Subsubsection: Self-Edges}

We now control the exponential moments of \eqref{Equation: Horizontal Steps}.
By referring to the uniform boundedness of the exponential moments of \eqref{Equation: Vertical Steps} that we
have just proved, we know that for any $b\in\{-1,1\}$, the exponential moments of
\[\frac{1}{m_n}\sum_{a\in\mbb Z}\frac{\La^{(a,a+b)}_{\th}(S^0)^q}{m_n^q}
\qquad\text{and}\qquad
\frac{1}{m_n}\sum_{a\in\mbb Z}\frac{\La^{(a+b,a)}_{\th}(S^0)^q}{m_n^q}\]
are uniformly bounded in $n$. Thus, by applying $(x+y)^q\leq 2^q\big(|x|^q+|y|^q\big)$,
the exponential moments of 
\[\frac{1}{m_n}\sum_{a\in\mbb Z}\frac{\big(\La^{(a+1,a)}_{\th}(S^0)+\La^{(a-1,a)}_{\th}(S^0)+\La^{(a,a+1)}_{\th}(S^0)+\La^{(a,a-1)}_{\th}(S^0)\big)^q}{m_n^q}\]
are uniformly bounded in $n$. Consequently, it suffices to prove that
there exists $c,\bar c>0$ such that for every $n\in\mbb N$ and $y$ large enough (independently of $n$),
\begin{multline}\label{Equation: Stochastic Domination-Type}
\mbf P\left[\sum_{a\in\mbb Z}\La^{(a,a)}_{\th}(S^0)^q>y\right]\\
\leq\mbf P\left[\sum_{a\in\mbb Z}\big(\La^{(a+1,a)}_{\th}(S^0)+\La^{(a-1,a)}_{\th}(S^0)+\La^{(a,a+1)}_{\th}(S^0)+\La^{(a,a-1)}_{\th}(S^0)\big)^q>cy-\bar c\right].
\end{multline}
We now prove \eqref{Equation: Stochastic Domination-Type}.

\begin{definition}
If $\th$ is even, let $\mc S_0,\mc S_1,\ldots,\mc S_{\th/2-1}$ be defined as
the path segments
\[\mc S_u=\big(S^0(2u),S^0(2u+1),S^0(2u+2)\big),\qquad 0\leq u\leq \th/2-1.\]
If $\th$ is odd, then we similarly define $\mc S_0,\mc S_1,\ldots,\mc S_{(\th-1)/2-1},\mc S_{(\th-1)/2}$ as
\[\mc S_u=\begin{cases}\big(S^0(2u),S^0(2u+1),S^0(2u+2)\big)&\qquad\text{if }0\leq u\leq (\th-1)/2-1,\\
\big(S^0(2u),S^0(2u+1)\big)&\qquad\text{if }u= (\th-1)/2.\end{cases}\]
In words, we partition the path formed by the first $\th$ steps of $S^0$ into successive segments
of two steps, with the exception that the very last segment may contain only one step if $\th$ is odd
(see Figure \ref{Figure: Horizontal Coupling} below for an illustration of this partition).
\end{definition}

\begin{definition}
Let $\mc S_u$ be a path segment as in the previous definition. We say that $\mc S_u$ is
a {\bf type 1} segment if there exists some $a\in\mbb Z$ and $b\in\{-1,1\}$ such that
\[\mc S_u=\begin{cases}
(a,a,a+b),\\
(a+b,a,a),\text{ or}\\
(a,a),
\end{cases}\]
we say that $\mc S_u$ is
a {\bf type 2} segment if there exists some $a\in\mbb Z$ such that
\[\mc S_u=(a,a,a),\]
and we say that $\mc S_u$ is
a {\bf type 3} segment if there exists some $a\in\mbb Z$ such that
\[\mc S_u=(a,a+1,a).\]
\end{definition}

Given a realization of the first $\th$ steps of the lazy random walk $S^0$,
we define the transformed path $\big(\hat S^0(u)\big)_{0\leq u\leq\th}$
by replacing every type 2 segment $(a,a,a)$ in $\big(S^0(u)\big)_{0\leq u\leq\th}$ by the corresponding type 3
segment $(a,a+1,a)$, and vice versa.
(see Figure \ref{Figure: Horizontal Coupling} below for an illustration of this transformation).
Given that this path transformation is a bijection on the set of all possible realizations of $\big(S^0(u)\big)_{0\leq u\leq\th}$,
$\big(\hat S^0(u)\big)_{0\leq u\leq\th}$ is also a lazy random walk.

\begin{figure}[htbp]
\begin{center}
%
%
\definecolor{mycolor1}{rgb}{0.00000,0.44700,0.74100}%
\definecolor{mycolor2}{rgb}{0.85000,0.32500,0.09800}%
\begin{tikzpicture}

\begin{axis}[%
width=4in,
height=0.5in,
at={(0in,0in)},
scale only axis,
xmin=0.8,
xmax=40.2,
ymin=-4.2,
ymax=1.2,
xtick=\empty,ytick=\empty,
axis background/.style={fill=white}
]
\addplot [thick,forget plot,dotted,gray]
  table[row sep=crcr]{%
1	-4\\
1	1.2\\
};
\addplot [thick,forget plot,dotted,gray]
  table[row sep=crcr]{%
3	-4\\
3	1.2\\
};
\addplot [thick,forget plot,dotted,gray]
  table[row sep=crcr]{%
5	-4\\
5	1.2\\
};
\addplot [thick,forget plot,dotted,gray]
  table[row sep=crcr]{%
7	-4\\
7	1.2\\
};
\addplot [thick,forget plot,dotted,gray]
  table[row sep=crcr]{%
9	-4\\
9	1.2\\
};
\addplot [thick,forget plot,dotted,gray]
  table[row sep=crcr]{%
11	-4\\
11	1.2\\
};
\addplot [thick,forget plot,dotted,gray]
  table[row sep=crcr]{%
13	-4\\
13	1.2\\
};
\addplot [thick,forget plot,dotted,gray]
  table[row sep=crcr]{%
15	-4\\
15	1.2\\
};
\addplot [thick,forget plot,dotted,gray]
  table[row sep=crcr]{%
17	-4\\
17	1.2\\
};
\addplot [thick,forget plot,dotted,gray]
  table[row sep=crcr]{%
19	-4\\
19	1.2\\
};
\addplot [thick,forget plot,dotted,gray]
  table[row sep=crcr]{%
21	-4\\
21	1.2\\
};
\addplot [thick,forget plot,dotted,gray]
  table[row sep=crcr]{%
23	-4\\
23	1.2\\
};
\addplot [thick,forget plot,dotted,gray]
  table[row sep=crcr]{%
25	-4\\
25	1.2\\
};
\addplot [thick,forget plot,dotted,gray]
  table[row sep=crcr]{%
27	-4\\
27	1.2\\
};
\addplot [thick,forget plot,dotted,gray]
  table[row sep=crcr]{%
29	-4\\
29	1.2\\
};
\addplot [thick,forget plot,dotted,gray]
  table[row sep=crcr]{%
31	-4\\
31	1.2\\
};
\addplot [thick,forget plot,dotted,gray]
  table[row sep=crcr]{%
33	-4\\
33	1.2\\
};
\addplot [thick,forget plot,dotted,gray]
  table[row sep=crcr]{%
35	-4\\
35	1.2\\
};
\addplot [thick,forget plot,dotted,gray]
  table[row sep=crcr]{%
37	-4\\
37	1.2\\
};
\addplot [thick,forget plot,dotted,gray]
  table[row sep=crcr]{%
39	-4\\
39	1.2\\
};
\addplot [color=mycolor1, draw=none, mark=*, mark size=1pt, mark options={solid,black}, forget plot]
  table[row sep=crcr]{%
1	0\\
2	-1\\
3	-2\\
4	-1\\
5	-2\\
6	-2\\
7	-3\\
8	-2\\
9	-1\\
10	-1\\
11	-1\\
12	-2\\
13	-1\\
14	-2\\
15	-3\\
16	-3\\
17	-4\\
18	-3\\
19	-2\\
20	-1\\
21	-2\\
22	-1\\
23	-2\\
24	-1\\
25	-1\\
26	0\\
27	0\\
28	0\\
29	1\\
30	0\\
31	-1\\
32	-2\\
33	-2\\
34	-1\\
35	0\\
36	-1\\
37	-1\\
38	-1\\
39	-1\\
40	-1\\
};
\addplot [thick,forget plot]
  table[row sep=crcr]{%
1	0\\
2	-1\\
3	-2\\
};
\addplot [thick,forget plot,blue]
  table[row sep=crcr]{%
3	-2\\
4	-1\\
5	-2\\
};
\addplot [thick,forget plot]
  table[row sep=crcr]{%
5	-2\\
6	-2\\
7	-3\\
8	-2\\
9	-1\\
};
\addplot [thick,forget plot,red]
  table[row sep=crcr]{%
9	-1\\
10	-1\\
11	-1\\
};
\addplot [thick,forget plot]
  table[row sep=crcr]{%
11	-1\\
12	-2\\
13	-1\\
14	-2\\
15	-3\\
16	-3\\
};
\addplot [thick,forget plot]
  table[row sep=crcr]{%
16	-3\\
17	-4\\
18	-3\\
19	-2\\
};
\addplot [thick,forget plot,blue]
  table[row sep=crcr]{%
19	-2\\
20	-1\\
21	-2\\
22	-1\\
23	-2\\
};
\addplot [thick,forget plot]
  table[row sep=crcr]{%
23	-2\\
24	-1\\
25	-1\\
26	0\\
27	0\\
28	0\\
29	1\\
30	0\\
31	-1\\
32	-2\\
33	-2\\
34	-1\\
35	0\\
36	-1\\
37	-1\\
};
\addplot [thick,forget plot,red]
  table[row sep=crcr]{%
37	-1\\
38	-1\\
39	-1\\
};
\addplot [thick,forget plot]
  table[row sep=crcr]{%
39	-1\\
40	-1\\
};
\end{axis}

\begin{axis}[%
width=4in,
height=0.5in,
at={(0in,-0.7in)},
scale only axis,
xmin=0.8,
xmax=40.2,
ymin=-4.2,
ymax=1.2,
xtick=\empty,ytick=\empty,
axis background/.style={fill=white}
]
\addplot [thick,forget plot,dotted,gray]
  table[row sep=crcr]{%
1	-4\\
1	1.2\\
};
\addplot [thick,forget plot,dotted,gray]
  table[row sep=crcr]{%
3	-4\\
3	1.2\\
};
\addplot [thick,forget plot,dotted,gray]
  table[row sep=crcr]{%
5	-4\\
5	1.2\\
};
\addplot [thick,forget plot,dotted,gray]
  table[row sep=crcr]{%
7	-4\\
7	1.2\\
};
\addplot [thick,forget plot,dotted,gray]
  table[row sep=crcr]{%
9	-4\\
9	1.2\\
};
\addplot [thick,forget plot,dotted,gray]
  table[row sep=crcr]{%
11	-4\\
11	1.2\\
};
\addplot [thick,forget plot,dotted,gray]
  table[row sep=crcr]{%
13	-4\\
13	1.2\\
};
\addplot [thick,forget plot,dotted,gray]
  table[row sep=crcr]{%
15	-4\\
15	1.2\\
};
\addplot [thick,forget plot,dotted,gray]
  table[row sep=crcr]{%
17	-4\\
17	1.2\\
};
\addplot [thick,forget plot,dotted,gray]
  table[row sep=crcr]{%
19	-4\\
19	1.2\\
};
\addplot [thick,forget plot,dotted,gray]
  table[row sep=crcr]{%
21	-4\\
21	1.2\\
};
\addplot [thick,forget plot,dotted,gray]
  table[row sep=crcr]{%
23	-4\\
23	1.2\\
};
\addplot [thick,forget plot,dotted,gray]
  table[row sep=crcr]{%
25	-4\\
25	1.2\\
};
\addplot [thick,forget plot,dotted,gray]
  table[row sep=crcr]{%
27	-4\\
27	1.2\\
};
\addplot [thick,forget plot,dotted,gray]
  table[row sep=crcr]{%
29	-4\\
29	1.2\\
};
\addplot [thick,forget plot,dotted,gray]
  table[row sep=crcr]{%
31	-4\\
31	1.2\\
};
\addplot [thick,forget plot,dotted,gray]
  table[row sep=crcr]{%
33	-4\\
33	1.2\\
};
\addplot [thick,forget plot,dotted,gray]
  table[row sep=crcr]{%
35	-4\\
35	1.2\\
};
\addplot [thick,forget plot,dotted,gray]
  table[row sep=crcr]{%
37	-4\\
37	1.2\\
};
\addplot [thick,forget plot,dotted,gray]
  table[row sep=crcr]{%
39	-4\\
39	1.2\\
};
\addplot [color=mycolor1, draw=none, mark=*, mark size=1pt, mark options={solid,black}, forget plot]
  table[row sep=crcr]{%
1	0\\
2	-1\\
3	-2\\
4	-2\\
5	-2\\
6	-2\\
7	-3\\
8	-2\\
9	-1\\
10	0\\
11	-1\\
12	-2\\
13	-1\\
14	-2\\
15	-3\\
16	-3\\
17	-4\\
18	-3\\
19	-2\\
20	-2\\
21	-2\\
22	-2\\
23	-2\\
24	-1\\
25	-1\\
26	0\\
27	0\\
28	0\\
29	1\\
30	0\\
31	-1\\
32	-2\\
33	-2\\
34	-1\\
35	0\\
36	-1\\
37	-1\\
38	0\\
39	-1\\
40	-1\\
};
\addplot [thick,forget plot]
  table[row sep=crcr]{%
1	0\\
2	-1\\
3	-2\\
};
\addplot [thick,forget plot,red]
  table[row sep=crcr]{%
3	-2\\
4	-2\\
5	-2\\
};
\addplot [thick,forget plot]
  table[row sep=crcr]{%
5	-2\\
6	-2\\
7	-3\\
8	-2\\
9	-1\\
};
\addplot [thick,forget plot,blue]
  table[row sep=crcr]{%
9	-1\\
10	0\\
11	-1\\
};
\addplot [thick,forget plot]
  table[row sep=crcr]{%
11	-1\\
12	-2\\
13	-1\\
14	-2\\
15	-3\\
16	-3\\
};
\addplot [thick,forget plot]
  table[row sep=crcr]{%
16	-3\\
17	-4\\
18	-3\\
19	-2\\
};
\addplot [thick,forget plot,red]
  table[row sep=crcr]{%
19	-2\\
20	-2\\
21	-2\\
22	-2\\
23	-2\\
};
\addplot [thick,forget plot]
  table[row sep=crcr]{%
23	-2\\
24	-1\\
25	-1\\
26	0\\
27	0\\
28	0\\
29	1\\
30	0\\
31	-1\\
32	-2\\
33	-2\\
34	-1\\
35	0\\
36	-1\\
37	-1\\
};
\addplot [thick,forget plot,blue]
  table[row sep=crcr]{%
37	-1\\
38	0\\
39	-1\\
};
\addplot [thick,forget plot]
  table[row sep=crcr]{%
39	-1\\
40	-1\\
};
\end{axis}
\end{tikzpicture}%
\caption{The partition into two-step segments is represented by dashed gray lines.
type 2 segments are red, and type 3 segments are blue. The two paths
represent $S^0$ and $\hat S^0$, as related to each other by the permutation of type 2 and 3 segments.}
\label{Figure: Horizontal Coupling}
\end{center}
\end{figure}

Every contribution of $S^0$ to $\sum_a\La_\th^{(a,a)}(S^0)$ comes from
type 1 and 2 segments. Moreover, if a type 1 segment $\mc S_u$ is not at the end
of the path and adds a contribution of one to $\La_\th^{(a,a)}(S^0)$ for some $a\in\mbb Z$,
then it must also add one to
\begin{align}\label{Equation: Transformed Contribution}
\La^{(a+1,a)}_{\th}(S^0)+\La^{(a-1,a)}_{\th}(S^0)+\La^{(a,a+1)}_{\th}(S^0)+\La^{(a,a-1)}_{\th}(S^0).
\end{align}
Lastly, for every type 2 segment, a contribution of two to $\La_\th^{(a,a)}(S^0)$ for some $a\in\mbb Z$
is turned into a contribution of two to \eqref{Equation: Transformed Contribution} in $\hat S^0$.
In short, we observe that there is at most one $a_0\in\mbb Z$ (i.e., the one level, if any, where a type 1 segment
occurs at the very end of the path of $S^0(u)$, $u\leq\th$) such that
\[\La^{(a,a)}_\th(S^0)\leq\La^{(a+1,a)}_{\th}(\hat S^0)+\La^{(a-1,a)}_{\th}(\hat S^0)+\La^{(a,a+1)}_{\th}(\hat S^0)+\La^{(a,a-1)}_{\th}(\hat S^0)\]
for every $a\in\mbb Z\setminus\{a_0\}$, and
\[\La^{(a_0,a_0)}_\th(S^0)\leq\La^{(a_0+1,a_0)}_{\th}(\hat S^0)+\La^{(a_0-1,a_0)}_{\th}(\hat S^0)+\La^{(a_0,a_0+1)}_{\th}(\hat S^0)+\La^{(a_0,a_0-1)}_{\th}(\hat S^0)+1\]
Given that $(z+1)^q\leq 2^{q-1}z^q+2^{q-1}$ for every $z,q\geq1$, we obtain \eqref{Equation: Stochastic Domination-Type}.

\section{Proof of Theorem \ref{Theorem: Main Dirichlet}-(2)}
\label{Section: Main Dirichlet 2}

This proof is very similar to that of Theorem \ref{Theorem: Main Dirichlet}-(1),
except that we deal with random walks and Brownian motions
conditioned on their endpoint.

\subsection{Step 1: Convergence of Moments}
\label{Subsection: Bridge Moment Convergence}

We begin with a generic mixed moment of traces, which we can always write in the form
\[\mbf E\left[\prod_{i=1}^k\mr{Tr}\big[ \hat K_n(t_i)\big]\right].\]
By Fubini's theorem, this is equal to
\begin{align}\label{Equation: Trace Integral}
\int_{[0,(n+1)/m_n]^k}\mbf E\left[\prod_{i=1}^km_n\mbf P[S^0(\th_i)=0]F_{n,t_i}(S^{i;x_i^n,x_i^n}_{\th_i})\right]\d x_1\cdots\dd x_k,
\end{align}
and by the trace formula in Remark \ref{Remark: Strong Continuity} the corresponding continuum limit is
\[\mbf E\left[\prod_{i=1}^k\mr{Tr}\big[ \hat K(t_i)\big]\right]
=\int_{\mbb R_+}\mbf E\left[\prod_{i=1}^k\frac{1}{\sqrt{2\pi t_i}}\mbf 1_{\{\tau_0(B^{i;x_i,x_i}_{t_i})>t\}}\mr e^{-\langle L_t(B^{i;x_i,x_i}_{t_i}),Q'\rangle}\right]\d x_1\cdots\dd x_k,\]
where $\th_i$ and $x_i^n$ are as in Section \ref{Subsection: Moments}, and
\begin{enumerate}
\item $S^{1;x_1^n,x_1^n}_{\th_1},\ldots,S^{k;x_k^n,x_k^n}_{\th_k}$ are independent copies
of random walk bridges $S^{x,x}_\th$ with $x=x_i^n$ and $\th=\th_i$;
\item $B^{1;x_1,x_1}_{t_1},\ldots,B^{k;x_k,x_k}_{t_k}$ are independent copies of standard Brownian bridges $B^{x,x}_t$
with $x=x_i$ and $t=t_i$.
\end{enumerate}
Also, $S^{i;x_i^n,x_i^n}_{\th_i}$ are independent of $Q_n$,
and $B^{i;x_i,x_i}_{t_i}$ are independent of $Q$.

According to the local central limit theorem,
\[\lim_{n\to\infty}m_n\mbf P[S^0(\th_i)=0]=\frac{1}{\sqrt{2\pi t_i}},\qquad 1\leq i\leq k.\]
Moreover, we have the following analog of Proposition \ref{Proposition: Moment Convergence}:

\begin{proposition}\label{Proposition: Trace Moment Convergence}
The conclusion of Proposition \ref{Proposition: Moment Convergence}
holds with every instance of $S^{i;x_i^n}$ replaced by $S^{i;x_i^n,x_i^n}_{\th_i}$,
and every instance of $B^{i;x_i}$ replaced by $B^{i;x_i,x_i}_{t_i}$.
\end{proposition}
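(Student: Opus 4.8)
The plan is to repeat the proof of Proposition \ref{Proposition: Moment Convergence} essentially verbatim, exploiting the fact that Theorem \ref{Theorem: Split Coupling} was deliberately stated so as to cover both the unconditioned lazy walk $S^{x^n}$ \emph{and} its bridge $S^{x^n,x^n}_\th$ under the same conclusions \eqref{Equation: KMT Coupling}--\eqref{Equation: Local Time Coupling}. Concretely, for each $1\le i\le k$ I would invoke Theorem \ref{Theorem: Split Coupling} in the case $(Z_n,Z)=(S^{i;x_i^n,x_i^n}_{\th_i},\tilde B^{i;x_i,x_i}_{3t_i/2})$, where $\tilde B^{i;x_i,x_i}_{3t_i/2}$ is a Brownian bridge of variance $2/3$ from $x_i$ to $x_i$ on $[0,3t_i/2]$, doing this jointly over $i$ by using mutually independent copies. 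This produces a coupling under which, almost surely and jointly in $i$ and in the three choices of $(y_n,\bar y_n)$ from \eqref{Equation: yn Sequences},
\[\sup_{0\le s\le t_i}\left|\frac{S^{i;x_i^n,x_i^n}_{\th_i}(\lfloor m_n^2(3s/2)\rfloor)}{m_n}-\tilde B^{i;x_i,x_i}_{3t_i/2}(3s/2)\right|\to 0,\qquad \sup_{y\in\mbb R}\left|\frac{\La^{(y_n,\bar y_n)}_{\th_i}(S^{i;x_i^n,x_i^n}_{\th_i})}{m_n}-\frac13 L^y_{3t_i/2}(\tilde B^{i;x_i,x_i}_{3t_i/2})\right|\to 0.\]

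Next I would apply the same Brownian scaling used in the proof of Proposition \ref{Proposition: Moment Convergence}: setting $B^{i;x_i,x_i}_{t_i}(s):=\tilde B^{i;x_i,x_i}_{3t_i/2}(3s/2)$, the process $B^{i;x_i,x_i}_{t_i}$ is a standard Brownian bridge from $x_i$ to $x_i$ on $[0,t_i]$ by the scaling property, and the change of variables $r=3s/2$ in \eqref{Equation: Interior Local Time} gives $L^y_{3t_i/2}(\tilde B^{i;x_i,x_i}_{3t_i/2})=\tfrac32 L^y_{t_i}(B^{i;x_i,x_i}_{t_i})$. Substituting this identity turns the two displays above into parts (1) and (2) of the statement, with the factor $\tfrac12$, almost surely. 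Part (3) then follows from (1), the uniform continuity of $g_i$, and the Lebesgue differentiation theorem, exactly as in the unconditioned case. Parts (4) and (5) follow from Assumption \ref{Assumption: Noise Convergence}: the random walk bridges $S^{i;x_i^n,x_i^n}_{\th_i}$ are independent of $Q_n$ and hence of the noise arrays $(\xi^E_n)_{E\in\{D,U,L\}}$, so the occupation-measure convergence of (2) (whose limits are compactly supported local times) combines with \eqref{Equation: Skorokhod Noise Terms} and \eqref{Equation: Noise Stochastic Integral Convergence}, jointly, in precisely the way it did for $S^{i;x_i^n}$.

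I do not expect a genuine obstacle: the analytic content is already packaged inside Theorem \ref{Theorem: Split Coupling}, which handled the bridge $\tilde B^{x,x}_t$ via the absolute-continuity argument of Lemmas \ref{Lemma: KMT}--\ref{Lemma: Occupation Measure Size} and the conditioning-via-local-CLT trick of Propositions \ref{Proposition: Strong Local Time Coupling} and \ref{Proposition: Splitting of Occupation Measure}. The only points that warrant an explicit sentence are (a) that the coupling may be taken jointly and independently across $i=1,\dots,k$ — immediate, since the bridges are independent and each is coupled separately to its own Brownian bridge — and (b) that the joint-in-distribution statement with \eqref{Equation: Skorokhod Noise Terms} is preserved, which holds because everything built from the $S^{i;x_i^n,x_i^n}_{\th_i}$ is independent of the noise array. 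No uniform-integrability or interchange-of-limits estimates are needed at this stage, since Proposition \ref{Proposition: Trace Moment Convergence} is purely a statement about joint almost sure/distributional convergence of the random-walk-bridge functionals; those estimates enter only afterwards, when \eqref{Equation: Trace Integral} is passed to the limit.
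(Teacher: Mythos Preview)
Your proposal is correct and matches the paper's own proof essentially verbatim: the paper likewise invokes Theorem \ref{Theorem: Split Coupling} in the bridge case $(Z_n,Z)=(S^{x^n,x^n}_\th,\tilde B^{x,x}_t)$ to couple each $S^{i;x_i^n,x_i^n}_{\th_i}$ with a variance-$2/3$ Brownian bridge $\tilde B^{i;x_i,x_i}_{3t_i/2}$, and then sets $B^{i;x_i,x_i}_{t_i}(s):=\tilde B^{i;x_i,x_i}_{3t_i/2}(3s/2)$ to obtain the standard bridge via Brownian scaling. Your write-up is simply more explicit about parts (3)--(5) and the joint independence across $i$, but there is no difference in approach.
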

\begin{proof}
Arguing as in the proof of Proposition \ref{Proposition: Moment Convergence}, this follows from
coupling $S^{i;x_i^n,x_i^n}$ with a Brownian bridge $\tilde B^{i;x_i,x_i}_{3t_i/2}$ with
variance $2/3$ using Theorem \ref{Theorem: Split Coupling}, and then defining
$B^{i;x_i,x_i}_{t_i}(s):=\tilde B^{i;x_i,x_i}_{3t_i/2}(3s/2)$.
\end{proof}

With these results in hand, by repeating the arguments in Section \ref{Subsubsection: Convergence Inside the Expected Value},
for any $x_1,\ldots,x_k\geq0$, we can find a coupling such that
\[\lim_{n\to\infty}m_n\mbf P[S^0(\th_i)=0]F_{n,t_i}(S^{i;x_i^n,x_i^n}_{\th_i})
=\frac{1}{\sqrt{2\pi t_i}}\mbf 1_{\{\tau_0(B^{i;x_i,x_i}_{t_i})>t\}}\mr e^{-\langle L_t(B^{i;x_i,x_i}_{t_i}),Q'\rangle}\]
in probability for $1\leq i\leq k$.
Then, by arguing as in Section \ref{Subsubsection: Convergence of the Expected Value}
(more specifically, the estimate for \eqref{Equation: Uniformly Integrability 1}), we get the convergence
\[\lim_{n\to\infty}\mbf E\left[\prod_{i=1}^km_n\mbf P[S^0(\th_i)=0]F_{n,t_i}(S^{i;x_i^n,x_i^n}_{\th_i})\right]
=\mbf E\left[\prod_{i=1}^k\frac{1}{\sqrt{2\pi t_i}}\mbf 1_{\{\tau_0(B^{i;x_i,x_i}_{t_i})>t\}}\mr e^{-\langle L_t(B^{i;x_i,x_i}_{t_i}),Q'\rangle}\right]\]
pointwise in $x_1,\ldots,x_k$ thanks to the following proposition, which we prove at
the end of this section.

\begin{proposition}\label{Proposition: Local Time Uniform Exponential Moments (Bridge)}
Let $\th=\th(n,t):=\lfloor m_n^2t\rfloor$ and $x^n:=\lfloor m_nx\rfloor$ for some $t>0$ and $x\geq0$.
For every $C>0$ and $1\leq q<3$,
\[\sup_{n\in\mbb N,~x\geq0}
\mbf E\left[\exp\left(\frac{C}{m_n}\sum_{a\in\mbb Z}\frac{\La^{a}_{\th}(S^{x^n,x^n}_\th)^q}{m_n^q}\right)\right]<\infty.\]
\end{proposition}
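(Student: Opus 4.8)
The plan is to follow the proof of Proposition~\ref{Proposition: Local Time Uniform Exponential Moments} essentially verbatim, with the lazy bridge $S^{x^n,x^n}_\th$ in place of the unconditioned walk $S^0$. First, by translation invariance of the increments and the substitution $a\mapsto a-x^n$ in the summation index, it suffices to treat $x=0$, i.e.\ to bound $\sup_{n}\mbf E[\exp(\tfrac C{m_n}\sum_{a\in\mbb Z}\La^a_\th(S^{0,0}_\th)^q/m_n^q)]$; note that $S^{0,0}_\th$ is well defined for every $\th$ because the lazy walk can return to the origin at any time. Using the identity $\La^a_\th(S^{0,0}_\th)=\La^{(a,a-1)}_\th+\La^{(a,a)}_\th+\La^{(a,a+1)}_\th+\mbf 1_{\{a=0\}}$, the inequality $(z+\bar z)^q\le 2^{q-1}(z^q+\bar z^q)$, and H\"older's inequality, this reduces to proving uniform boundedness of the exponential moments of the non-self-edge quantity $\tfrac1{m_n}\sum_a(\La^{(a,a-1)}_\th(S^{0,0}_\th)+\La^{(a,a+1)}_\th(S^{0,0}_\th))^q/m_n^q$ and of the self-edge quantity $\tfrac1{m_n}\sum_a\La^{(a,a)}_\th(S^{0,0}_\th)^q/m_n^q$ (the endpoint term contributes $1$).

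For the non-self-edge part I would condition on $\mc H_\th(S^{0,0}_\th):=\sum_a\La^{(a,a)}_\th(S^{0,0}_\th)$, the number of self-edge steps. On $\{\mc H_\th=h\}$ the path obtained by deleting the self-edge steps is a simple symmetric random walk bridge $\mf S^{0,0}_{\th-h}$, its range coincides with $\mc R_\th(S^{0,0}_\th)$ (deleting self-edges changes neither the set of visited sites nor the range), and $\La^{(a,a-1)}_\th(S^{0,0}_\th)+\La^{(a,a+1)}_\th(S^{0,0}_\th)\le\La^a_{\th-h}(\mf S^{0,0}_{\th-h})$ up to an additive $1$ at the endpoint. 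Applying the quantile-transform inequality~\eqref{Equation: Local Time to Range Inequality} to $\mf S^{0,0}_{\th-h}$ (with start and end both $0$, and its length $\le\th$) and noting that the right-hand side depends on the path only through $\mc R_\th(S^{0,0}_\th)$ and $m_n$, one gets the almost sure bound $\tfrac1{m_n}\sum_a(\La^{(a,a-1)}_\th(S^{0,0}_\th)+\La^{(a,a+1)}_\th(S^{0,0}_\th))^q/m_n^q\le\bar C\big((\mc R_\th(S^{0,0}_\th)/m_n)^{q-1}+(2/m_n)^{q-1}\big)$, so the conditioning disappears and it remains to show $\sup_n\mbf E[\exp(C(\mc R_\th(S^{0,0}_\th)/m_n)^{q-1})]<\infty$ — the bridge analogue of \eqref{Equation: Range Exponential Moments}. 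Since $q-1<2$ and $\th=O(m_n^2)$, this follows from a prefactor-free Gaussian tail estimate $\mbf P[\mc R_\th(S^{0,0}_\th)\ge r]\le C\mr e^{-cr^2/\th}$, which I would prove by reflection: the lazy walk has increments in $\{-1,0,1\}$ and hence hits every level without overshoot, so $\mbf P[\max_{0\le j\le\th}S^0(j)\ge r,\ S^0(\th)=0]=\mbf P[S^0(\th)=2r]$, and $\mbf P[S^0(\th)=2r]/\mbf P[S^0(\th)=0]\le\mr e^{-cr^2/\th}$ holds with no polynomial prefactor because the one-step law is symmetric and log-concave (equivalently, realizing $S^0(\th)$ as $\mf S^0$ evaluated at an independent $\mr{Binomial}(\th,2/3)$ time and comparing binomial coefficients); combining with $\mc R_\th(S^{0,0}_\th)\le 2\max_{0\le j\le\th}|S^{0,0}_\th(j)|$ yields the claim. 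Alternatively, one may simply invoke \cite[(6.2.3)]{Chen} (or its bridge companion) for $\mf S^{0,0}_{\th-h}$ if it is stated for bridges.

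For the self-edge part I would reuse the two-step-segment path transformation of Section~\ref{Subsubsection: Self-Edges}: partition $\{0,\dots,\th\}$ into successive two-step blocks and swap every type-$2$ segment $(a,a,a)$ with the corresponding type-$3$ segment $(a,a+1,a)$. The point for the bridge is that both segments have the same heights at the block endpoints, so the transformed path $\hat S^{0,0}_\th$ agrees with $S^{0,0}_\th$ at all even times and at time $\th$; in particular the transformation is a bijection on lazy paths from $0$ to $0$ of length $\th$, so $\hat S^{0,0}_\th$ is again a lazy bridge. The domination argument leading to~\eqref{Equation: Stochastic Domination-Type} then carries over unchanged (with at most one exceptional level carrying an additive $1$, absorbed by $(z+1)^q\le 2^{q-1}z^q+2^{q-1}$), bounding $\sum_a\La^{(a,a)}_\th(S^{0,0}_\th)^q$ by a constant times $\sum_a(\La^{(a+1,a)}_\th+\La^{(a-1,a)}_\th+\La^{(a,a+1)}_\th+\La^{(a,a-1)}_\th)^q$ of $\hat S^{0,0}_\th$ plus a constant, whose exponential moments are controlled by the non-self-edge estimate already established, applied to $\hat S^{0,0}_\th$ (equidistributed with $S^{0,0}_\th$).

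The step I expect to be the main obstacle is the prefactor-free tail bound $\mbf P[\mc R_\th(S^{0,0}_\th)\ge r]\le C\mr e^{-cr^2/\th}$ for the lazy bridge: the naive conditioning bound $\mbf P[\,\cdot\mid S^0(\th)=0]\le\mbf P[\,\cdot\,]/\mbf P[S^0(\th)=0]$ loses a factor $O(m_n)$ through the local central limit theorem, which is fatal here since the conclusion must be uniform in $n$, not merely polynomial in $n$; the reflection-principle computation above is designed precisely to avoid that loss, and making its log-concavity/binomial-ratio input rigorous for the lazy (rather than simple symmetric) walk is the one genuinely new ingredient relative to the proof of Proposition~\ref{Proposition: Local Time Uniform Exponential Moments}.
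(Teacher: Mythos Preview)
Your proposal is correct and follows essentially the same route as the paper: reduce to $x=0$, split into non-self-edge and self-edge contributions via $\La^a_\th=\La^{(a,a-1)}_\th+\La^{(a,a)}_\th+\La^{(a,a+1)}_\th+\mbf 1_{\{S(\th)=a\}}$, condition on the number $\mc H$ of self-edge steps to pass to a simple symmetric bridge for the first part, and reuse the two-step path transformation of Section~\ref{Subsubsection: Self-Edges} for the second (correctly noting that it preserves endpoints, hence maps lazy bridges to lazy bridges). The only place you diverge is in how you obtain the sub-Gaussian tail for the bridge range: you propose a reflection/log-concavity argument for the lazy bridge directly, whereas the paper simply invokes the known bound \eqref{Equation: Range of Bridge 1} (i.e.\ \cite[(4.7)]{GorinShkolnikov}) for the \emph{simple symmetric} bridge $\mf S^{0,0}_u$, which is already available once the conditioning on $\mc H$ has reduced to that setting (and which, as you note, has the same range as the lazy bridge). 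Your ``alternative'' suggestion is exactly this, so the obstacle you flag is not one.
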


It only remains to prove that we can pass the limit outside the integral \eqref{Equation: Trace Integral}.
We once again use \cite[Theorem 2.24]{FonsecaLeoni}.
For this, it is enough to prove that, for $n$ large enough,
there exists constants $c_1,c_2,c_3,\bar c_4,\bar c_5>0$ such that
\begingroup
\allowdisplaybreaks
\begin{align}\label{Equation: Vitali Convergence Bridge}
\nonumber
&\int_{[0,(n+1)/m_n]^k}\left|\mbf E\left[\prod_{i=1}^kF_{n,t_i}(S^{i;x_i^n,x_i^n}_{\th_i})\right]\right|\d x_1\cdots\dd x_k
\leq\prod_{i=1}^k\int_0^{(n+1)/m_n}\,\mbf E\left[|F_{n,t_i}(S^{i;x^n,x^n}_{\th_i})|^k\right]^{1/k}\d x\\
&\leq\Bigg(c_1\int_0^{n^{1-\eps}/m_n}\left((1+x)^{-c_2\theta}+\mr e^{-c_3n^{2\mf d}}\right)\d x
+\int_{n^{1-\eps}/m_n}^{(n+1)/m_n}\left(\mr e^{-\bar c_4 n^{\al(1-\mf d)-\al\eps}}+\mr e^{-2\bar c_5 n^{2\mf d}}\right)\d x\Bigg)^k,
\end{align}
\endgroup
where $\theta$ is taken large enough so that $(1+x)^{-c_2\theta}$ is integrable. To this end,
for every $\th\in\mbb N$, let us define $\mc R_\th(S^{0,0}_\th)$ as the range of $S^{0,0}_\th$.
By replicating the estimates in Section \ref{Subsubsection: Convergence of Integral},
we see that \eqref{Equation: Vitali Convergence Bridge} is the consequence of the following
two propositions, concluding the proof of the convergence of moments.

\begin{proposition}\label{Proposition: Range of Bridge}
Let $\th=\th(n,t):=\lfloor m_n^2 t\rfloor$ for some $t>0$. For every $C>0$,
\[\sup_{n\in\mbb N}\mbf E\left[\mr e^{C\mc R_\th(S^{0,0}_\th)/m_n}\right]<\infty.\]
\end{proposition}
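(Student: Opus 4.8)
The idea is to reduce the range of the bridge to the ranges over two half-length sub-intervals, each of which can be compared to the \emph{unconditioned} lazy walk by a bounded Radon--Nikodym factor (a uniform local central limit theorem argument, exactly in the spirit of Lemmas~\ref{Lemma: KMT} and \ref{Lemma: Local Time Regularity}), and then to invoke the exponential moment bound \eqref{Equation: Range Exponential Moments} already established for $S^0$.

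First, a reduction. Since $\th=\lfloor m_n^2t\rfloor\le\th$ is bounded and $m_n\ge C>0$ for the finitely many $n$ with $\th$ small, it suffices to treat large $n$. Set $r:=\lfloor\th/2\rfloor$. Because $S^{0,0}_\th(0)=S^{0,0}_\th(\th)=0$, the value $0$ lies between the running max and running min on each of $[0,r]$ and $[r,\th]$, so $\mc R_{[0,r]}(S^{0,0}_\th)\ge|S^{0,0}_\th(u)|$ for $u\le r$ and $\mc R_{[r,\th]}(S^{0,0}_\th)\ge|S^{0,0}_\th(u)|$ for $u\ge r$; hence
\[\mc R_\th(S^{0,0}_\th)\le 2\max_{0\le u\le\th}|S^{0,0}_\th(u)|\le 2\big(\mc R_{[0,r]}(S^{0,0}_\th)+\mc R_{[r,\th]}(S^{0,0}_\th)\big).\]
Using $\mr e^{x+y}\le\tfrac12(\mr e^{2x}+\mr e^{2y})$, it is therefore enough to bound $\mbf E\big[\mr e^{4C\mc R_{[0,r]}(S^{0,0}_\th)/m_n}\big]$ and $\mbf E\big[\mr e^{4C\mc R_{[r,\th]}(S^{0,0}_\th)/m_n}\big]$ uniformly in $n$. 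By the time-reversal symmetry of the symmetric bridge (the increments of the lazy walk are i.i.d.\ and symmetric, and conditioning on start $=$ end $=0$ is preserved under $u\mapsto\th-u$), the path $(S^{0,0}_\th(u))_{u\in[r,\th]}$ has the same law as $(S^{0,0}_\th(v))_{v\in[0,\th-r]}$ read backwards, so it suffices to control $\mbf E\big[\mr e^{4C\mc R_{[0,\ell]}(S^{0,0}_\th)/m_n}\big]$ for $\ell\in\{r,\th-r\}$.

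Next, the comparison with the unconditioned walk. Writing $p_u(x,y):=\mbf P[S^x(u)=y]$, for any path $\omega$ on $[0,\ell]$ with $\omega(0)=0$ one has
\[\mbf P\big[(S^{0,0}_\th(u))_{u\le\ell}=\omega\big]=\mbf P\big[(S^0(u))_{u\le\ell}=\omega\big]\cdot\frac{p_{\th-\ell}(\omega(\ell),0)}{p_\th(0,0)}.\]
Since $\ell\le\th-r$ with $\th-\ell\ge r\to\infty$, the local CLT (uniformly in the endpoint, e.g.\ \cite[\S 49]{GnedenkoKolmogorov}) gives $p_{\th-\ell}(x,0)\le C m_n^{-1}$ for all $x$ and $p_\th(0,0)\ge c m_n^{-1}$, so the Radon--Nikodym factor is bounded by $M:=C/c$, a constant independent of $n$. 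Consequently, for any nonnegative functional of the path on $[0,\ell]$,
\[\mbf E\big[\mr e^{4C\mc R_{[0,\ell]}(S^{0,0}_\th)/m_n}\big]\le M\,\mbf E\big[\mr e^{4C\mc R_{[0,\ell]}(S^0)/m_n}\big]\le M\,\mbf E\big[\mr e^{4C\mc R_\th(S^0)/m_n}\big].\]
By \eqref{Equation: Range Exponential Moments} (valid for every exponent $r<2$, in particular $r=1$), the right-hand side is bounded uniformly in $n$, and combining the two halves as above completes the proof.

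\textbf{Main obstacle.} The only genuinely delicate point is the uniform-in-endpoint local CLT bounds $p_u(x,0)\le C u^{-1/2}$ and $p_u(0,0)\ge c u^{-1/2}$ that make the half-interval marginal of the bridge absolutely continuous with a bounded density relative to the unconditioned walk; these are classical for the aperiodic lazy walk and are already used elsewhere in the paper (e.g.\ in the proof of Lemma~\ref{Lemma: Occupation Measure Regularity}). Everything else—the range decomposition using the endpoint constraints and the splitting of the exponential—is elementary. The same argument, applied to $S^{x^n,x^n}_\th$ after translating to start at the origin, also proves Proposition~\ref{Proposition: Local Time Uniform Exponential Moments (Bridge)} is consistent with this reduction scheme.
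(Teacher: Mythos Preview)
Your proof is correct but takes a genuinely different route from the paper's. The paper bounds $\mc R_\th(S^{0,0}_\th)\le 2\mc M(S^{0,0}_\th)$, then conditions on the number $\mc H(S^{0,0}_\th)=h$ of self-edge visits; on that event the path, with self-loops removed, is a simple symmetric random walk bridge $\mf S^{0,0}_{\th-h}$ with the same maximum, and the paper then invokes the external estimate \cite[(4.7)]{GorinShkolnikov} that $\sup_u\mbf E[\exp(C(\mc M(\mf S^{0,0}_u)/\sqrt u)^q)]<\infty$ for $q<2$. Your argument instead splits time at $r=\lfloor\th/2\rfloor$, uses time-reversal symmetry, and bounds each half of the bridge by the unconditioned lazy walk via the uniform local CLT bound $p_{\th-\ell}(\cdot,0)/p_\th(0,0)\le M$, reducing everything to \eqref{Equation: Range Exponential Moments}. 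Your route is more self-contained (it avoids the reduction to the simple walk and the citation to \cite{GorinShkolnikov}), while the paper's coupling with $\mf S$ is reused for Proposition~\ref{Proposition: Local Time Uniform Exponential Moments (Bridge)}, where the quantile-transform machinery behind \eqref{Equation: Local Time to Range Inequality} is specific to the simple walk; in that sense the paper's detour pays off later, whereas your half-interval absolute-continuity trick would not by itself handle the $\ell^q$ local-time functional needed there.
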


\begin{proposition}\label{Proposition: Bridge Binomial Concentration}
Let $\th=\th(n,t):=\lfloor m_n^2 t\rfloor$ for some $t>0$. For small enough $\nu>0$,
there exists some $c>0$ independent of $n$ such that
\[\mbf P\left[\sum_{a\in\mbb Z}\La^{(a,a+b)}_{\th}(S_\th^{0,0})<\nu m_n^2\right]\leq\mr e^{-cm_n^2},\qquad b\in\{-1,0,1\}.\]
\end{proposition}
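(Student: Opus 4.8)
The plan is to deduce the bound for the lazy random walk bridge $S^{0,0}_\th$ from the corresponding bound for the unconditioned lazy walk $S^0$ of Definition \ref{Definition: Lazy Walk}, using the local central limit theorem together with the elementary conditioning inequality $\mbf P[E_1\mid E_2]\le\mbf P[E_1]/\mbf P[E_2]$ — the same device used at the end of the proof of Lemma \ref{Lemma: Occupation Measure Regularity}.

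First I would note that, by \eqref{Equation: Edge-Occupation Measure}, for any fixed $b\in\{-1,0,1\}$ one has
\[\sum_{a\in\mbb Z}\La^{(a,a+b)}_\th(S^0)=\sum_{u=0}^{\th-1}\mbf 1_{\{S^0(u+1)-S^0(u)=b\}},\]
which is a sum of $\th$ i.i.d.\ $\mr{Bernoulli}(1/3)$ random variables, since the increments of $S^0$ are i.i.d.\ uniform on $\{-1,0,1\}$; hence it is $\mr{Binomial}(\th,1/3)$-distributed with mean $\th/3\ge(m_n^2t-1)/3$. Fixing any $\nu<t/3$, one has $\nu m_n^2<\th/3-\eta\,\th$ for $n$ large, where $\eta>0$ depends only on $t$ and $\nu$, so Hoeffding's inequality yields a constant $c_1=c_1(t,\nu)>0$ with
\[\mbf P\!\left[\sum_{a\in\mbb Z}\La^{(a,a+b)}_\th(S^0)<\nu m_n^2\right]\le\mr e^{-c_1m_n^2},\qquad n\text{ large}.\]
Then I would invoke the local CLT: the increments of $S^0$ have mean zero, finite variance $2/3$, and are aperiodic (because of the lazy step), so by \cite[\S 49]{GnedenkoKolmogorov} there is $c_2>0$ with $\mbf P[S^0(\th)=0]\ge c_2/m_n$ for $n$ large. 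Since $S^{0,0}_\th$ is $S^0$ conditioned on $\{S^0(\th)=0\}$, applying $\mbf P[E_1\mid E_2]\le\mbf P[E_1]/\mbf P[E_2]$ with $E_1=\{\sum_a\La^{(a,a+b)}_\th(S^0)<\nu m_n^2\}$ and $E_2=\{S^0(\th)=0\}$ gives
\[\mbf P\!\left[\sum_{a\in\mbb Z}\La^{(a,a+b)}_\th(S^{0,0}_\th)<\nu m_n^2\right]\le c_2^{-1}m_n\,\mr e^{-c_1m_n^2}.\]
Because $m_n=O(n^{\mf d})$ by \eqref{Equation: mn}, the polynomial prefactor is absorbed into the exponential: for any $0<c<c_1$ this is $\le\mr e^{-cm_n^2}$ for all $n$ large enough, which is all that is used where the proposition is applied.

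I do not expect a genuine obstacle. The two points needing care are: (i) the local CLT lower bound $\mbf P[S^0(\th)=0]\gtrsim m_n^{-1}$ must hold with \emph{no} parity restriction on $\th$, which is precisely where the laziness of $S$ enters, since the lazy step makes the walk aperiodic; and (ii) for $b=\pm1$, conditioning on the bridge forces the numbers of $+1$ and $-1$ steps to coincide, so the conditional law is no longer binomial — but the crude bound $\mbf P[E_1\mid E_2]\le\mbf P[E_1]/\mbf P[E_2]$ bypasses this entirely, at the cost only of the harmless factor $O(m_n)$.
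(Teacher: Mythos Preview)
Your proposal is correct and follows essentially the same approach as the paper: reduce to the unconditioned walk via $\mbf P[E_1\mid E_2]\le\mbf P[E_1]/\mbf P[E_2]$, observe that $\sum_{a}\La^{(a,a+b)}_\th(S^0)$ is $\mr{Binomial}(\th,1/3)$ and apply Hoeffding, then use the local CLT bound $\mbf P[S^0(\th)=0]^{-1}=O(m_n)$ and absorb the polynomial prefactor into the exponential. Your remarks on aperiodicity and on why the conditioning trick sidesteps the non-binomial conditional law for $b=\pm1$ are apt and make explicit what the paper leaves implicit.
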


\begin{proof}[Proof of Proposition \ref{Proposition: Range of Bridge}]
Let us define
\[\mc M(S^{0,0}_\th):=\max_{0\leq u\leq\th}|S^{0,0}_\th(u)|.\]
It is easy to see that $\mc R_\th(S^{0,0}_\th)\leq2\mc M(S^{0,0}_\th)$, and thus it suffices to prove that
the exponential moments of $\mc M(S^{0,0}_\th)/m_n$ are uniformly bounded in $n$.

Let $\mf S$ be as in Definition \ref{Definition: SSRW}, and define
\[\mc M(\mf S^{0,0}_v):=\max_{0\leq u\leq v}|\mf S^{0,0}_u|,\qquad v\in2\mbb N_0.\]
According to
\cite[(4.7)]{GorinShkolnikov} (up to normalization, the quantity denoted $\tilde M(N,\tilde T)$
in \cite[(4.7)]{GorinShkolnikov} is essentially the same as what we denote by $\mc M(\mf S^{0,0}_\th)$;
see the definition of the former on \cite[Page 2302]{GorinShkolnikov}) we know that for every $0<q<2$ and $C>0$,
\begin{align}\label{Equation: Range of Bridge 1}
\sup_{u\in\mbb N}\mbf E\left[\mr e^{C(\mc M(\mf S^{0,0}_u)/\sqrt{u})^q}\right]<\infty.
\end{align}
Let us define
\begin{align}\label{Equation: Number of Horizontal}
\mc H(S^{0,0}_u):=\sum_{a\in\mbb Z}\La^{(a,a)}_\th(S^{0,0}_u),\qquad u\in2\mbb N_0.
\end{align}
For any $h\in\mbb N_0$, we can couple the bridges of $S$ and $\mf S$
in such a way that
\[\big(S^{0,0}_\th(u)|\mc H(S^{0,0}_\th)=h\big)=\mf S_{\th-h}^{0,0}(u-\mc H(S^{0,0}_u)).\]
In words, we obtain $\mf S_{\th-\cdot}^{0,0}$ from $S^{0,0}_\th(u)$
by removing all segments that visit self-edges.
Since visits to self-edges do not contribute to the magnitude of $S^{0,0}_\th$,
\[(\mc M(S^{0,0}_\th)|\mc H(S^{0,0}_\th)=h)=\mc M(\mf S^{0,0}_{\th-h}).\]
Thus, \eqref{Equation: Range of Bridge 1} for $q=1$
implies that
\begin{align}\label{Equation: Range of Bridge 2}
&\sup_{n\in\mbb N}\mbf E\left[\mr e^{C\mc M(S^{0,0}_\th)/m_n}\right]
=\sup_{n\in\mbb N}\sum_{h\in\mbb N_0}\mbf E\left[\mr e^{C\mc M(S^{0,0}_\th)/m_n}\Big|\mc H(S^{0,0}_\th)=h\right]
\mbf P[\mc H(S^{0,0}_\th)=h]\\
\nonumber
&\leq\sup_{n\in\mbb N}\sup_{1\leq u\leq\th}\mbf E\left[\mr e^{(\sqrt{u}/m_n)C\mc M(\mf S^{0,0}_u)/\sqrt{u}}\right]<\infty
\end{align}
for every $C>0$, as desired.
\end{proof}

\begin{proof}[Proof of Proposition \ref{Proposition: Bridge Binomial Concentration}]
Note that
\begin{align*}
\mbf P\left[\sum_{a\in\mbb N_0}\La^{(a,a+b)}_{\th}(S_\th^{0,0})<\nu m_n^2\right]
\leq\mbf P\left[\sum_{a\in\mbb N_0}\La^{(a,a+b)}_{\th}(S^0)<\nu m_n^2\right]\mbf P\left[S^0(\th)=0\right]^{-1}.
\end{align*}
By the local central limit theorem, $\mbf P[S^0(\th)=0]^{-1}=O(m_n)$, and thus the result
follows from the same binomial concentration argument used for \eqref{Equation: Uniform Integrability Hoeffding}.
\end{proof}

\begin{proof}[Proof of Proposition \ref{Proposition: Local Time Uniform Exponential Moments (Bridge)}]
In similar fashion to the proof of Proposition \ref{Proposition: Local Time Uniform Exponential Moments},
it suffices to prove that the exponential moments of
\begin{align}\label{Equation: Bridge Vertical Steps}
\frac{1}{m_n}\sum_{a\in\mbb Z}\frac{\big(\La^{(a,a-1)}_{\th}(S^{0,0}_\th)+\La^{(a,a+1)}_{\th}(S^{0,0}_\th)\big)^q}{m_n^q}
\qquad\text{and}\qquad
\frac{1}{m_n}\sum_{a\in\mbb Z}\frac{\La^{(a,a)}_{\th}(S^{0,0}_\th)^q}{m_n^q}
\end{align}
are uniformly bounded in $n$.
We start with the first term in \eqref{Equation: Bridge Vertical Steps}.
Under the coupling in the proof of Proposition \ref{Proposition: Range of Bridge},
\[\left(\sum_{a\in\mbb Z}\big(\La^{(a,a-1)}_{\th}(S^{0,0}_\th)+\La^{(a,a+1)}_{\th}(S^{0,0}_\th)\big)^q
\bigg|\mc H(S^{0,0}_\th)=h\right)\leq\sum_{a\in\mbb Z}\La_{\th-h}^a(\mf S_{\th-h}^{0,0})^q\]
for every $h\in\mbb N_0$.
By conditioning on $\mc H(S^{0,0}_\th)$ as in \eqref{Equation: Range of Bridge 2},
we need only prove that
\[\sup_{n\in\mbb N}\mbf E\left[\exp\left(\frac{C}{m_n}\sum_{a\in\mbb Z}\frac{\La^{a}_{\th}(\mf S_\th^{0,0})^q}{m_n^q}\right)\right]<\infty.\]
By using \eqref{Equation: Local Time to Range Inequality} in the case $b=0$
(i.e., \cite[(4.19)]{GorinShkolnikov}), this follows from \eqref{Equation: Range of Bridge 1}.
With this established, the exponential moments of the second term in
\eqref{Equation: Bridge Vertical Steps} can be
controlled by using the same argument in Section \ref{Subsubsection: Self-Edges}
(the path transformation used therein
does not change the endpoint of the path that is being modified; hence the transformed version
of $S^{0,0}_\th$ is a random walk bridge).
\end{proof}

\subsection{Step 2: Convergence in Distribution}

The convergence in distribution follows from the convergence of mixed moments
by using the same truncation/stochastic domination argument as in Section \ref{Subsection: Distribution}.

\section{Proof of Theorem \ref{Theorem: Main Robin}}
\label{Section: Main Robin}

This follows roughly the same steps as the proof of Theorem
\ref{Theorem: Main Dirichlet}-(1).

\subsection{Step 1: Convergence of Moments}

\subsubsection{Expression for Mixed Moments and Convergence Result}

By Fubini's theorem, any mixed moment
$\mbf E\left[\prod_{i=1}^k\langle f_i, \hat K^w_n(t_i)g_i\rangle\right]$
can be written as
\begin{align}\label{Equation: Spiked Mixed Moment Prelimit}
\int_{[0,(n+1)/m_n)^k}\left(\prod_{i=1}^kf_i(x_i)\right)
\mbf E\left[\prod_{i=1}^kF_{n,t_i}(T^{i;x_i^n})
\,m_n\int_{T^{i;x_i^n}(\th_i)/m_n}^{(T^{i;x_i^n}(\th_i)+1)/m_n}g_i(y)\d y\right]\d x_1\cdots\dd x_k,
\end{align}
and the corresponding continuum limit is
\begin{align}\label{Equation: Spiked Mixed Moment Limit}
\mbf E\left[\prod_{i=1}^k\langle f_i, \hat K(t_i)g_i\rangle\right]
=\int_{\mbb R_+^k}\left(\prod_{i=1}^kf_i(x_i)\right)\,
\mbf E\left[\prod_{i=1}^k\mr e^{-\langle L_t(X^{i;x_i}),Q'\rangle-w\mf L^0_{t_i}(X^{i;x_i})} g_i\big(X^{i;x_i}(t)\big)\right]\d x_1\cdots\dd x_k,
\end{align}
where $\th_i$ and $x_i^n$ are as in Section \ref{Subsection: Moments},
\begin{enumerate}
\item $T^{1;x_1^n},\ldots,T^{k;x_k^n}$ are independent copies of the Markov chain $T$ with respective
starting points $x_1^n,\ldots,x_k^n$; and
\item $X^{1;x_1},\ldots,X^{k;x_k}$ are independent copies of $X$
with respective starting points $x_1,\ldots,x_k$.
\end{enumerate}
$T^{i;x_i^n}$ are independent of $Q_n$
and $X^{i;x_i}$ are independent of $Q$.

\begin{proposition}\label{Proposition: Spiked Moment Convergence}
Let $x_1,\ldots,x_n\geq0$ be fixed.
The following limits hold jointly in distribution over $1\leq i\leq k$:
\begin{enumerate}
\item $\displaystyle \lim_{n\to\infty}\sup_{0\leq s\leq t_i}\left|\frac{T^{i;x_i^n}(\lfloor m_n^2(3s/2)\rfloor)}{m_n}-X^{i;x_i}(s)\right|=0.$
\item $\displaystyle \lim_{n\to\infty}\sup_{y>0}\left|\frac{\La^{(y_n,\bar y_n)}_{\th_i}(T^{i;x_i^n})}{m_n}(1-\tfrac12\mbf 1_{\{(y_n,\bar y_n)=(0,0)\}})-\frac12L^y_{t_i}(X^{i;x_i})\right|=0$,
\\jointly in $(y_n,\bar y_n)_{n\in\mbb N}$ as in \eqref{Equation: yn Sequences}.
\item$\displaystyle \lim_{n\to\infty}\left|\frac{\La^{(0,0)}_{\th_i}(T^{i;x_i^n})}{m_n}-2\mf L^0_{t_i}(X^{i;x_i})\right|=0$.
\item $\displaystyle \lim_{n\to\infty}m_n\int_{T^{i;x_i^n}(\th_i)/m_n}^{(T^{i;x_i^n}(\th_i)+1)/m_n}g_i(y)\d y=g_i\big(X^{i;x_i}(t)\big).$
\item The convergences in \eqref{Equation: Skorokhod Noise Terms}.
\item $\displaystyle\lim_{n\to\infty}\sum_{a\in\mbb N_0}\frac{\La^{(a_E,\bar a_E)}_{\th_i}(X^{i;x_i^n})}{m_n}\frac{\xi^E_n(a)}{m_n}
=\frac12\int_{\mbb R_+}L^y_{t_i}(T^{i;x_i})\d W^E(y)$\\
for $E\in\{D,U,L\}$, where, for every $a\in\mbb N_0$, $(a_E,\bar a_E)$ are as in \eqref{Equation: Product (a,a) Notation}.
\end{enumerate}
\end{proposition}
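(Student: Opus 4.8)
The plan is to follow the proof of Proposition~\ref{Proposition: Moment Convergence} essentially verbatim, substituting Theorem~\ref{Theorem: Split Skorokhod Couplings} for Theorem~\ref{Theorem: Split Coupling} and the Markov chain $T$ for the lazy random walk $S$. For each $1\le i\le k$, the chain $T^{i;x_i^n}$ is an independent copy of $T^{x^n}$ with $x=x_i$, and Theorem~\ref{Theorem: Split Skorokhod Couplings}---applied with the parameter ``$t$'' there replaced by $3t_i/2$, so that ``$\th$'' there equals $\th_i=\lfloor m_n^2(3t_i/2)\rfloor$---lets us couple $T^{i;x_i^n}$ with an independent reflected Brownian motion $\tilde X^{i;x_i}$ of variance $2/3$ so that \eqref{Equation: Skorokhod Coupling}, \eqref{Equation: Skorokhod Local Time at Zero}, and \eqref{Equation: Skorokhod Local Time Coupling} hold almost surely as $n\to\infty$. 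I then set $X^{i;x_i}(s):=\tilde X^{i;x_i}(3s/2)$, which by the scaling property is a standard reflected Brownian motion started at $x_i$, and the $X^{i;x_i}$ are independent because the $\tilde X^{i;x_i}$ are. Under this single coupling, items~(1)--(4) will hold almost surely, and items~(5)--(6) will then follow from Assumption~\ref{Assumption: Noise Convergence} together with the independence of the noise vectors $\xi_n^E$ from the chains, exactly as in Proposition~\ref{Proposition: Moment Convergence}.

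Items~(1) and~(2) come out once the time change is tracked. For~(1), substituting $s\mapsto 3s/2$ into \eqref{Equation: Skorokhod Coupling} gives $\sup_{0\le s\le t_i}\big|m_n^{-1}T^{i;x_i^n}(\lfloor m_n^2(3s/2)\rfloor)-\tilde X^{i;x_i}(3s/2)\big|=O(m_n^{-1}\log m_n)\to 0$, and $\tilde X^{i;x_i}(3s/2)=X^{i;x_i}(s)$. For~(2), the interior local time obeys the elementary change-of-variables identity $L^y_{3t_i/2}(\tilde X^{i;x_i})=\tfrac32 L^y_{t_i}(X^{i;x_i})$ (substitute $u=3s/2$ in \eqref{Equation: Interior Local Time}), so the prefactor $1/3$ in \eqref{Equation: Skorokhod Local Time Coupling} turns into $1/2$; the correction $(1-\tfrac12\mbf 1_{\{(y_n,\bar y_n)=(0,0)\}})$ is exactly the one already present in \eqref{Equation: Skorokhod Local Time Coupling}, and the three sequences $(y_n,\bar y_n)$ of \eqref{Equation: yn Sequences} are covered jointly since \eqref{Equation: Skorokhod Local Time Coupling} is.

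For~(3), the boundary local time satisfies the analogous identity $\mf L^0_{3t_i/2}(\tilde X^{i;x_i})=\tfrac32\,\mf L^0_{t_i}(X^{i;x_i})$ (substitute $u=3s/2$ in \eqref{Equation: Boundary Local Time}), so the constant $4/3$ in \eqref{Equation: Skorokhod Local Time at Zero} becomes $\tfrac43\cdot\tfrac32=2$, matching~(3). Item~(4) follows from~(1) and the Lebesgue differentiation theorem since the $g_i$ are bounded and (being uniformly continuous) continuous; and items~(5)--(6) follow from \eqref{Equation: Skorokhod Noise Terms} and \eqref{Equation: Noise Stochastic Integral Convergence} combined with the uniform almost sure convergence in~(2)--(3) of the random but noise-independent occupation-measure step functions to their compactly supported local-time limits, precisely as in Proposition~\ref{Proposition: Moment Convergence}.

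Since Theorem~\ref{Theorem: Split Skorokhod Couplings} already carries the entire analytic burden, there is no substantial obstacle here; the only point requiring care is the bookkeeping of scaling constants---the variance $2/3$ of $\tilde X^{i;x_i}$, the time-change factor $3/2$, and the way these combine with the constants $1/3$ and $4/3$ in Theorem~\ref{Theorem: Split Skorokhod Couplings}---so that in the end the interior local times appear with coefficient $1/2$ and the boundary local time with coefficient $2$, in agreement with the kernel \eqref{Equation: Robin Semigroup}. Note that, in contrast to the Dirichlet case, no bridge version is needed here (and, per the remark following Theorem~\ref{Theorem: Split Skorokhod Couplings}, none is available), which is consistent with Theorem~\ref{Theorem: Main Robin} carrying no trace statement.
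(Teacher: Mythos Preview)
Your proposal is correct and follows essentially the same approach as the paper: couple each $T^{i;x_i^n}$ with a variance-$2/3$ reflected Brownian motion $\tilde X^{i;x_i}$ via Theorem~\ref{Theorem: Split Skorokhod Couplings}, set $X^{i;x_i}(s):=\tilde X^{i;x_i}(3s/2)$, and use the scaling identities $L^y_{3t_i/2}(\tilde X^{i;x_i})=\tfrac32 L^y_{t_i}(X^{i;x_i})$ and $\mf L^0_{3t_i/2}(\tilde X^{i;x_i})=\tfrac32\,\mf L^0_{t_i}(X^{i;x_i})$ to convert the constants $1/3$ and $4/3$ into $1/2$ and $2$. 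The paper's proof says precisely this in one sentence, referring back to Proposition~\ref{Proposition: Moment Convergence}; your write-up simply spells out the bookkeeping in more detail.
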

\begin{proof}
Arguing as in Proposition \ref{Proposition: Moment Convergence},
the result follows by using Theorem \ref{Theorem: Split Skorokhod Couplings} to
couple the $T^{i;x^n_i}$ with reflected Brownian motions with variance $2/3$, $\tilde X^{i;x^n_i}$,
and then defining $X^{i;x^n_i}(s):=\tilde X^{i;x^n_i}(3s/2)$, which yields a standard reflected
Brownian motion such that 
$L^y_{3t_i/2}(\tilde X^{i;x_i})=\tfrac32L^y_{t_i}(X^{i;x_i})$ and $\mf L^0_{3t_i/2}(\tilde X^{i;x_i})=\tfrac32\mf L^0_{t_i}(X^{i;x_i})$.
\end{proof}

\subsubsection{Convergence Inside the Expected Value}

We begin with the proof that for every $x_1,\ldots,x_k\geq0$, there is a coupling such that
\begin{align}\label{Equation: Spiked Convergence Inside Expectation}
\lim_{n\to\infty}\prod_{i=1}^kF_{n,t_i}(T^{i;x_i^n})
\,m_n\int_{T^{i;x_i^n}(\th_i)/m_n}^{(T^{i;x_i^n}(\th_i)+1)/m_n}g_i(y)\d y
=\prod_{i=1}^k\mr e^{-\langle L_t(X^{i;x_i}),Q'\rangle-w\mf L^0_{t_i}(X^{i;x_i})} g_i\big(X^{i;x_i}(t)\big)
\end{align}
in probability. Proposition \ref{Proposition: Spiked Moment Convergence} provides a coupling such that
\begin{multline*}
\prod_{i=1}^k\mbf 1_{\{\tau^{(n)}(T^{i;x^n_i})>{\th_i}\}}\left(\prod_{a\in\mbb N}\left(1-\frac{D_n(a)}{m_n^2}\right)^{\La^{(a,a)}_{\th_i}(T^{i;x^n_i})}\right)\\
\cdot\left(\prod_{a\in\mbb N_0}\left(1-\frac{U_n(a)}{m_n^2}\right)^{\La^{(a,a+1)}_{\th_i}(T^{i;x^n_i})}\left(1-\frac{L_n(a)}{m_n^2}\right)^{\La_{\th_i}^{(a+1,a)}(T^{i;x^n_i})}\right)
\end{multline*}
converges in probability to $\prod_{i=1}^k\mr e^{-\langle L_t(X^{i;x_i}),Q'\rangle}$.
Combining this with Proposition \ref{Proposition: Spiked Moment Convergence}-(4),
it only remains to show that 
\[\lim_{n\to\infty}\prod_{i=1}^k\left(1-\frac{(1-w_n)}2-\frac{D_n(0)}{2m_n^2}\right)^{\La_{\th_i}^{(0,0)}(T^{i;x^n_i})}
=\prod_{i=1}^k\mr e^{-w\mf L^0_{t_i}(X^{i;x_i})}.\]
To this effect, the Taylor expansion $\log(1+z)=z+O(z^2)$ yields
\begin{multline*}
\left(1-\frac{(1-w_n)}2-\frac{D_n(0)}{2m_n^2}\right)^{\La_{\th_i}^{(0,0)}(T^{i;x^n_i})}\\
=\exp\left(-\La_{\th_i}^{(0,0)}(T^{i;x^n_i})\left(\frac{(1-w_n)}2+\frac{D_n(0)}{2m_n^2}+O\left(\frac{(1-w_n)^2}4+\frac{D_n(0)^2}{2m_n^4}\right)\right)\right).
\end{multline*}
By Proposition \ref{Proposition: Spiked Moment Convergence}-(3) and Assumption \ref{Assumption: wn},
\[\lim_{n\to\infty}\La_{\th_i}^{(0,0)}(T^{i;x^n_i})\left(\frac{(1-w_n)}2+\frac{D_n(0)}{2m_n^2}\right)=w\mf L^0_{t_i}(X^{i;x_i})\]
and
\[\lim_{n\to\infty}\La_{\th_i}^{(0,0)}(T^{i;x^n_i})\left(\frac{(1-w_n)^2}4+\frac{D_n(0)^2}{2m_n^4}\right)=0\]
almost surely, as desired.

\subsubsection{Convergence of the Expected Value}

Next we prove
\begin{multline}\label{Equation: Spiked Convergence of Expectation}
\lim_{n\to\infty}\mbf E\left[\prod_{i=1}^kF^w_{n,t_i}(T^{i;x_i^n})
\,m_n\int_{T^{i;x_i^n}(\th_i)/m_n}^{(T^{i;x_i^n}(\th_i)+1)/m_n}g_i(y)\d y\right]\\
=\mbf E\left[\prod_{i=1}^k\mr e^{-\langle L_t(X^{i;x_i}),Q'\rangle-w\mf L^0_{t_i}(X^{i;x_i})} g_i\big(X^{i;x_i}(t)\big)\right]
\end{multline}
pointwise in $x_1,\ldots,x_k\geq0$.
Similarly to Section \ref{Subsubsection: Convergence of the Expected Value},
this is done by combining \eqref{Equation: Spiked Convergence Inside Expectation}
with the uniform integrability estimate
\begin{align}\label{Equation: Spiked Uniformly Integrability}
\sup_{n\geq N}\mbf E\left[\big|F^w_{n,t_i}(T^{i;x_i^n})\big|^{2k}\right]<\infty,\qquad 1\leq i\leq k
\end{align}
for large enough $N$.
To achieve this we combine Proposition
\ref{Proposition: Tail of Horizontal at Zero} and the following:

\begin{proposition}\label{Proposition: Spiked Local Time Uniform Exponential Moments}
Let $\th=\th(n,t)=\lfloor m_n^2t\rfloor$ for some $t>0$. For every $C>0$ and $1\leq q<3$,
\[\sup_{n\in\mbb N,~x\geq0}\mbf E\left[\exp\left(\frac{C}{m_n}\sum_{a\in\mbb N}\frac{\La^{a}_{\th}(T^{x^n})^q}{m_n^q}\right)\right]<\infty.\]
\end{proposition}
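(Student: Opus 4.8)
The plan is to reduce the statement for the reflected-type Markov chain $T$ to the corresponding statement for the lazy random walk $S$, which is exactly Proposition \ref{Proposition: Local Time Uniform Exponential Moments}. The key observation is that the coupling of Definition \ref{Definition: Time Change Coupling} realizes $T^{x^n}(u)=|S^{x^n}(\rho^{x^n}(u))|$, where $\rho^{x^n}$ is the (inverse of the) time change that deletes, independently with probability $3/4$, each visit of $T^{x^n}$ to the self-edge $(0,0)$. Under this coupling, for every $a\in\mbb N$ one has $\La^a_\th(T^{x^n}) = \La^a_{\rho^{x^n}_\th}(S^{x^n}) + \La^{-a}_{\rho^{x^n}_\th}(S^{x^n}) \le \La^a_\th(S^{x^n}) + \La^{-a}_\th(S^{x^n})$, since $\rho^{x^n}_\th\le\th$ and occupation measures are monotone in the time horizon. (Note $a\ge1$ here, so no self-edge subtlety arises; the $a=0$ term is excluded from the sum in the statement.) Consequently
\[
\frac{1}{m_n}\sum_{a\in\mbb N}\frac{\La^a_\th(T^{x^n})^q}{m_n^q}
\le \frac{2^{q-1}}{m_n}\sum_{a\in\mbb Z}\frac{\La^a_\th(S^{x^n})^q}{m_n^q}
\]
using $(z+\bar z)^q\le 2^{q-1}(z^q+\bar z^q)$ for $z,\bar z\ge 0$ and $q\ge1$, after relabelling $-a\mapsto a$.

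With this pointwise (almost sure) domination in hand, the exponential moment of the left-hand quantity is bounded by the exponential moment of $\frac{2^{q-1}C}{m_n}\sum_{a\in\mbb Z}\La^a_\th(S^{x^n})^q/m_n^q$, which is finite uniformly in $n$ and $x\ge 0$ by Proposition \ref{Proposition: Local Time Uniform Exponential Moments} applied with the constant $2^{q-1}C$ in place of $C$ (the range $1\le q<3$ is exactly the one allowed there). This gives the claim. The one point requiring a line of care is that Definition \ref{Definition: Time Change Coupling} and the identity \eqref{Equation: Step Removal Coupling} are stated for a fixed starting point $a\in\mbb N_0$, so the bound is genuinely uniform in $x$; and that the supremum over $x\ge0$ in Proposition \ref{Proposition: Local Time Uniform Exponential Moments} already absorbs any dependence on the starting point of $S^{x^n}$.

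I do not expect a serious obstacle here: all the work has been front-loaded into Propositions \ref{Proposition: Splitting of Occupation Measure}, \ref{Proposition: Reflected Splitting of Occupation Measure}, and especially \ref{Proposition: Local Time Uniform Exponential Moments}, together with the time-change coupling. The only mild subtlety is making sure the inequality $\La^a_{\rho^{x^n}_\th}(S^{x^n})+\La^{-a}_{\rho^{x^n}_\th}(S^{x^n})\le\La^a_\th(S^{x^n})+\La^{-a}_\th(S^{x^n})$ is invoked correctly (it is immediate from $\rho^{x^n}_\th\le\th$, since deleting steps only shortens the horizon), and that one tracks the factor $2^{q-1}$ through to the final invocation of Proposition \ref{Proposition: Local Time Uniform Exponential Moments}. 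If one preferred to avoid even the constant $2^{q-1}$, one could instead note $\La^a_\th(T^{x^n})\le \La^a_\th(S^{x^n})+\La^{-a}_\th(S^{x^n})$ and use the trivial bound $\sum_a(\La^a+\La^{-a})^q \le 2^q\sum_a\big((\La^a)^q+(\La^{-a})^q\big) = 2^{q+1}\sum_a(\La^a)^q$ over $\mbb Z$; either route works, and the argument ends there by Borel–Cantelli-free direct comparison of exponential moments.
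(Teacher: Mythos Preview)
Your proposal is correct and matches the paper's proof essentially verbatim: couple $T^{x^n}$ with $S^{x^n}$ via Definition \ref{Definition: Time Change Coupling}, use $\La^a_\th(T^{x^n})=\La^a_{\rho^{x^n}_\th}(S^{x^n})+\La^{-a}_{\rho^{x^n}_\th}(S^{x^n})$ for $a\in\mbb N$ together with $(z+\bar z)^q\le 2^{q-1}(z^q+\bar z^q)$ and $\rho^{x^n}_\th\le\th$, and then invoke Proposition \ref{Proposition: Local Time Uniform Exponential Moments} with constant $2^{q-1}C$. The paper writes the bound with $S^0$ rather than $S^{x^n}$ (harmless, by translation invariance of occupation measures), but otherwise the arguments are identical.
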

\begin{proof}
If we couple $X$ and $S$ as in Definition \ref{Definition: Time Change Coupling},
then we see that
\begin{align*}
\mbf E\left[\exp\left(\frac{C}{m_n}\sum_{a\in\mbb N}\frac{\La^{a}_{\th}(T^{x^n})^q}{m_n^q}\right)\right]
&\leq\mbf E\left[\exp\left(\frac{2^{q-1}C}{m_n}\sum_{a\in\mbb Z\setminus\{0\}}\frac{\La^{a}_{\rho^{x^n}_\th}(S^{0})^q}{m_n^q}\right)\right]\\
&\leq\mbf E\left[\exp\left(\frac{2^{q-1}C}{m_n}\sum_{a\in\mbb Z}\frac{\La^{a}_{\th}(S^{0})^q}{m_n^q}\right)\right].
\end{align*}
Thus Proposition \ref{Proposition: Spiked Local Time Uniform Exponential Moments} follows directly
from Proposition \ref{Proposition: Local Time Uniform Exponential Moments}.
\end{proof}

Indeed, the arguments of Section \ref{Subsubsection: Convergence of the Expected Value}
show that the contribution of the terms of the form \eqref{Equation: Spiked Random Walk Functional 2}
and \eqref{Equation: Spiked Random Walk Functional 3} to \eqref{Equation: Spiked Uniformly Integrability}
can be controlled by Proposition \ref{Proposition: Spiked Local Time Uniform Exponential Moments}. Thus,
it suffices to prove that for every $C>0$, there is some $N\in\mbb N$ large enough so that
\begin{align}\label{Equation: Spiked Moments}
\sup_{n\geq N,~x\geq0}\mbf E\left[\left|1-\frac{(1-w_n)}2-\frac{D_n(0)}{2m_n^2}\right|^{C\La_{\th_i}^{(0,0)}(T^{i;x^n_i})}\right]<\infty.
\end{align}
By using the bound $|1-z|\leq\mr e^{|z|}$, it suffices to control the exponential moments of
\begin{align}\label{Equation: Spiked Moments 1}
\La_\th^{(0,0)}(T^{x^n})|1-w_n|
\qquad\text{and}\qquad
\frac{\La_\th^{(0,0)}(T^{x^n})|D_n(0)|}{m_n^2}.
\end{align}

We begin with the first term in \eqref{Equation: Spiked Moments 1}. According to Proposition \ref{Proposition: Tail of Horizontal at Zero},
for every $C>0$,
\[\sup_{n\in\mbb N,~x\geq0}\mr E\left[\mr e^{C\La_\th^{(0,0)}(T^{x^n})/m_n}\right]<\infty.\]
Thus, given that $|1-w_n|=O(m_n^{-1})$ by Assumption \ref{Assumption: wn}, we conclude that 
\[\sup_{n\in\mbb N,~x\geq0}\mr E\left[\mr e^{C\La_\th^{(0,0)}(T^{x^n})|1-w_n|}\right]<\infty.\]
Let us now consider the second term in \eqref{Equation: Spiked Moments 1}. By the tower property and
Assumption \ref{Assumption: Robin}, there exists $\bar C,\bar c>0$ independent of
$n$ such that
\[\mbf E\left[\mr e^{C(\La_\th^{(0,0)}(T^{x^n})/m_n^{3/2}\big)(|D_n(0)|/m_n^{1/2})}\right]
\leq\bar C\mbf E\left[\mr e^{\bar c\,(C^2/m_n)\big(\La_\th^{(0,0)}(T^{x^n})/m_n\big)^2}\right].\]
Since $\bar c\,(C^2/m_n)\to0$, it follows from
Proposition \ref{Proposition: Tail of Horizontal at Zero} that
\[\sup_{n\geq N,~x\geq0}\mbf E\left[\mr e^{\bar c\,(C^2/m_n)\big(\La_\th^{(0,0)}(T^{x^n})/m_n\big)^2}\right]<\infty\]
for large enough $N$,
concluding the proof of \eqref{Equation: Spiked Moments}.

\subsubsection{Convergence of the Integral}

With \eqref{Equation: Spiked Convergence of Expectation} established, once more
we aim to prove that \eqref{Equation: Spiked Mixed Moment Prelimit} converges to
\eqref{Equation: Spiked Mixed Moment Limit} by using \cite[Theorem 2.24]{FonsecaLeoni}.
Similarly to Section \ref{Subsubsection: Convergence of Integral},
for this we need upper bounds of the form
\begin{align}\label{Equation: Spiked Vitali for Small x}
\mbf E\left[|F^w_{n,t_i}(T^{i;x^n})|^k\right]^{1/k}
\leq c_1\left((1+x)^{-c_2\theta}+\mr e^{-c_3n^{2\mf d}}\right),\qquad x\in[0,n^{1-\eps}/m_n)
\end{align}
and
\begin{align}\label{Equation: Spiked Vitali for Large x}
\mbf E\left[|F^w_{n,t_i}(T^{i;x^n})|^k\right]^{1/k}
\leq \mr e^{-\bar c_4 n^{\al(1-\mf d)-\al\eps}}
+\mr e^{-\bar c_5 n^{2\mf d}},\qquad x\in[n^{1-\eps}/m_n,(n+1)/m_n),
\end{align}
where $\eps,c_1,c_2,c_3,\bar c_4,\bar c_5>0$ are independent of $n$
and $\theta>0$ is taken large enough so that $(1+x)^{-c_2\theta}$ is integrable.

We begin with $x\in[0,n^{1-\eps}/m_n)$.
Replicating the analysis leading up to \eqref{Equation: Vitali Convergence Noise 1}
and \eqref{Equation: Vitali Convergence Potential 1} leads to bounding
$\mbf E\left[|F^w_{n,t_i}(T^{i;x^n})|^k\right]^{1/k}$ by the product of
the following five terms:
\begingroup
\allowdisplaybreaks
\begin{align}
\label{Equation: Spiked Vitali Convergence Boundary}
&\mbf E\left[\left|1-\frac{(1-w_n)}2-\frac{D_n(0)}{2m_n^2}\right|^{7k\La_{\th_i}^{(0,0)}(T^{i;x^n_i})}\right]^{1/7k},\\
\label{Equation: Spiked Vitali Convergence Noise 1.1}
&\prod_{E\in\{U,L\}}\mbf E\left[\prod_{a\in\mbb N_0}\left|1-\frac{\xi^E_n(a)}{m_n^2-V^E_n(a)}\right|
^{7k\La^{(a_E,\bar a_E)}_{\th_i}(T^{i;x^n})}
\right]^{1/7k},\\
\label{Equation: Spiked Vitali Convergence Noise 1.2}
&\mbf E\left[\prod_{a\in\mbb N}\left|1-\frac{\xi^D_n(a)}{m_n^2-V^D_n(a)}\right|
^{7k\La^{(a,a)}_{\th_i}(T^{i;x^n})}
\right]^{1/7k},\\
\label{Equation: Spiked Vitali Convergence Potential 1.1}
&\prod_{E\in\{U,L\}}\mbf E\left[\prod_{a\in\mbb N_0}\left|1-\frac{V_n^E(a)}{m_n^2}\right|^{7k\La^{(a_E,\bar a_E)}_{\th_i}(T^{i;x^n})}\right]^{1/7k},\\
\label{Equation: Spiked Vitali Convergence Potential 1.2}
&\mbf E\left[\prod_{a\in\mbb N}\left|1-\frac{V_n^D(a)}{m_n^2}\right|^{7k\La^{(a,a)}_{\th_i}(T^{i;x^n})}\right]^{1/7k}.
\end{align}
\endgroup
Suppose without loss of generality that $V^D_n$ satisfies \eqref{Equation: Log Potential Lower Bound}.
\eqref{Equation: Spiked Vitali Convergence Boundary} can be controlled with \eqref{Equation: Spiked Moments};
\eqref{Equation: Spiked Vitali Convergence Noise 1.1} and \eqref{Equation: Spiked Vitali Convergence Noise 1.2}
can be controlled with Proposition \ref{Proposition: Spiked Local Time Uniform Exponential Moments};
and \eqref{Equation: Spiked Vitali Convergence Potential 1.1} can be controlled with
\eqref{Equation: Potential Absolute Bounds}.
For \eqref{Equation: Spiked Vitali Convergence Potential 1.2}, up to a constant independent of $n$,
we get from \eqref{Equation: Log Potential Lower Bound} the upper bound
\begin{align}\label{Equation: Spiked Vitali Convergence Potential 1.2-2}
\mbf E\left[\exp\left(-\frac{7k\theta}{m_n^2}\sum_{a\in\mbb N}\log(1+|a|/m_n)\La^{(a,a)}_{\th_i}(T^{i;x^n})\right)\right]^{1/7k}.
\end{align}
Let us couple $T^{i;x^n}$ and $S^{x^n}=x^n+S^0$ as in Definition \ref{Definition: Time Change Coupling}.
The same argument used to control
\eqref{Equation: Vitali Convergence Potential 1-2} implies that
\eqref{Equation: Spiked Vitali Convergence Potential 1.2-2} is bounded above
by the product of
\begin{align}
\label{Equation: Spiked Vitali Convergence Potential 1.2-3}
&\mbf E\left[\exp\left(-\frac{14k\theta \log(1+x)}{m_n^2}
\sum_{a\in\mbb Z\setminus\{0\}}\La^{(a,a)}_{\rho^{x^n}_{\th_i}}(S^0)\right)\right]^{1/14k},\\
\label{Equation: Spiked Vitali Convergence Potential 1.2-4}
&\mbf E\left[\exp\left(\frac{14k\theta}{m_n^2}\sum_{0\leq u\leq\rho^{x^n}_{\th_i}}\frac{|S^0(u)|}{m_n}\right)\right]^{1/14k}.
\end{align}
Since $\rho^{x^n}_{\th_i}\leq\th_i$, we can prove that \eqref{Equation: Spiked Vitali Convergence Potential 1.2-4}
is bounded by a constant independent of $n$ by using \eqref{Equation: Range Exponential Moments} directly.
As for \eqref{Equation: Spiked Vitali Convergence Potential 1.2-3}, we have the following proposition:

\begin{proposition}\label{Proposition: Reflected Binomial Concentration}
Let $\th=\th(n,t):=\lfloor m_n^2 t\rfloor$ for some $t>0$.
For every $x\geq0$, let us couple $T^{x^n}$ and $S^{x^n}:=x^n+S^0$
as in Definition \ref{Definition: Time Change Coupling}.
For small enough $\nu>0$,
there exists $C,c>0$ independent of $x$ and $n$ such that
\[\sup_{x\geq0}\mbf P\left[\sum_{a\in\mbb Z\setminus\{0\}}
\La^{(a,a+b)}_{\rho^{x^n}_\th}(S^0)<\nu m_n^2\right]\leq C\mr e^{-cm_n^2}
,\qquad b\in\{-1,0,1\}.\]
\end{proposition}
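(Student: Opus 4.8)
The plan is to reduce the statement, via the coupling of $T^{x^n}$ and $S^{x^n}=x^n+S^0$ from Definition \ref{Definition: Time Change Coupling} (i.e.\ \eqref{Equation: Step Removal Coupling}), to three concentration bounds, all uniform in $x$. The starting point is two elementary facts about the time change: since $S^{x^n}$ is obtained from $T^{x^n}$ by deleting some of its $(0,0)$-self-edges, and each deletion is at a distinct such self-edge among the first $\th$ steps, one has
\[\th-\La^{(0,0)}_\th(T^{x^n})\ \le\ \rho^{x^n}_\th\ \le\ \th.\]
Next, writing $N_b(m):=\sum_{a\in\mbb Z}\La^{(a,a+b)}_m(S^0)$ for the number of steps of $S^0$ with increment $b$ among the first $m$ --- a $\mr{Binom}(m,1/3)$ variable, nondecreasing in $m$ --- I would use
\[\sum_{a\in\mbb Z\setminus\{0\}}\La^{(a,a+b)}_{\rho^{x^n}_\th}(S^0)
=N_b\big(\rho^{x^n}_\th\big)-\La^{(0,b)}_{\rho^{x^n}_\th}(S^0)
\ \ge\ N_b\big(\rho^{x^n}_\th\big)-\La^0_\th(S^0),\]
where the inequality uses $\La^{(0,b)}_m(S^0)\le\La^0_m(S^0)$ together with $\rho^{x^n}_\th\le\th$ and the monotonicity of $\La^0_\cdot(S^0)$.

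Thus the claim follows once we show that, for a suitably small constant $\eta=\eta(t)>0$, the event
\[E_n:=\big\{\La^{(0,0)}_\th(T^{x^n})\le\eta m_n^2\big\}\cap\big\{\La^0_\th(S^0)\le\eta m_n^2\big\}\cap\big\{N_b\big(\lceil\th-\eta m_n^2\rceil\big)\ge\tfrac14\big(\th-\eta m_n^2\big)\big\}\]
has complement of probability $\le Ce^{-cm_n^2}$. Indeed, on $E_n$ the first condition gives $\rho^{x^n}_\th\ge\lceil\th-\eta m_n^2\rceil$, hence $N_b(\rho^{x^n}_\th)\ge N_b(\lceil\th-\eta m_n^2\rceil)$ by monotonicity; combining with the third condition, the second condition, and $\th\ge m_n^2t-1$ yields $\sum_{a\ne0}\La^{(a,a+b)}_{\rho^{x^n}_\th}(S^0)\ge\tfrac14(t-5\eta)m_n^2-\tfrac14\ge\nu m_n^2$ for $n$ large, for any $\nu$ slightly below $t/16$ (with $\eta$ chosen accordingly; smaller $n$ are absorbed into $C$). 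Since the event in the statement is decreasing in $\nu$, this establishes the bound for all sufficiently small $\nu$, with $C,c$ depending only on $t$.

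It then remains to control $\mbf P[E_n^c]$ piece by piece. The bound $\mbf P[\La^{(0,0)}_\th(T^{x^n})>\eta m_n^2]\le Ce^{-c\eta^2 m_n^2}$ is exactly Proposition \ref{Proposition: Tail of Horizontal at Zero}, which is already uniform in $x$ and $n$; this is the only place the starting point $x$ enters. The bound $\mbf P[N_b(\lceil\th-\eta m_n^2\rceil)<\tfrac14(\th-\eta m_n^2)]\le e^{-cm_n^2}$ is immediate from Hoeffding's inequality, since (for $\eta<t$) the binomial $N_b(\lceil\th-\eta m_n^2\rceil)$ has mean $\tfrac13\lceil\th-\eta m_n^2\rceil$ exceeding the threshold by a positive multiple of $m_n^2$.

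The remaining --- and main --- estimate is $\mbf P[\La^0_\th(S^0)\ge\eta m_n^2]\le Ce^{-c\eta^2 m_n^2}$, which I would prove in the spirit of the proof of Proposition \ref{Proposition: Tail of Horizontal at Zero}, but using the successive return times of $S^0$ to the origin in place of the ladder epochs. Letting $0=\sigma_0<\sigma_1<\cdots$ be those return times, one has $\{\La^0_\th(S^0)\ge K\}=\{\sigma_{K-1}\le\th\}$, and $\sigma_{K-1}$ is a sum of $K-1$ i.i.d.\ copies of $\sigma_1$. Using the classical fact that the return time to the origin of a recurrent one-dimensional (lazy) walk satisfies $1-\mbf E[e^{-\la\sigma_1}]\ge c\sqrt\la$ for $\la$ in a fixed neighbourhood of $0$ (equivalently $\mbf P[\sigma_1>m]\asymp m^{-1/2}$), the Chernoff bound
\[\mbf P[\sigma_{K-1}\le\th]\le e^{\la\th}\,\mbf E[e^{-\la\sigma_1}]^{K-1}\le\exp\!\big(\la\th-c(K-1)\sqrt\la\big),\]
optimized at the \emph{constant} value $\la=(c(K-1)/2\th)^2$, gives $\mbf P[\sigma_{K-1}\le\th]\le\exp(-c^2(K-1)^2/4\th)$; with $K=\lceil\eta m_n^2\rceil$ and $\th\le tm_n^2$ this is $\le Ce^{-c'\eta^2 m_n^2}$, as desired. (One may instead avoid the Laplace transform by truncating the $\sigma_j$ at a level $m_n^\al$, $\al$ small, and combining the tail bound $\mbf P[\sigma_1>m]\ge c/\sqrt m$ with a binomial estimate.) Combining the three pieces, all uniform in $x\ge0$, completes the proof. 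The only genuine subtlety is this last estimate: the crude bound $\mbf P[\sigma_{K-1}\le\th]\le\mbf P[\sigma_1\le\th]^{K-1}$ only yields $e^{-cm_n}$, so obtaining the full $e^{-cm_n^2}$ is what forces the use of the sharp $\sqrt\la$ behaviour of the Laplace transform (equivalently, a carefully chosen truncation level).
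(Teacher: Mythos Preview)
Your proof is correct and follows the same two-step skeleton as the paper: first use Proposition~\ref{Proposition: Tail of Horizontal at Zero} together with $\th-\rho^{x^n}_\th\le\La^{(0,0)}_\th(T^{x^n})$ to reduce to a deterministic time $\ge(1-\de)\th$, then argue that the remaining edge-count is large by a Hoeffding-type bound. The paper's proof ends with ``This follows by Hoeffding's inequality,'' but the sum $\sum_{a\ne0}\La^{(a,a+b)}_m(S^0)$ is not literally binomial, since steps starting from the origin are excluded; you correctly isolate this correction as $\La^0_\th(S^0)$ and supply the missing sub-Gaussian tail bound $\mbf P[\La^0_\th(S^0)\ge\eta m_n^2]\le C\mr e^{-c\eta^2m_n^2}$ via the Laplace transform of the first return time. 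This extra ingredient is genuinely needed if one wants the stated rate $\mr e^{-cm_n^2}$ (the cruder occupation-time tail \eqref{Equation: Unconditioned Occupation Measure Tail} would only give $\mr e^{-cm_n}$), so your argument is best viewed as a fleshed-out version of the paper's rather than a different route.
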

\begin{proof}
By Proposition \ref{Proposition: Tail of Horizontal at Zero},
for any $0<\de<1$, we can find $\bar C,\bar c>0$ such that
\[\sup_{x\geq0}\mbf P\left[\La_\th^{(0,0)}(T^{x^n})\geq \de\th\right]\leq \bar C\mr e^{-\bar cm_n^2}.\]
Given that $\th-\rho^{x^n}_{\th}\leq\La_\th^{(0,0)}(T^{x^n})$,
it suffices to prove that
\[\sup_{x\geq0}\mbf P\left[\sum_{a\in\mbb Z\setminus\{0\}}
\La^{(a,a+b)}_{(1-\de)\th}(S^0)<\nu m_n^2\right]\leq C\mr e^{-cm_n^2}
,\qquad b\in\{-1,0,1\}\]
for large enough $N$. This follows by Hoeffding's inequality.
\end{proof}

By arguing as in the passage following \eqref{Equation: Uniform Integrability Hoeffding},
Proposition \ref{Proposition: Reflected Binomial Concentration}
implies that \eqref{Equation: Spiked Vitali Convergence Potential 1.2-3}
is bounded above by $c_1\Big((1+x)^{-c_2\theta}+\mr e^{-c_3n^{2\mf d}}\Big)$
for $c_1,c_2,c_3>0$ independent of $n$ (and $c_2$ independent of $\theta$),
hence \eqref{Equation: Spiked Vitali for Small x} holds.

We now prove \eqref{Equation: Spiked Vitali for Large x}.
Let $x\in[n^{1-\eps}/m_n,(n+1)/m_n)$.
Assuming without loss of generality that
$V^D_n$ satisfies \eqref{Equation: Polynomial Potential Lower Bound},
by arguing as in Section \ref{Subsubsection: Convergence of Integral},
we get that $\mbf E\left[|F^w_{n,t_i}(T^{i;x^n})|^k\right]^{1/k}$ is bounded
by the product of the four terms
\begingroup
\allowdisplaybreaks
\begin{align*}
&\mbf E\left[\left|1-\frac{(1-w_n)}2-\frac{D_n(0)}{2m_n^2}\right|^{5k\La_{\th_i}^{(0,0)}(T^{i;x^n_i})}\right]^{1/5k}
\cdot\mbf E\left[\prod_{a\in\mbb N}\left(1-\frac{\kappa(Cn^{1-\eps}/m_n)^\al}{m_n^2}\right)^{5k\La^{(a,a)}_{\th_i}(T^{i;x^n})}\right]^{1/5k}\\
&\cdot\mbf E\left[\prod_{a\in\mbb N}\left(1+\frac{2|\xi^D_n(a)|}{m_n^2}\right)^{5k\La^{(a,a)}_{\th_i}(T^{i;x^n})}\right]^{1/5k}
\cdot\prod_{E\in\{U,L\}}\mbf E\left[\prod_{a\in\mbb N_0}\left(1+\frac{|\xi^E_n(a)|}{m_n^2}\right)^{5k\La^{(a_E,\bar a_E)}_{\th_i}(T^{i;x^n})}\right]^{1/5k}.
\end{align*}
\endgroup
By combining Propositions \ref{Proposition: Spiked Local Time Uniform Exponential Moments} and
\ref{Proposition: Reflected Binomial Concentration} with
\eqref{Equation: Spiked Moments}, the same arguments used in Section \ref{Subsubsection: Convergence of Integral}
yields \eqref{Equation: Spiked Vitali for Large x}, concluding the proof
of the convergence of moments.

\subsection{Step 2: Convergence in Distribution}

The convergence in joint distribution follows from the convergence of moments by using the same
truncation/stochastic dominance argument Section \ref{Subsection: Distribution},
thus concluding the proof of Theorem \ref{Theorem: Main Robin}.

\bibliographystyle{plain}
\bibliography{Bibliography}
\end{document}